\documentclass{article}

\usepackage[utf8]{inputenc}
\usepackage[T1]{fontenc}
\usepackage{microtype}
\usepackage{amsmath,amssymb,mathtools}
\usepackage{graphicx}
\usepackage{latexsym}
\usepackage{pxfonts,kpfonts}
\usepackage{fullpage}
\usepackage[colorinlistoftodos]{todonotes}
\usepackage{bbm}            % pour \mathbbm{1}
\usepackage{xypic}
\usepackage[all]{xy}

\usepackage[inline]{enumitem}
\setlist[itemize]{label={$\blacktriangleright$}}

\usepackage{tikz-cd}

% l'ordre est important
\usepackage[numbered]{bookmark}
\usepackage{hyperref}
\hypersetup{
  pdfencoding=auto,      % https://tex.stackexchange.com/a/26193
  colorlinks,
  urlcolor = {blue!40!black},
  linkcolor = {red!40!black},
  citecolor = {green!40!black},
  pdfauthor = {Ricardo Campos, Julien Ducoulombier, and Najib Idrissi},
  pdftitle = {Boardman--Vogt resolutions and bar/cobar constructions of (co)operadic (co)bimodules}
}

\usepackage{amsthm}
\hypersetup{colorlinks}
% pour que les liens vers les figures arrivent en haut de la figure
\usepackage[all]{hypcap}

%%%%%%%%%%%%%%%%%%%%%%%%%%%%%%%%%%%%%
%%%%%%%%%%%%%%%%%%%%%%%%%%%%%%%%%%%%%
%%%%%%%%%%%%%%%%%%%%%%%%%%%%%%%%%%%%%
%%%%%%%%%%%%%%%%%%%%%%%%%%%%%%%%%%%%%

\theoremstyle{definition}
\newtheorem{defi}{Definition}[section]
\newtheorem{const}[defi]{Construction}
\newtheorem{expl}[defi]{Example}

\newtheorem{rmk}[defi]{Remark}

\newtheorem{notat}[defi]{Notation}

\theoremstyle{plain}
\newtheorem{mainthm}{Main Theorem}
\newtheorem{pro}[defi]{Proposition}
\newtheorem{thm}[defi]{Theorem}

\newtheorem{lmm}[defi]{Lemma}
\newtheorem{cordef}[defi]{Corollary-Definition}

\theoremstyle{remark}

%%%%%%%%%%%%%%%%%%%%%%%%%%%%%%%%%%%%%
%%%%%%%%%%%%%%%%%%%%%%%%%%%%%%%%%%%%%
%%%%%%%%%%%%%%%%%%%%%%%%%%%%%%%%%%%%%
%%%%%%%%%%%%%%%%%%%%%%%%%%%%%%%%%%%%%

\renewcommand{\SS}{{\mathbb S}}
\newcommand{\R}{{\mathbb R}}
\newcommand{\Q}{{\mathbb Q}}
\newcommand{\Map}{{\mathrm{Map}}}
\renewcommand{\hom}{{\mathrm{hom}}}
\newcommand{\Nat}{{\mathrm{Nat}}}
\newcommand{\Emb}{{\mathrm{Emb}}}
\newcommand{\Imm}{{\mathrm{Imm}}}
\newcommand{\Topo}{{\mathrm{Top}}}
\newcommand{\AllTop}{\mathrm{AllTop}}
\newcommand{\Embbar}{\overline{\Emb}}
\newcommand{\Immbar}{\overline{\Imm}}
\newcommand{\Seq}{{\mathrm{Seq}}}
\newcommand{\Operad}{{\mathrm{Operad}}}
\newcommand{\Bimod}{{\mathrm{Bimod}}}
\newcommand{\Ibimod}{{\mathrm{Ibimod}}}
\newcommand{\Comm}{{Comm}}

\newcommand{\TT}{{\mathrm{T}}}
\newcommand{\MM}{{\mathsf{M}}}

\newcommand{\calO}{{\mathcal O}}
\newcommand{\calB}{{\mathcal B}}
\newcommand{\calI}{{\mathcal I}}
\newcommand{\calF}{{\mathcal F}}
\newcommand{\calU}{{\mathcal U}}
\newcommand{\calIF}{{\mathcal{IF}}}
\newcommand{\calBF}{{\mathcal{BF}}}

\newcommand{\hocolim}{\operatorname{hocolim}}

%%%%%%%%%%%%%%%%%%%%%%%%%%%%%%%%%%%%%
%%%%%%%%%%%%%%%%%%%%%%%%%%%%%%%%%%%%%
%%%%%%%%%%%%%%%%%%%%%%%%%%%%%%%%%%%%%
%%%%%%%%%%%%%%%%%%%%%%%%%%%%%%%%%%%%%

\title{Delooping the functor calculus tower}
\date{}
\author{Julien Ducoulombier \and Victor Turchin}

\begin{document}

\maketitle

\begin{abstract}
We study a connection between  mapping spaces of bimodules and  of infinitesimal bimodules over an operad. As  main application and motivation of our work, we produce an explicit delooping of the
manifold calculus tower associated to the space of smooth maps $D^m\to D^n$ of discs, $n\geq m$, avoiding any given multisingularity and coinciding with the standard inclusion near the boundary $\partial D^m$. In particular, we give a new proof of the delooping of the space of disc embeddings in terms of little discs operads maps with the advantage that it can be applied to more general mapping spaces. 
 As a spin-off result we discover a homotopy recurrence relation on the components of the 
little discs operads.
\end{abstract}

\tableofcontents

\section{Introduction}\label{s:intro}

\subsection{Functor calculus on a closed disc for non-singular mapping spaces}\label{ss11}

The calculus of functors on manifolds was invented by T.~Goodwillie and M.~Weiss in order to study 
spaces of smooth embeddings~\cite{Weiss99,Weiss99.2}. The approach is universal and in particular can be applied to the study of more general   spaces of maps between smooth manifolds avoiding any given type $\SS$ of multisingularity. The idea of this method goes back to Smale\rq{}s study of immersions~\cite{Smale59} and 
to the Gromov $h$-principle~\cite{Gromov86}, that suggest replacing the space of maps avoiding any given singularity with the space of sections
of a jet bundle. In case the singularity condition depends on more than one point, one can consider similar spaces of sections of multijet bundles 
over configuration spaces of points in the source manifold. By doing so one should take into account that  points in configurations can collide
or be forgotten. The manifold calculus  keeps track of all these data in a homotopy invariant way.  The $k$-th approximation 
in this method is obtained by restricting the number of points in configurations to be~$\leq k$. \vspace{5pt}

For $m\leq n$, and any given multisingularity $\SS$, specified by a multijet condition,
see~\cite{Vassiliev92}, consider the space $\Map_\partial^\SS(D^m,D^n)$ of smooth maps $D^m\to D^n$
avoiding $\SS$ and coinciding with the standard inclusion of discs $i\colon D^m\subset D^n$ near the boundary $\partial D^m$. The multisingularity $\SS$ must be {\it closed} in the sense that if $f$ is a limit of 
$\SS$-singular maps, then $f$ should also be $\SS$-singular. Examples of such mapping spaces are
the spaces $\Emb_\partial(D^m,D^n)$ and $\Imm_\partial(D^m,D^n)$ of embeddings and
immersions, respectively. As another example, one can consider the space $\Imm_\partial^{(\ell)}(D^m,D^n)$
of non-$\ell$-equal immersions, $\ell\geq 2$, i.e. immersions $f\colon D^m\looparrowright D^n$
for which any subset of $\ell$ points in $D^m$ has more than one point in the image. One obviously has
$\Imm_\partial^{(2)}(D^m,D^n)=\Emb_\partial(D^m,D^n)$.  Self-tangency would be  another
example of a possible forbidden multi-singularity, or one can consider any  mixed
condition on self-intersection and singularity type at intersection points. \vspace{5pt}

Let $\calO_\partial(D^m)$ be the category of open subsets of $D^m$  containing the boundary $\partial D^m$. For any contravariant functor $F\colon \calO_\partial(D^m)\to\Topo$ to topological 
spaces, that  sends isotopy equivalences to weak homotopy equivalences, the functor calculus assigns a {\it Taylor tower} of polynomial approximations to $F$:
\begin{equation}
\xymatrix{
&F\ar[dl] \ar[d] \ar[dr] \ar[drr]\\
T_0F&T_1F\ar[l]&T_2F\ar[l]&T_3F\ar[l]&\cdots\ar[l]
}
\label{eq_tower}
\end{equation}
We say that the tower~\eqref{eq_tower} {\it  converges to $F$} (or simply {\it converges}) if the natural map
$F\to T_\infty F$ to the limit of the tower is a weak equivalence of functors (by this we mean objectwise weak
equivalence). In practice we are usually concerned with the convergence of the Taylor tower~\eqref{eq_tower}
evaluated on $U=D^m$. \vspace{5pt}

 To study the space $\Map_\partial^\SS(D^m,D^n)$, we consider the functor  $\Map_\partial^\SS(-,D^n)$
 assigning to $U\in\calO_\partial(D^m)$ 
 $$
 U\mapsto \Map_\partial^\SS(U,D^n)
 $$
 the space of $\SS$-non-singular maps $U\to D^n$ coinciding with $i\colon D^m\subset D^n$ near
 $\partial D^m$. One of the main results of the embedding calculus~\cite{Goodwillie15} is that it can be
 always applied when codimension is at least three:
 \begin{equation}\label{eq:emb_conv}
 T_\infty\Emb_\partial(D^m,D^n)\simeq \Emb_\partial(D^m,D^n), \quad n-m\geq 3.
 \end{equation}
 Note that even though the multi-singularity condition is expressed using only two points, we
 still need to go to the limit of the tower, which takes into account configurations of arbitrary large number of points, in order to recover the initial embedding space.\vspace{5pt}
 
Unfortunately, for other types of $\SS$ the question of convergence of the Goodwillie-Weiss tower has not yet been studied. In particular one does not have  any results for non-$\ell$-equal immersions
with $\ell\geq 3$. One should mention that for general mapping spaces, the methods 
of the embedding calculus do not work and the question of convergence  appears to be very hard. 
Besides the Goodwillie-Weiss calculus method, one also has the Vassiliev theory of discriminants that
was used to study such spaces~\cite{Vassiliev92}. However, for embedding spaces the discriminant method
has a more restricted range where it is applicable $n\geq 2m+2$ compared to the calculus approach,
  which works for
the range  $n\geq m+3$. Thus one still anticipates that for general mapping spaces avoiding any given
multisingularity $\SS$, the calculus method works and in particular can be applied in cases where the
discriminant theory cannot. In particular we hope that the delooping result that we produce in this
paper will encourage more
 studies in this direction.

\subsection{Action of the little discs operad and a few more examples of mapping spaces}\label{ss12}

The spaces $\Map_\partial^\SS(D^m,D^n)$  %and their polynomial approximations $T_{\bullet}\Map_\partial^\SS(D^m,D^n)$ 
  are naturally algebras over the little $m$-discs operad $\calB_m$. Recall that an element $b\in\calB_m(k)$ is a configuration of $k$ discs $D^m_i$, $i=1\ldots k$,
with disjoint interiors in the unit disc $D^m$, where each disc is the image of a linear map $L_i\colon D^m\hookrightarrow D^m$,
which is a composition of translation and rescaling. Given such $b$ and $f_i\in \Map_\partial^\SS(D^m,D^n)$, $i=1\ldots k$, the action in question is defined as follows:
$b(f_1,\ldots,f_k)\in \Map_\partial^\SS(D^m,D^n)$ is the map $D^m\to D^n$, which is the standard
inclusion on $D^m\setminus\cup_{i=1}^k D^m_i$, and is $\hat L_i\circ f_i\circ L_i^{-1}$ on $D_i^m$, where
$\hat L_i\colon \R^n\to\R^n$ is the obvious extension of $L_i$, so that it is also a composition of translation and rescaling. \vspace{5pt}

A typical example of a $\calB_m$-algebra is an iterated $m$-loop space. Moreover, by the celebrated 
May-Boardman-Vogt recognition principle, under the condition $\pi_0X$ is a group, any $\calB_m$-algebra $X$ is weakly equivalent to an $m$-loop space $\Omega^mY$~\cite{May72,Boardman73}.  In this paper we give an explicit
$m$-delooping of the tower $T_\bullet \Map_\partial^\SS(D^m,D^n)$. Before doing this, consider several other examples to which our delooping construction applies.\vspace{5pt}

  Define $\Emb_\partial^{fr}(D^m,D^n)$ as the space of framed embeddings $D^m\hookrightarrow D^n$, i.e. embeddings with trivialization of the normal bundle 
standard near $\partial D^m$. Define also $\Embbar_\partial(D^m,D^n)$ as the homotopy fiber of the
inclusion $\Emb_\partial(D^m,D^n)\hookrightarrow\Imm_\partial(D^m,D^n)$ over $i\colon D^m\subset D^n$. As another example
we consider $\Embbar_\partial(D^m,D^n)^\Q$ -- rationalization of $\Embbar_\partial(D^m,D^n)$. The common feature of these three examples $\Emb_\partial^{fr}(D^m,D^n)$, $\Embbar_\partial(D^m,D^n)$,
$\Embbar_\partial(D^m,D^n)^\Q$ is that they are all algebras over $\calB_{m+1}$. For the 
first example, this has been shown by R.~Budney~\cite{Budney07}. For the second one, 
see~\cite{Sakai14,Turchin10}. The third example was considered in~\cite{FresseTWbig}:
as a consequence of the above, it is an algebra over $\calB_{m+1}^\Q$ and thus over $\calB_{m+1}$ by restriction.  The idea of this  little discs action in one higher dimension is that embeddings can be shrunken 
and pulled one through another as in the proof of the commutativity of 
 the monoid $\pi_0 \Emb_\partial(D^1,D^3)$ of isotopy classes of classical long knots. 

Finally, define $\Immbar{}_\partial^{(\ell)}(D^m,D^n)$ as the homotopy fiber of the
inclusion $\Imm_\partial^{(\ell)}(D^m,D^n)\hookrightarrow\Imm_\partial(D^m,D^n)$ over $i\colon D^m\subset D^n$. When $\ell\geq 3$, this space is not a $\calB_{m+1}$-algebra as pulling 
non-$\ell$-equal immersions one through another might create self-intersections of higher degree, but it is still a 
$\calB_m$-algebra. The pair of spaces $(\Immbar_\partial^{(\ell)}(D^m,D^n),\Embbar_\partial(D^m,D^n))$ is actually an algebra over the {\it extended Swiss cheese operad}
$\mathcal{ESC}_{m,m+1}$ considered in~\cite{Willwacher17}. Informally this means that both non-$\ell$-equal
immersions can be shrunken and pulled through embeddings and embeddings can be shrunken and pulled
through immersions. It would be interesting to find a relative delooping of this pair of spaces that would account to this extended Swiss cheese action. The relative delooping with respect to the action of the usual Swiss cheese operad~\cite{Voronov99} has been obtained (modulo convergence of the tower and our Main Theorem~\ref{th:delooping1}) by the first author in~\cite{Ducoulombier16}. \vspace{5pt}

Recent developments in the manifold calculus of functors allows one to express the tower~\eqref{eq_tower}
in terms of derived mapping spaces of (truncated) infinitesimal bimodules over 
$\calB_m$~\cite{Arone14,Turchin13}. The main result of this paper -- we show that in certain cases,
as in all examples  above, these towers admit an explicit $m$-th or $(m+1)$-th delooping in terms 
of derived mapping spaces of (truncated) bimodules over $\calB_m$ or of (truncated) operads.
This approach will translate many difficult geometrical problems to a not necessarily easy, but definitely
interesting 
algebraic framework. Also the obtained deloopings  are more highly connected than the initial spaces, which will
allow the use of   rational homotopy theory to study them. As a particular example, the delooping
of $\Embbar_\partial(D^m,D^n)$, $n-m\geq 3$, produced earlier by Boavida de Brito and 
Weiss in~\cite{Weiss15} and the deloopings of $T_\infty\Embbar_\partial(D^m,D^n)^\Q$ 
and  of $T_k\Embbar_\partial(D^m,D^n)^\Q$, that follow from our work, see~\eqref{eq:Qdeloop}-\eqref{eq:Qdeloop2}, were recently used in~\cite{FresseTWbig} to  produce
a complete rational understanding of the spaces $\Embbar_\partial(D^m,D^n)$, $n-m\geq 3$, and
 the towers $T_k\Embbar_\partial(D^m,D^n)$, $n-m\geq 2$.\vspace{5pt}

In case the tower~\eqref{eq_tower} does not converge, one can still consider the induced 
map $\pi_0F(D^m)\to \pi_0T_kF(D^m)$. According to our delooping result for
 $F=\Map_\partial^\SS(-,D^n)$ (or any other functor considered in this subsection), this map
 produces an invariant of isotopy classes of $\SS$-non-singular maps that takes values in
 an  (abelian) group. Such invariants can be of interest. For example, in the classical case of knots in a 
 three-dimensional space $\Emb_\partial(D^1,D^3)$, this map
 was shown to be an integral additive Vassiliev invariant of order $\leq k-1$, and it is conjectured 
 to be the universal one of this type~\cite{Budney17}.

\subsection{(Truncated) operads, bimodules, infinitesimal bimodules}\label{ss13}

In this subsection we recall some standard definitions from the theory of operads and fix notation. 
This will be necessary to formulate our main results. 

\subsubsection{$\Sigma$-sequences, operads, bimodules, infinitesimal bimodules.}\label{ss131}
By a {\it $\Sigma$-sequence} we mean a family of topological spaces $M=\{M(n),\, n\geq 0\}$, endowed
with a right action of the symmetric group $M(n)\times\Sigma_n\to M(n)$.  We denote the category of $\Sigma$-sequences by $\Sigma\Seq$. This category is endowed with a monoidal structure 
$(\Sigma\Seq,\circ,\mathbb{1})$, where $\circ$ is the composition product~\cite{Fresse09},
and $$\mathbb{1}(n)=\begin{cases} *,& n=1;\\ \emptyset,& n\neq 1.\end{cases}$$  An {\it operad} is a monoid
with respect to this structure. Explicitly, the structure of an operad is determined by the operadic compositions:
\begin{equation}\label{A2}
\circ_{i}:O(n)\times O(m)\longrightarrow O(n+m-1),\hspace{15pt} \text{with }1\leq i\leq n, 
\end{equation}
and the unit element $\ast_1\in O(1)$, satisfying compatibility with the action of the symmetric group, associativity, commutativity and unit axioms. A map between two operads should respect the operadic compositions.  We denote by $\Operad$ the categories of operads. 

\begin{expl}\label{ex:operad1}
Our two main examples of  operads are the little discs operad $\calB_m$
and the Fulton-MacPherson operad $\calF_m$, which is equivalent to $\calB_m$, see~\cite{Salvatore99}.
We assume that the reader is familiar with these two examples. Main properties of $\calF_m$ are recalled
at the beginning of Section~\ref{s:FM}.
\end{expl}

\begin{expl}\label{ex:operad2}
We can also consider rationalizations $\calB_n^\Q$, $\calF_n^\Q$ of $\calB_n$ and $\calF_n$, 
respectively, see~\cite{FresseTWbig}. One has natural maps $\calB_n\to \calB_n^\Q$,
$\calF_n\to\calF_n^\Q$.
\end{expl}

\begin{expl}\label{ex:operad3}
Framed little discs operad $\calB_n^{fr}$ and framed Fulton-MacPherson operad $\calF_n^{fr}$~\cite{Salvatore99}.
One has natural inclusions $\calB_n\to\calB_n^{fr}$, $\calF_n\to \calF_n^{fr}$. 
\end{expl}

A {\it left module}, {\it right module}, and {\it bimodule over an operad $O$} is a symmetric sequence $M$ endowed with the 
structure of a left module, right module, or bimodule, respectively, over the monoid ($=$ operad) $O$.  Explicitly, the
structure of a bimodule is given by a family of maps
\begin{equation}\label{A3}
\begin{array}{llr}\vspace{7pt}
\gamma_{r}: & M(n)\times O(m_{1})\times \cdots\times O(m_{n})  \longrightarrow  M(m_{1}+\cdots + m_{n}), & \text{right action},\\ \vspace{7pt}
\gamma_{l}: & O(n)\times M(m_{1})\times \cdots\times M(m_{n})  \longrightarrow  M(m_{1}+\cdots + m_{n}),& \text{left action},
\end{array}
\end{equation}
satisfying compatibility with the action of the symmetric group, associativity and unity axioms (see \cite{Arone14,Fresse09}). In particular, the spaces $O(0)$ and $M(0)$ are $O$-algebras and there is a map of algebras $\gamma_{0}:O(0)\rightarrow M(0)$. A map between $O$-bimodules should respect these operations. We denote by $\Bimod_{O}$ the category of $O$-bimodules. Thanks to the unit in $O(1)$, the right action can equivalently be defined by a family of continuous maps
$$
\circ^{i}:M(n)\times O(m)\longrightarrow M(n+m-1),\hspace{15pt}\text{with }1\leq i\leq n.
$$
For the rest of the paper, we also use the following notation: 
$$
\begin{array}{ll}\vspace{7pt}
x\circ^{i}y=\circ^{i}(x\,;\,y) & \text{for } x\in M(n) \text{ and } y\in O(m),  \\ 
x(y_{1},\ldots,y_{n})=\gamma_{l}(x\,;\,y_{1}\,;\ldots;\,y_{n}) & \text{for } x\in O(n) \,\,\text{ and } y_{i}\in M(m_{i}).
\end{array} 
$$

\begin{expl}\label{ex:bimod1}
Given a map of operads $O\to P$, the target operad $P$ becomes a bimodule over $O$. For example, one has inclusions of operads $\calB_m\to \calB_n$ and $\calF_m\to\calF_n$, which
are induced by the coordinate inclusion $\R^m\subset\R^n$, $n\geq m$. Composing these maps with those from 
Examples~\ref{ex:operad2} and~\ref{ex:operad3}, we get operad maps $\calB_m\to\calB_n^\Q$,
$\calB_m\to\calB_n^{fr}$, and $\calF_m\to\calF_n^\Q$, $\calF_m\to\calF_n^{fr}$, $n\geq m$. As a consequence for $n\geq m$, the sequences $\calB_n$, $\calB_n^\Q$, $\calB_n^{fr}$ are bimodules
over $\calB_m$ and $\calF_n$, $\calF_n^\Q$, $\calF_n^{fr}$ are bimodules over $\calF_m$. 
\end{expl}

\begin{expl}\label{ex:bimod2}
For $n\geq 1$, $\ell\geq 2$, consider the sequences of spaces $\{\calB_n^{(\ell)}(j),\, j\geq 0\}$,
where $\calB_n^{(\ell)}(j)$ is the configuration space of $j$ discs in a unit disc $D^n$, defined as images of maps $L_i\colon D^n\to D^n$, each one being a composition of translation and rescaling, satisfying the 
{\it non-$\ell$-overlapping condition}:  no $\ell$ of them share a point in their interiors.  One obviously has
$\calB_n^{(2)}=\calB_n$.
The sequence
$\calB_n^{(\ell)}$ is a bimodule over $\calB_n$, see~\cite{Turchin14}, and thus a bimodule over $\calB_m$ 
by restriction. It  is called {\it bimodule of non-$\ell$-overlapping discs}. 
\end{expl}

\begin{expl}\label{ex:bimod3}
For any $m\leq n$ and any multisingularity $\SS$, the sequence
\[
\Map^\SS\left(\sqcup_\bullet D^m,D^n\right) =
\left\{ \Map^\SS\left(\sqcup_j D^m,D^n\right), \, j\geq 0\right\}
\]
is a bimodule over $\calB_m$ by taking pre- and post-composition.
Here $\Map^\SS\left(\sqcup_j D^m,D^n\right)$ denotes the space of smooth $\SS$-non-singular
maps $\sqcup_j D^m\to D^n$.  In fact it is
a $\calB_n$-$\calB_m$ bimodule, i.e. it has a left action of $\calB_n$ and a right action of $\calB_m$, 
which commute with each other. But we are interested only in its $\calB_m$-bimodule restriction.
\end{expl}

Finally, recall the notion of {\it infinitesimal bimodule}, which is less standard.  
To the best of our knowledge it appeared first in~\cite{Merkulov09} with this name, see also~\cite{Arone14}. In
the literature it is sometimes called {\it weak}, {\it abelian}, or {\it linear bimodule}~\cite{Dwyer12,Tourtchine10}. 
An \textit{infinitesimal bimodule} over $O$, or \textit{$O$-Ibimodule}, is a  sequence $N\in \Sigma\Seq$ endowed with operations 
\begin{equation}\label{C3}
\begin{array}{llr}\vspace{7pt}
\circ_{i}:O(n)\times N(m) 
\rightarrow N(n+m-1) & \text{for } 1\leq i\leq n,  & \text{infinitesimal left action,}\\ 
\circ^{i}:N(m)\times O(n)
\rightarrow N(n+m-1) & \text{for } 1\leq i\leq n, & \text{infinitesimal right action,}
\end{array} 
\end{equation}
satisfying unit, associativity, commutativity and compatibility with the symmetric group axioms, see \cite{Arone14}. We denote by $\Ibimod_{O}$ the category of infinitesimal bimodules. The infinitesimal right
action is equivalent to the usual right action, but it is not the case for the left action. In fact  
the existence of a left action does not imply the existence of  an infinitesimal left action, nor does the 
existence of  an infinitesimal left action imply the existence of a left action. 

\begin{expl}\label{ex:ibimod1}
Given a map of operads $\eta\colon O\to P$, the sequence $P$ inherits a structure of an infinitesimal bimodule over $O$: for $x\in O$, $y\in P$, one defines $x\circ_i y := \eta(x)\circ_i y$ and $y\circ^i x :=
y\circ_i \eta(x)$. For example, $\calB_n$, $\calB_n^\Q$, $\calB_n^{fr}$ are infinitesimal bimodules over $\calB_m$ and $\calF_n$, $\calF_n^\Q$, $\calF_n^{fr}$ are infinitesimal bimodules over $\calF_m$.
\end{expl}

\begin{expl}\label{ex:ibimod2}
As we mentioned earlier,  the structure of a bimodule and that of an infinitesimal bimodule do not imply
one another. However, if $M$ is a bimodule over an operad $O$  and one has a map of $O$-bimodules $\eta:O\rightarrow M$, then $M$ is also an infinitesimal bimodule over $O$. Since the right operations and the right infinitesimal operations are the same, we just need to define the left infinitesimal operations:
$$
\begin{array}{rcl}\vspace{4pt}
\circ_{i}:O(n)\times M(m) & \longrightarrow & M(n+m-1); \\ 
(\,x\,;\,y) & \longmapsto & \gamma_{l}(\,x\,;\,\underset{i-1}{\underbrace{\eta(\ast_{1}),\ldots,\eta(\ast_{1})}},y,\underset{n-i}{\underbrace{\eta(\ast_{1}),\ldots,\eta(\ast_{1})}}\,).
\end{array} 
$$ %\vspace{-30pt}
For example, one has the obvious inclusion $\calB_n\to\calB_n^{(\ell)}$, making $\calB_n^{(\ell)}$  into an infinitesimal bimodule over $\calB_n$ and also over $\calB_m$ by restriction. As another example,
one also has the inclusion $\calB_m\to \Map^\SS\left(\sqcup_\bullet D^m,D^n\right)$, making
$\Map^\SS\left(\sqcup_\bullet D^m,D^n\right)$ into  a  $\calB_m$-Ibimodule.
\end{expl}

Note  that Example~\ref{ex:ibimod1} is a particular case of Example~\ref{ex:ibimod2}.

\begin{expl}\label{ex:ibimod3}
For $m\leq n$, a multisingularity $\SS$, and a compact subset $K\subset D^n$ in the interior of the unit
 disc, define the space $\Map^\SS\left(\left(\sqcup_j D^m\right)\sqcup K,D^n\right)$ of maps
 \[
 f\colon \left(\sqcup_j D^m\right)\sqcup K\to D^n,
 \]
 such that $f|_{\sqcup_j D^m}$ is smooth $\SS$-non-singular, $f|_K$ is a composition of translation and rescaling and $f(K)$ lies in the interior of $D^n$ and is disjoint from $f(\sqcup_j D^m)$. Then 
 the sequence of spaces
 \[
 \Map^\SS\left(\left(\sqcup_\bullet D^m\right)\sqcup K,D^n\right) =\left\{\Map^\SS\left(\left(\sqcup_j D^m\right)\sqcup K,D^n\right),\, j\geq 0\right\}
 \]
 has a structure of a $\calB_m$-Ibimodule defined similarly by pre- and post-composition. 
 One should also notice that contrary to the previous examples, this sequence is not a $\calB_m$-bimodule.
 \end{expl}

\subsubsection{Truncated objects.}\label{sss132}
For any $k\geq 0$, we also consider the categories $\TT_k\Sigma\Seq$, $\TT_k\Operad$, $\TT_k\Bimod_O$,
$\TT_k\Ibimod$ of {\it $k$-truncated sequences, $k$-truncated operads, $k$-truncated bimodules,}
and {\it $k$-truncated infinitesimal bimodules}, respectively. A {\it $k$-truncated object} is a finite
sequence of spaces $\{M(j),\, 0\leq j\leq k\}$ with all the corresponding operations: $\Sigma$-action, unit $*_1$, compositions~\eqref{A2}, \eqref{A3}, \eqref{C3}, in the range where applicable, and satisfying the same compatibility axioms: associativity, commutativity, unity, and $\Sigma$-compatibility.  For example,
for $k$-truncated operads, we require $\Sigma$-action, unit $*_1\in O(1)$, and compositions~\eqref{A2}
with $n\leq k$, $m\leq k$, $n+m-1\leq k$. For $k$-truncated bimodules, besides the $\Sigma$-action,
we require right action~\eqref{A3} with $n\leq k$, $m_1+\ldots +m_n\leq k$, and left action~\eqref{A3}
with $m_1+\ldots+m_n\leq k$. For $k$-truncated Ibimodules, besides the $\Sigma$-action we 
require compositions~\eqref{C3} with $m\leq k$ and $n+m-1\leq k$. In particular, infinitesimal left
compositions $\circ_i\colon O(n)\times N(m)\to N(n+m-1)$ with $m=0$ and $n=k+1$ are allowed.\vspace{5pt}

One has obvious truncation functors, that abusing notation we always denote by $\TT_k$:
$$
\begin{array}{ll}\vspace{5pt}
\TT_k\colon \Sigma\Seq\to \TT_k\Sigma\Seq, & \TT_k\colon\Operad\to \TT_k\Operad, \\ 
\TT_k\colon\Bimod_O\to \TT_k\Bimod_O, &  \TT_k\colon \Ibimod_O\to \TT_k\Ibimod_O.
\end{array} 
$$

 \subsection{Taylor tower on a closed disc and the little discs operad: 
 Main Theorem~\ref{th:delooping1}}\label{ss14}
 Recent developments in the manifold calculus allows one to describe the Taylor tower
 in terms of derived mapping spaces of truncated right modules or infinitesimal bimodules~\cite{Arone14,
 Boavida13,Turchin13}. In case of the closed disc and a contravariant $F\colon\calO_\partial(D^m)\to\Topo$
 of certain \lq\lq{}context-free\rq\rq{} nature, one has
 \begin{equation}\label{eq:tower_ibimod}
\begin{array}{rcl}\vspace{4pt} 
T_\infty F(D^m) & \cong & \Ibimod_{\calB_m}^h(\calB_m,Ib(F));\\
T_kF(D^m) & \cong & \TT_k\Ibimod_{\calB_m}^h(\TT_k\calB_m,\TT_kIb(F)),\, k\geq 0,
\end{array} 
\end{equation}
 where $\Ibimod_{\calB_m}^h(-,-)$ and $\TT_k\Ibimod_{\calB_m}^h(-,-)$ denote the derived mapping spaces of infinitesimal bimodules and of $k$-truncated infinitesimal bimodules, respectively; $Ib(F)$ is a 
 $\calB_m$-Ibimodule naturally assigned to $F$. The table below relates the functors considered in
 Subsections~\ref{ss11}-\ref{ss12} and the corresponding to them Ibimodules described in Subsection~\ref{ss13}. 
 \begin{center}
 %\footnotesize
 %\scriptsize
 \scalebox{.88}{\begin{tabular}{c|c|c|c|c|c|c}
$F$&$\Embbar_\partial(-,D^n)$&$\Embbar_\partial(-,D^n)^\Q$&$\Emb_\partial^{fr}(-,D^n)$&$\Immbar{}_\partial^{(\ell)}(-,D^n)$&$\Map_\partial^\SS(-,D^n)$&$\Map_\partial^\SS(-,D^n\setminus K)$\\
\hline
$Ib(F)$&$\calB_n$&$\calB_n^\Q$&$\calB_n^{fr}$&$\calB_n^{(\ell)} $&$\Map^\SS\left(\sqcup_\bullet D^m, D^n\right)$&$\Map^\SS\left(\left(\sqcup_\bullet D^m\right)\sqcup K, D^n\right)$\\
\end{tabular}}
\end{center}
\vspace{.2cm}

In particular, for $n-m\geq 3$ the convergence of the tower~\eqref{eq:emb_conv} together
with~\eqref{eq:tower_ibimod} allows one to describe the spaces $\Emb_\partial(D^m,D^n)$, $\Embbar_\partial(D^m,D^n)$,
and $\Emb_\partial^{fr}(D^m,D^n)$ as spaces of derived maps of 
infinitesimal bimodules. For example, one has
\begin{equation}\label{eq:embbar_infbim}
\Embbar_\partial(D^m,D^n)\simeq \Ibimod_{\calB_m}^h(\calB_m,\calB_n),\hspace{15pt} \text{with } n-m\geq 3.
\end{equation}

This equivalence is a generalization of
Sinha\rq{}s cosimplicial model~\cite{Sinha06}  for the space $\Embbar_\partial(D^1,D^n)$ of knots, with $n\geq 4$. Indeed, in the latter case $\calB_1$ can be replaced by the associative operad. 
The right-hand side of~\eqref{eq:embbar_infbim}
becomes the homotopy totalization of a cosimplicial object as an infinitesimal bimodule
over the associative operad is the same thing as a cosimplicial object.\vspace{5pt}

In fact  equivalence~\eqref{eq:tower_ibimod} was proved in~\cite{Arone14} only for $F=
\Embbar_\partial(-,D^n)$, but the argument works for all functors of context-free nature as all the 
examples above.
Note that for all the functors $F$ from the table above except the last one, the space $F(D^m)$ is a $\calB_m$-algebra,
see Subsection~\ref{ss12}. Note also that all the corresponding infinitesimal bimodules $Ib(F)$ appear
as $\calB_m$-bimodules endowed with a map from $\calB_m$, see Example~\ref{ex:ibimod2}. Moreover,
the first three ones are operads, see Example~\ref{ex:ibimod1}, and exactly for the first three functors $F(D^m)$
is a $\calB_{m+1}$-algebra. As our main result, the theorem below, shows that this connection is not random. 

\begin{mainthm}\label{th:delooping1}
Let $\eta\colon\calB_m\to M$ be a morphism of $\calB_m$-bimodules, and also assume
that $M(0)=*$. Then one has an equivalence of towers:
\begin{equation}\label{eq:deloopingBm1}
\TT_k\Ibimod_{\calB_m}^h\left(\TT_k\calB_m,\TT_kM\right)\simeq 
\Omega^m \TT_k\Bimod_{\calB_m}^h\left(\TT_k\calB_m,\TT_kM\right),\, k\geq 0,
\end{equation}
implying at the limit $k=\infty$:
\begin{equation}\label{eq:deloopingBm2}
\Ibimod_{\calB_m}^h\left(\calB_m,M\right)\simeq 
\Omega^m \Bimod_{\calB_m}^h\left(\calB_m,M\right).
\end{equation}
\end{mainthm}
Here $\Bimod^h(-,-)$ and $\TT_k\Bimod^h(-,-)$ denote the derived mapping spaces of bimodules and of $k$-truncated bimodules, respectively. The basepoint for the loop spaces above is $\TT_k\eta$ and $\eta$, respectively. By a {\it tower} we mean
a sequence of spaces $X_\bullet$ together with morphisms
\[
X_0\leftarrow X_1\leftarrow X_2\leftarrow X_3\leftarrow\cdots,
\]
which are usually assumed to be fibrations. A morphism $X_\bullet\to Y_\bullet$ of towers is a sequence
of maps $X_k\to Y_k$ which make all the corresponding squares commute. Two towers are said to be equivalent if there is a zigzag of morphisms between them, which are all objectwise weak homotopy equivalences.\vspace{5pt}

Main Theorem~\ref{th:delooping1} has the following immediate corollary.

\begin{thm}\label{th:delooping2}
Let $\eta\colon \calB_m\to P$ be a map of operads, where $P(0)=*$ and $P(1)\simeq *$. Then
one has an equivalence of towers:
\begin{equation}\label{eq:deloopingBm3}
\TT_k\Ibimod_{\calB_m}^h\left(\TT_k\calB_m,\TT_k P\right)\simeq 
\Omega^{m+1} \TT_k\Operad^h\left(\TT_k\calB_m,\TT_k P\right),\, k\geq 1,
\end{equation}
implying at the limit $k=\infty$:
\begin{equation}\label{eq:deloopingBm4}
\Ibimod_{\calB_m}^h\left(\calB_m,P\right)\simeq 
\Omega^{m+1} \Operad^h\left(\calB_m,P\right).
\end{equation}
\end{thm}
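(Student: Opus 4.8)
The plan is to derive Theorem~\ref{th:delooping2} from Main Theorem~\ref{th:delooping1} by analyzing the relation between mapping spaces of bimodules and of operads under the extra hypotheses $P(0)=*$ and $P(1)\simeq *$. The starting point is the observation that a map of operads $\eta\colon\calB_m\to P$ in particular makes $P$ into a $\calB_m$-bimodule endowed with a morphism of $\calB_m$-bimodules from $\calB_m$ (Example~\ref{ex:bimod1} together with the operad map structure), so Main Theorem~\ref{th:delooping1} applies verbatim and yields
\[
\Ibimod_{\calB_m}^h\left(\calB_m,P\right)\simeq \Omega^m\,\Bimod_{\calB_m}^h\left(\calB_m,P\right),
\]
and likewise in the truncated setting. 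Thus it suffices to produce one further loop, i.e. to show
\begin{equation}\label{eq:plan_extra_loop}
\Bimod_{\calB_m}^h\left(\calB_m,P\right)\simeq \Omega\,\Operad^h\left(\calB_m,P\right),
\end{equation}
together with its $k$-truncated analogue (valid for $k\geq 1$, since the case $k=0$ degenerates), and to check that these equivalences are compatible as $k$ varies so as to assemble into an equivalence of towers.

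First I would set up the comparison of the two derived mapping spaces via a forgetful/fiber-sequence argument. There is a forgetful functor $\Operad\to\Bimod_{\calB_m}$ (an operad under $\calB_m$ is in particular a $\calB_m$-bimodule under $\calB_m$), inducing a map on derived mapping spaces $\Operad^h(\calB_m,P)\to\Bimod_{\calB_m}^h(\calB_m,P)$. The key is to identify the homotopy fiber of this map, over the basepoint $\eta$, with $\Bimod_{\calB_m}^h(\calB_m,P)$ itself — this is what would give the looping~\eqref{eq:plan_extra_loop} once one knows the total space $\Operad^h(\calB_m,P)$ has the right connectivity, namely that it is (at least) $1$-connected at $\eta$ so that the fibration sequence $\mathrm{fib}\to\Operad^h\to\Bimod^h$ forces $\mathrm{fib}\simeq\Omega\,\Operad^h$ as soon as $\mathrm{fib}\simeq\Bimod^h$ is shown to be (at least) connected — which it is, again by Main Theorem~\ref{th:delooping1} writing it as $\Omega^m$ of something. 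The natural way to see the fiber is to use an explicit cofibrant resolution of $\calB_m$ as an operad — e.g. the Boardman--Vogt $W$-resolution — and to observe that the extra data in an operad map, beyond a bimodule map, is precisely the compatibility with the ``vertical'' operadic compositions, which under the resolution are parametrized by an extra interval coordinate; the hypotheses $P(0)=*$ (used already in Main Theorem~\ref{th:delooping1}) and $P(1)\simeq *$ are exactly what is needed to contract the potential obstructions coming from arities $0$ and $1$, so that the space of ``operad structures extending a given bimodule map'' is a based loop space on the space of bimodule maps.

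The cleanest route, and the one I would actually try to write, avoids reproving a delooping from scratch: apply Main Theorem~\ref{th:delooping1} \emph{twice}, once to $\calB_m\to P$ viewed over $\calB_m$ and, after restricting along $\calB_{m-1}\to\calB_m$, regard $P$ as a $\calB_{m-1}$-bimodule under $\calB_{m-1}$? No — that changes the operad. Instead the honest extra input is a \emph{stabilization} statement: there is an equivalence
\[
\Bimod_{\calB_m}^h(\calB_m,P)\simeq \Omega\,\Operad^h(\calB_m,P)
\]
coming from the fact that the forgetful map from operads under $\calB_m$ (with $P(0)=*$, $P(1)\simeq *$) to bimodules under $\calB_m$ is, after deriving, the inclusion of a ``one-fold delooping'' — this is analogous to, and can be proved by the same model-categorical machinery (explicit cofibrant replacements, analysis of mapping spaces arity by arity via a tower whose layers are mapping spaces out of the generating cofibrations) as is used for Main Theorem~\ref{th:delooping1} itself. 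I would state this as a lemma, prove it by the fiber-sequence argument of the previous paragraph, and then concatenate: combining it with~\eqref{eq:deloopingBm2} gives~\eqref{eq:deloopingBm4}, and combining its $k$-truncated version (where one must take $k\geq 1$ for $\TT_k\Operad^h$ to see any operadic composition at all, which is why the range shifts from $k\geq 0$ to $k\geq 1$) with~\eqref{eq:deloopingBm1} gives~\eqref{eq:deloopingBm3}; the compatibility with the tower maps is inherited from that in Main Theorem~\ref{th:delooping1} and from the naturality of the forgetful functors in $k$.

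The main obstacle I anticipate is precisely the lemma~\eqref{eq:plan_extra_loop}: identifying the homotopy fiber of $\Operad^h(\calB_m,P)\to\Bimod_{\calB_m}^h(\calB_m,P)$ with $\Bimod_{\calB_m}^h(\calB_m,P)$ requires real work with an explicit model — one must show that, over a fixed bimodule map, the space of ways to promote it to an operad map is (naturally, and compatibly with truncation) equivalent to a based loop space on the bimodule mapping space, and this is where the hypothesis $P(1)\simeq*$ (absent from the Main Theorem) gets used in an essential way to kill the contribution of the unit/arity-one operadic composition. Everything else — the double application of the Main Theorem, the bookkeeping of the loop-space basepoints ($\TT_k\eta$, resp.\ $\eta$), and the assembly into a morphism of towers — is formal once that lemma is in place.
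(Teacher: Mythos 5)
Your overall decomposition — apply Main Theorem~\ref{th:delooping1} to obtain the $m$-fold delooping through $\Bimod_{\calB_m}^h$, and then stack on one more loop via a comparison between bimodule and operad mapping spaces — is exactly the paper's strategy. The paper, however, does not prove the one-extra-loop lemma; it simply cites a prior result (from \cite{Ducoulombier17}, with the non-$\Sigma$, $k=\infty$ case going back to \cite{Dwyer12}) stating that for any map of operads $O\to P$ with $P(1)\simeq *$, one has an equivalence of towers
\[
\TT_k\Bimod_O^h(\TT_kO,\TT_kP)\simeq \Omega\,\TT_k\Operad^h(\TT_kO,\TT_kP),\quad k\geq 1.
\]
So the proof in the paper is genuinely a one-liner: compose~\eqref{eq:deloopingBm1}--\eqref{eq:deloopingBm2} with the quoted result. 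You identified the lemma correctly, along with the role of $P(1)\simeq *$ and the reason for the range shift to $k\geq 1$.

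Where your write-up goes wrong is in the sketched \emph{proof} of that extra-loop lemma. You propose to look at the forgetful map $\Operad^h(\calB_m,P)\to\Bimod_{\calB_m}^h(\calB_m,P)$, identify its homotopy fiber with $\Bimod_{\calB_m}^h(\calB_m,P)$, and then invoke connectivity of $\Operad^h$ to conclude $\mathrm{fib}\simeq\Omega\,\Operad^h$. This deduction does not work as stated. For a fibration $F\to E\to B$, knowing $F\simeq B$ and that $E$ is $1$-connected gives no general reason for $F\simeq\Omega E$: the trivial fibration $B\to B\times B\to B$ (with $B$ simply connected) has $F\simeq B$ but $\Omega(B\times B)\simeq\Omega B\times\Omega B\not\simeq B$. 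What you need is the opposite shape of argument: exhibit $\Bimod^h$ as the fiber of a map from a \emph{contractible} space to $\Operad^h$, or produce a direct model-level computation. That is precisely what Ducoulombier and Dwyer--Hess do, via an explicit cofibrant replacement of $O$ as an operad and an arity-by-arity analysis of the mapping spaces, with $P(1)\simeq *$ used to contract the arity-one layer. If you want your proof to be self-contained, you would have to reproduce that computation; as written, the fiber-sequence step is not correct. If instead you are content to cite the bimodule-vs-operad delooping as a black box (as the paper does), then your argument reduces exactly to the paper's and is fine.
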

As before for the basepoint in the loop spaces one takes $\TT_k\eta$ and $\eta$, respectively.
This theorem follows from Main Theorem~\ref{th:delooping1} and also the fact that for any map of 
topological operads $O\to P$, with $P(1)\simeq *$, one has an equivalence of towers
\[
\TT_k\Bimod_O^h(\TT_kO,\TT_kP)\simeq \Omega \TT_k\Operad^h(\TT_kO,\TT_kP),
\]
see~\cite{Ducoulombier17}. In the case of non-$\Sigma$ operads and $k=\infty$, this has been
shown earlier by Dwyer and Hess~\cite{Dwyer12}.\vspace{5pt}

For $m=1$, Theorems~\ref{th:delooping1} 
and~\ref{th:delooping2} were proved earlier by Dwyer-Hess~\cite{Dwyer12} and the second author~\cite{Tourtchine10}. To be precise this was proved for the associative operad,
which is equivalent to~$\calB_1$. Recently another proof appeared in~\cite{BataninDL17}.
This result is sometimes referred as the {\it concrete} topological version of Deligne\rq{}s Hochschild
cohomology conjecture. For  Deligne\rq{}s conjecture, now theorem, and its topological version 
see~\cite{McClure04.2,McClure04,Voronov00} and references in within. \vspace{5pt}

Several years ago Dwyer and Hess announced that they proved~\eqref{eq:deloopingBm2} and~\eqref{eq:deloopingBm4}. Their approach used the fact that $\calB_m$
is a homotopy Boardmann-Vogt tensor product of $m$ copies of $\calB_1$~\cite{Fiedorowicz15}, which allowed them to peel off the $m$ deloppings one after another. %Unfortunately, it seems like they  lost a little bit interest in this problem. 
 As we explain in the introduction, this delooping result has many applications and is
 centrally important for the manifold functor calculus. Having a different proof
 would be helpful in  understanding this connection between algebra and geometry. Also their approach
 could be useful in describing the partial deloopings of the tower.
 We hope that they will fix  technical difficulties  and their proof will finally appear. \vspace{5pt}
 
 In our approach we get deloopings~\eqref{eq:deloopingBm1} and~\eqref{eq:deloopingBm2} in 
one step by taking explicit cofibrant replacements of the source objects which we replace to be 
the Fulton-MacPherson operad $\calF_m$ and its truncations instead of $\calB_m$. An advantage of our approach is that we get
delooping of all the stages of the tower and not only of their limits. \vspace{5pt}

Theorem~\ref{th:delooping2} produces an $(m+1)$-delooping of $T_\bullet\Embbar_\partial(D^m,D^n)$
and $T_\bullet\Embbar_\partial(D^m,D^n)^\Q$. In particular, one has
\begin{gather}\label{eq:del_boavida}
\Embbar_\partial(D^m,D^n)\simeq \Omega^{m+1}\Operad^h(\calB_m,\calB_n), \,\,\,  n\geq m+ 3;\\ 
T_k\Embbar_\partial(D^m,D^n)\simeq \Omega^{m+1}\TT_k\Operad^h(\TT_k\calB_m,\TT_k\calB_n),
\, n\geq m.
\end{gather}
These results were first proved by Boavida de Brita and Weiss~\cite{Weiss15,Weiss15_2}. Their method did not 
use~\eqref{eq:embbar_infbim} and also could not be applied in a general situation and in particular to any other examples 
considered above. Specifically it cannot be used to get the following deloopings that follow from our work:
\begin{gather}\label{eq:Qdeloop}
T_\infty\Embbar_\partial(D^m,D^n)^\Q\simeq \Omega^{m+1}\Operad^h(\calB_m,\calB_n^\Q), \,\,\, n\geq m;\\ 
T_k\Embbar_\partial(D^m,D^n)^\Q\simeq \Omega^{m+1}\TT_k\Operad^h(\TT_k\calB_m,\TT_k\calB_n^\Q).
\, n\geq m,\label{eq:Qdeloop2}
\end{gather}

Nonetheless, Theorem~\ref{th:delooping2}  cannot be applied to produce an $(m+1)$-delooping of 
the space $\Emb_\partial^{fr}(D^m,D^n)$ or of
the polynomial approximations
$T_\bullet\Emb_\partial^{fr}(D^m,D^n)$. Indeed, $\calB_n^{fr}(1)$ is the orthogonal group $O(n)$, which 
is not contractible. In fact it is easy to show that the inclusion $\calB_n\to \calB_n^{fr}$ induces an equivalence of towers
\[
\TT_k\Operad^h(\TT_k\calB_m,\TT_k \calB_n)\simeq \TT_k\Operad^h(\TT_k\calB_m,\TT_k \calB_n^{fr}),\, k\geq 1.
\]
Thus, $\Operad^h(\calB_m,\calB_n^{fr})$ cannot be an $(m+1)$-delooping of $\Emb_\partial^{fr}(D^m,D^n)$, $n-m\geq 3$. It has recently been showen by T.~Willwacher and the authors in~\cite{Ducoulombier18} that
\[
 \Emb_\partial^{fr}(D^m,D^n)\simeq \Omega^{m+1}\big(\, \Operad^h(\calB_m,\calB_n)// SO(n)\,\big), \,\, n\geq m+3;
\]
\[
T_k \Emb_\partial^{fr}(D^m,D^n)\simeq \Omega^{m+1}\big(\, \TT_k\Operad^h(\TT_k\calB_m,\TT_k \calB_n)// SO(n)\,\big).
\]
For other approaches of delooping the  spaces of disc embeddings and related problems,
see also~\cite{Budney12,Moriya16,Mostovoy02,Sakai14}.

\subsection{Generalized delooping results: Main Theorems~\ref{th:main} and~\ref{th:main3}}\label{ss15}

To prove~\eqref{eq:deloopingBm4} we replace the little discs operad by the Fulton-MacPherson operad $\calF_m$, which, if we ignore its arity zero operation, is cofibrant. Each component $\calF_m(k)$ is a manifold
with corners whose interior is the configuration space $C(k,\R^m)$ of $k$ distinct points in $\R^m$ 
quotiented out by translations and rescalings. On the other hand, again ignoring degeneracies, $\calF_m$ 
has a natural cofibrant replacement $\calIF_m$ as $\calF_m$-Ibimodule, whose components $\calIF_m(k)$ are also
manifolds with corners with interior $C(k,\R^m)$, see~\cite{Turchin13} and also Subsection~\ref{ss:IF}. Thus this quotient by translations and rescalings kills exactly
$(m+1)$ degrees of freedom  and suggests the delooping~\eqref{eq:deloopingBm4}.  Similarly, for~\eqref{eq:deloopingBm2},   the spaces
$C(k,\R^m)$ quotiented only by translations must admit a natural compactification $\calBF_m(k)$, so that
the sequence $\calBF_m$ forms a cofibrant replacement of $\calF_m$ as $\calF_m$-bimodule,
see Subsection~\ref{ss:BF}. Now, translations of $\R^m$ have $m$ degrees of freedom, which explains 
that we only get $m$-th delooping in~\eqref{eq:deloopingBm2}. This idea that lost degrees of freedom correspond to deloopings has been successfully put to use by the second author in~\cite{Tourtchine10} for the case 
$m=1$ of the associative operad.  Our work and the techniques that we use are inspired
 from~\cite{Tourtchine10}.  However, instead of these geometrical cofibrant replacements 
 of $\calF_m$ in the categories of bimodules and infinitesimal bimodules, we use combinatorial ones of the Boardman-Vogt type.  We sketch briefly the geometrical approach in Subsection~\ref{ss:FM5}. %If the reader feels 
% tired with  combinatoris, they are strongly encouraged  to look at Subsections~\ref{ss:IF}, \ref{ss:BF}, \ref{ss:FM5}, where this geometrical approach is sketched. 
 A crucial thing is the construction of the
 delooping  map between the towers, which  appears more natural in this geometrical approach.
    The geometrical approach required more 
work, so instead we used the Boardman-Vogt type resolutions which are easier to define. In fact we conjecture that they are homeomorphic to the geometric ones, see Section~\ref{s:FM}.  As an outcome we were  able to prove a more general result,  
%which is the actual main result of our work and
  which implies Main Theorem~\ref{th:delooping1}.

\begin{mainthm}\label{th:main}
Let $O$ be a coherent, $\Sigma$-cofibrant, well-pointed, and weakly doubly reduced ($O(0)=*$, $O(1)\simeq *$)  operad, and let $\eta\colon O\to M$ be an $O$-bimodules map, with $M(0)=*$. 
Then one has an equivalence of towers:
\begin{equation}\label{eq:equiv_general1}
\TT_k\Ibimod_O^h\left(\TT_kO,\TT_kM\right)\simeq \Map_*\left(\Sigma O(2), \TT_k\Bimod_O^h\left(\TT_kO,\TT_kM
\right)\right),\, k\geq 0,
\end{equation}
implying at the limit $k=\infty$:
\begin{equation}\label{eq:equiv_general2}
\Ibimod_O^h\left(O,M\right)\simeq \Map_*\left(\Sigma O(2), \Bimod_O^h\left(O,M
\right)\right).
\end{equation}
\end{mainthm}
Here $\Map_*(-,-)$ denotes the space of pointed maps, and $\TT_k\Bimod_O^h\left(\TT_kO,\TT_kM
\right)$, $\Bimod_O^h\left(O,M
\right)$ are pointed in $\TT_k\eta$ and $\eta$, respectively. The property of being {\it coherent}
%is expressed in terms of a sequence of morphisms of certain homotopy colimits that must be equivalences,
 is expressed in terms of a  certain homotopy recurrence relation on the components of $O$,
see Definition~\ref{d:coherent}.  To have equivalence of towers~\eqref{eq:equiv_general1}  only up to stage~$k$,
it is enough for an operad to be {\it $k$-coherent}, meaning that the recurrence relation 
works only up to arity~$k$, see Definition~\ref{d:coherent}. 

Theorems~\ref{th:coherent},~\ref{th:FM} and Lemma~\ref{l:coherent} imply that any 
operad equivalent to the little discs operad~$\calB_d$, $0\leq d\leq \infty$, is coherent. It would be interesting to find other coherent operads, as so far these are the only 
 examples that we know. In fact we showed that for any weakly doubly reduced, $\Sigma$-cofibrant,
 and well-pointed (but not necessairily coherent) operad $O$, one has a naturally defined map from the
 right-hand side to the left-hand side of~\eqref{eq:equiv_general1}. Then we were able to determine 
 homotopy conditions of {\it coherence} on~$O$ that ensure that this map is a weak equivalence.  It would be interesting to understand
 what this map measures, what is exactly  its nature, and whether there are similar algebraic settings  
 producing analogous maps. \vspace{5pt}
 
 As an attempt to understand better this phenomenon, in the very last Subsection~\ref{ss:last} we formulate
 and sketch a proof of a more general statement -- Main Theorem~\ref{th:main3}, which shed some light
 at least on the algebraic side of this problem. The property of being coherent is really  a condition
 on an infinitesimal bimodule rather than on an operad. In case $N$ is a coherent $O$-Ibimodule
 endowed with a map $N\to O$,  Main Theorem~\ref{th:main3} describes the space
 $\Ibimod_O^h\left(N,M\right)$ similarly to the right-hand side of~\eqref{eq:equiv_general2} as a
 space of based maps from a space $C_N$ depending on $O$ and $N$ to the space $ \Bimod_O^h\left(O,M
\right)$. We do not see any immediate geometrical application of this more general result, but it is
interesting from the algebraic viewpoint. This more general result  must have much more examples since coherent infinitesimal 
bimodules must be easier to construct than coherent operads.
 
%
%The plan of the proof of Main Theorem~\ref{th:main} is as follows. First we prove it in the case $O$ is doubly reduced:
%$O(0)=O(1)=*$, which is really   the heart of our work. Then, using the homotopy theory of operads, 
%bimodules, and infinitesimal bimodules we generalize it to the case when $O(1)$ is contractible,
%but not necessarily a point. We actually believe that the condition $O(0)=M(0)=*$ in the theorem can be released
%to a weaker one $O(0)\simeq M(0)\simeq *$, but it requires more work. We warn the reader that this problem
%appears to be much harder than it seems.

\subsubsection*{Acknowledgements.}
The second author is greatful to G.~Arone, M.~Kontsevich, and P.~Lambrechts, discussions with whom brought him to the problem solved in the paper
(Main Theorem~\ref{th:delooping1} and Theorem~\ref{th:delooping2}). The authors are indebted to
 B.~Fresse for answering numerous questions on the homotopy theory. The authors also thank G.~ Arone,
 T.~Banach, C.~Berger, P.~Boavida de Brito, P.~Gaucher, K.~Hess, D.~Nardin, D.~Sinha, M.~Weiss, and
  D.~Yetter for communication. Finally, the authors acknowledge
University of Paris~13 and Kansas State University for generous support that allowed a one month visit by Ducoulombier to KSU. The first author is partially supported by the grant ERC-2015-StG 678156 GRAPHCPX while the second author is partially supported by
the Simons Foundation
collaboration grant,
award ID: 519474. 

\section{Homotopy theory}\label{s:ht}
In this section we build up the necessary homotopy background. We describe the model structure
for the categories of $\Sigma$ and $\Lambda$-sequences, operads, bimodules, infinitesimal bimodules,
and the truncated versions of all these structures. In fact we  consider two model structures
for these algebraic objects: projective and Reedy. The projective one is obtained by transfering the projective model structure 
from $\Sigma$-sequences and the Reedy model structure is obtained by transfering the Reedy structure from $\Lambda$-sequences. There are a few reasons why we want to use both structures. The projective one is
more common, for example it is the one used in the manifold calculus in particular for equivalence~\eqref{eq:tower_ibimod}. Also in the projective structure the construction of the derived mapping spaces is more explicit: since all objects are fibrant we do not need to worry about the fibrant replacements, which on the contrary are difficult to make explicit in the Reedy structure.\footnote{In fact in~\cite[Section~3.1.1]{FressePC17}  B. Fresse and the authors produce an explicit Reedy fibrant replacement in the categories of  (infinitesimal) bimodules. However, if we compare explicit derived mapping spaces, in the projective structure they still appear to be smaller.} The advantage of the
Reedy structure is that  it simplifies the proof of Main Theorem~\ref{th:main}. Using it makes the proof
more elegant in the case when the operad $O$ is doubly reduced ($O(0)=O(1)=*$). Also the generalization from the strictly to weakly doubly reduced  case ($O(0)=*$, $O(1)\simeq *$) relies on certain homotopy properties of the Reedy model structure  (Theorems~\ref{th:zigzag}, \ref{th:bim_reedy_model}~(iii)) that
we were not able to check for the projective structure. Having both structures available could be handy for a possible future work. A detailed study  of the projective and Reedy model structures for (infinitesimal) bimodules is done in the  work~\cite{FressePC17} by B.~Fresse and the authors.

\subsection{$\Sigma$ and $\Lambda$ sequences}\label{ss:ht1}

Following~\cite{Fresse17}, we denote by $\Lambda$ the category whose objects are finite sets
$\underline{n}=\{1,\ldots,n\}$, $n\geq 0$, and morphisms are injective maps between them. 
The category $\Sigma\subset \Lambda$ is the subcategory of isomorphisms of $\Lambda$. By a 
$\Sigma$-sequence, respectively $\Lambda$-sequence, we understand a functor
$\Sigma^{op}\to\Topo$, respectively $\Lambda^{op}\to \Topo$. The categories of $\Sigma$ and
$\Lambda$-sequences are denoted by $\Sigma\Seq=\Topo^{\Sigma^{op}}$ and $\Lambda\Seq=
\Topo^{\Lambda^{op}}$, respectively. Let $\Sigma_{>0}\subset \Sigma$ and $\Lambda_{>0}
\subset\Lambda$ be the full subcategories of non-empty sets. We similarly define the categories
$\Sigma_{>0}\Seq$, $\Lambda_{>0}\Seq$ of $\Sigma_{>0}$ and $\Lambda_{>0}$-sequences. 
One has obvious adjunctions
\[
(-)_{>0}\colon\Sigma\Seq \leftrightarrows \Sigma_{>0}\Seq\colon(-)_+;
\]
\[
(-)_{>0}\colon\Lambda\Seq \leftrightarrows \Lambda_{>0}\Seq\colon(-)_+;
\]
where $M_{>0}$ is obtained from $M$ by forgetting its arity zero component, and $N_+$ is obtained from $N$ by defining $N_+(0)=*$ and keeping all the other components the same.\vspace{5pt} 

Similarly to Subsection~\ref{sss132} we also consider the categories $\TT_k\Lambda$,
$\TT_k\Sigma$, $\TT_k\Lambda_{>0}$, $\TT_k\Sigma_{>0}$ as full subcategories of $\Sigma$ and 
$\Lambda$ of objects with $\leq k$ elements, and we consider the diagram categories of $k$-truncated sequences $\TT_k\Lambda\Seq$,
$\TT_k\Sigma\Seq$, etc. \vspace{5pt}

The categories $\Sigma\Seq$, $\Sigma_{>0}\Seq$, $\TT_k\Sigma\Seq$, $\TT_k\Sigma_{>0}\Seq$
being diagram categories are endowed with the so called {\it projective model structure}
\cite[Section 11.6]{Hirschhorn03}, \cite[Section II.8.1]{Fresse17}. For this model structure, a map 
$M\to N$ is a weak equivalence, respectively a fibration, if it is an objectwise weak homotopy equivalence,
respectively an objectwise Serre fibration. This model structure is cofibrantly generated, and all the 
objects are fibrant in it.\vspace{5pt}

The categories $\Lambda\Seq$, $\Lambda_{>0}\Seq$, $\TT_k\Lambda\Seq$, $\TT_k\Lambda_{>0}\Seq$
are endowed with the so called {\it Reedy model structure}. The idea is that they are also
diagram categories with the source category of generalized Reedy type~\cite{Berger11},
\cite[Section II.8.3]{Fresse17}. For a (possibly truncated) $\Lambda$-sequence $X$, we denote by 
$\MM(X)$ the (truncated) $\Sigma$-sequence defined as
\begin{equation}\label{eq:match1}
\MM(X)(r)=\lim_{ {u\in Mor_\Lambda(\underline{i},\underline{r})}\atop {i<r} }X(i)
\end{equation}
and called {\it matching object} of $X$. By \cite[Proposition II.8.3.2]{Fresse17}, it can equivalently be defined as
\begin{equation}\label{eq:match2}
\MM(X)(r)=\lim_{ {u\in Mor_{\Lambda^+}(\underline{i},\underline{r})}\atop {i<r} }X(i),
\end{equation}
where $\Lambda^+$ is the subcategory of $\Lambda$ consisting of order-preserving maps.
 Thus~\eqref{eq:match2} is a limit over a so called subcubical diagram (see Subsection~\ref{s:coherent}). For $\Lambda_{>0}$ and
  $\TT_k\Lambda_{>0}$-sequences we slightly modify~\eqref{eq:match1}, \eqref{eq:match2} by excluding 
  $i=0$ in the diagram of the limit. \vspace{5pt}
  
  According to~\cite{Berger11}, \cite[Section II.8.3]{Fresse17}, the categories $\Lambda\Seq$,
   $\Lambda_{>0}\Seq$, $\TT_k\Lambda\Seq$, $\TT_k\Lambda_{>0}\Seq$ are endowed with the cofibantly generated model structure for which weak equivalences are objectwise weak homotopy equivalences,
   fibrations are morphisms $M\to N$ for which any induced map 
   $M(r)\to \MM(M)(r)\times_{\MM(N)(r)} N(r)$ is a Serre fibration in every arity where defined. 
  As shown in \cite[Theorem~II.8.3.20]{Fresse17}, a morphism of $\Lambda$, $\Lambda_{>0}$, $\TT_k\Lambda$, $\TT_k\Lambda_{>0}$ sequences
   is a cofibration if and only if it is a cofibration as a morphism of  $\Sigma$, $\Sigma_{>0}$,
   $\TT_k\Sigma$, $\TT_k\Sigma_{>0}$-sequences, respectively.

   \subsection{Operads}\label{ss:ht2}
   
  Abusing notation we also denote by $\Lambda$ the operad defined as
   \begin{equation}\label{eq:lambda_op}
   \Lambda(r)=\begin{cases} *,& r=0\text{ or }1;\\
   \emptyset,& r\geq 2.
   \end{cases}
   \end{equation}
   One can easily see that a right module over this operad is the same thing as a $\Lambda$-sequence.
   Note that a left $\Lambda$-module is simply a $\Sigma$-sequence $M$ for which $M(0)$ is pointed.\vspace{5pt}
   
   Recall that an operad $O$ is called {\it reduced} if $O(0)=*$.  Since $O$ contains the operad $\Lambda$, 
   any right $O$ module is automatically a $\Lambda$-sequence. Thus  any bimodule or infinitisemal bimodule over $O$ including $O$ itself is also a $\Lambda$-sequence. The reduced operads will also be called $\Lambda$-{\it operads}. We denote by $\Lambda\Operad$ the category of reduced operads. 
   We also use notation $\Sigma\Operad$ for the category of all operads, and $\Sigma_{>0}\Operad$
   for the category of operads whose arity zero component is empty. \vspace{5pt}
   
   One has an adjunction
   \begin{equation}\label{eq:unitar_op}
   \tau\colon\Sigma\Operad\leftrightarrows\Lambda\Operad\colon\iota,
   \end{equation}
   where $\iota$ is the obvious inclusion and $\tau$ is the {\it unitarization} functor~\cite{FresseTWsmall},
   which collapses the arity zero component to a point and takes the other components to the quotient  induced by this collapse.\vspace{5pt}
   
   The category $\Sigma\Operad$ is endowed with the so called {\it projective model structure} transferred
   from $\Sigma\Seq$ along the adjunction
 \begin{equation}\label{eq:sigma_op_adj}
 \calF^\Sigma_{Op}\colon \Sigma\Seq\leftrightarrows\Sigma\Operad\colon \calU^\Sigma,
 \end{equation}
 where $\calU^\Sigma$ is the forgetful functor, and $\calF_{Op}^\Sigma$ is the free functor,
see~\cite{Berger03}. \lq\lq{}Transferred\rq\rq{} means that a morphism $P\to Q$ of operads is a weak equivalence, respectively a fibration, if and only if it is a weak equivalence, respectively a fibration, as a
morphism of $\Sigma$-sequences. \vspace{5pt}

The category $\Lambda\Operad$ is endowed with the so called {\it Reedy model structure}
transferred from $\Lambda_{>0}\Seq$ along the adjunction
 \begin{equation}\label{eq:lambda_op_adj}
 \calF^\Lambda_{Op}\colon \Lambda_{>0}\Seq\leftrightarrows\Lambda\Operad\colon \calU^\Lambda,
 \end{equation}
see \cite[Section II.8.4]{Fresse17}.  An important property of $\calF_{Op}^\Lambda$ is that
$\calF_{Op}^\Lambda(X)_{> 0}=\calF_{Op}^\Sigma(X)$: the free $\Lambda$-operad generated by a 
$\Lambda_{>0}$-sequence $X$ in positive arities is the free operad generated by the $\Sigma_{>0}$-
sequence $X$. Using this fact it has been shown in~\cite[Theorem II.8.4.12]{Fresse17} that a morphism 
$P\to Q$ of $\Lambda$-operads is a cofibration if and only if $P_{>0}\to Q_{>0}$ is a cofibration
of operads
in the projective model structure. \vspace{5pt}

The projective and Reedy model structures on the categories of (reduced) $k$-truncated operads
$\TT_k\Sigma\Operad$ and $\TT_k\Lambda\Operad$ are defined similarly by transferring the model
structure along the corresponding adjunction
\[
 \calF^{\TT_k\Sigma}_{Op}\colon \TT_k\Sigma\Seq\leftrightarrows\TT_k\Sigma\Operad\colon 
 \calU^{\TT_k\Sigma},
 \]
 \[
 \calF^{\TT_k\Lambda}_{Op}\colon \TT_k\Lambda_{>0}\Seq\leftrightarrows\TT_k\Lambda\Operad\colon 
 \calU^{\TT_k\Lambda}.
 \]
 It has been shown in~\cite{FresseTWsmall} that the adjunction~\eqref{eq:unitar_op} and  its truncated
 versions are Quillen. Moreover, for any pair of reduced ($k$-truncated) operads $P$ and $Q$, one
 has
 \begin{equation}\label{eq:der_op_map}
 \Sigma\Operad^h(\iota P,\iota Q)\simeq \Lambda\Operad^h(P,Q),
 \end{equation}
 respectively,
 \begin{equation}\label{eq:der_tr_op_map}
 \TT_k\Sigma\Operad^h(\iota P,\iota Q)\simeq \TT_k\Lambda\Operad^h(P,Q),
 \end{equation}
  see \cite[Theorems 1\&1\rq{}]{FresseTWsmall}. The equivalences
   \eqref{eq:der_op_map}-\eqref{eq:der_tr_op_map} are immediately derived from the fact
   that for a cofibrant replacement $WP$ of $\iota P$, the natural map $\tau WP\to P$ is an equivalence.\vspace{5pt}
   
   Because of the equivalences  \eqref{eq:der_op_map}-\eqref{eq:der_tr_op_map} we do not distinguish 
   between the two derived mapping spaces and simply write $\Operad^h(-,-)$, $\TT_k\Operad^h(-,-)$.\vspace{5pt}
   
   We notice for future use that the truncation functor
   $\TT_k\colon\Lambda\Operad\to\TT_k\Lambda\Operad$ preserves cofibrations because 
   $\TT_k\colon\Sigma_{>0}\Operad\to\TT_k\Sigma_{>0}\Operad$ does so; and it preserves fibrations
   because $\TT_k\colon\Lambda_{>0}\Seq\to \TT_k\Lambda_{>0}\Seq$ preserves fibrations.\vspace{5pt}
   
   We also need the following fact about $\Lambda$ operads. Recall that an operad $O$ is {\it weakly
   doubly reduced} if it is reduced and $O(1)\simeq *$, and it is {\it doubly reduced} or
   {\it strictly doubly reduced} if $O(0)=O(1)=*$. 
   
  % \begin{thm}[\cite{FressePC17}]\label{th:zigzag}
 \begin{thm}\label{th:zigzag}
   For any weakly doubly reduced operad $O$ there exists a  zigzag of weak equivalences of reduced operads
   \[
   O\xleftarrow{\simeq}WO\xrightarrow{\simeq}W_1O,
   \]
   where both $WO$ and $W_1O$ are Reedy cofibrant and $W_1O$ is doubly reduced.
   \end{thm}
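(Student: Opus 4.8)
The plan is to produce $WO$ and $W_1O$ as cofibrant replacements of $O$ in the Reedy model structure on $\Lambda\Operad$, with the additional requirement that $W_1O$ has trivial unary component. First I would take $WO$ to be a cofibrant replacement of $O$ in the Reedy model structure, so that $O \xleftarrow{\simeq} WO$ is a weak equivalence with $WO$ Reedy cofibrant. This gives one leg of the zigzag for free; the content is in the second leg. Since $O$ is weakly doubly reduced, $O(1)\simeq *$, hence also $WO(1)\simeq *$. The goal is to collapse $WO(1)$ to a point in a way that (a) remains a reduced operad, (b) is still Reedy cofibrant, and (c) receives a weak equivalence from $WO$.

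The key step is to construct $W_1O$ by a pushout that formally strictifies the unary part. Concretely, inside $WO$ sits the free sub-$\Lambda$-operad $\calF_{Op}^\Lambda$ generated by $WO(1)$ (regarded as a $\Lambda_{>0}$-sequence concentrated in arity $1$, with its composition product). Since $WO(1)$ is cofibrant as a space (Reedy cofibrancy of $WO$ forces at least that its arity-$1$ latching-type map is a cofibration; recall $\Lambda(1)=*$, so the relevant matching object vanishes) and is contractible, the inclusion $\calF_{Op}^\Lambda(WO(1)) \to \calF_{Op}^\Lambda(*) = \Lambda$ is an acyclic cofibration of $\Lambda$-operads. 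Here $\Lambda$ denotes the operad of~\eqref{eq:lambda_op}, which is strictly doubly reduced. Now form the pushout
\[
\xymatrix{
\calF_{Op}^\Lambda(WO(1)) \ar[r] \ar[d] & WO \ar[d] \\
\Lambda \ar[r] & W_1O
}
\]
in $\Lambda\Operad$. The left vertical map is an acyclic cofibration, so by left properness (equivalently, since the model structure is cofibrantly generated and the map is an acyclic cofibration, it is preserved by pushout) the map $WO \to W_1O$ is an acyclic cofibration, giving the weak equivalence $WO \xrightarrow{\simeq} W_1O$. Because $WO$ is Reedy cofibrant and $W_1O$ is built from it by a pushout along a cofibration, $W_1O$ is again Reedy cofibrant. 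Finally, pushing out along $\Lambda$ forces $W_1O(1)=*$ and $W_1O(0)=*$, so $W_1O$ is strictly doubly reduced.

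The main obstacle I expect is verifying that the pushout genuinely strictifies the unary component to a point without disturbing the operad axioms or cofibrancy — i.e., identifying the sub-operad generated by $WO(1)$ correctly and checking that $\calF_{Op}^\Lambda(WO(1)) \to \Lambda$ is an \emph{acyclic} cofibration rather than merely a cofibration. The cofibration part follows from $WO(1) \to *$ being a cofibration of spaces together with $\calF_{Op}^\Lambda$ being a left Quillen functor (using $\calF_{Op}^\Lambda(X)_{>0}=\calF_{Op}^\Sigma(X)$ and the characterization of $\Lambda$-operad cofibrations in terms of projective cofibrations of $\Sigma_{>0}$-sequences recalled before Theorem~\ref{th:zigzag}). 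The acyclicity is more delicate: one needs that the free operad functor applied to a contractible arity-$1$ space is weakly equivalent to $\Lambda$, which amounts to the statement that the free associative-monoid-like construction on a contractible space (in arity $1$ an operad map source reduces to an $A_\infty$-type monoid) is contractible in every arity. This is where weak double reducedness is essential and where a careful filtration argument on the free operad — analyzing the trees with only unary vertices among the inputs — is required; I would handle it by the standard skeletal/pushout filtration of $\calF_{Op}^\Lambda$ and an inductive contractibility check. If one prefers to avoid this, an alternative is to first replace $WO$ by an operad whose arity-$1$ space is literally a point up to homotopy via a zig-zag through $\TT_k$-compatible constructions, but the pushout approach above is the cleanest and is the one I would write up.
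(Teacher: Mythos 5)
Your construction hinges on the identification $\calF_{Op}^\Lambda(*) = \Lambda$, where $*$ denotes the $\Lambda_{>0}$-sequence concentrated on a one-point space in arity~$1$, and this identification is false. Since $\calF_{Op}^\Lambda(X)_{>0} = \calF_{Op}^\Sigma(X)$, the free operad on a $\Sigma_{>0}$-sequence concentrated in arity~$1$ has, in arity~$1$, the free monoid $\coprod_{n\geq 0} X(1)^n$, not a single point. In particular $\calF_{Op}^\Lambda(*)(1)$ is a countable discrete set, and for $X(1)=WO(1)$ contractible, $\calF_{Op}^\Lambda(WO(1))(1)$ is homotopy equivalent to a countable discrete set, not to a point. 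Thus the map $\calF_{Op}^\Lambda(WO(1)) \to \Lambda$ that your pushout square uses is not a weak equivalence in arity~$1$, and it is not a cofibration of $\Lambda$-operads either (it is not in the image of the left Quillen functor $\calF_{Op}^\Lambda$: the operad $\Lambda$ is free on the empty $\Lambda_{>0}$-sequence, not on an arity-$1$ generator). The step you yourself flagged as delicate --- that ``the free operad applied to a contractible arity-$1$ space is weakly equivalent to $\Lambda$'' --- is precisely the false one, so no skeletal filtration can repair the argument. The pushout $WO \sqcup_{\calF_{Op}^\Lambda(WO(1))} \Lambda$ therefore need not be weakly equivalent to $WO$.

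The paper avoids this by working with an \emph{explicit} $WO$, namely the Boardman-Vogt resolution of $O_{>0}$ (padded by a point in arity~$0$). In that model $WO(1)$ is built from chains of arity-$1$ vertices labelled by $O(1)$ with edge lengths in $[0,1]$ and with a relation collapsing length-$0$ edges; this space is contractible when $O(1)$ is, unlike the free monoid. The operad $W_1O$ is then defined directly as the quotient of $WO$ obtained by collapsing $WO(1)$ to a point together with the identifications this induces in higher arities --- a quotient of underlying spaces, not a pushout in $\Lambda\Operad$ --- and the weak equivalence $WO\xrightarrow{\simeq}W_1O$ is established by a filtration on the number of vertices combined with the equivariant pushout-product lemma (Lemma~\ref{l:pushprod}). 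Reedy cofibrancy of $W_1O$ requires a further, separate filtration argument by prime components (attaching free operadic cells). If you want to keep a model-categorical framing, you would have to replace your acyclic-cofibration pushout with a direct argument that this explicit quotient is equivalent to $WO$, which is exactly what the paper's vertex-count filtration supplies.
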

   
   \begin{proof}[Idea of the proof]
   In case $O$ is $\Sigma$-cofibrant and well-pointed (the case that we need), one can take $WO$ to be
   the Boardmann-Vogt resolution of $O_{>0}$ to which we add a point in arity zero. The operad
   $W_1O$ is obtained from $WO$ by the {\it second unitarization}: collapsing the arity one component to a point and quotienting the other components according to the equivalence relation that this collapse produces. The complete proof is given in Appendix~\ref{s:A1}.
   \end{proof}
   
%   To conclude we want to emphasize that  $\Lambda$-operads are built up from $\Lambda_{>0}$-
%   sequences in the same way as $\Sigma_{>0}$-operads are built up from $\Sigma_{>0}$-sequences.
%  We call it {\bf \lq\lq{}$\Lambda$ to $\Sigma$ reduction principle\rq\rq{}}.
%%   \begin{itemize}
%%   \item A statement in the $\Lambda$ settings holds always when  it does in the $\Sigma$ setting.
%%   \end{itemize}
%   Theorem~\ref{th:zigzag} is a particular occurence of this principle. Theorems~\ref{th:bim_reedy_model},
%   \ref{th:bimod_ind_restr}, \ref{th:Ibim_reedy_model}, \ref{th:id_ibim_equiv} are other instances of it. 
%   That is an informal principle which should be understood as the following strategy in solving a problem:
%   first we solve it in the $\Sigma_{>0}$ (or $\Sigma$) setting, then we replace all the sequences in our constructions by  reduced $\Lambda$ sequences (respectively, $\Lambda$-sequences).  
%   

   \subsection{Bimodules}\label{ss:ht3}
   Let $O$ be a topological operad. In this subsection we denote by $\Sigma\Bimod_O$ the category
   of all $O$ bimodules. In case $O$ is reduced ($O(0)=*$), we denote by $\Lambda\Bimod_O$
   the category of reduced $O$ bimodules, i.e. bimodules $M$ with $M(0)=*$. One has a similar
   unitarization-inclusion adjunction
   \begin{equation}\label{eq:unitar_bim}
   \tau\colon\Sigma\Bimod_O\leftrightarrows\Lambda\Bimod_O\colon\iota,
   \end{equation}
   where $\iota$ is the inclusion functor, and $\tau$ is its adjoint -- it collapses the arity zero component 
   to a point, and adjusts the other components according to the equivalence relation induced by this collapse.
We also have the free-forgetful adjunctions
\begin{equation}\label{eq:sigma_bim_adj}
 \calF^\Sigma_{B}\colon \Sigma\Seq\leftrightarrows\Sigma\Bimod_O\colon \calU^\Sigma;
 \end{equation}
    \begin{equation}\label{eq:lambda_bim_adj}
 \calF^\Lambda_{B}\colon \Lambda_{>0}\Seq\leftrightarrows\Lambda\Bimod_O\colon \calU^\Lambda.
 \end{equation}
   Explicitly, $\calF_B^\Sigma(X)=O\circ X\circ O$, and $\calF_B^\Lambda(Y)=
   O\circ_\Lambda Y_+\circ_\Lambda O$. Here $\Lambda$ is considered as an operad~\eqref{eq:lambda_op} and we use the following standard notation that will be also useful for us
    in the sequel.
   
   \begin{notat}\label{not:circO}
   For an operad $P$, its right module $M$ and its left module $N$, we define $M\circ_P N$ as the
   coequalizer of
   $M\circ P\circ N\rightrightarrows M\circ N$,
     where the upper arrow is induced by the right $P$-action on $M$ (i.e. the operation $M\circ P\to M$) and the 
     lower arrow is induced by the left $P$-action on $N$ (i.e. the operation $P\circ N\to N$).
     \end{notat}
     
  \noindent  One can easily see that as a bimodule over $O_{>0}$,
   \[
   \calF_B^\Lambda(Y)_{>0}=O_{>0}\circ Y\circ O_{>0},
   \]
   it is a free $O_{>0}$-bimodule generated by $Y$. \vspace{5pt}
   
   The three adjunctions above have also their truncated counterparts:
   \begin{equation}\label{eq:unitar_tr_bim}
   \tau\colon\TT_k\Sigma\Bimod_O\leftrightarrows\TT_k\Lambda\Bimod_O\colon\iota,
   \end{equation}
   \begin{equation}\label{eq:sigma_tr_bim_adj}
 \calF^{\TT_k\Sigma}_{B}\colon \TT_k\Sigma\Seq\leftrightarrows\TT_k\Sigma\Bimod_O\colon \calU^{\TT_k\Sigma},
 \end{equation}
 \begin{equation}\label{eq:lambda_tr_bim_adj}
 \calF^{\TT_k\Lambda}_{B}\colon \TT_k\Lambda_{>0}\Seq\leftrightarrows\TT_k\Lambda\Bimod_O\colon \calU^{\TT_k\Lambda}.
 \end{equation}
   Here $\TT_k\Sigma\Bimod_O$ (respectively, $\TT_k\Lambda\Bimod_O$) are the categories
   of (reduced) $k$-truncated bimodules over~$O$. \vspace{5pt}
   
  In case $O$ is $\Sigma$-cofibrant and well-pointed, the categories $\Sigma\Bimod_O$
  and $\TT_k\Sigma\Bimod_O$ admit a cofibrantly generated model structure transferred from
   $\Sigma\Seq$, $\TT_k\Sigma\Seq$ along the adjunctions \eqref{eq:sigma_bim_adj} 
   and~\eqref{eq:sigma_tr_bim_adj} respectively, see~\cite{Ducoulombier17} and 
   also~\cite[Section~14.3]{Fresse09}. We use the following results from \cite{FressePC17}.
   
   \begin{thm}\label{th:bim_reedy_model} {\bf{(\cite[Theorems 3.1, 3.5, 3.6]{FressePC17})}}
   \begin{itemize}
   \item[(i)] For a $\Sigma$-cofibrant, well-pointed, and reduced topological operad $O$, the categories
   $\Lambda\Bimod_O$ and $\TT_k\Lambda\Bimod_O$, $k\geq 0$, admit a cofibrantly generated model
   structure transferred from $\Lambda_{>0}\Seq$ and $\TT_k\Lambda_{>0}\Seq$, respectively,
   along the adjunctions~\eqref{eq:lambda_bim_adj} and~\eqref{eq:lambda_tr_bim_adj}, respectively. 
   (We call them {\rm Reedy model structures}.)
   
   \item[(ii)] A morphism $M\to N$ of reduced ($k$-truncated) $O$-bimodules is a cofibration 
   in this model structure if and only if $M_{>0}\to N_{>0}$ is a cofibration of $O_{>0}$-bimodules
   in the projective model structure. 
   
   \item[(iii)] In case $O$ is a Reedy cofibrant operad, the model structure on $\Lambda\Bimod_O$ is left
   proper relative to the class of $\Sigma$-cofibrant reduced bimodules.
   \end{itemize}
   \end{thm}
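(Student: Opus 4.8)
The plan is to reduce all three assertions to their already established $\Sigma$-analogues by the \lq\lq{}From $\Lambda$ to $\Sigma$ reduction principle\rq\rq{}. The essential inputs are the identity $\calF_B^\Lambda(Y)_{>0}=O_{>0}\circ Y\circ O_{>0}$ (the free $O_{>0}$-bimodule on~$Y$, noted above), and \cite[Theorem~II.8.3.20]{Fresse17}, which identifies cofibrations of $\Lambda_{>0}$-sequences with cofibrations of $\Sigma_{>0}$-sequences. Note that $O_{>0}$ is still $\Sigma$-cofibrant and well-pointed, so the projective model structure on the category of $O_{>0}$-bimodules, and on its $k$-truncated version, is available by~\cite[Section~14.3]{Fresse09} and~\cite{Ducoulombier17}.

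For part~(i) I would invoke the standard transfer principle for cofibrantly generated model structures (cf.~\cite{Berger03,Fresse09}) along the adjunctions~\eqref{eq:lambda_bim_adj} and~\eqref{eq:lambda_tr_bim_adj}. Smallness of the domains of the generating (trivial) cofibrations and preservation of filtered colimits by $\calU^\Lambda$ are routine for topological bimodules; the substantive point is the acyclicity condition, that every relative $\calF_B^\Lambda(J)$-cell complex is an objectwise weak equivalence, $J$ being a set of generating trivial cofibrations of $\Lambda_{>0}\Seq$. For a reduced bimodule this is automatic in arity zero; in positive arities I would verify it by the cell-attachment analysis for free $\Lambda$-bimodules, exactly as in the $\Sigma$-case of~\cite{Fresse09,Ducoulombier17}, via the identity above and the fact that the composition product $\circ_\Lambda$ with the $\Sigma$-cofibrant, well-pointed operad~$O$ preserves trivial cofibrations. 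The $k$-truncated statement is proved identically, replacing $\Lambda_{>0}\Seq$ by $\TT_k\Lambda_{>0}\Seq$.

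For part~(ii), the \lq\lq{}only if\rq\rq{} direction is formal. The cofibrations of $\Lambda\Bimod_O$ are generated by the maps $\calF_B^\Lambda(i)$ with $i$ a generating cofibration of $\Lambda_{>0}\Seq$, and the identity above gives $\calF_B^\Lambda(i)_{>0}=O_{>0}\circ i\circ O_{>0}$, a cofibration of $O_{>0}$-bimodules since $i$ is a cofibration of $\Sigma_{>0}$-sequences (\cite[Theorem~II.8.3.20]{Fresse17}) and the free $O_{>0}$-bimodule functor is left Quillen; as $(-)_{>0}$ carries the cell-attachment pushouts, transfinite compositions, and retracts defining a cofibration to the corresponding ones over $O_{>0}$, a cofibration $M\to N$ restricts to a cofibration $M_{>0}\to N_{>0}$. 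The converse is the delicate direction: I would show that a relative cell complex of $O_{>0}$-bimodules, built from the maps $O_{>0}\circ i\circ O_{>0}$, lifts stage by stage along its cellular filtration to a relative $\calF_B^\Lambda$-cell complex of reduced $O$-bimodules with the prescribed positive-arity restriction. Each attaching map extends (uniquely) over the arity-zero component, forced to be $*$, and one checks inductively that the extra $\Lambda$-structure --- equivalently, the arity-lowering operations induced by $O(0)=*$ --- is respected at each stage. This verification, the bimodule analogue of~\cite[Theorem~II.8.4.12]{Fresse17}, is the step I expect to be the main obstacle.

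Finally, for part~(iii) I would again restrict to positive arities. Assume $O$ is Reedy cofibrant; let $M\xrightarrow{\simeq}X$ be a weak equivalence and $M\to N$ a cofibration of reduced $O$-bimodules, and form the pushout $N\to N'=N\sqcup_M X$. Because $O$ is Reedy cofibrant, the forgetful functor $\Lambda\Bimod_O\to\Lambda_{>0}\Seq$ preserves this pushout along the cofibration $M\to N$ (the usual cofibrant-operad argument for $\circ_\Lambda$), so $N'_{>0}=N_{>0}\sqcup_{M_{>0}}X_{>0}$ is computed in $\Sigma_{>0}$-sequences. By part~(ii), $M_{>0}\to N_{>0}$ is a cofibration of $O_{>0}$-bimodules, hence of $\Sigma_{>0}$-sequences ($O_{>0}$ being $\Sigma$-cofibrant and well-pointed). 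Since a weak equivalence of reduced $O$-bimodules is precisely an objectwise weak equivalence of the underlying sequences, and the projective model structure on $\Sigma_{>0}\Seq$ is left proper, the map $N_{>0}\to N'_{>0}$ is a weak equivalence; hence so is $N\to N'$, and $\Lambda\Bimod_O$ is left proper.
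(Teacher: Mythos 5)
Your overall strategy is the one the paper itself indicates: the statement is quoted from \cite{FressePC17} and the paper's entire ``proof'' is the one-line \emph{From $\Lambda$ to $\Sigma$ reduction principle}, which your parts (i) and (ii) elaborate in the expected way (transfer along the free--forgetful adjunction using $\calF_B^\Lambda(Y)_{>0}=O_{>0}\circ Y\circ O_{>0}$, then the bimodule analogue of \cite[Theorem~II.8.4.12]{Fresse17} for the characterization of cofibrations). Up to the details deferred to the reference, those two parts are consistent with the intended argument.

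Part (iii), however, contains a genuine error. You assert that, because $O$ is Reedy cofibrant, the forgetful functor $\Lambda\Bimod_O\to\Lambda_{>0}\Seq$ preserves the pushout along the cofibration $M\to N$, so that $N'_{>0}=N_{>0}\sqcup_{M_{>0}}X_{>0}$ is computed objectwise in sequences. This is false for bimodules, for any $O$: the left action $\gamma_l\colon O(n)\times M(m_1)\times\cdots\times M(m_n)\to M(m_1+\cdots+m_n)$ is polynomial (not linear) in $M$, so $\Bimod_O$ is \emph{not} a diagram category, in contrast with right modules and infinitesimal bimodules (which the paper identifies with $Top^{F(O)}$ and $Top^{\widetilde\Gamma(O)}$). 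Already coproducts fail: $\calF_B^\Sigma(X)\sqcup\calF_B^\Sigma(Y)=O\circ(X\sqcup Y)\circ O$ contains mixed terms absent from the objectwise coproduct, and the same phenomenon occurs for any pushout along a free cell attachment. Note also that if your claim were true, left properness would follow from left properness of $Top$ with no hypothesis on $O$ at all, which would make the Reedy-cofibrancy assumption in (iii) vacuous --- a warning sign. The Reedy cofibrancy of $O$ is needed, but for a different reason: by (ii) and \cite[Theorem~II.8.4.12]{Fresse17} it makes $O_{>0}$ a projectively cofibrant operad, and one then proves left properness by the standard cellular filtration of the pushout $N\sqcup_M X$ (filtering each free cell attachment by the number of attached generators, identifying the layers as pushout-products built from the generating cofibrations and components of $O$ and $X$, and applying the gluing lemma layerwise to compare the filtrations over $M$ and over $X$). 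That filtration argument is the missing content of part (iii).
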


   \begin{thm}\label{th:bimod_maps}
   {\bf (\cite[Proposition 3.10, Theorem 3.11]{FressePC17})}
   For a reduced, well-pointed, and $\Sigma$-cofibrant operad $O$, the adjunctions~\eqref{eq:unitar_bim}
   and~\eqref{eq:unitar_tr_bim} are Quillen adjunctions. Moreover, for any pair $M,\, N\in\Lambda\Bimod_O$
   (respectively, $M,\, N\in\TT_k\Lambda\Bimod_O$, $k\geq 0$), one has
   \begin{equation}\label{eq:der_bim_map}
   \Sigma\Bimod_O^h(\iota M,\iota N)\simeq \Lambda\Bimod_O^h(M,N),
 \end{equation}
 respectively,
   \begin{equation}\label{eq:der_tr_bim_map}
  \TT_k\Sigma\Bimod_O^h(\iota M,\iota N)\simeq \TT_k\Lambda\Bimod_O^h(M,N).
 \end{equation}
   \end{thm}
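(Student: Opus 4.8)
The plan is to deduce Theorem~\ref{th:bimod_maps} from the ``$\Lambda$ to $\Sigma$ reduction principle'' together with the already-established model-categorical facts about bimodules. First I would verify that the unitarization--inclusion adjunctions~\eqref{eq:unitar_bim} and~\eqref{eq:unitar_tr_bim} are Quillen. Since the model structures on $\Lambda\Bimod_O$ and $\TT_k\Lambda\Bimod_O$ are transferred along the free functors $\calF_B^\Lambda$, $\calF_B^{\TT_k\Lambda}$, it suffices to check that the right adjoint $\iota$ preserves fibrations and trivial fibrations; but a fibration (resp.\ weak equivalence) in the $\Lambda$-setting is detected on the underlying $\Sigma$-sequence after passing to positive arities (by Theorem~\ref{th:bim_reedy_model}(ii) for cofibrations, dually for fibrations via the matching-object criterion of Subsection~\ref{ss:ht1}), and $\iota$ does not change the positive-arity components, so the statement reduces to the analogous fact about the $\Sigma$-side transfer, which is standard. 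For the truncated versions the argument is identical arity by arity.

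Next I would establish the equivalences~\eqref{eq:der_bim_map} and~\eqref{eq:der_tr_bim_map}. The clean way is to mimic the proof of~\eqref{eq:der_op_map}--\eqref{eq:der_tr_op_map} indicated in the excerpt: take a cofibrant replacement $WM \xrightarrow{\simeq} \iota M$ in $\Sigma\Bimod_O$ and show that the natural map $\tau WM \to M$ is a weak equivalence of reduced $O$-bimodules. Given this, for any $N \in \Lambda\Bimod_O$ one computes
\[
\Lambda\Bimod_O^h(M,N) \simeq \Lambda\Bimod_O(\tau WM, \iota' N) = \Sigma\Bimod_O(WM, \iota N) \simeq \Sigma\Bimod_O^h(\iota M, \iota N),
\]
using the Quillen adjunction in the middle equality (here $\iota' $ denotes the fibrant replacement, irrelevant since in the projective structure all objects are fibrant, and for the Reedy structure one uses that $\iota$ preserves fibrant objects). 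The key input $\tau WM \to M \simeq$ is proved exactly as for operads: since $WM$ is built as a cell complex over the free bimodule functor $\calF_B^\Sigma(X) = O\circ X\circ O$ with $O$ reduced and well-pointed, collapsing arity zero interacts well with the cellular filtration, and on each cell one checks the map is an equivalence by an induction using that $O(0)=*$ makes the relevant quotient maps cofibrations (this is where $\Sigma$-cofibrancy and well-pointedness of $O$ enter). The truncated case is handled the same way, now with only finitely many cells to worry about.

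The main obstacle I anticipate is precisely the verification that $\tau WM \to M$ is a weak equivalence, i.e.\ that unitarization sends a cofibrant replacement of $\iota M$ to something still equivalent to $M$. Unitarization is a quotient construction and does not in general preserve weak equivalences, so one genuinely needs the cofibrancy of $WM$ and a careful analysis of how $\tau$ acts on the generating cofibrations $\calF_B^\Sigma(\partial\Delta^n \hookrightarrow \Delta^n)$ and their pushouts. Concretely, one wants that for a pushout $A \to B$ along such a generating cofibration, the square obtained by applying $\tau$ remains a homotopy pushout and the map $\tau B \to (\text{target})$ stays an equivalence; this follows from $O$ being $\Sigma$-cofibrant and well-pointed (so the composition products $O\circ(-)\circ O$ are homotopically well-behaved and the collapse $O(0)\to *$ is a cofibration), but the bookkeeping is the delicate part. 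Once this lemma is in hand, everything else is formal manipulation of Quillen adjunctions, and the truncated statements~\eqref{eq:der_tr_bim_map} are obtained by running the identical argument inside $\TT_k\Sigma\Bimod_O$, where the cellular induction terminates after arity $k$.
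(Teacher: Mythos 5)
Your core strategy matches the paper's: establish the Quillen adjunction, then reduce the comparison of derived mapping spaces to the key lemma that $\tau WM \to M$ is a weak equivalence when $WM$ is a cofibrant replacement of $\iota M$, and conclude by the adjunction manipulation in your display. That is exactly the outline the paper gives, and your discussion of where the real work lies (the behavior of $\tau$ on the cellular filtration of $WM$, using that $O$ is reduced, $\Sigma$-cofibrant and well-pointed) is the right place to focus.

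Where you depart from the paper is in verifying the Quillen adjunction, and there your argument is shaky as written. The paper checks that the \emph{left} adjoint $\tau$ sends generating cofibrations (of the form $\calF^\Sigma_B(\partial X)\to\calF^\Sigma_B(X)$) to cofibrations, which is a direct computation and is what "it is straightforward" refers to. You instead try to show that the right adjoint $\iota$ preserves fibrations and acyclic fibrations, and to do so you appeal to a characterization of fibrations "detected on the underlying $\Sigma$-sequence after passing to positive arities, by Theorem~\ref{th:bim_reedy_model}(ii)." But part (ii) of that theorem characterizes \emph{cofibrations} in the Reedy structure (via $O_{>0}$-bimodule cofibrations in the projective structure); there is no stated dual for fibrations. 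Reedy fibrations in $\Lambda\Bimod_O$ are defined by the matching-map condition, which is not literally an objectwise condition. The fact you actually need -- that every Reedy fibration is an objectwise fibration, hence a projective fibration after applying $\iota$ -- is true, but it requires its own inductive argument on arity using stability of fibrations under the iterated pullbacks appearing in $\MM(-)$, which you neither state nor carry out. As written, your reduction "to the $\Sigma$-side transfer" does not follow from what you cite. The paper's route through $\tau$ and generating cofibrations avoids this entirely and is the argument you should use.
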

%   \begin{proof}[Idea of the proof]
%   The proof is similar to that of the analogous statements about 
%   operads~\cite[Theorems 1\& 1\rq{}]{FresseTWsmall}.
%   It is straightforward that $\tau$ sends the generating cofibrations to cofibrations, which immediately implies that~\eqref{eq:unitar_bim} and~\eqref{eq:unitar_tr_bim} are Quillen adjunctions. In order to prove~\eqref{eq:der_bim_map}
%    and~\eqref{eq:der_tr_bim_map}, it is enough to show that the natural map $\tau WM\to M$, 
%    where $WM$ is a cofibrant replacement of $\iota M$, is an equivalence. Details will appear in~\cite{FressePC17}.
%    \end{proof}
    
    Because of the equivalences~\eqref{eq:der_bim_map}-\eqref{eq:der_tr_bim_map}, we 
    do not distinguish between the two versions of derived mapping spaces (projective and Reedy),
    and simply write $\Bimod_O^h(-,-)$ and $\TT_k\Bimod_O^h(-,-)$. For the proof of the main theorem
    we use the Reedy version as it makes the proof  easier. Finally, in the $\Lambda$-case one has the expected comparison  theorem.
    
    \begin{thm}\label{th:bimod_ind_restr}
    {\bf (\cite[Theorem 3.7]{FressePC17})}
    For any weak equivalence $\phi\colon O_1\xrightarrow{\simeq} O_2$ of reduced, $\Sigma$-cofibrant, and 
    well-pointed
     operads, one has  Quillen equivalences
     \begin{equation}\label{eq:bimod_ind_restr}
     \phi_B^!\colon\Lambda\Bimod_{O_1}\leftrightarrows\Lambda\Bimod_{O_2}\colon\phi_B^*,
     \end{equation}
     \begin{equation}\label{eq:tr_bimod_ind_restr}
     \phi_B^!\colon\TT_k\Lambda\Bimod_{O_1}\leftrightarrows\TT_k\Lambda\Bimod_{O_2}\colon\phi_B^*,
     \end{equation}
     where $\phi_B^*$ is the restriction functor and $\phi_B^!$ is the induction one.
     \end{thm}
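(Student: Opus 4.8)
The plan is to run the ``From $\Lambda$ to $\Sigma$ reduction principle'': one first proves the analogous base-change comparison for $\Sigma_{>0}$-bimodules over the $\Sigma$-cofibrant well-pointed operads $O_{1,>0}\to O_{2,>0}$, and then transports it to the reduced $\Lambda$-setting through Theorem~\ref{th:bim_reedy_model}. First I would observe that $\phi_B^*$, and likewise its $k$-truncated version, is right Quillen: restriction along $\phi$ leaves the underlying ($k$-truncated) $\Lambda$-sequence untouched, while by Theorem~\ref{th:bim_reedy_model} weak equivalences and fibrations in the Reedy structure are detected purely on this underlying sequence and its matching objects; hence $\phi_B^*$ preserves fibrations and trivial fibrations, and it preserves \emph{and reflects} all weak equivalences. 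By the standard criterion (a Quillen pair whose right adjoint preserves and reflects weak equivalences is a Quillen equivalence exactly when the strict unit is a weak equivalence on cofibrant objects, no fibrant replacement being then needed), it remains to prove that $M\to\phi_B^*\phi_B^!M$ is a weak equivalence for every cofibrant $M$, both in the untruncated case~\eqref{eq:bimod_ind_restr} and in the $k$-truncated case~\eqref{eq:tr_bimod_ind_restr}.

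Next I would push this verification down to positive arities. By Theorem~\ref{th:bim_reedy_model}(ii), $M$ is cofibrant in $\Lambda\Bimod_{O_1}$ (resp.\ in $\TT_k\Lambda\Bimod_{O_1}$) precisely when $M_{>0}$ is cofibrant among $\Sigma_{>0}$-bimodules (resp.\ $\TT_k\Sigma_{>0}$-bimodules) over $O_{1,>0}$, and the functor $(-)_{>0}$ intertwines $\phi_B^!$, $\phi_B^*$ with the induction and restriction functors between the corresponding $\Sigma_{>0}$-bimodule categories, since in positive arities $O\circ_\Lambda(-)\circ_\Lambda O=O_{>0}\circ(-)\circ O_{>0}$ (cf.\ the remark after Notation~\ref{not:circO}); for the truncated statement one uses in addition that truncation commutes with restriction, so that truncated induction is again given by a relative composition product in arities $\leq k$ and is compatible with $(-)_{>0}$. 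Since a map of reduced bimodules is a weak equivalence iff it is one in positive arities, it suffices to prove that $M_{>0}\to(\phi_{>0})_B^*(\phi_{>0})_B^!M_{>0}$ is a weak equivalence for every cofibrant $\Sigma_{>0}$-bimodule $M_{>0}$ (and similarly in the truncated case).

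For this last, classical, statement I would follow the usual scheme (as in~\cite{Fresse09,Ducoulombier17}). On a free bimodule $O_1\circ X\circ O_1$ with $X$ cofibrant one has $\phi_B^!(O_1\circ X\circ O_1)=O_2\circ X\circ O_2$, and the unit is the map $O_1\circ X\circ O_1\to O_2\circ X\circ O_2$ induced by $\phi$; this is a weak equivalence because the composition product preserves weak equivalences between $\Sigma$-cofibrant objects and both sides are $\Sigma$-cofibrant. One then runs a cell-attachment induction for an arbitrary cofibrant bimodule, which is a retract of a transfinite composition of pushouts of free bimodules on the generating cofibrations of $\Sigma_{>0}\Seq$ (finitely many in the $k$-truncated case); here $\phi_B^!$, being a left adjoint, preserves pushouts, and one checks that the comparison map stays a weak equivalence, in practice by replacing $\phi_B^!M$ by a suitable two-sided bar model whose underlying $\Sigma$-sequence is a homotopy colimit on which $\phi_B^*$ acts transparently, and then invoking the gluing lemma for cofibrant objects.

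The main obstacle is precisely this cell-attachment step: starting from a bimodule on which the comparison map is already a weak equivalence, one must show that attaching a cell (a pushout along a free bimodule on a generating cofibration of $\Sigma_{>0}\Seq$) again produces such a bimodule, the subtle point being to control how $\phi_B^*$ interacts with pushouts of bimodules, which are not created on underlying sequences; this is where $\Sigma$-cofibrancy and well-pointedness of $O_1,O_2$ are consumed. A secondary, bookkeeping, difficulty is pinning down the truncated induction functor and its compatibility with $(-)_{>0}$ and the cell filtration, since it is not merely the truncation of the untruncated induction.
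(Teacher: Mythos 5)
Your proposal is correct and follows essentially the same route as the paper: the paper's entire proof is the one-line ``From $\Lambda$ to $\Sigma$ reduction principle'' (with details deferred to the cited communication of Fresse), and your argument --- detect fibrations/equivalences on underlying sequences so $\phi_B^*$ is right Quillen and preserves/reflects equivalences, reduce the unit condition to positive arities via Theorem~\ref{th:bim_reedy_model}(ii), and verify it there by the free-bimodule base case plus cell-attachment induction --- is precisely an elaboration of that principle. The step you flag as the main obstacle is the standard cellular induction from the projective-structure comparison theorems already invoked in the paper, so nothing essential is missing.
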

%     
%\begin{proof}[Idea of the proof]
%First, we show this result for the projective model category of $O$-bimodules. For this purpose, we identify the projective model category of $O$-bimodules with a projective model category of algebras over a coloured operad. Indeed, we know from \cite[Chapter 15]{Fresse09} that the extension/restriction adjunction induces a Quillen equivalence between categories of algebras over coloured operad under some conditions on the coloured operads. Then, we extend this property to reduced $O$-bimodules using the $\Lambda$ to $\Sigma$ reduction principle. Details will appear in \cite{FressePC17}.
%\end{proof}
%  
    
    \subsection{Infinitesimal bimodules}\label{ss:ht4}
    For a reduced topological operad $O$ whose components $O(k)$, $k\geq 0$, are cofibrant
    spaces, we consider two model structures on $\Ibimod_O$ and $\TT_k\Ibimod_O$: projective and Reedy.
     In order to
    distinguish between the two and also to be consistent with the notation in the previous subsections,
    the category $\Ibimod_O$ (respevtively, $\TT_k\Ibimod_O$) with the projective model structure
    is denoted by $\Sigma\Ibimod_O$ (respectively, $\TT_k\Sigma\Ibimod_O$) and the same category
    with the Reedy model structure is denoted by $\Lambda\Ibimod_O$ (respectively, 
    $\TT_k\Lambda\Ibimod_O$). \vspace{5pt}
    
    The projective one is transferred from $\Sigma\Seq$ (respectively, $\TT_k\Sigma\Seq$)
    along the free-forgetful adjunctions:
    \begin{equation}\label{eq:sigma_Ibim_adj}
 \calF^\Sigma_{Ib}\colon \Sigma\Seq\leftrightarrows\Sigma\Ibimod_O\colon \calU^\Sigma;
 \end{equation}
     \begin{equation}\label{eq:sigma_tr_Ibim_adj}
 \calF^{\TT_k\Sigma}_{Ib}\colon \TT_k\Sigma\Seq\leftrightarrows\TT_k\Sigma\Ibimod_O\colon \calU^{\TT_k\Sigma},\quad k\geq 0.
 \end{equation}
    For the category of right modules over an operad with cofibrant components,
    such model structure is constructed in~\cite[Proposition~14.1.A]{Fresse09}. Our case is very similar. 
    Indeed, the structure of a right module is the same thing as a functor $F(O)\to\Topo$
    from a certain topologically enriched category $F(O)$ to 
    $\Topo$~\cite[Definition 4.1, Proposition 4.3]{Arone14}. Thus the category of right modules
    is essentially a diagram category $\Topo^{F(O)}$, where the category $F(O)$ has cofibrant all morphism
    spaces. Similarly, the category $\Ibimod_O$ can be 
    described as a diagram category $\Topo^{\widetilde{\Gamma}(O)}$, where $\widetilde{\Gamma}(O)$
    is also a topologically enriched category assigned to $O$ with the same properties 
    \cite[Definition 4.7, Proposition 4.9]{Arone14}.  Thus we can again consider the projective model structure
    for the enriched version of the diagram categories~\cite[Theorem~4.1]{Batanin17}.\vspace{5pt}
    
    The Reedy model structure is transferred from $\Lambda\Seq$ (respectively, $\TT_k\Lambda\Seq$)
    along the adjunctions:
       \begin{equation}\label{eq:lambda_Ibim_adj}
 \calF^\Lambda_{Ib}\colon \Lambda\Seq\leftrightarrows\Lambda\Ibimod_O\colon \calU^\Lambda;
 \end{equation}
     \begin{equation}\label{eq:lambda_tr_Ibim_adj}
 \calF^{\TT_k\Lambda}_{Ib}\colon \TT_k\Lambda\Seq\leftrightarrows\TT_k\Lambda\Ibimod_O\colon \calU^{\TT_k\Lambda},\quad k\geq 0.
 \end{equation}
 
 One can easily check that for a $\Lambda$-sequence $X$ (respectively, $\TT_k\Lambda$-sequence $X$),
 the sequence $\calF_{Ib}^\Lambda(X)$ (respectively, $\calF_{Ib}^{\TT_k\Lambda}(X)$) as a ($k$-truncated)
 $O_{>0}$-Ibimodule is freely generated by the ($k$-truncated) $\Sigma$-sequence~$X$. We use the following results from~\cite{FressePC17}.
 
 \begin{thm}\label{th:Ibim_reedy_model}
 {\bf (\cite[Theorems 5.1 and 5.4]{FressePC17})}
 \begin{itemize}
 \item[(i)] For any reduced topological operad $O$ with cofibrant components, the category
 of $O$-Ibimodules (respectively, $k$-truncated $O$-Ibimodules, $k\geq 0$) admits a cofibrantly generated
 model structure transferred from $\Lambda\Seq$ (respectively, $\TT_k\Lambda\Seq$) along 
    \eqref{eq:lambda_Ibim_adj} (respectively, \eqref{eq:lambda_tr_Ibim_adj}).
    
    \item[(ii)] A map $\alpha\colon M\to N$ of ($k$-truncated) $O$-Ibimodules is a cofibration in this (Reedy)
    model structure if and only if $\alpha$ is a cofibration of $O_{>0}$-Ibimodules in the projective model structure.
    \end{itemize}
    \end{thm}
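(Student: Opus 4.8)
The plan is to produce both the non-truncated and the $k$-truncated Reedy model structures by Kan's transfer theorem applied to the free--forgetful adjunctions $(\calF^\Lambda_{Ib},\calU^\Lambda)$ and $(\calF^{\TT_k\Lambda}_{Ib},\calU^{\TT_k\Lambda})$, with $\Lambda\Seq$ and $\TT_k\Lambda\Seq$ carrying their Reedy model structures as source, and then to read off (ii) from the behaviour of these free functors under the restriction $(-)_{>0}$ to positive arities. As with Theorems~\ref{th:zigzag} and~\ref{th:bim_reedy_model}, this is an instance of the \lq\lq{}from $\Lambda$ to $\Sigma$ reduction principle\rq\rq{}: the genuine homotopical content is pushed onto the projective model structure on $\Sigma\Ibimod_O$ and on the category of $O_{>0}$-Ibimodules, both of which exist by the same enriched diagram-category argument used above (these are diagram categories of the form $Top^{\widetilde{\Gamma}(O)}$, respectively $Top^{\widetilde{\Gamma}(O_{>0})}$, whose hom-objects are cofibrant because $O_{>0}$ still has cofibrant components, so \cite[Theorem~4.1]{Batanin17} applies), together with their truncated analogues.

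First I would check the hypotheses of the transfer theorem. The categories $\Lambda\Ibimod_O$ and $\TT_k\Lambda\Ibimod_O$ are complete and cocomplete, being the categories of algebras over an accessible monad on $\Lambda\Seq$, respectively $\TT_k\Lambda\Seq$; and the small object argument applies since $\calU^\Lambda$ preserves filtered colimits and the domains of the generating (acyclic) cofibrations of the Reedy structure on $\Lambda\Seq$ are small. The one substantial point is the acyclicity condition: for a set $J$ of generating acyclic cofibrations of the Reedy model structure on $\Lambda\Seq$, one must show that $\calU^\Lambda$ sends every relative $\calF^\Lambda_{Ib}(J)$-cell complex to a (necessarily objectwise) weak equivalence.

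To verify acyclicity I would use the reduction principle. By \cite[Theorem~II.8.3.20]{Fresse17} each $j\colon A\to B$ in $J$ is a cofibration of underlying $\Sigma$-sequences and, weak equivalences in both the Reedy and the projective structures being objectwise, an acyclic cofibration of $\Sigma$-sequences; passing to positive arities (where $(-)_{>0}\colon\Sigma\Seq\to\Sigma_{>0}\Seq$ is left Quillen, its right adjoint $(-)_+$ preserving fibrations and acyclic fibrations) yields an acyclic cofibration $j_{>0}$ of $\Sigma_{>0}$-sequences. The key lemma --- this is where the \lq\lq{}from $\Lambda$ to $\Sigma$\rq\rq{} passage does its work, exactly as in Theorem~\ref{th:bim_reedy_model} --- is that a cell attachment $N=M\cup_{\calF^\Lambda_{Ib}(A)}\calF^\Lambda_{Ib}(B)$ in $\Lambda\Ibimod_O$ is, on positive arities, the corresponding cell attachment of the free $O_{>0}$-Ibimodule functor along $j_{>0}$, and has arity-zero component the point; here one uses that $\calF^\Lambda_{Ib}$ restricted to positive arities is the free $O_{>0}$-Ibimodule functor (recalled just before the theorem). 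Granting this, $M_{>0}\to N_{>0}$ is an acyclic cofibration in the projective model structure on $O_{>0}$-Ibimodules, hence an objectwise weak equivalence, so $\calU^\Lambda M\to\calU^\Lambda N$ is an objectwise weak equivalence. Since objectwise weak equivalences and acyclic cofibrations of $O_{>0}$-Ibimodules are closed under transfinite composition, the same holds for any relative $\calF^\Lambda_{Ib}(J)$-cell complex, which establishes (i); the $k$-truncated statement is proved verbatim, capping every arity at~$k$.

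Part (ii) follows by the same translation applied to generating cofibrations rather than acyclic ones: using \cite[Theorem~II.8.3.20]{Fresse17} one picks a generating set $I$ of cofibrations of the Reedy structure on $\Lambda\Seq$ whose restriction $I_{>0}$ generates the cofibrations of $\Sigma_{>0}\Seq$, and then, invoking again the identification of $\calF^\Lambda_{Ib}$ with the free $O_{>0}$-Ibimodule functor on positive arities and the fact that $(-)_{>0}$ preserves retracts and the relevant colimits, $\alpha$ is a cofibration in the Reedy model structure (a retract of a relative $\calF^\Lambda_{Ib}(I)$-cell complex) if and only if $\alpha_{>0}$ is a cofibration of $O_{>0}$-Ibimodules in the projective structure; this is the exact analogue of the argument for Theorem~\ref{th:bim_reedy_model}(ii), and the truncated case is identical. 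I expect the acyclicity step, and more precisely the key lemma within it, to be the main obstacle, for the standard reason that one must control the homotopy type of the underlying $\Lambda$-sequence of a pushout of a free map of Ibimodules; the reduction above avoids building an explicit filtration of such a pushout here, but at the cost of checking carefully that the passage to positive arities intertwines the free functors, the generating cells and the relevant colimits on the nose and that nothing goes wrong in arity zero --- which is precisely where \cite[Theorem~II.8.3.20]{Fresse17} enters as the essential input.
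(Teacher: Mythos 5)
Your strategy---Kan's transfer theorem plus the ``from $\Lambda$ to $\Sigma$ reduction principle''---is the one the paper intends, but your formulation of the key lemma imports the \emph{bimodule} pattern (Theorem~\ref{th:bim_reedy_model}) into a situation where it does not apply. For a \emph{reduced bimodule} the arity-zero component is forced to be a point, so the reduction passes to positive arities ($M_{>0}\to N_{>0}$, free $O_{>0}$-bimodules on $\Sigma_{>0}$-sequences). An infinitesimal bimodule over a reduced operad is \emph{not} constrained in arity zero, and neither is an $O_{>0}$-Ibimodule. The lemma stated just before the theorem says that $\calF^\Lambda_{Ib}(X)$ \emph{as a whole}---all arities, including arity zero---is, as an $O_{>0}$-Ibimodule, free on the full $\Sigma$-sequence $X$; it does \emph{not} say that $(\calF^\Lambda_{Ib}(X))_{>0}$ is free on $X_{>0}$. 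Consequently your claims ``$\calF^\Lambda_{Ib}$ restricted to positive arities is the free $O_{>0}$-Ibimodule functor'' and ``has arity-zero component the point'' are false, your reduction to $j_{>0}\in\Sigma_{>0}\Seq$ is a detour in the wrong direction, and the conclusion that ``$M_{>0}\to N_{>0}$ is an acyclic cofibration of $O_{>0}$-Ibimodules'' does not parse (an $O_{>0}$-Ibimodule has an arity-zero component, so $M_{>0}$ is not one) nor does it say anything about arity zero of the pushout. The same confusion appears in your treatment of (ii), which should compare $\alpha$ itself, not $\alpha_{>0}$, to a projective cofibration of $O_{>0}$-Ibimodules, exactly as the theorem is stated.

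The fix stays within your framework and is short: $j\colon A\to B$ is a projective acyclic cofibration of $\Sigma$-sequences in \emph{all} arities, so $\calF^\Sigma_{Ib,O_{>0}}(j)$ is an acyclic cofibration in the projective structure on $\Ibimod_{O_{>0}}$. By the paper's lemma this is the map $\calF^\Lambda_{Ib}(j)$ viewed in $\Ibimod_{O_{>0}}$. Since $\Ibimod_O\cong Top^{\widetilde\Gamma(O)}$ and the forgetful $\Ibimod_O\to\Ibimod_{O_{>0}}$ is restriction along the enriched functor $\widetilde\Gamma(O_{>0})\to\widetilde\Gamma(O)$ of \cite[Def.~4.7]{Arone14}, it has both Kan-extension adjoints and preserves pushouts and transfinite composites; hence the cell attachment computed in $\Lambda\Ibimod_O$ is, in every arity including zero, a pushout of a projective acyclic cofibration of $O_{>0}$-Ibimodules and therefore an objectwise weak equivalence. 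This closes the acyclicity check, and the same identification of $\calF^\Lambda_{Ib}(I)$ with $\calF^\Sigma_{Ib,O_{>0}}(I)$ together with preservation of pushouts, transfinite composites, and retracts by the forgetful gives (ii) in the form actually claimed.
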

    
%    \begin{proof}[Idea of the proof]
%    The strategy is the same as for bimodules (see Theorem \ref{th:bim_reedy_model}).
%    \end{proof}
%    
    
    \begin{thm}\label{th:id_ibim_equiv}
    {\bf (\cite[Theorem 5.9]{FressePC17})}
    \begin{itemize}
    \item[(i)] For any  reduced topological operad $O$ with cofibrant components, one has 
    Quillen equivalences:
    \begin{equation}\label{eq:id_ibim_equiv}
    id\colon\Sigma\Ibimod_O\leftrightarrows\Lambda\Ibimod_O\colon id;
    \end{equation}
    \begin{equation}\label{eq:id_tr_ibim_equiv}
    id\colon\TT_k\Sigma\Ibimod_O\leftrightarrows\TT_k\Lambda\Ibimod_O\colon id.
    \end{equation}
    
    \item[(ii)] As a consequence, for any pair of ($k$-truncated) $O$-Ibimodules $M$ and $N$, one has an equivalence of mapping spaces:
     \begin{equation}\label{eq:der_ibim_map}
   \Sigma\Ibimod_O^h(M, N)\simeq \Lambda\Ibimod_O^h(M,N),
 \end{equation}
     \begin{equation}\label{eq:der_tr_ibim_map}
   \TT_k\Sigma\Ibimod_O^h(M, N)\simeq \TT_k\Lambda\Ibimod_O^h(M,N),
 \end{equation}
    respectively.
    \end{itemize}
    \end{thm}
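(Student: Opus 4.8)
The plan rests on the observation that the projective and Reedy model structures on $\Ibimod_O$ have the same weak equivalences --- the objectwise weak homotopy equivalences --- because they are transferred from $\Sigma\Seq$ and $\Lambda\Seq$ along the forgetful functors $\calU^\Sigma$ and $\calU^\Lambda$, and weak equivalences in $\Sigma\Seq$ and in $\Lambda\Seq$ are in both cases objectwise. Hence, once $(id,id)$ of~\eqref{eq:id_ibim_equiv} is shown to be a Quillen adjunction with $id\colon\Sigma\Ibimod_O\to\Lambda\Ibimod_O$ left Quillen, it is automatically a Quillen equivalence: a Quillen adjunction between two model structures on one and the same category that share their weak equivalences always is one. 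Part~(ii) then follows from the invariance of homotopy function complexes under Quillen equivalences; alternatively one argues directly, using that a projective cofibrant replacement is again a Reedy one (by left Quillen-ness of $id$), while a Reedy fibrant replacement is projectively fibrant --- \emph{every} object is projectively fibrant, the projective structure being transferred from $\Sigma\Seq$ --- and projectively weakly equivalent to the original, so that a single (co)fibrant replacement computes both derived mapping spaces. The $k$-truncated statement is proved verbatim, with $\Sigma\Seq$, $\Lambda\Seq$ replaced by $\TT_k\Sigma\Seq$, $\TT_k\Lambda\Seq$; I therefore treat only the non-truncated case below.

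To obtain the Quillen adjunction I would show that the right adjoint $id\colon\Lambda\Ibimod_O\to\Sigma\Ibimod_O$ is right Quillen, i.e.\ that it sends Reedy fibrations to projective fibrations and Reedy acyclic fibrations to projective acyclic fibrations. Since a morphism of $O$-Ibimodules is a projective fibration (resp.\ a projective acyclic fibration) precisely when $\calU^\Sigma$ sends it to a fibration (resp.\ an acyclic fibration) in $\Sigma\Seq$, it suffices to show that $\calU^\Sigma$ is right Quillen when $\Ibimod_O$ carries the Reedy model structure. I would factor it as $\calU^\Sigma=\iota^*\circ\calU^\Lambda$, with $\iota^*\colon\Lambda\Seq\to\Sigma\Seq$ the restriction along the inclusion $\iota\colon\Sigma\hookrightarrow\Lambda$ --- that is, the functor forgetting the $\Lambda$-structure maps. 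Now $\calU^\Lambda$ is right Quillen by the very construction of the transferred Reedy structure in Theorem~\ref{th:Ibim_reedy_model}(i), and $\iota^*$ is right Quillen because its left adjoint $\iota_!$, the free $\Lambda$-sequence functor, carries the generating (acyclic) cofibrations of $\Sigma\Seq$ to Reedy (acyclic) cofibrations of $\Lambda\Seq$: on a cell of the form $\Sigma_d\times K$ concentrated in a single arity $d$, with $K$ a generating cofibration (resp.\ acyclic cofibration) of spaces, a short computation gives, in each arity $r$,
\[
\iota_!(\Sigma_d\times K)(r)\;\cong\;Mor_\Lambda(\underline{r},\underline{d})\times K ,
\]
a free $\Sigma_r$-space --- $\Sigma_r$ acting on $Mor_\Lambda(\underline{r},\underline{d})$ by precomposition --- tensored with $K$; this is a cofibration (resp.\ acyclic cofibration) of $\Sigma$-sequences, hence a Reedy cofibration (resp.\ acyclic Reedy cofibration) of $\Lambda$-sequences by~\cite[Theorem~II.8.3.20]{Fresse17}, so $\iota_!$ is left Quillen and $\iota^*$ right Quillen. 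Therefore $\calU^\Sigma=\iota^*\circ\calU^\Lambda$ is right Quillen out of $\Lambda\Ibimod_O$, and consequently $id\colon\Lambda\Ibimod_O\to\Sigma\Ibimod_O$ carries Reedy (acyclic) fibrations to projective ones.

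The one step with genuine content is the computation of $\iota_!$ on generating cells together with the freeness of the residual $\Sigma_r$-action on $Mor_\Lambda(\underline{r},\underline{d})$, and this is routine. Everything else --- turning a left Quillen identity into a Quillen equivalence, and that into the equivalences~\eqref{eq:der_ibim_map}--\eqref{eq:der_tr_ibim_map} of derived mapping spaces --- is formal. I do not anticipate any serious obstacle; the argument is one more instance of the ``from $\Lambda$ to $\Sigma$ reduction principle'', the only novelty being that here the two model structures under comparison sit on a single underlying category.
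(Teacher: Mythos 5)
Your proposal is correct, and the key computation coincides with the paper's: the paper's proof observes that the free $O$-Ibimodule $\calF_{Ib}^\Sigma(X)$ is free as an $O_{>0}$-Ibimodule on $X\circ\Lambda=X\otimes_\Sigma\Lambda$ (so generating projective cofibrations of $\Sigma\Ibimod_O$ go to Reedy cofibrations by Theorem~\ref{th:Ibim_reedy_model}(ii)), and your calculation $\iota_!(\Sigma_d\times K)(r)\cong Mor_\Lambda(\underline{r},\underline{d})\times K$ with free residual $\Sigma_r$-action is exactly the same fact viewed through the adjoint $\iota_!\dashv\iota^*$ between sequence categories. The difference is only one of presentation: you verify that the identity $\Lambda\Ibimod_O\to\Sigma\Ibimod_O$ is right Quillen by factoring $\calU^\Sigma=\iota^*\circ\calU^\Lambda$ and checking $\iota^*$ right Quillen, while the paper checks directly that the identity $\Sigma\Ibimod_O\to\Lambda\Ibimod_O$ is left Quillen; these are dual phrasings of one and the same adjunction-theoretic argument, and both rest on the same combinatorial input.
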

%    
%    \begin{proof}[Idea of the proof]
%    The statement $(i)$ follows from the fact that the identity functor $id\colon\Sigma\Ibimod_O\to\Lambda\Ibimod_O$ (respecively, $id\colon\TT_k\Sigma\Ibimod_O\to\TT_k\Lambda\Ibimod_O$) sends the generating cofibrations
%    to cofibrations, which is a consequence of the fact that the free $O$-Ibimodule generated
%    by a $\Sigma$-sequence $X$ is also free as an $O_{>0}$-Ibimodule generated by the
%    $\Sigma$-sequence $X\otimes_\Sigma\Lambda$ (equivalently, we can write it as $X\circ\Lambda$,
%    where now $\Lambda$ denotes the symmetric sequence~\eqref{eq:lambda_op}).
%    \end{proof}
    
     Because of the equivalences~\eqref {eq:der_ibim_map}-\eqref{eq:der_tr_ibim_map}, we do not
     distinguish between the two mapping spaces and simply write
     $\Ibimod_O^h(-,-)$ and $\TT_k\Ibimod_O^h(-,-)$. For the proof of the main theorem we use
     the Reedy version of the derived mapping space.     Here is the comparison theorem in the $\Lambda$ case.
    
    \begin{thm}\label{th:ibimod_ind_restr}
    {\bf (\cite[Theorem 5.6]{FressePC17})}
    For any weak equivalence $\phi\colon O_1\xrightarrow{\simeq} O_2$ of reduced 
    well-pointed
     operads with cofibrant components, one has  Quillen equivalences
     \begin{equation}\label{eq:ibimod_ind_restr}
     \phi_{Ib}^!\colon\Lambda\Ibimod_{O_1}\leftrightarrows\Lambda\Ibimod_{O_2}\colon\phi_{Ib}^*,
     \end{equation}
     \begin{equation}\label{eq:tr_ibimod_ind_restr}
     \phi_{Ib}^!\colon\TT_k\Lambda\Ibimod_{O_1}\leftrightarrows\TT_k\Lambda\Ibimod_{O_2}\colon\phi_{Ib}^*,
     \end{equation}
     where $\phi_{Ib}^*$ is the restriction functor and $\phi_{Ib}^!$ is the induction one.
     \end{thm}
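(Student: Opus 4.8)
The plan is to prove Theorem~\ref{th:ibimod_ind_restr} by the ``$\Lambda$ to $\Sigma$ reduction principle'' advertised in Subsection~\ref{ss:ht2}: first establish the analogous statement in the category of $O_{>0}$-infinitesimal bimodules (the projective/$\Sigma_{>0}$ setting), and then deduce the $\Lambda$-statement from it. In the projective setting, the comparison theorem for right modules over weakly equivalent $\Sigma$-cofibrant operads is classical; since $\Ibimod_O$ is, by \cite[Definition 4.7, Proposition 4.9]{Arone14}, equivalent to a diagram category $Top^{\widetilde\Gamma(O)}$ over the enriched category $\widetilde\Gamma(O)$ with cofibrant morphism spaces, the same argument as for right modules applies verbatim. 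Concretely, a weak equivalence $\phi\colon O_1\xrightarrow{\simeq}O_2$ induces a functor $\widetilde\Gamma(\phi)\colon\widetilde\Gamma(O_1)\to\widetilde\Gamma(O_2)$ which is essentially surjective and a weak equivalence on all morphism spaces (using that the $O_i$ are reduced and well-pointed, so that the combinatorics of $\widetilde\Gamma$ behaves well); restriction and left Kan extension along such an enriched functor form a Quillen equivalence. This handles~\eqref{eq:ibimod_ind_restr} and~\eqref{eq:tr_ibimod_ind_restr} after replacing $\Lambda$ by $\Sigma$, and after replacing $O_i$ by $O_{i,>0}$ one should check the corresponding statement for $O_{i,>0}$-Ibimodules, which is the same argument.

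Next I would upgrade this to the $\Lambda$ (Reedy) setting. By Theorem~\ref{th:Ibim_reedy_model}(ii), a map of ($k$-truncated) $O_i$-Ibimodules is a Reedy cofibration precisely when it is a projective cofibration of $O_{i,>0}$-Ibimodules; by Theorem~\ref{th:id_ibim_equiv} the identity functors $\Sigma\Ibimod_{O_i}\rightleftarrows\Lambda\Ibimod_{O_i}$ (and their truncated versions) are Quillen equivalences, so weak equivalences and the homotopy categories agree on the two sides. Since $\phi_{Ib}^*$ manifestly commutes with the identity functors on the nose (restriction does not see the model structure) and $\phi_{Ib}^!$ does so up to natural isomorphism, the diagram of adjunctions relating $\Lambda\Ibimod_{O_1}$, $\Lambda\Ibimod_{O_2}$ to $\Sigma\Ibimod_{O_1}$, $\Sigma\Ibimod_{O_2}$ commutes. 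Because three of its four edges are Quillen equivalences, the fourth—$(\phi_{Ib}^!,\phi_{Ib}^*)$ in the $\Lambda$ setting—is a Quillen equivalence as well. The truncated statement~\eqref{eq:tr_ibimod_ind_restr} follows the same pattern, using the truncated versions of Theorems~\ref{th:Ibim_reedy_model} and~\ref{th:id_ibim_equiv} and the fact that truncation is compatible with restriction and with the free functors.

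The main obstacle is the first step in the projective setting: verifying that $\widetilde\Gamma(\phi)$ induces a weak equivalence on morphism spaces and, more importantly, that restriction along it is a right Quillen \emph{equivalence}, not merely a right Quillen functor. For right modules this rests on the description of $F(O)$-morphism spaces as (colimits of products of) the spaces $O(k)$, so that $\Sigma$-cofibrancy of $O$ makes these morphism spaces cofibrant and $\phi$ a levelwise equivalence of enriched categories; one must confirm that $\widetilde\Gamma(O)$, which records also the infinitesimal left action, enjoys the analogous property. The delicate point is that the \emph{infinitesimal} left module structure is encoded by morphisms in $\widetilde\Gamma(O)$ built from a \emph{single} operation of $O$ (rather than an iterated composite), so the relevant mapping spaces are again built out of copies of the $O(k)$ with free $\Sigma$-actions, and cofibrancy/levelwise equivalence go through; but this needs to be spelled out, and the well-pointedness hypothesis is used precisely to keep the identity morphisms (the basepoints $*_1\in O(1)$ and the zero-ary structure) cofibrant. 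Once this enriched-diagram comparison is in place, everything else is formal two-out-of-three for Quillen equivalences and an invocation of the reduction principle, exactly as in \cite[Theorems~1\,\&\,1$'$]{FresseTWsmall}.
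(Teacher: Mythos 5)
Your proposal is correct and follows essentially the same route as the paper: establish the comparison in the projective ($\Sigma$) setting, invoke the identity-functor Quillen equivalences of Theorem~\ref{th:id_ibim_equiv}, and conclude by the two-out-of-three property of Quillen equivalences. The paper's proof is exactly this one-line argument; you have simply spelled out the projective input via the enriched diagram category $\widetilde\Gamma(O)$, which the paper leaves implicit, and the detour about passing to $O_{i,>0}$-Ibimodules is unnecessary since both the projective and Reedy structures live on the same underlying category $\Ibimod_{O_i}$.
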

%    \begin{proof}
%    The result follows from the analogous statement in the projective model structure,
%     Theorem~\ref{th:id_ibim_equiv}, and the two out of three property of  Quillen equivalences.
%     \end{proof}

\subsection{Topological spaces: products, subspaces, quotients, mapping spaces}\label{ss:top}
In~\cite{Fresse17}, B.~Fresse uses the category of simplicial sets as the underlying category of 
$\Lambda$-operads. However, his methods are easily adaptable to other model 
categories~\cite[Section~II.8.4]{Fresse17}.  Throughout the paper by the category $\Topo$ of spaces we  understand the
category of $k$-spaces (compactly generated possibly non-Hausdorff spaces). The model category structure on $\Topo$ is
defined in~\cite[Section~2.4]{Hovey99}, where it is denoted by $\textbf{K}$. This category is cartesian closed~\cite[Appendix~A]{Lewis78}, \cite{Vogt71},
which implies that for any $X,Y,Z\in\Topo$, one has homeomorphisms:
\begin{equation}\label{eq:exp1}
\Map(X\times Y,Z)\cong \Map(X,\Map(Y,Z)),
\end{equation}
which is not in general true in the category $\AllTop$ of all topological spaces. We  freely use
the homeomorphism~\eqref{eq:exp1} and its based and/or equivariant versions. We also
use the homeomorphism (and its based/equivariant versions):
\begin{equation}\label{eq:map_prod}
\Map(X,\prod_i Y_i)\cong \prod_i \Map(X,Y_i),
\end{equation}
which holds in $\AllTop$ as well, and the fact that for any equivalence relation $\sim$ on any $k$-space 
$X$, the natural inclusion
\begin{equation}\label{eq:map_equiv}
\Map(X/{\sim},Y)\subset \Map(X,Y)
\end{equation}
is a homeomorphism on its image, see Lemma~\ref{l:q_map}. The latter fact is not true in 
general in $\AllTop$, see~\cite{mathoverflow}.

As a particular case of Lemma~\ref{l:q_map} or rather of its pointed version, in $\Topo$ for any inclusion $X_0\subset X$ and any pointed space $(Y,*)$, the space of pointed maps is homeomorphic to the
space of maps of pairs:
\[
\Map_*(X/X_0,Y)\cong \Map\bigl((X,X_0),(Y,*)\bigr),
\]
which is also used at several occasions in the paper.\vspace{5pt}

We warn the reader that here and everywhere throughout the paper the products, subspaces, and more generally limits, as well as mapping spaces should be taken with the kelleyfication of their standard product, subspace, or compact-open topologies. The quotients and any colimits of $k$-spaces are always $k$-spaces
and kelleyfication is not necessary. Basic properties of the category $\Topo$ of $k$-spaces are  recalled in Appendix~\ref{s:A0}.

     \section{Cofibrant replacements}\label{s:cof}
In order to construct the towers~\eqref{eq:equiv_general1} whose stages are derived mapping spaces, we need explicit cofibrant replacements for bimodules and infinitesimal bimodules. We use the
Boardman-Vogt  resolution, which has been introduced in~\cite{Boardman68} for topological operads
and adapted to bimodules by the first author in~\cite{Ducoulombier16,Ducoulombier17}.  In 
Subsection~\ref{ss:cof1} we recall this construction and then adjust it to the $\Lambda$ setting. In the second
Subsection~\ref{ss:cof2} we construct a  Boardman-Vogt type resolution for infinitesimal bimodules.\vspace{5pt}

  In order to fix  notation, a \textit{planar tree} $T$ is a finite planar tree with one output edge on the bottom and input edges on the top. The output and input edges are considered to be half-open, i.e. connected only to one vertex in the body of the tree.   The vertex connected to the output edge, called the {\it root} of $T$, is denoted by $r$. Each edge in the tree is endowed with an orientation from top to bottom. Let $T$ be a planar tree:
\begin{itemize}[leftmargin=*]
\item[$\blacktriangleright$] The set of its vertices and the set of its edges are denoted by $V(T)$ and $E(T)$ respectively. The set of its inner edges $E^{int}(T)$ is formed by the edges connecting two vertices. Each edge or vertex is joined to the root by a unique path composed of edges.  
\item[$\blacktriangleright$] According to the orientation of the tree, if $e$ is an inner edge, then its vertex  $t(e)$ toward the root is called the \textit{target vertex} whereas the other vertex $s(e)$ is called the \textit{source vertex}. 
%\newpage
\item[$\blacktriangleright$] The input edges are called \textit{leaves} and they  are ordered from  left to right. Let $in(T):=\{l_{1},\ldots, l_{|T|}\}$ denote the ordered set of leaves with $|T|$ the number of leaves. 
\item[$\blacktriangleright$]  The set of incoming edges of a vertex $v$ is ordered from left to right. This set is denoted by $in(v):=\{e_{1}(v),\ldots, e_{|v|}(v)\}$ with $|v|$ the number of incoming edges. The unique output edge of $v$ is denoted by $e_{0}(v)$.
\item[$\blacktriangleright$] The number $|v|$ of incoming edges for a vertrex $v$ will be called the {\it arity}
of $v$, whereas the total number of adjacent edges at $v$ will be called the {\it valence} of $v$.
%\item[$\blacktriangleright$] The vertices with no incoming edge (i.e. whose valence is $1$) are called \textit{univalent vertices} whereas the vertices with only one input (i.e. whose valence is $1$) are called \textit{bivalent vertices}.\vspace{-5pt}
\end{itemize}

A \textit{labelled planar tree} is a pair $(T\,;\,\sigma)$ where $T$ is a planar tree and $\sigma:\{1,\ldots, |T|\}\rightarrow in(T)$ is a bijection labelling the leaves of $T$. Such an element is denoted by $T$ if there is no ambiguity about the bijection $\sigma$. We denote by \textbf{tree}$_k$ the set of 
labelled planar trees with $k$ leaves.  The bijection $\sigma$ can be interpreted as an element in the symmetric group $\Sigma_k$. By convention, the $k$-corolla is the tree with one vertex and $k$ leaves, indexed by the identity permutation. By $\textbf{tree}_k^{\geq 1}$, respectively, $\textbf{tree}_k^{\geq 2}$,
we denote the subset of $\textbf{tree}_k$ all of whose vertices have arity $\geq 1$, respectively, $\geq 2$.

\begin{figure}[!h]
\begin{center}
\includegraphics[scale=0.3]{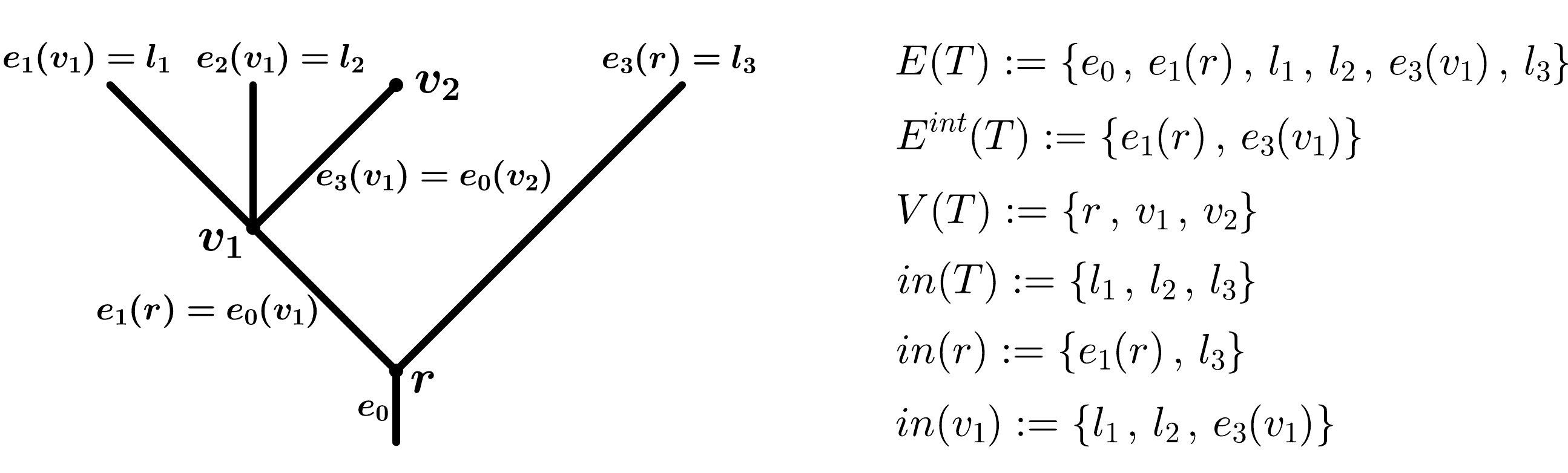}
\caption{Example of a planar tree.}\vspace{-5pt}
\end{center}
\end{figure}
  
\subsection{Boardman-Vogt resolution for bimodules}\label{ss:cof1}

Below we present a functorial cofibrant replacement for bimodules over an operad. The main difference with 
the similar resolution for operads is that vertices in the trees could now be of two types: pearls labeled by elements of the bimodule and usual internal vertices labeled by elements of the operad. Also instead of edges
being assigned numbers from $[0,1]$, such numbers are assigned to the usual internal vertices. We  need the following combinatorial set of trees.

\begin{defi} \textbf{The set of trees with section}\\
Let \textbf{stree}$_k$ be the set of pairs $(T\,;\, V^{p}(T))$ where $T\in\textbf{tree}_k$,  and $V^{p}(T)$ is a subset of $V(T)$, called the \textit{set of pearls}. Each path joining a leaf or a univalent vertex with the root passes through a unique pearl.  %We also require that each path from a univalent vertex to the root meets at most one pearl.  
 The set of pearls forms a horizontal section cutting the tree $T$ into two parts.  Elements in \textbf{stree}$_k$ are called (planar labelled) \textit{trees with section}. %\vspace{-15pt}
\end{defi}

%Note that vertices of any arity including arity 0 can be below, above, and on the section.

\begin{center}
\begin{figure}[!h]
\begin{center}
\includegraphics[scale=0.45]{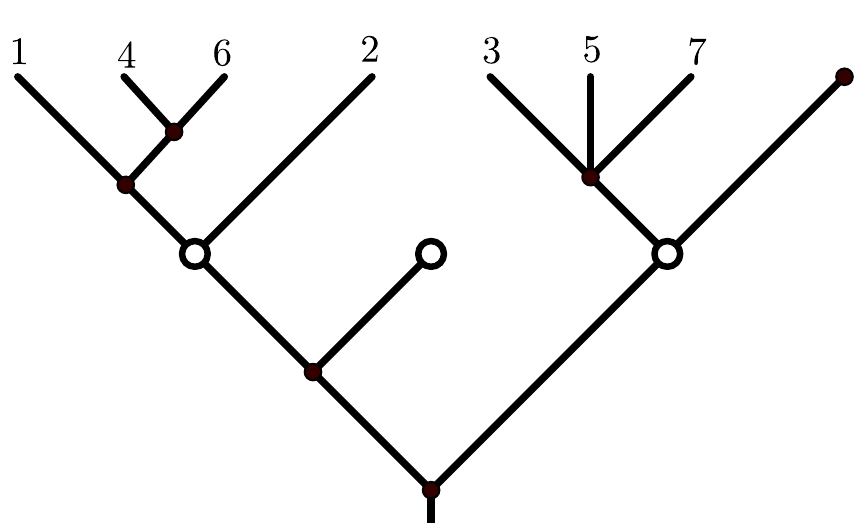}%\vspace{-10pt}
\caption{A tree with section.}
\end{center}
\end{figure}
\end{center}

\begin{const}\label{B0}
From an $O$-bimodule $M$, we build an $O$-bimodule $\mathcal{B}^\Sigma_O(M)$.  
When $O$ is understood
we simply denote it by $\calB^\Sigma(M)$. The superscipt $\Sigma$ indicates that we
construct a cofibrant replacement of a bimodule in the projective model structure. The points of $\calB^\Sigma(M)(k)$, $k\geq 0$, are equivalence classes $[T\,;\, \{t_{v}\}\,;\, \{a_{v}\}]$, where $T\in \textbf{stree}_k$ and $\{a_{v}\}_{v\in V(T)}$ is a family of points labelling the vertices of~$T$. The pearls are labelled by points in $M$ with the corresponding arity, whereas the other vertices are labelled by points in the operad $O$ again assuming that the arity is respected.  Furthermore, $\{t_{v}\}_{v\in V(T)\setminus V^{p}(T)}$ is a family of real numbers in the interval $[0\,,\,1]$ indexing the vertices which are not pearls. If $e$ is an inner edge above the section, then $t_{s(e)}\geq t_{t(e)}$. Similarly, if $e$ is an inner edge below the section, then $t_{s(e)}\leq t_{t(e)}$. In other words, the closer  a vertex is to a pearl, the smaller is the corresponding number. The space $\mathcal{B}^\Sigma(M)(k)$ is the quotient of the sub-space of 
\begin{equation}\label{eq:union_stree}
%\left.
\underset{T\in\textbf{\text{stree}}_k}{\coprod}\,\,\,\underset{v\in V^{p}(T)}{\prod}\,M(|v|)\,\,\times\underset{v\in V(T)\setminus V^{p}(T)}{\prod}\,\big[\,O(|v|)\times [0\,,\,1]\big]
%\right/
%\sim
\end{equation} 
determined by the restrictions on the families $\{t_{v}\}$. The equivalence relation is generated by the following conditions:
 
\begin{itemize}[topsep=0pt, leftmargin=*]
\item[$i)$] If a vertex is labelled by  $\ast_{1}\in O(1)$, then locally one has the identification
%\begin{figure}[!h]
\begin{center}
\includegraphics[scale=0.25]{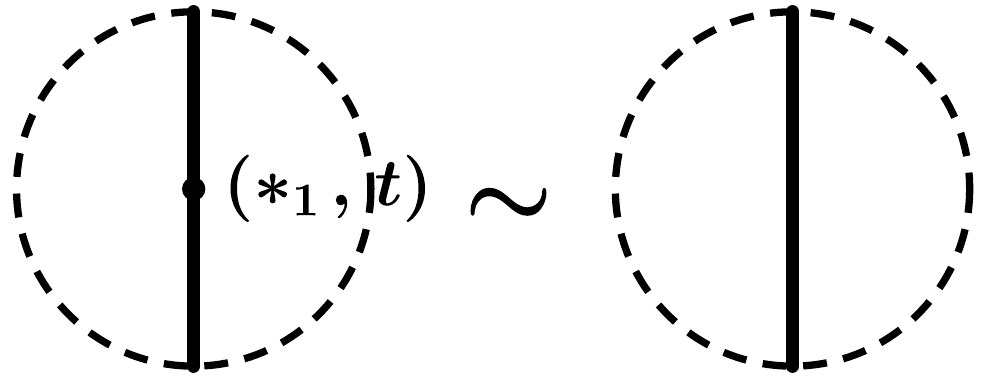}\vspace{-2pt}
\end{center}
%\end{figure}

\item[$ii)$] If a vertex is indexed by $a\cdot \sigma$, with $\sigma\in \Sigma$, then 
\begin{center}
\includegraphics[scale=0.42]{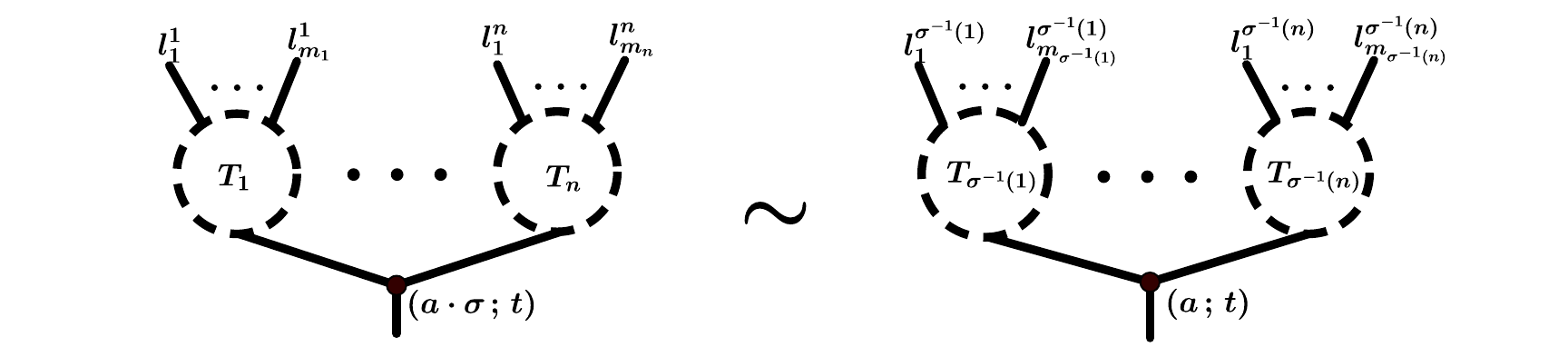}\vspace{3pt}
\end{center}

%\item[$iii)$] If a univalent pearl is indexed by a point of the form $\gamma_{0}(x)$, with $x\in O(0)$, then we contract its output edge by using the operadic structure of $P$. In particular, if all the pearls connected to a vertex $v$ are univalent and of the form $\gamma_{0}(x_{v})$, then the vertex is identified to the pearled corolla with no input.
%\begin{figure}[!h]
%\begin{center}
%\includegraphics[scale=0.4]{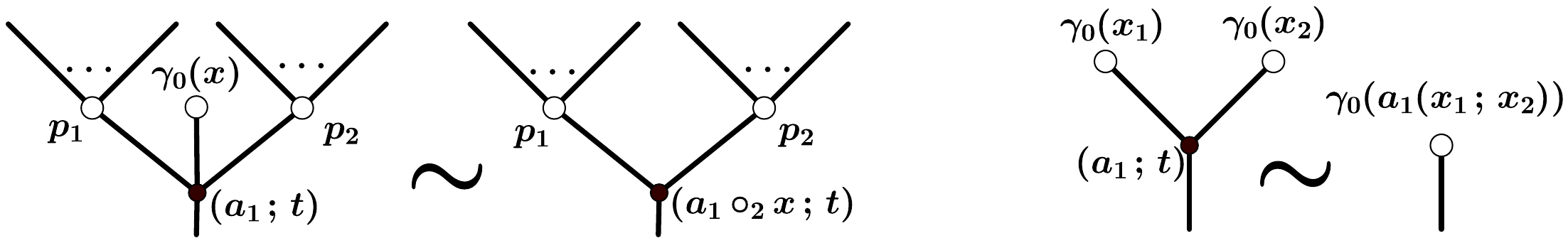}
%\caption{Example of the relation $(iii)$.}\vspace{-15pt}
%\end{center}
%\end{figure}

\item[$iii)$] If two consecutive vertices, connected by an edge $e$, are indexed by the same real number $t\in [0\,,\,1]$, then~$e$ is contracted using the operadic structure of~$O$. The vertex so obtained is indexed by the real number~$t$.%\vspace{-10pt}
%\begin{figure}[!h]
\begin{center}
\includegraphics[scale=0.52]{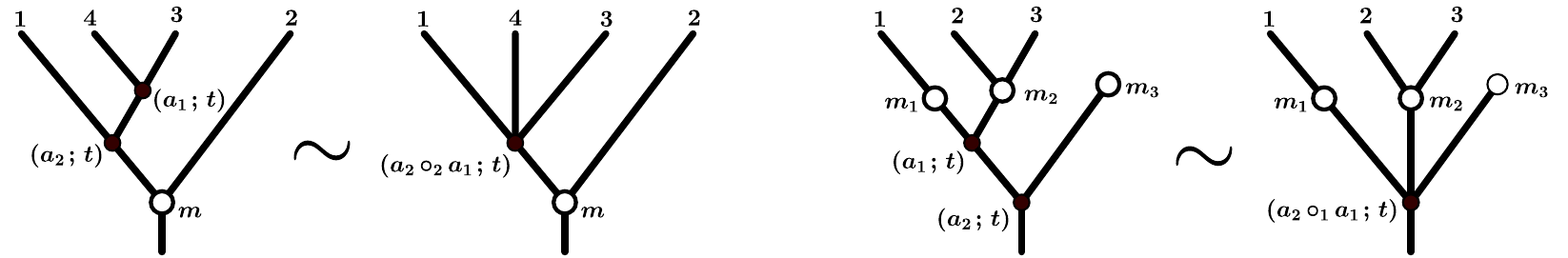}\vspace{3pt}
%\caption{Examples of the relation $(iii)$.}
\end{center}
%\end{figure}

\item[$iv)$]  If a vertex above the section is indexed by $0$, then its output edge is contracted by using the right module structures. Similarly, if a vertex below the section is indexed by $0$, then all its incoming edges are contracted by using the left module structure. In both cases the new vertex becomes a pearl.%\vspace{-5pt}
%\begin{figure}[!h]
\begin{center}
\includegraphics[scale=0.52]{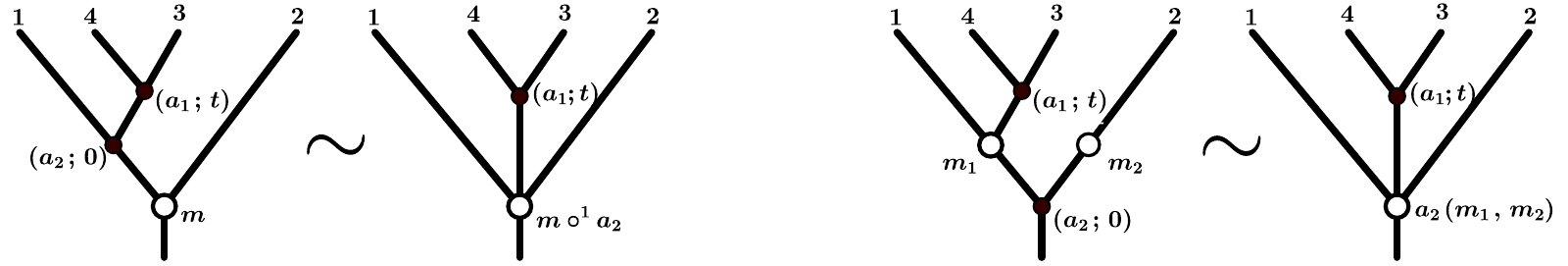}\vspace{3pt}
%\caption{Examples of the relation $(iv)$.}
\end{center}
%\end{figure}

\item[$v)$] If a univalent pearl is indexed by a point of the form $\gamma_{0}(x)$, with $x\in O(0)$, then we contract its output edge by using the operadic structure of $P$. In particular, if all the pearls connected to a vertex $v$ are univalent and of the form $\gamma_{0}(x_{v})$, then the vertex is identified to the pearled corolla with no input.
%\begin{figure}[!h]
\begin{center}
\includegraphics[scale=0.37]{dessin12.pdf}\vspace{3pt}
%\caption{Example of the relation $(v)$.}
\end{center}%\vspace{-5pt}
%\end{figure}

\end{itemize}

\newpage

Let us describe the $O$-bimodule structure. Let $a\in O(n)$ and $[T\,;\,\{a_{v}\}\,;\,\{t_{v}\}]$ be a point in $\mathcal{B}^\Sigma(M)(m)$. The composition $[T\,;\,\{a_{v}\}\,;\,\{t_{v}\}]\circ^{i}a$ consists in grafting the $n$-corolla labelled by $a$ to the $i$-th incoming edge of $T$ and indexing the new vertex by $1$. Similarly, let $[T^{i}\,;\,\{a^{i}_{v}\}\,;\,\{t^{i}_{v}\}]$ be a family of points in the spaces 
$\mathcal{B}^\Sigma(M)(m_{i})$. The left module structure over $O$ is defined as follows: each tree of the family is grafted to a leaf of the $n$-corolla labelled by $a$ from left to right. The new vertex, coming from the $n$-corolla, is indexed by $1$.

\begin{figure}[!h]
\begin{center}
\includegraphics[scale=0.45]{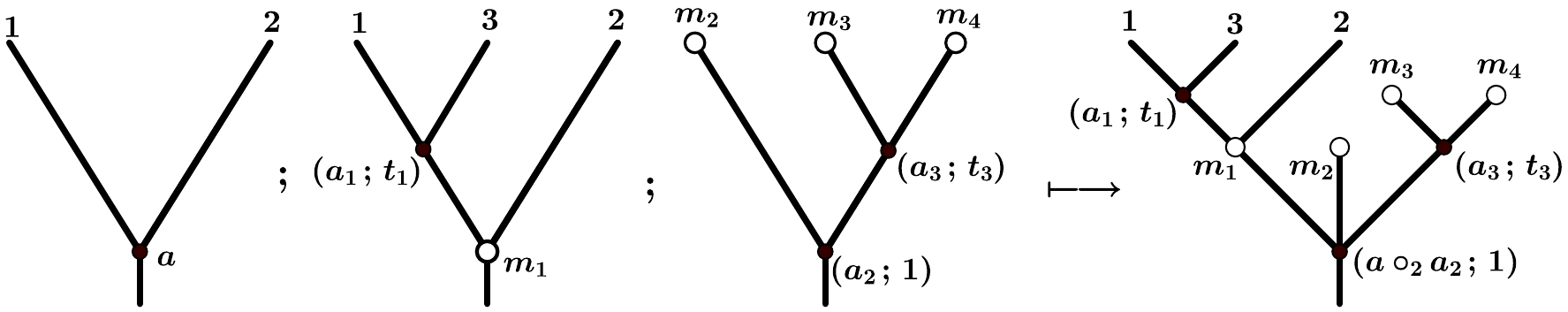}
\caption{Illustration of the left module structure.}
\end{center}
\end{figure}

One has an obvious inclusion of $\Sigma$-sequences $\iota \colon M\to \calB^\Sigma(M)$, where each element
$m\in M(k)$ is sent to a $k$-corolla labelled by $m$, whose only vertex is a pearl.  
%This induces
%a map of $O$-bimodules 
%\begin{equation}\label{D4}
%\tau:\mathcal{F}_{B}^\Sigma(M)\rightarrow\mathcal{B}^\Sigma(M),
%\end{equation}
%where $\calF_B^\Sigma(M)$ is the free $O$-bimodule. It is easy to see that $\tau$ is an inclusion, whose  image  consists of pearled trees whose all non-pearls  labelled by 1 (thus all edges between such vertices get contracted).
%
%\noindent Let us recall that the free $O$-bimodule $\mathcal{F}_{B}(M)$ is the space of equivalence classes $[T\,;\,\{a_{v}\}]$ with $T\in \textbf{rstree}$. Since \textbf{rstree} is a subset of \textbf{stree}, there is a map:
%\begin{equation}\label{D4}
%\tau:\mathcal{F}_{B}(M)\rightarrow\mathcal{B}(M)\,\,;\,\, [T\,;\,\{a_{v}\}]\mapsto [T\,;\,\{1_{v}\}\,;\, \{a_{v}\}],
%\end{equation}
%indexing the vertices other than the pearls by $1$. Due to the axiom $(iii)$ of Construction \ref{B0}, $\tau$ is a  $O$-bimodule map. 
 Furthermore, the following map:
\begin{equation}\label{D3}
\mu:\mathcal{B}^\Sigma(M)\rightarrow M\,\,;\,\, [T\,;\,\{t_{v}\}\,;\,\{a_{v}\}]\mapsto [T\,;\,\{0_{v}\}\,;\, \{a_{v}\}],
\end{equation}
is defined by sending the real numbers indexing the vertices other than the pearls to $0$. The element so obtained is identified to the pearled corolla labelled by a point in $M$.  It is easy to see that $\mu$ is an $O$-bimodule map.
\end{const}

\begin{rmk}\label{rm:BWbim_coend}
Each component $\mathcal{B}^\Sigma(M)(k)$ can also be defined as a coend over a category whose objects 
are trees slightly more general than those in $\textbf{stree}_k$. Namely, one should also allow
trees with univalent vertices below the section. The morphisms in this category are generated
by section preserving isomorphisms of trees, additions of a bivalent vertex in the middle of any edge,
edge contractions between two non-pearl vertices or between  a vertex above the section and a pearl, and finally
by contractions of all the edges in between a vertex (possibly of arity zero) below the section  and pearls above it.  In the latter case this vertex becomes a pearl. Compare with Remark~\ref{rm:BWbim_alt}.
\end{rmk}

In order to get resolutions for truncated bimodules, one considers a filtration in $\mathcal{B}^\Sigma(M)$ according to  the number of {\it geometrical inputs} which is the number of leaves plus the number of univalent vertices above the section. A point in $\mathcal{B}^\Sigma(M)$ is said to be \textit{prime} if the real numbers indexing the vertices are strictly smaller than $1$. Otherwise, a point is said to be \textit{composite}. Such a point can be decomposed into \textit{prime components} as shown in Figure~\ref{B1}. More precisely, the prime components %of a point indexed by a planar tree %(it means that the permutation labelling the leave is the identity permutation) 
 are obtained by removing the vertices indexed by the real number $1$. %Otherwise, the prime components of a point of the form $[(T\,;\,\sigma)\,;\,\{t_{v}\}\,;\,\{a_{v}\}]$, with $\sigma\neq id$, coincide with the prime components of $[(T\,;\,id)\,;\,\{t_{v}\}\,;\,\{a_{v}\}]$.
\begin{figure}[!h]
\begin{center}
\includegraphics[scale=0.4]{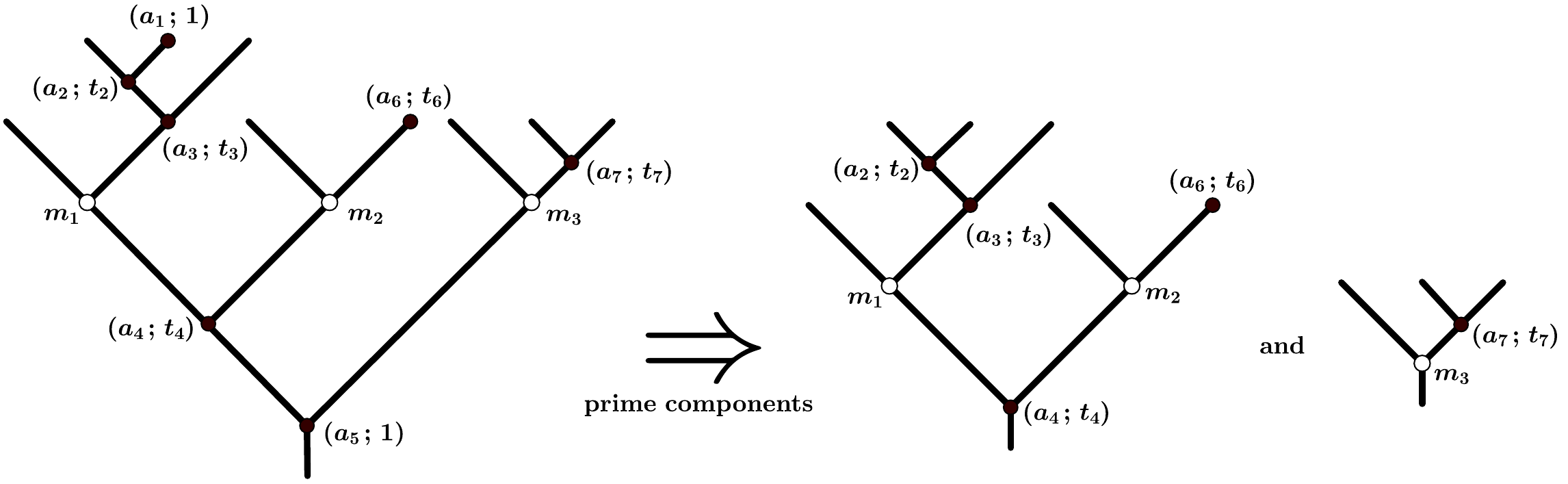}
\caption{A composite point and its prime components.}\label{B1}\vspace{-35pt}
\end{center}
\end{figure}

\newpage

 A prime point is in the $k$-th filtration term $\mathcal{B}_{k}^\Sigma(M)$ if the number of its geometrical inputs is at most~$k$. Similarly, a composite point is in the $k$-th filtration term if its all prime components are in $\mathcal{B}_{k}^\Sigma(M)$. For instance, the composite point in Figure~\ref{B1} is in the filtration term $\mathcal{B}_{6}^\Sigma(M)$. For each $k$, $\mathcal{B}_{k}^\Sigma(M)$ is a bimodule over $O$ and one has the following filtration of $\mathcal{B}^\Sigma(M)$:
\begin{equation}\label{B3}
\xymatrix{
 \mathcal{B}_{0}^\Sigma(M) \ar[r] & \mathcal{B}_{1}^\Sigma(M)\ar[r] & \cdots \ar[r] & \mathcal{B}_{k-1}^\Sigma(M) \ar[r] & \mathcal{B}_{k}^\Sigma(M) \ar[r] & \cdots \ar[r] &  \mathcal{B}^\Sigma(M).
}
\end{equation}

The term $\mathcal{B}_0^\Sigma(M)$ is non-empty only in arity zero, where it is the pushout in the
category of $O$-algebras of the free $O$-algebra generated by $M(0)$ with $O(0)$  along the free $O$-algebra generated by $O(0)$:
$$
\mathcal{B}_0^\Sigma(M)=O(0)\coprod_{\calF_B^\Sigma(O(0))} \calF_B^\Sigma(M(0)).
$$

\begin{thm}[Theorem 2.12 in \cite{Ducoulombier16}]\label{th:BV_proj_bimod}
Assume that  $O$  is a well-pointed $\Sigma$-cofibrant topological operad, and $M$ is a $\Sigma$-cofibrant 
$O$-bimodule for which the arity zero left action map $\gamma_0\colon O(0)\to M(0)$ is a cofibration. Then, the objects $\mathcal{B}^\Sigma(M)$ and $\TT_{k}\mathcal{B}_{k}^\Sigma(M)$ are cofibrant replacements of $M$ and $\TT_{k}M$ in the categories $\Sigma\Bimod_{O}$ and $\TT_{k}\Sigma\Bimod_{O}$, respectively. In particular the maps $\mu$ and $\TT_{k}\mu|_{\TT_{k}\mathcal{B}_{k}^\Sigma(M)}$ are weak equivalences.
\end{thm}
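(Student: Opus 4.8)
There are two things to prove: (i) the $O$-bimodule map $\mu\colon\calB^\Sigma(M)\to M$, and its truncated variant $\TT_k\mu|_{\TT_k\calB^\Sigma_k(M)}$, are weak equivalences; (ii) $\calB^\Sigma(M)$ is cofibrant in $\Sigma\Bimod_O$, and $\TT_k\calB^\Sigma_k(M)$ is cofibrant in $\TT_k\Sigma\Bimod_O$. Since weak equivalences in the projective model structures are detected arity-wise on underlying spaces, (i) is the soft part and is handled by an explicit homotopy; (ii) is where the hypotheses ($O$ well-pointed $\Sigma$-cofibrant, $M$ $\Sigma$-cofibrant, $\gamma_0\colon O(0)\to M(0)$ a cofibration) are consumed.

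\textbf{Step 1: $\mu$ is a weak equivalence.} The inclusion $\iota\colon M\to\calB^\Sigma(M)$ sending $m\in M(k)$ to the pearled $k$-corolla labelled by $m$ satisfies $\mu\circ\iota=\mathrm{id}_M$. I would then write the deformation retraction $H\colon\calB^\Sigma(M)\times[0,1]\to\calB^\Sigma(M)$, $H_s[T;\{t_v\};\{a_v\}]=[T;\{(1-s)t_v\};\{a_v\}]$, uniformly shrinking all vertex parameters toward $0$. One checks that $H$ is well defined on equivalence classes (each generating relation $i$--$v$ is visibly stable under rescaling), continuous (products and quotients behave well in $k$-spaces), $\Sigma_k$-equivariant in each arity, $H_0=\mathrm{id}$, and $H_1=\iota\circ\mu$ (when every $t_v=0$, relation $iv$ collapses the tree to a pearled corolla). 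Hence $\mu$ is an arity-wise equivariant homotopy equivalence, in particular a weak equivalence. For the truncated statement one must keep the retraction inside $\TT_k\calB^\Sigma_k(M)$; uniform rescaling is not filtration-preserving because it merges prime components and can create more geometrical inputs, so I would instead shrink the parameters \emph{within each prime component} while fixing the vertices indexed by $1$, obtaining a retraction onto the subspace whose prime components are all pearled corollas, and then finish with a secondary homotopy onto $\iota(\TT_kM)$. Setting up this refined, filtration-preserving retraction and checking it is well defined is one of the points requiring genuine care.

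\textbf{Step 2: cofibrancy.} I would exhibit $\calB^\Sigma(M)$ as a cellular $O$-bimodule by filtering it according to the number of \emph{internal vertices}, i.e. non-pearl vertices with parameter in the open interval $(0,1)$ (equivalently, the total number of non-pearl vertices of the prime components): $\calB^\Sigma_{\langle 0\rangle}(M)\subset\calB^\Sigma_{\langle 1\rangle}(M)\subset\cdots$. This filtration is preserved by the bimodule operations, since grafting $O$-labelled corollas indexed by $1$ does not add internal vertices (consecutive $1$-vertices merge by relation $iii$). The bottom piece $\calB^\Sigma_{\langle 0\rangle}(M)$, whose prime components are all pearled corollas, is a quotient of the free $O$-bimodule $\calF^\Sigma_B(M)=O\circ M\circ O$ by the identifications produced by relation $v$; its cofibrancy reduces to the $\Sigma$-cofibrancy of $M$ together with the hypothesis that $\gamma_0$ is a cofibration, and to $\calF^\Sigma_B$ being left Quillen in the transferred model structure. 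Each subsequent inclusion $\calB^\Sigma_{\langle n-1\rangle}(M)\to\calB^\Sigma_{\langle n\rangle}(M)$ I would realise as a pushout in $\Sigma\Bimod_O$ of $\calF^\Sigma_B(\iota_n)$, where $\iota_n\colon A_n\hookrightarrow B_n$ is a cofibration of $\Sigma$-sequences: $B_n$ is the $\Sigma$-sequence of "new prime blocks" with exactly $n$ internal vertices --- a bundle over a discrete index set of decorated labelled trees, with fibre a product of intervals $[0,1]$ recording the parameters and with decorations in the spaces $O(|v|)$, $M(|v|)$ --- and $A_n\subset B_n$ is the sublocus where some parameter lies in $\{0,1\}$, which by relations $iii$ and $iv$ already belongs to the previous stage. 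The inclusion $\iota_n$ is a $\Sigma$-cofibration by a pushout-product argument: the cube-boundary inclusions $\partial[0,1]^p\hookrightarrow[0,1]^p$ are cofibrations of spaces, the decoration spaces are cofibrant because $O$ and $M$ are $\Sigma$-cofibrant and $O$ is well-pointed (making the unit cells from relation $i$ behave), and the $\Sigma$-action permuting the leaf labels is free on the tree index set. Since $\calF^\Sigma_B$ is left Quillen, $\calF^\Sigma_B(\iota_n)$ is a cofibration of bimodules, hence so is each filtration step, hence so is the transfinite composite; together with cofibrancy of the bottom piece this gives that $\calB^\Sigma(M)$ is cofibrant. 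The truncated assertion follows by running the same filtration inside $\TT_k\calB^\Sigma_k(M)$ with the truncated free functor $\calF^{\TT_k\Sigma}_B$, observing that attaching a prime block never lowers the number of geometrical inputs, so only cells indexed by trees with at most $k$ geometrical inputs occur.

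\textbf{Main obstacle.} The hard part, and the source of most of the bookkeeping, is the arity-zero behaviour together with making the cellular pushout description literally correct. One must verify that $A_n$ is \emph{exactly} the preimage of $\calB^\Sigma_{\langle n-1\rangle}(M)$ in $B_n$ --- that the equivalence relation generated by $i$--$v$ glues prime blocks along their "parameter in $\{0,1\}$" faces precisely as a pushout of a free-bimodule map does, with no further and no missing identifications --- and that the interplay of relations $iv$, $v$ with the hypothesis that $\gamma_0$ is a cofibration suffices to make the bottom piece and the arity-zero components cofibrant. A second recurring nuisance is equivariance: through relation $ii$ the $\Sigma$-actions act on the decorations as well as on the leaf labels, and one must ensure the relevant actions are free (or the induced maps are cofibrations) so that the pushout-product and induced-cofibration arguments apply. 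For the truncated theorem there is the additional subtlety flagged in Step 1, namely that the naive shrinking homotopy does not respect the geometrical-input filtration, so the weak-equivalence and the cellular parts must be kept simultaneously compatible with the truncation functor.
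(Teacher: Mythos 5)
Your overall architecture --- an explicit retraction for the weak-equivalence half, and a cellular filtration realized by free-bimodule cell attachments for the cofibrancy half --- is the same strategy the paper sketches (the paper defers the full proof to \cite{Ducoulombier16}, but the discussion after the theorem and Example~\ref{ex:BprojO} describe exactly this: prime elements span cells attached in a suitable order). The one structural difference is that you filter by the number of internal vertices of the prime components, whereas the paper's primary filtration is by geometrical inputs, refined within each stage by arity and number of univalent vertices. Your untruncated Step~1 is correct. Two points need repair.

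First, the truncated retraction. Your two-stage homotopy (shrink within prime components, then merge) does not stay inside $\TT_k\calB_k^\Sigma(M)$. The culprit is arity-zero vertices above the section: they contribute to the geometrical-input count but not to the arity. Concretely, take $k=1$, $a\in O(2)$, $m_1,m_2\in M(1)$, $\ast_0\in O(0)$, and the arity-one element $x=a\bigl(\iota(m_1),\iota(m_2)\circ^1 \ast_0\bigr)$, i.e.\ the left action of $a$ (parameter $1$) on two pearled $1$-corollas, the second of which has its leaf capped by the $0$-corolla $\ast_0$ (parameter $1$). Its prime components are the two pearled $1$-corollas, each with one geometrical input, so $x\in\calB_1^\Sigma(M)(1)$; but as soon as your secondary homotopy lowers the parameters of $a$ and $\ast_0$ below $1$, the element becomes prime with one leaf and one univalent vertex above the section, hence two geometrical inputs, and it leaves $\calB_1^\Sigma(M)$. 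The missing idea is the one the paper uses in the proof of Theorem~\ref{th:proj_bar_cof}: first run a filtration-preserving homotopy that pulls every vertex above the section not vertically connected to a leaf down into its pearl (relation~$(iv)$), thereby eliminating all univalent vertices above the section. After that, geometrical inputs reduce to leaf counts, which are bounded by the arity of the element, so every arity-$j$ element lies in the $j$-th filtration term and the remaining uniform shrinking automatically respects the truncation.

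Second, the attaching loci. Under the paper's conventions the parameters sit on vertices subject to $t_{s(e)}\geq t_{t(e)}$ (resp.\ $\leq$), so the fibre of a cell is an order polytope rather than a cube, and the sublocus $A_n$ already lying in the previous stage is not just ``some parameter in $\{0,1\}$'': it must also contain the faces where two adjacent parameters coincide (relation~$(iii)$), the loci where a non-pearl label is the unit $\ast_1$ (relation~$(i)$, where well-pointedness enters), and the loci where relation~$(v)$ applies (where the hypothesis that $\gamma_0$ is a cofibration enters; note that relation~$(v)$ can convert a non-pearl vertex into a pearl, so it lowers your internal-vertex count and must appear in $A_n$ at every stage, not only in the bottom piece). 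Relation~$(v)$ also glues cells with the same internal-vertex count to one another, so each filtration step is a possibly infinite, suitably ordered sequence of cell attachments rather than a single pushout. None of this breaks the strategy --- each of these loci gives a $\Sigma$-cofibration by the equivariant pushout-product lemma under the stated hypotheses --- but with $A_n$ as you describe it the pushout comparison you flag as the main obstacle would in fact fail.
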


\begin{rmk}\label{rm:BWbim_alt}
There is a slightly different construction of the Boardman-Vogt replacement for bimodules in which we allow
univalent
vertices below the section. Its advantage is that it works even when $\gamma_0\colon O(0)\to M(0)$ 
is not a cofibration. The disadvantage is that this replacement combinatorially is much more complicated. For example, its filtration zero term $\mathcal{B}_{0}^\Sigma(M)$ consists
of trees with no vertices above the section. In the case $M=O$ and $O$ is doubly reduced to which we want to apply this construction, even to obtain  $\mathcal{B}_{0}^\Sigma(M)$, we need an infinite sequence of cell attachments, see construction below, which makes it
too clumsy   to apply for the proof of
our Main Theorem~\ref{th:main}. 
\end{rmk}
%
%\begin{equation}\label{eq:bimod_aux_filtr}
%\xymatrix{
% \mathcal{B}_{0}^\Sigma(M) \ar[r] & \mathcal{B}_{1}^\Sigma(M)\ar[r] & \cdots \ar[r] & \mathcal{B}_{k-1}^\Sigma(M) \ar[r] & \mathcal{B}_{k}^\Sigma(M) \ar[r] & \cdots \ar[r] &  \mathcal{B}^\Sigma(M).
%}
%\end{equation}

%
%From a $k$-truncated bimodule $M_{k}$, we can consider the $k$-free bimodule $(k)\text{-}\mathcal{F}_{B}(M_{k})$ whose $k$ first components coincide with $M_{k}$. The functor $(k)\text{-}\mathcal{F}_{B}$ is the left adjoint to the truncated functor $T_{k}$ and can be described using the set of reduced trees with section in which the sum of two consecutive vertices is bigger than $k+2$. Furthermore, in our case $(k)\text{-}\mathcal{F}_{B}(T_{k}(\mathcal{B}_{k}(M)))=\mathcal{B}_{k}(M)$ since $\mathcal{B}_{k}(M)$ is built as the sub-bimodule of $\mathcal{B}(M)$ generated by its $k$ first components. 
%

The idea of the proof of this theorem is that each map $\calB_{k-1}^\Sigma(M)\to \calB_k^\Sigma(M)$,
$k\geq 0$
is a cofibration being obtained as a possibly infinite sequence of so called cell attachments -- a procedure 
that we explain below. Here by $\calB_{-1}^\Sigma(M)$ we mean the free bimodule $\calF_B^\Sigma(\emptyset)$
generated by the empty sequence. It is $O(0)$ in arity zero and empty otherwise. \vspace{5pt}

The inclusion of $O$ bimodules $M\to M\rq{}$ is called a cell attachment if there is a cofibration
of $\Sigma$-sequences $\partial X\to X$ (in practice $\partial X$ is also cofibrant) and a morphism
of $\Sigma$-sequences $\partial X\to M$ inducing an $O$ bimodules map $\calF_B^\Sigma(\partial X)
\to M$, so that $M\rq{}$ is obtained as the following pushout in the category of $O$-bimodules:
\[
M\rq{}=M\coprod_{\calF_B^\Sigma(\partial X)} \calF_B^\Sigma(X).
\]

We
warn the reader that here and almost everywhere in the text $\partial$ does not mean boundary in the usual sense, but just a subset. However, because of the notation, we call it {\it boundary} sometimes and the 
complement $X\setminus\partial X$ is called {\it interior} of the cell~$X$.
The main reason for this notation is that because for the main example of the Fulton-MacPherson operad,
in all the constructions, the corresponding subsets are expected to be actual boundaries of manifolds.
It is not proved, but conjectured.\vspace{5pt}

The prime elements in $\calB^\Sigma(M)$ span free cells that are attached in certain order.
Note that the prime components for the $k$-th filtration term always have arity $\leq k$. This means that
$\calB_k^\Sigma(M)$ is obtained by a sequence of \lq\lq{}cell-attachments\rq\rq{} where we always attach
cells of arity $\leq k$.
Consequently, one has the following equivalences:
\begin{equation}\label{eq:der_bimod_vs_tr1}
\TT_{k}\Bimod_{O}^{h}(\TT_{k}M\,;\, \TT_{k}M')\simeq \TT_{k}\Bimod_{O}(\TT_{k}\mathcal{B}_{k}^\Sigma(M)\,;\, \TT_{k}M') \cong \Bimod_{O}(\mathcal{B}_{k}^\Sigma(M)\,;\, M'),
\end{equation}
where the last one is a homeomorphism. 

\begin{expl}\label{ex:BprojO}
Let us desrcibe a bit more explicitly the above cell attachments in the special case $O(0)=O(1)=*$ and
$M=O$. Since $O(1)=*$, thanks to relation ($i$), trees with bivalent non-pearl vertices do not appear.   First we get in this case
\[
\calB_{-1}^\Sigma(O)=\calB_{0}^\Sigma(O)=\calF_B^\Sigma(\emptyset),
\]
which means that there is no prime elements with zero geometrical inputs. To go from $\calB_{0}^\Sigma(O)$ to $\calB_1^\Sigma(O)$ we need to attach 2 cells, the first one corresponds to
the pearled 1-corolla, attached in arity~1, and the second one corresponds to the tree \includegraphics[scale=0.08]{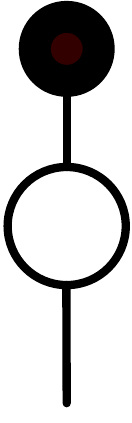}, attached in arity zero. In general the cofibration 
$\calB_{k-1}^\Sigma(O)\to \calB_k^\Sigma(O)$ is a composition of $k+1$ cell attachments, where at the first step we attach a cell whose interior consists of prime elements  labelled by trees of arity $k$ with no univalent vertices, then we attach
a cell whose interior consists of prime elements labelled by trees of arity $k-1$ with exactly one univalent vertex, and so on, at the
$(i+1)$-th step we attach a cell whose interior consists of  prime elements labelled by trees of arity $(k-i)$ with exactly $i$ univalent vertices, etc. 
Note that each time the generating sequence is concentrated in one arity. 
\end{expl}

\subsubsection{Boardman-Vogt resolution in the $\Lambda$ setting.}\label{sss:cof11}
Now we adjust the above construction to produce Reedy cofibrant replacements of bimodules.
We assume that our operad $O$ and a bimodule $M$ over it are reduced: $O(0)=M(0)=*$. As  a $\Sigma$-sequence, we set
\[
\calB_O^\Lambda(M):=\calB_{O_{>0}}^\Sigma(M_{>0})_+.
\]
When $O$ is understood, we also write $\calB^\Lambda(M)$. The superscript $\Lambda$
is to emphasize that we get a cofibrant replacement in the Reedy model structure.  The map $\mu\colon \calB^\Lambda(M)\to M$ is extended to  arity zero in the obvious way.\vspace{5pt}

 The arity zero left action $\gamma_0\colon O(0)\to \calB^\Lambda(M)(0)$ sends a point to a point
$*_0\mapsto *_0^\calB$. The
positive arity left action on positive arity components is defined in the same way as for
 $\calB_{O_{>0}}^\Sigma(M_{>0})$. For the positive arity left action on the components of $\calB^\Lambda(M)$, some of which are of arity zero, one uses the associativity of the left $O$-action to get
 \[
 x(*_0^\calB,y_1,\ldots,y_{k-1})=(x\circ_1 *_0)(y_1,\ldots,y_{k-1}),
 \]
 where $x\in O(k)$, $y_1,\ldots,y_{k-1}\in \calB^\Lambda(M)$.\vspace{5pt}

  The right action by the positive arity components is defined as it is on $\calB_{O_{>0}}^\Sigma(M_{>0})$. The right action by $*_0\in O(0)$ is defined in the obvious way as the right
 action by $*_0$ on $a$ in the vertex $(a,t)$ connected to the leaf labelled by $i$ as illustrated in the Figure \ref{A1}.
 \begin{figure}[!h]
 \begin{center}
 \includegraphics[scale=0.25]{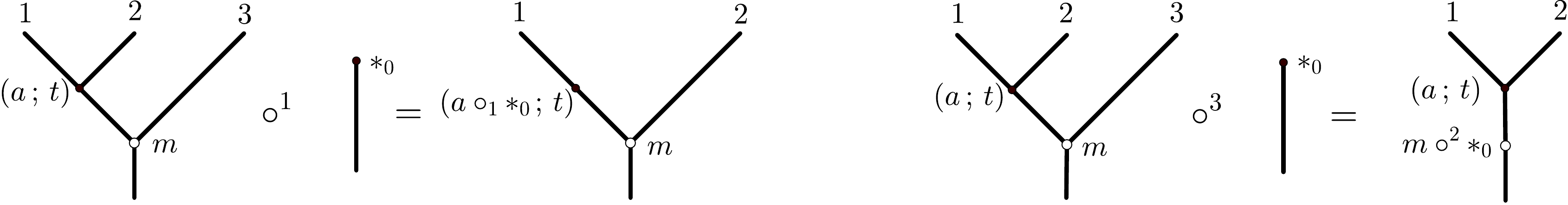}
 \caption{Illustration of the right action by $\ast_{0}$.}\label{A1}\vspace{-5pt}
 \end{center}
 \end{figure}

Note that since $O_{>0}$ and $M_{>0}$ have  empty arity zero components, in the union~\eqref{eq:union_stree} 
we can consider only trees whose all vertices have arity $\geq 1$. We denote this set by 
$\textbf{stree}^{\geq 1}_k$.
 Moreover, if $O$ is doubly reduced --
the case that we mostly use for the proof of  Main Theorem~\ref{th:main},
we can in addition assume that all non-pearl vertices are of arity~$\geq 2$ (thanks to relation~(i)
of Construction~\ref{B0}). The corresponding set is denoted by $\textbf{stree}^{\geq 2}_k$.\vspace{5pt}

The filtration~\eqref{B3} in $\calB_{O_{>0}}^\Sigma(M_{>0})$ induces filtration
\begin{equation}\label{eq:filtr_reedy_bimod}
\xymatrix{
 \mathcal{B}_{0}^\Lambda(M) \ar[r] & \mathcal{B}_{1}^\Lambda(M)\ar[r] & \cdots \ar[r] & \mathcal{B}_{k-1}^\Lambda(M) \ar[r] & \mathcal{B}_{k}^\Lambda(M) \ar[r] & \cdots \ar[r] &  \mathcal{B}^\Lambda(M),
}
\end{equation}
where $\mathcal{B}_{k}^\Lambda(M)$ is the $k$-th filtration term of $\calB_{O_{>0}}^\Sigma(M_{>0})$
plus $*_0^\calB$. In particular, the zeroth term $ \mathcal{B}_{0}^\Lambda(M)$ has only a point in arity zero and 
is empty in all the other arities.  Note also that when we go from $(k-1)$-th to $k$-th filtration term we only attach cells of arity exactly $k$
as the number of geometrical inputs in prime components is equal to their arity by the lack of arity zero vertices. (In case $O$ is doubly reduced there is exactly one cell attached at this step.)  Such attachments 
affect only components of arity $\geq k$. As a consequence,
 $\TT_k \mathcal{B}_{k}^\Lambda(M)=\TT_k\mathcal{B}^\Lambda(M)$.

\begin{pro}\label{p:BV_reedy_bimod}
Assume that   $O$ is a reduced well-pointed $\Sigma$-cofibrant topological operad, and $M$ is a reduced
$\Sigma$-cofibrant $O$-bimodule. Then, the objects $\mathcal{B}^\Lambda(M)$ and $\TT_{k}\mathcal{B}^\Lambda(M)$ are cofibrant
 replacements of $M$ and $\TT_{k}M$ in the categories $\Lambda\Bimod_{O}$ and $\TT_{k}\Lambda\Bimod_{O}$, respectively. In particular the maps $\mu$ and $\TT_{k}\mu$ are weak equivalences.
\end{pro}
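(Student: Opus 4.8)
The proof proposal follows the ``From $\Lambda$ to $\Sigma$ reduction principle'': we deduce everything from the projective statement Theorem~\ref{th:BV_proj_bimod} together with the cofibration criterion of Theorem~\ref{th:bim_reedy_model}~(ii), using the definition $\calB_O^\Lambda(M)=\calB_{O_{>0}}^\Sigma(M_{>0})_+$.

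First I would observe that the hypotheses pass to the positive-arity data. Since $O$ is reduced, well-pointed and $\Sigma$-cofibrant, the operad $O_{>0}$ is well-pointed and $\Sigma$-cofibrant, and $M_{>0}$ is a $\Sigma$-cofibrant $O_{>0}$-bimodule; the arity-zero left action $\gamma_0\colon O_{>0}(0)=\emptyset\to M_{>0}(0)=\emptyset$ is an isomorphism, hence a cofibration. Therefore Theorem~\ref{th:BV_proj_bimod} applies to the pair $(O_{>0},M_{>0})$: the objects $\calB_{O_{>0}}^\Sigma(M_{>0})$ and $\TT_k\calB_{O_{>0},k}^\Sigma(M_{>0})$ are cofibrant replacements of $M_{>0}$ and $\TT_kM_{>0}$ in the projective model structures on $O_{>0}$-bimodules and on $k$-truncated $O_{>0}$-bimodules, and $\mu$, $\TT_k\mu$ are objectwise weak equivalences in positive arities.

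Next I would run the two-part argument in the Reedy setting. For the weak-equivalence property: weak equivalences in $\Lambda\Bimod_O$ and $\TT_k\Lambda\Bimod_O$ are detected objectwise, in positive arities by the previous paragraph and in arity zero because $\mu$ is the identity of $\ast$; so $\mu$ and $\TT_k\mu$ are weak equivalences. For cofibrancy: the unique map from the initial object of $\Lambda\Bimod_O$ to $\calB^\Lambda(M)$ restricts in positive arities to the unique map from the initial $O_{>0}$-bimodule to $\calB^\Lambda(M)_{>0}=\calB_{O_{>0}}^\Sigma(M_{>0})$, which is a projective cofibration by the first step; hence by Theorem~\ref{th:bim_reedy_model}~(ii) it is a Reedy cofibration, so $\calB^\Lambda(M)$ is Reedy cofibrant. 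For the truncated case I would first invoke the remark preceding the statement: passing from the $(k-1)$-st to the $k$-th filtration term of $\calB^\Lambda(M)$ attaches only cells of arity exactly $k$, hence affects only components of arity $\geq k$, so $\TT_k\calB_k^\Lambda(M)=\TT_k\calB^\Lambda(M)$ and its positive-arity part is $\TT_k\calB_{O_{>0},k}^\Sigma(M_{>0})$; applying the $k$-truncated version of Theorem~\ref{th:bim_reedy_model}~(ii) to this projectively cofibrant object gives that $\TT_k\calB^\Lambda(M)$ is Reedy cofibrant in $\TT_k\Lambda\Bimod_O$. The homeomorphism analogous to~\eqref{eq:der_bimod_vs_tr1} for the Reedy derived mapping spaces then follows formally.

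The only point requiring genuine verification rather than bookkeeping is that $\calB^\Lambda(M)=\calB_{O_{>0}}^\Sigma(M_{>0})_+$, equipped with the structure described just before the statement — the arity-zero left unit $\ast_0\mapsto\ast_0^\calB$, the left action $x(\ast_0^\calB,y_1,\ldots,y_{k-1})=(x\circ_1\ast_0)(y_1,\ldots,y_{k-1})$, and the right action by $\ast_0\in O(0)$ depicted in Figure~\ref{A1} — really is a reduced $O$-bimodule and that $\mu$ is a morphism of such. I expect this to be the main (though routine) obstacle: one must check the associativity, unit and $\Sigma$-equivariance axioms involving arity-zero compositions one operation at a time, reducing each to the already-established axioms for $\calB_{O_{>0}}^\Sigma(M_{>0})$ via the associativity of the left $O$-action that is used to define the new operations. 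Once this is in place, the rest of the proof is formal.
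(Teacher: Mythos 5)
Your proposal is correct and follows exactly the paper's route: the paper's proof consists precisely of applying Theorem~\ref{th:BV_proj_bimod} to $O_{>0}$ and $M_{>0}$ and invoking the cofibration criterion of Theorem~\ref{th:bim_reedy_model}~(ii). The additional details you supply (the vacuous arity-zero cofibration hypothesis, the objectwise detection of weak equivalences, the identification $\TT_k\calB_k^\Lambda(M)=\TT_k\calB^\Lambda(M)$, and the verification that the extended arity-zero operations satisfy the bimodule axioms) are exactly the bookkeeping the paper leaves implicit.
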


\begin{proof}
Follows from Theorem~\ref{th:BV_proj_bimod} applied to $O_{>0}$ and $M_{>0}$ and 
Theorem~\ref{th:bim_reedy_model}~(ii).
\end{proof}

For $M'$ reduced and Reedy fibrant $O$-bimodule, one similarly has
\begin{equation}\label{eq:der_bimod_vs_tr2}
\TT_{k}\Bimod_{O}^{h}(\TT_{k}M\,;\, \TT_{k}M')\simeq \TT_{k}\Bimod_{O}(\TT_{k}\mathcal{B}^\Lambda(M)\,;\, \TT_{k}M') \cong \Bimod_{O}(\mathcal{B}_{k}^\Lambda(M)\,;\, M'),
\end{equation}
where again the last equivalence is a homeomorphism.

\begin{expl}\label{ex:1compl}
In case $O$ is doubly reduced,  the  only trees with section that  contribute to $\calB^\Lambda(M)$ are those that have pearls of arity $\geq 1$, and 
all the other vertices of arity $\geq 2$. In particular, for $k=1$ there is only one such tree 
\includegraphics[scale=0.07]{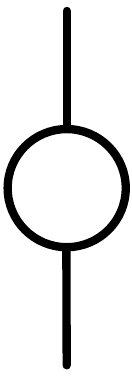}.
And one has $\calB^\Lambda(M)(1)=M(1)$. For $k=2$, here is the space of trees:
$$
\includegraphics[scale=0.25]{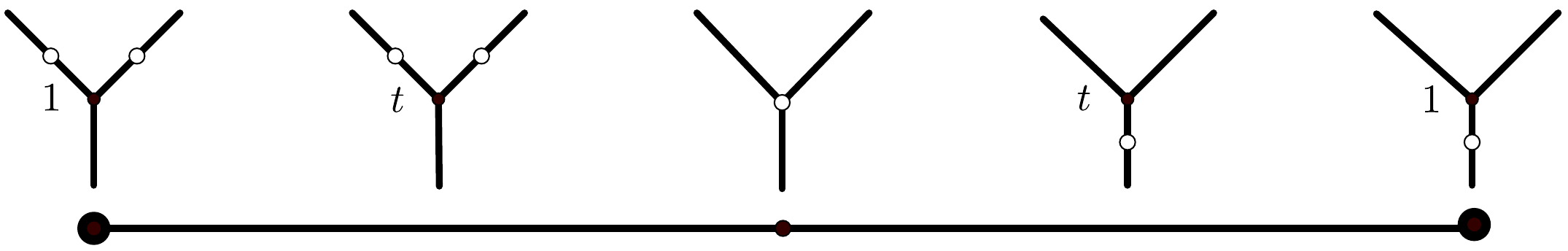}
$$
The corresponding space $\calB^\Lambda(M)(2)$ is the union of two mapping cylinders corresponding to the maps 
\[
\gamma_l\colon O(2)\times M(1)\times M(1)\to M(2), \quad
\gamma_r\colon M(1)\times O(2)\to M(2),
\]
which are glued together along the target space $M(2)$. For the  case $M=O$,  we get $\calB^\Lambda(O)(1)=*$, $\calB^\Lambda(O)(2)=O(2)\times [-1,1]$ (as both maps above are essentially identity maps).
\end{expl}

\subsection{Boardman-Vogt resolution for infinitesimal bimodules}\label{ss:cof2}

The constructions of the Boardman-Vogt resolutions for bimodules and infinitesimal bimodules are similar to each other. Consequently, we refer the reader to \cite[Section 2.3]{Ducoulombier16} for a proof of 
Theorem~\ref{C7} (see also~\cite{Berger06} for the operadic case and Appendix~\ref{s:A1} that
gives a proof of a similar statement). First, we introduce the set of trees used to define the Boardman-Vogt resolution.

\begin{figure}[!h]
\begin{center}
\includegraphics[scale=0.45]{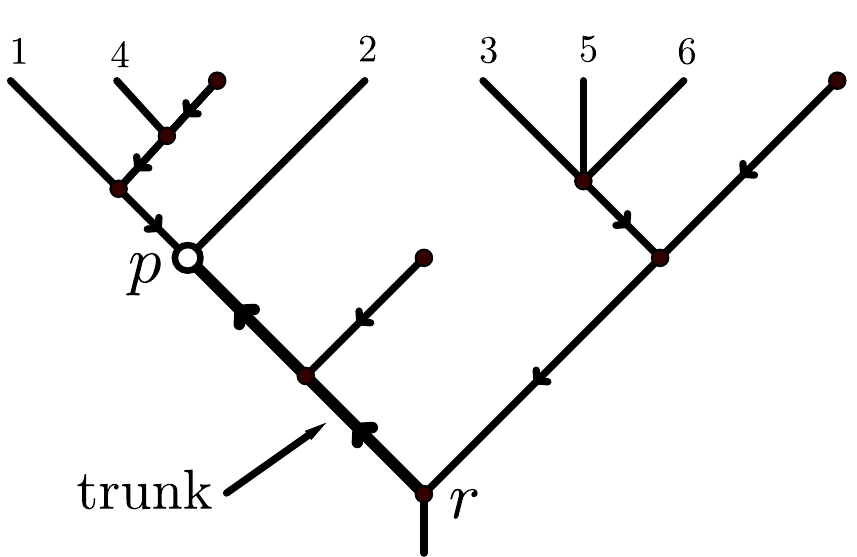}\vspace{-5pt}
\caption{Example of a pearled tree with its orientation toward the pearl.}\label{fig:ptree}\vspace{-15pt}
\end{center}
\end{figure}

\begin{defi}\textbf{The set of pearled trees}\\
Let \textbf{ptree}$_k$ be the set of pairs $(T\,;\,p)$ where $T\in\textbf{tree}_k$  and $p\in V(T)$ is
a distinguished vertex called the {\it pearl}. The path between the root $r$ and the pearl $p$ will be called the {\rm trunk} of $(T\,;\,p)$. Elements in \textbf{ptree}$_k$ are called \textit{pearled trees}.  %The pearl devides the tree into two parts.
  Such a tree is endowed with another orientation toward the pearl. By abuse of notation, if $e$ is an inner edge of a pearled tree, then $s(e)$ and $t(e)$ denote respectively the source and the target vertices according to the orientation toward the pearl. If there is no ambiguity about the pearl, we denote by $T$ the pearled tree $(T\,;\,p)$.\vspace{-5pt}
\end{defi}

\begin{const}\label{C8}
From an $O$-Ibimodule $M$, we build an $O$-Ibimodule $\mathcal{I}b_O^\Sigma(M)$. When $O$
is understood we  also write $\mathcal{I}b^\Sigma(M)$. The  points of  $\mathcal{I}b^\Sigma(M)(k)$ are equivalence classes $[T\,;\, \{t_{v}\}\,;\, \{a_{v}\}]$ with $T\in \textbf{ptree}_k$ and $\{a_{v}\}_{v\in V(T)}$ is a family of points labelling the vertices of~$T$. The pearl is labelled by a point in $M$ whereas the other vertices are labelled by points in the operad $O$ always assuming that the arity is respected.  Furthermore, $\{t_{v}\}_{v\in V(T)\setminus \{p\}}$ is a family of real numbers in the interval $[0\,,\,1]$ indexing the vertices other than the pearl. If $e$ is an inner edge, then $t_{s(e)}\geq t_{t(e)}$ according to the orientation toward the pearl.  In other words, closer to the pearl is a vertex, smaller is the corresponding real number. 
The space $\mathcal{I}b^\Sigma (M)(k)$ is  the quotient of the sub-space of
$$
\underset{T\in\textbf{\text{ptree}}_k}{\coprod}\,M(|p|)\,\,\times\underset{v\in V(T)\setminus \{p\}}{\prod}\,\big[\,O(|v|)\times [0\,,\,1]\big] 
$$ 

\noindent determined by the restrictions on the families of real numbers $\{t_{v}\}$. The equivalence relation is generated by the axioms $(i)$ and $(ii)$ of Construction~\ref{B0} and also the following conditions:

\begin{itemize}[itemsep=-0pt, topsep=2pt, leftmargin=15pt]
\item[$iii)$] If two consecutive vertices, connected by an edge $e$, are indexed by the same real number $t\in [0\,,\,1]$, then $e$ is contracted by using the operadic structure. The vertex so obtained is indexed by the real number $t$.

\begin{figure}[!h]
\begin{center}
\includegraphics[scale=0.5]{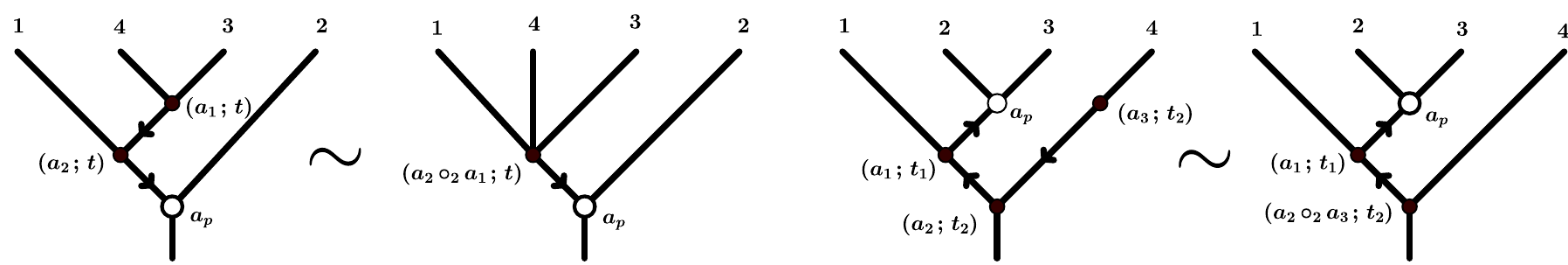}%\vspace{-10pt}
\caption{Illustration of the relation $(iii)$.}%\vspace{-35pt}
\end{center}
\end{figure}

%\newpage

\item[$iv)$]  If a vertex is indexed by $0$, then its output edge (according to the orientation toward the pearl) is contracted by using the infinitesimal bimodule structure. The obtained vertex becomes a pearl.
\begin{figure}[!h]
\begin{center}
\includegraphics[scale=0.5]{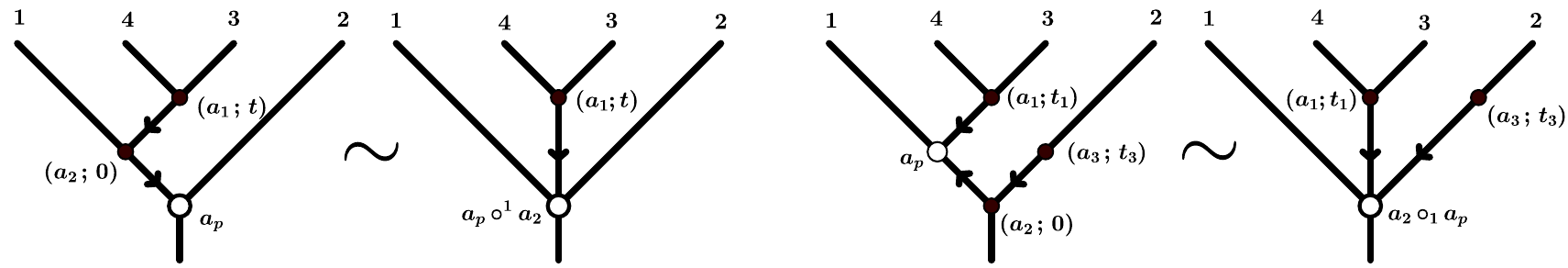}
\caption{Examples of the relation $(iv)$.}\vspace{-5pt}
\end{center}
\end{figure}
\end{itemize}

 Let us describe the $O$-Ibimodule structure. Let $a\in O(n)$ and $[T\,;\,\{a_{v}\}\,;\,\{t_{v}\}]$ be a point in $\mathcal{I}b^\Sigma(M)(m)$. The composition $[T\,;\,\{a_{v}\}\,;\,\{t_{v}\}]\circ^{i}a$ consists in grafting the $n$-corolla labelled by $a$ to the $i$-th incoming edge of $T$ and indexing the new vertex by $1$. Similarly, the composition $a\circ_{i}[T\,;\,\{a_{v}\}\,;\,\{t_{v}\}]$ consists in grafting the pearled tree $T$ to the $i$-th incoming edge of the $n$-corolla labelled by $a$ and indexing the new vertex by $1$.

One has an obvious inclusion of $\Sigma$-sequences $\iota'\colon M\to \mathcal{I}b^\Sigma(M)$ sending an element
$m\in M(k)$ to the pearled $k$-corolla labelled by $m$. 
%This induces a map of $O$-Ibimodules
%\begin{equation*}
%\tau':\mathcal{F}_{Ib}^\Sigma(M)\rightarrow\mathcal{I}b^\Sigma(M),
%\end{equation*}
%where $\mathcal{F}_{Ib}^\Sigma(M)$ is the free $O$-Ibimodule generated by the $\Sigma$ sequence $M$. 
%It\rq{} is easy to see that this map is an inclusion, whose image consists of pearled trees whose
%all non-pearl vertices are labeled by~1 (and thus all edges between such vertices get contracted).
  One also has a map
\begin{equation}\label{G8}
\mu':\mathcal{I}b^\Sigma(M)\rightarrow M\,\,;\,\, [T\,;\,\{t_{v}\}\,;\,\{a_{v}\}]\mapsto [T\,;\,\{0_{v}\}\,;\, \{a_{v}\}],
\end{equation}
 defined by sending the real numbers indexing the vertices other than the pearl to $0$. The element so obtained is identified to the pearled corolla labelled by a point in $M$. By construction, $\mu'$ is an $O$- Ibimodule map.%\vspace{-15pt}
\end{const}

\begin{rmk}\label{rm:BWibim_coend}
Each component $\mathcal{I}b^\Sigma(M)(k)$ can also be defined as a coend over a category whose objects 
set is $\textbf{ptree}_k$.  The morphisms in this category are generated
by pearl preserving isomorphisms of trees, additions of a bivalent vertex in the middle of any edge,
and edge contractions.
\end{rmk}

In order to get resolutions for truncated infinitesimal bimodules, one considers a filtration in $\mathcal{I}b^\Sigma(M)$ according to the number of geometrical inputs, which is the number of leaves plus the number of univalent vertices other than the pearl. A point in $\mathcal{I}b^\Sigma(M)$ is said to be prime if the real numbers labelling its vertices are strictly smaller than $1$. Besides, a point is said to be composite if one of its vertex is labelled by $1$. Such a point can be associated to a prime component. More precisely, the prime component of a composite point is obtained by removing all the vertices indexed by $1$ as illustrated in Figure~\ref{C9}.  %Otherwise, the prime component of a point of the form $[(T\,;\,\sigma)\,;\,\{t_{v}\}\,;\,\{a_{v}\}]$ , with $\sigma\neq id$, coincide with the prime component of $[(T\,;\,\sigma)\,;\,\{t_{v}\}\,;\,\{a_{v}\}]$. 

\begin{figure}[!h]
\begin{center}
\includegraphics[scale=0.4]{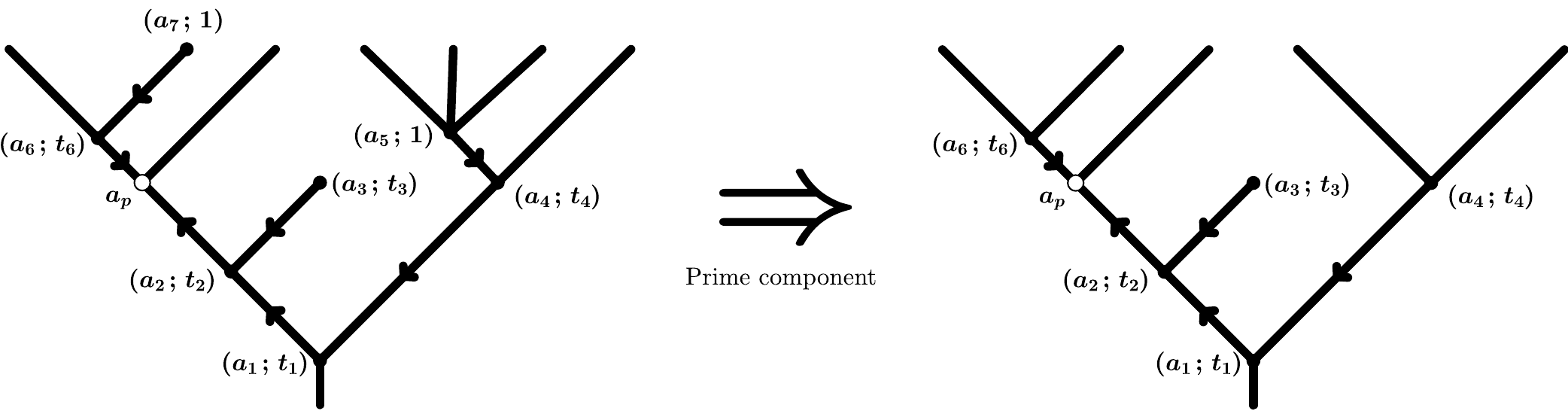}\vspace{-5pt}
\caption{Illustration of a composite point in $\mathcal{I}b^\Sigma(M)$ together with its prime component.}\label{C9}\vspace{-5pt}
\end{center}
\end{figure}

\noindent  A prime point is in the $k$-th filtration term 
$\mathcal{I}b_{k}^\Sigma(M)$ if the number of its geometrical inputs is smaller than~$k$. Similarly, a composite point is in the $k$-th filtration term if its prime component is in the $k$-th filtration term. For each $k\geq 0$, $\mathcal{I}b_{k}^\Sigma(M)$ is an $O$-Ibimodule and they define a filtration in  $\mathcal{I}b^\Sigma(M)$:
\begin{equation}\label{eq:filtr_proj_ibimod}
\xymatrix{
\mathcal{I}b_{0}^\Sigma(M) \ar[r] & \mathcal{I}b_{1}^\Sigma(M)\ar[r] & \cdots \ar[r] & \mathcal{I}b_{k-1}^\Sigma(M) \ar[r] & \mathcal{I}b_{k}^\Sigma(M) \ar[r] & \cdots \ar[r] & \mathcal{I}b^\Sigma(M).
}
\end{equation}

\begin{thm}\label{C7}
Let $O$ be a well-pointed operad and $M$ be an $O$-Ibimodule. If $O$ and $M$ are $\Sigma$-cofibrants, then  $\mathcal{I}b^\Sigma(M)$ and $\TT_{k}\mathcal{I}b_{k}^\Sigma(M)$ are cofibrant replacements of $M$ and $\TT_{k}M$ in the categories $\Sigma\Ibimod_{O}$ and $\TT_{k}\Sigma\Ibimod_{O}$, respectively. 
In particular the maps $\mu'$ and $\TT_{k}\mu'|_{\TT_{k}\mathcal{I}b_{k}^\Sigma(M)}$ are weak equivalences.  
\end{thm}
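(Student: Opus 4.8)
The plan is to prove the two halves of the statement separately: that $\mu'$ and its truncated analogue are objectwise weak equivalences, and that $\mathcal{I}b^\Sigma(M)$ and $\TT_k\mathcal{I}b_k^\Sigma(M)$ are cofibrant in $\Sigma\Ibimod_O$, respectively $\TT_k\Sigma\Ibimod_O$. Since $\mu'$ is a morphism of Ibimodules by construction and every object of these (projective) model categories is fibrant, this is exactly what is needed. The whole argument is parallel to the bimodule case of Theorem~\ref{th:BV_proj_bimod} and to the operadic Boardman--Vogt resolution treated in Appendix~\ref{s:A1}, and is carried out for Ibimodules in~\cite[Section~2.3]{Ducoulombier16}; here I just indicate how I would organize it.

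\emph{Weak equivalence.} I would exhibit a deformation retraction of $\mathcal{I}b^\Sigma(M)$ onto the sub-$\Sigma$-sequence $\iota'(M)$ of pearled corollas. One has $\mu'\circ\iota'=\mathrm{id}_M$ on the nose, and a homotopy $H\colon\mathcal{I}b^\Sigma(M)\times[0,1]\to\mathcal{I}b^\Sigma(M)$ from $\mathrm{id}$ to $\iota'\circ\mu'$ is given at time $s$ by rescaling every parameter, $t_v\mapsto(1-s)t_v$. To see that $H$ is well defined I would check its compatibility with the generating relations of Construction~\ref{C8}: relations (i) and (ii) do not involve the $t_v$; relation (iii) is preserved because equal parameters stay equal after rescaling; and relation (iv) is precisely what collapses the tree to a pearled corolla at $s=1$. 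Continuity is the standard compatibility of quotients with finite products of $k$-spaces recalled in Subsection~\ref{ss:top} and Appendix~\ref{s:A0}; note that $H$ need not be a morphism of Ibimodules, which is harmless since we only want an objectwise homotopy equivalence. For the truncated statement I would observe that letting a parameter tend to $0$ never increases the number of geometrical inputs of a prime component, so $H$ restricts to a deformation retraction of $\TT_k\mathcal{I}b_k^\Sigma(M)$ onto $\TT_kM$, and $\iota'$ sends $M(j)$ into the $j$-th filtration term for $j\le k$; hence $\TT_k\mu'|_{\TT_k\mathcal{I}b_k^\Sigma(M)}$ is a weak equivalence.

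\emph{Cofibrancy.} Here I would use the filtration~\eqref{eq:filtr_proj_ibimod}. The bottom term $\mathcal{I}b_0^\Sigma(M)$ is cofibrant --- in the reduced case it is essentially the initial object, and in general it is built from the $\Sigma$-cofibrant spaces $M(0)$ and $O(j)$, $j\ge 1$, by an explicit, possibly transfinite, sequence of cell attachments. Then I would show that each inclusion $\mathcal{I}b_{k-1}^\Sigma(M)\hookrightarrow\mathcal{I}b_k^\Sigma(M)$ is a (possibly transfinite) composition of cell attachments in the sense introduced for bimodules just after Theorem~\ref{th:BV_proj_bimod}, i.e.\ pushouts in $\Sigma\Ibimod_O$ along $\calF^\Sigma_{Ib}(\partial X)\to\calF^\Sigma_{Ib}(X)$ with $\partial X\to X$ a cofibration of $\Sigma$-sequences. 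The cells are indexed by the prime configurations (all $t_v<1$) whose underlying pearled tree has a prescribed combinatorial type and the number of geometrical inputs corresponding to the $k$-th stage; the associated $X$ has $\Sigma$-cofibrant entries because it is assembled from cubes together with the $\Sigma$-cofibrant spaces $O(|v|)$ and $M(|p|)$, while $\partial X$ is the subsequence of configurations that already occur at an earlier stage or cell --- those where some $t_v=0$ (relation (iv) applies) together with those where relation (i) or (ii) lets one absorb a $*_1$-labelled or permutation vertex. As in Example~\ref{ex:BprojO} I would order these attachments (by decreasing arity / increasing number of univalent vertices of the cell) so that the bookkeeping closes up. Applying $\TT_k$, and using that the cells attached in passing from stage $k-1$ to stage $k$ only affect arities within the relevant range, their truncations remain cofibrations in $\TT_k\Sigma\Ibimod_O$, giving cofibrancy of $\TT_k\mathcal{I}b_k^\Sigma(M)$.

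\emph{Main obstacle.} The delicate point is verifying that the boundary inclusions $\partial X\to X$ really are cofibrations of $\Sigma$-sequences and that the cell attachments can be arranged in a coherent order. This is a latching-object computation feeding iterated pushout-product arguments, in which well-pointedness of $O$ (so that $\{*_1\}\hookrightarrow O(1)$ is a cofibration, needed for relation (i)) and the $\Sigma$-cofibrancy of $O$ and $M$ are exactly the hypotheses one uses; the genuine complication, as for bimodules, is that arity-zero data decouples the arity of a configuration from its number of geometrical inputs, so a single filtration step itself needs the secondary filtration by number of univalent vertices illustrated in Example~\ref{ex:BprojO}. All of this is done for infinitesimal bimodules in~\cite[Section~2.3]{Ducoulombier16}, following the operadic template of~\cite{Berger06} reproduced in Appendix~\ref{s:A1}, so at this point I would simply invoke that reference.
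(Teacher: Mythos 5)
Your overall strategy coincides with the paper's: the paper's own proof consists of the observation that $\mathcal{I}b_{0}^\Sigma(M)=\calF_{Ib}^\Sigma(M_0)$ (with $M_0$ the sequence concentrated in arity zero with value $M(0)$), the claim that each inclusion $\mathcal{I}b_{k-1}^\Sigma(M)\to\mathcal{I}b_{k}^\Sigma(M)$ is a sequence of cell attachments in the sense you describe, and a citation to \cite[Section~2.3]{Ducoulombier16} for the details. Your untruncated weak-equivalence argument (the rescaling homotopy $t_v\mapsto(1-s)t_v$, which is compatible with relations $(i)$--$(iv)$ of Construction~\ref{C8} and lands on $\iota'\circ\mu'$ at $s=1$) and your description of the cells are consistent with this.

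There is, however, a genuine gap in your truncated weak-equivalence claim. The straight-line homotopy does \emph{not} restrict to $\TT_k\mathcal{I}b_k^\Sigma(M)$: for any $s>0$ every composite point becomes prime (all its parameters drop strictly below $1$), and the number of geometrical inputs that then decides its filtration degree is that of the \emph{whole} tree, not of the original prime component. Concretely, take the pearled $1$-corolla labelled by $m\in M(1)$, graft onto its leaf a $2$-corolla labelled by $a\in O(2)$ at parameter $1$, and graft two arity-zero corollas labelled by $b_1,b_2\in O(0)$, also at parameter $1$, onto its leaves. This arity-zero composite point has prime component the pearled $1$-corolla, hence lies in $\TT_1\mathcal{I}b_1^\Sigma(M)$; but at time $s>0$ it is a prime point with two univalent vertices, i.e.\ two geometrical inputs, so it has left $\mathcal{I}b_1^\Sigma(M)$. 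Your assertion that ``letting a parameter tend to $0$ never increases the number of geometrical inputs of a prime component'' is therefore false precisely because of the arity-zero vertices whose role you correctly flag in the cofibrancy part but overlook here. The repair is the one the paper carries out in the closely analogous proof of Theorem~\ref{th:proj_bar_cof}: first run a filtration-preserving homotopy that pulls down, and hence removes, the univalent non-pearl vertices; afterwards every element of arity $j$ has exactly $j$ geometrical inputs and so automatically remains in $\mathcal{I}b_j^\Sigma(M)\subseteq\mathcal{I}b_k^\Sigma(M)$ for $j\le k$, and the remaining contraction of the parameters to $0$ no longer needs to preserve the filtration. With that modification the rest of your argument goes through.
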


\begin{proof}
The proof is similar to that of \cite[Theorem 2.10]{Ducoulombier16} for bimodules (see also \cite{Berger06} for operads).  The main idea is that $\mathcal{I}b_{0}^\Sigma(M)$ is cofibrant and every
inclusion $\mathcal{I}b_{k-1}^\Sigma(M) \to \mathcal{I}b_{k}^\Sigma(M)$ is a cofibration obtained
as a sequence of cell attachments. To see that, one
notices first that  $\mathcal{I}b_{0}^\Sigma(M)=\calF_{Ib}^\Sigma(M(0))$, where abusing notation $M(0)$ denotes the sequence %concentrated in arity zero where it is $M(0)$. 
$
%M_0(k)=
\begin{cases} M(0),&  k=0;\\ \emptyset, & k\geq 1. \end{cases} 
$  

Here we mean again that an $O$-Ibimodule
$N\rq{}$ is obtained from $N$ by a  {\it cell attachment} if $N\rq{}$ is obtained from $N$ by a pushout in 
$O$-Ibimodules:
\[
N\rq{}=N\coprod_{\calF_{Ib}^\Sigma (\partial X)} \calF_{Ib}^\Sigma (X),
\]
where $\partial X\to X$ is again a cofibration of $\Sigma$-sequences and the map $\calF_{Ib}^\Sigma (\partial X)\to N$ is induced by a map $\partial X\to N$ of $\Sigma$-sequences.  
\end{proof}

\begin{expl}\label{ex:Ibproj}
In case $O$ is doubly reduced $O(0)=O(1)=*$, the cofibration $\mathcal{I}b_{k-1}^\Sigma(M) \to \mathcal{I}b_{k}^\Sigma(M)$ is a sequence of $k+1$ cell attachments similarly to the case of bimodules,
see Example~\ref{ex:BprojO}. At the first step one attaches a cell whose interior consists of prime elements
of arity $k$ with no univalent non-pearl vertices; at the second step one attaches a cell whose interior
consists of
 prime elements of arity $k-1$ having exactly one univalent non-pearl; so on; at the $(i+1)-$th step one attaches a cell of prime elements of arity $k-j$ with $j$ univalent non-pearl vertices. 
\end{expl}

Similarly to the case of bimodules, one has the following equivalences of mapping spaces
%
%
%From a $k$-truncated infinitesimal bimodule $N_{k}$, we can consider the $k$-free infinitesimal bimodule $(k)\text{-}\mathcal{F}_{Ib}(N_{k})$ whose $k$ first components coincide with $N_{k}$. The functor $(k)\text{-}\mathcal{F}_{Ib}$ is the left adjoint to the truncated functor $T_{k}$ and can be described using the set of reduced pearled trees in which the sum of two consecutive vertices is bigger than $k+2$. Furthermore, in our case $(k)\text{-}\mathcal{F}_{Ib}(\TT_{k}(\mathcal{I}b_{k}^\Sigma(M)))=\mathcal{I}b_{k}^\Sigma(M)$ since $\mathcal{I}b_{k}^\Sigma(M)$ is built as the sub-bimodule of $\mathcal{I}b^\Sigma(M)$ generated by its $k$ first components. Consequently, one has the following identifications:
\begin{equation}\label{eq:der_ibimod_vs_tr1}
\TT_{k}\Ibimod_{O}^{h}(\TT_{k}M\,;\, \TT_{k}M')\simeq \TT_{k}\Ibimod_{O}(\TT_{k}\mathcal{I}b_{k}^\Sigma(M)\,;\, \TT_{k}M') \cong \Ibimod_{O}(\mathcal{I}b_{k}^\Sigma(M)\,;\, M').
\end{equation}
The last equivalence, which is a homeomorphism, follows from the fact that $\mathcal{I}b_{k}^\Sigma(M)$
is obtained by attaching cells only of arity $\leq k$.

\subsubsection{Boardman-Vogt resolution in the $\Lambda$ setting.}\label{sss:cof21}

Now we assume that our operad $O$ is reduced and  we adapt the above construction to produce Reedy cofibrant replacements of $O$-Ibimodules.
 As  a $\Sigma$-sequence, we set
 \[
\calI b_O^\Lambda(M):=\calI b_{O_{>0}}^\Sigma(M).
\]
When $O$ is understood, we also write $\calI b^\Lambda(M)$.  \vspace{5pt}

The sequence $\calI b^\Lambda(M)$ is an $O_{>0}$-Ibimodule by construction. It is also an infinitesimal bimodule over $O$ in which the composition with the one point topological space $O(0)$  is defined as follows:
$$
\begin{array}{rcl}\vspace{4pt}
\circ^{i}: \mathcal{I}b_{>0}^\Sigma(M)(n)\times O(0) & \longrightarrow & \mathcal{I}b_{>0}^\Sigma(M)(n-1); \\ 
\left[ T\,;\,\{t_{v}\}\,;\,\{a_{v}\} \right]\,;\,\ast_{0} & \longmapsto & \left[ T\,;\,\{t_{v}\}\,;\,\{a'_{v}\}\right],
\end{array} 
$$  
where the family $\{a'_{v}\}$ is given by
$$
a'_{v}:=
\left\{
\begin{array}{cl}\vspace{3pt}
a_{v}\circ^{j}\ast_{0} & \text{if the } i\text{-th leaf of } T \text{ corresponds to the } j\text{-th incoming edge of } v= p, \\  \vspace{3pt}
a_{v}\circ_{j}\ast_{0} & \text{if the } i\text{-th leaf of } T \text{ corresponds to the } j\text{-th incoming edge of } v\neq p,  \\ 
a_{v} & \text{otherwise.}
\end{array} 
\right.
$$

One can easily see that the $O_{>0}$-Ibimodule map~\eqref{G8} induces a map
\begin{equation}\label{eq:mu_lambda_ibimod}
\mu'\colon \calI b^\Lambda(M)\to M,
\end{equation}
which respects the $\Lambda$ action and thus is an $O$-Ibimodules map.\vspace{5pt}

We define a filtration in $\calI b^\Lambda(M)$ exactly as the one  on $\calI b_{O_{>0}}^\Sigma (M)$,
see~\eqref{eq:filtr_proj_ibimod}:
\begin{equation}\label{eq:filtr_reedy_ibimod}
\xymatrix{
\mathcal{I}b_{0}^\Lambda(M) \ar[r] & \mathcal{I}b_{1}^\Lambda(M)\ar[r] & \cdots \ar[r] & \mathcal{I}b_{k-1}^\Lambda(M) \ar[r] & \mathcal{I}b_{k}^\Lambda(M) \ar[r] & \cdots \ar[r] & \mathcal{I}b^\Lambda(M).
}
\end{equation}
It is easy to see that the right action by $O(0)$ defined above preserves this filtration, because 
when acting on the prime components it only decreases their arity. As a consequence it is a filtration
of $O$-Ibimodules. Note that when we pass from 
$\mathcal{I}b_{k-1}^\Lambda(M)$ to 
$\mathcal{I}b_{k}^\Lambda(M)$ we only attach celles of arity exactly $k$, as the number of geometrical inputs
for the prime component is exactly the arity by the lack of the arity zero non-pearl vertices. 
(In case $O$ is doubly reduced, there is exactly one cell attached at this step.) As
 a consequences $\TT_k \mathcal{I}b_{k}^\Lambda(M) = \TT_k\mathcal{I}b^\Lambda(M)$.

\begin{pro}\label{p:BV_reedy_ibimod}
Assume that   $O$ is a reduced  $\Sigma$-cofibrant topological operad, and $M$ is a
$\Sigma$-cofibrant $O$-Ibimodule. Then, the objects $\mathcal{I}b^\Lambda(M)$ and $\TT_{k}\mathcal{I}b^\Lambda(M)$ are cofibrant replacements of $M$ and $\TT_{k}M$ in the categories $\Lambda\Ibimod_{O}$ and $\TT_{k}\Lambda\Ibimod_{O}$, respectively. In particular the maps $\mu'$ and $\TT_{k}\mu'$ are weak equivalences.
\end{pro}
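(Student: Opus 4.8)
The plan is to reduce Proposition~\ref{p:BV_reedy_ibimod} to the already-established projective statement, Theorem~\ref{C7}, exactly along the lines of the ``From $\Lambda$ to $\Sigma$ reduction principle'' used in Propositions~\ref{p:BV_reedy_bimod} and in Theorems~\ref{th:Ibim_reedy_model}, \ref{th:id_ibim_equiv}. By definition $\calI b^\Lambda(M) = \calI b^\Sigma_{O_{>0}}(M)$ as a $\Sigma$-sequence, with the extra right $O(0)$-action spelled out in the construction above. First I would verify that $\calI b^\Lambda(M)$ really is an $O$-Ibimodule, i.e.\ that the $O(0)$-composition $\circ^i$ is compatible with the unit, associativity, commutativity, and $\Sigma$-equivariance axioms of~\eqref{C3}; this is a direct check on representatives $[T;\{t_v\};\{a_v\}]$, using that on the pearl one composes via the infinitesimal right action of $M$ and on the non-pearl vertices via the operadic $\circ_j \ast_0$, and that these operations are themselves associative and equivariant in $O$ and $M$. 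Next I would check that $\mu'$ of~\eqref{eq:mu_lambda_ibimod} commutes with the $\Lambda$-structure, so that it is a map of $O$-Ibimodules, not merely of $O_{>0}$-Ibimodules; this is again immediate since sending the $t_v$ to $0$ is manifestly compatible with the $O(0)$-action.

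The core of the argument is then: $\mu'\colon \calI b^\Lambda(M)\to M$ is a weak equivalence, and $\calI b^\Lambda(M)$ is Reedy cofibrant in $\Lambda\Ibimod_O$. For the weak equivalence, by Theorem~\ref{th:id_ibim_equiv}(i) the identity functor $\Sigma\Ibimod_O\to\Lambda\Ibimod_O$ is a Quillen equivalence; but more simply, weak equivalences in both the projective and Reedy model structures on $\Ibimod_O$ are just objectwise weak homotopy equivalences, and $\mu'$ is objectwise the map $\mu'$ of Theorem~\ref{C7} applied to $O_{>0}$ (in positive arities) and the identity in arity zero, hence a weak equivalence arity by arity. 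For cofibrancy, I would invoke Theorem~\ref{th:Ibim_reedy_model}(ii): a map of $O$-Ibimodules is a Reedy cofibration iff it is a cofibration of $O_{>0}$-Ibimodules in the projective model structure. Applying this to $\emptyset \to \calI b^\Lambda(M)$: as an $O_{>0}$-Ibimodule this is precisely $\emptyset \to \calI b^\Sigma_{O_{>0}}(M)$, which is a cofibration by Theorem~\ref{C7} (with $O$ there taken to be $O_{>0}$, which is $\Sigma$-cofibrant, well-pointed with empty arity-zero component, and $M$ which is $\Sigma$-cofibrant). Hence $\calI b^\Lambda(M)$ is Reedy cofibrant, and together with the weak equivalence $\mu'$ this makes it a cofibrant replacement of $M$ in $\Lambda\Ibimod_O$.

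For the truncated statement I would argue analogously. The filtration~\eqref{eq:filtr_reedy_ibimod} is by $O$-Ibimodules, and as noted in the text, passing from $\mathcal I b^\Lambda_{k-1}(M)$ to $\mathcal I b^\Lambda_k(M)$ attaches cells of arity exactly $k$, so $\TT_k \mathcal I b^\Lambda_k(M) = \TT_k\mathcal I b^\Lambda(M)$ and no cells of arity $>k$ are involved. Thus $\TT_k\mathcal I b^\Lambda(M)$, regarded as a $k$-truncated $O_{>0}$-Ibimodule, is obtained from $\emptyset$ by attaching free cells of arity $\le k$, hence is projectively cofibrant in $\TT_k\Sigma\Ibimod_{O_{>0}}$ by (the truncated part of) Theorem~\ref{C7}; applying Theorem~\ref{th:Ibim_reedy_model}(ii) in its truncated form gives that $\TT_k\mathcal I b^\Lambda(M)$ is Reedy cofibrant in $\TT_k\Lambda\Ibimod_O$. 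Finally $\TT_k\mu'$ is an objectwise weak equivalence since $\mu'$ is. The main obstacle, such as it is, is purely bookkeeping: one must be careful that the hypotheses of Theorem~\ref{C7} are met for the pair $(O_{>0}, M)$ — in particular that $M$ need only be $\Sigma$-cofibrant and there is no arity-zero condition of the form ``$\gamma_0$ is a cofibration'' needed here (unlike the bimodule case, cf.\ Remark~\ref{rm:BWbim_alt}), since $O_{>0}(0)=\emptyset$ and the Ibimodule arity-zero cell attachment $\mathcal I b^\Sigma_0 = \calF^\Sigma_{Ib}(M_0)$ is automatically cofibrant when $M(0)$ is cofibrant. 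Everything else is a transcription of the $\Sigma$-level proof.
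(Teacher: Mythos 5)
Your proof is correct and takes essentially the same approach as the paper, which cites exactly Theorem~\ref{th:Ibim_reedy_model}~(ii) and Theorem~\ref{C7} applied to $O_{>0}$; you merely spell out the routine verifications. One small inaccuracy: in arity zero, $\mu'$ is not the identity (since $\calI b^\Lambda_O(M)(0)=\calI b^\Sigma_{O_{>0}}(M)(0)$ contains trunks of bivalent vertices, not just $M(0)$), but this does not matter because arity zero is already covered by Theorem~\ref{C7} applied to $O_{>0}$.
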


\begin{proof}
Follows from Theorem~\ref{th:Ibim_reedy_model}~(ii) and Theorem~\ref{C7} applied to the operad 
$O_{> 0}$.
\end{proof}
 
 Assuming
 that $M'$ is a Reedy fibrant $O$-Ibimodule, we get:
 \begin{equation}\label{eq:der_ibimod_vs_tr2}
\TT_{k}\Ibimod_{O}^{h}(\TT_{k}M\,;\, \TT_{k}M')\simeq \TT_{k}\Ibimod_{O}
(\TT_{k}\mathcal{I}b^\Lambda(M)\,;\, \TT_{k}M') \cong \Ibimod_{O}(\mathcal{I}b_{k}^\Lambda(M)\,;\, M').
\end{equation}
The last equivalence, which is a homeomorphism, follows from the fact that $\mathcal{I}b_{k}^\Lambda(M)$
as an $O_{> 0}$-Ibimodule
is obtained by attaching cells only of arity $\leq k$.

\begin{expl}\label{D2}
Assuming that $O$ is doubly reduced $O(0)=O(1)=*$, one has $\calI b^\Lambda(O)(0)=*=
\includegraphics[scale=0.06]{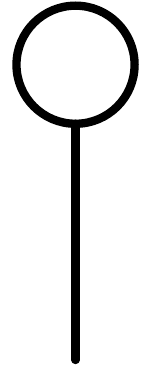}$. 
 As illustrated in Figure \ref{D0}, each point in the space $\calI b^\Lambda(O)(1)$ has a representative element of the form $[ T\,;\,\{t_{v}\}\,;\,\{a_{v}\}]$ in which $T$ is a pearled tree having two vertices such that the pearl is univalent and connected to the first incoming edge of the root. In other words, a point is determined by a pair $(\theta\,;\, t)$, with $\theta\in O(2)$ and $t\in [0\,;\,1]$, satisfying the relation $(\theta_{1}\,;\,0)\sim (\theta_{2}\,;\,0)$ due to the axiom~$(iv)$ of Construction~\ref{C8} and the condition $O(1)=*$, see Figure~\ref{D0}.  As a consequence, $\calI b^\Lambda(O)(1)$ is homeomorphic to the cone of the space $O(2)$, denoted $C(O(2))$.
\begin{figure}[!h]
\begin{center}
\includegraphics[scale=0.35]{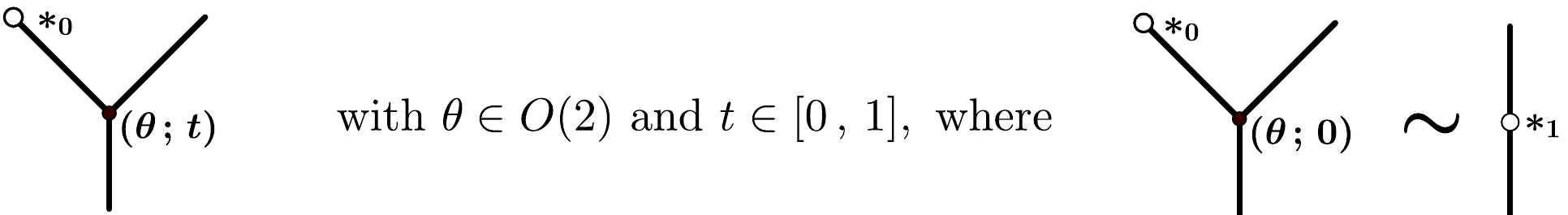}\vspace{-5pt}
\caption{Illustration of points in $\calI b^\Lambda(O)(1)$.}\label{D0}\vspace{-15pt}
\end{center}
\end{figure}
\end{expl}

\begin{expl}\label{ex:2compl}
Let again $O$ be doubly reduced. To visualize better the spaces $\calI b^\Lambda(M)(k)$,
it helps to look at the space of non-planar pearled trees with $k$ labelled leaves, whose non-pearls have 
arity $\geq 2$ (the pearl can be of any arity) and are labelled by numbers in $[0,1]$ respecting the inequalities $t_{s(e)}\geq t_{t(e)}$ from Construction~\ref{C8}. For example, for $k=0$, it is just
one point $*=\includegraphics[scale=0.06]{risunok1.pdf}$. For $k=1$, it is the closed interval:
\[
\includegraphics[scale=0.3]{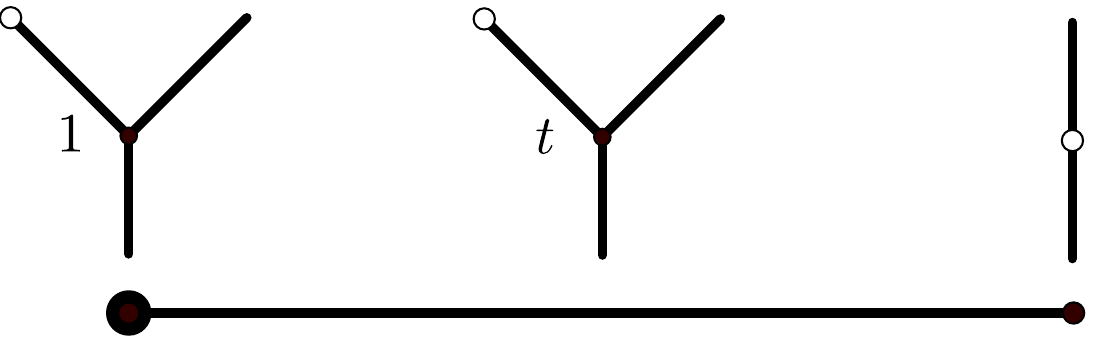}
\]

%\newpage

\noindent For $k=2$, it is a union of 3 triangles corresponding to the trees \quad
\includegraphics[scale=0.08]{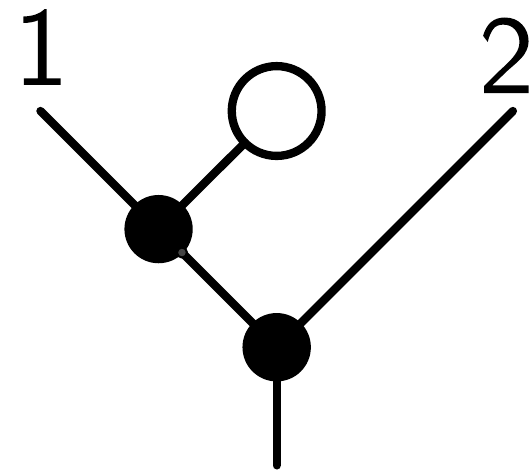}, \quad  \includegraphics[scale=0.08]{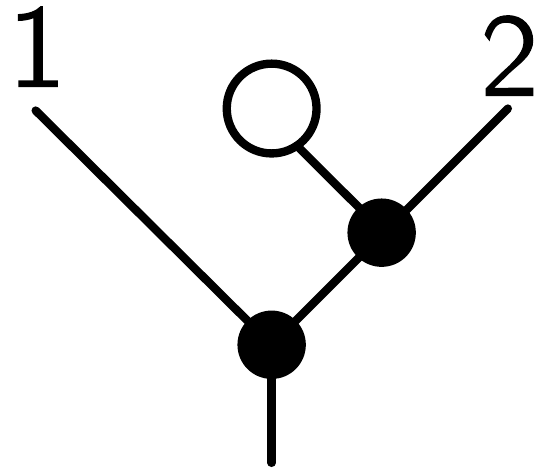},
\quad and \quad
\includegraphics[scale=0.1]{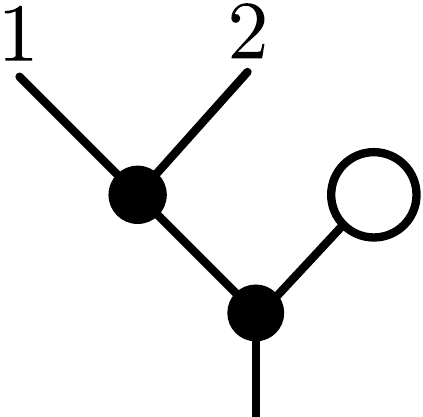}, as illustrated in  Figure~\ref{Fig:Ibbar2}.

\begin{figure}[!h]
\begin{center}
\includegraphics[scale=0.15]{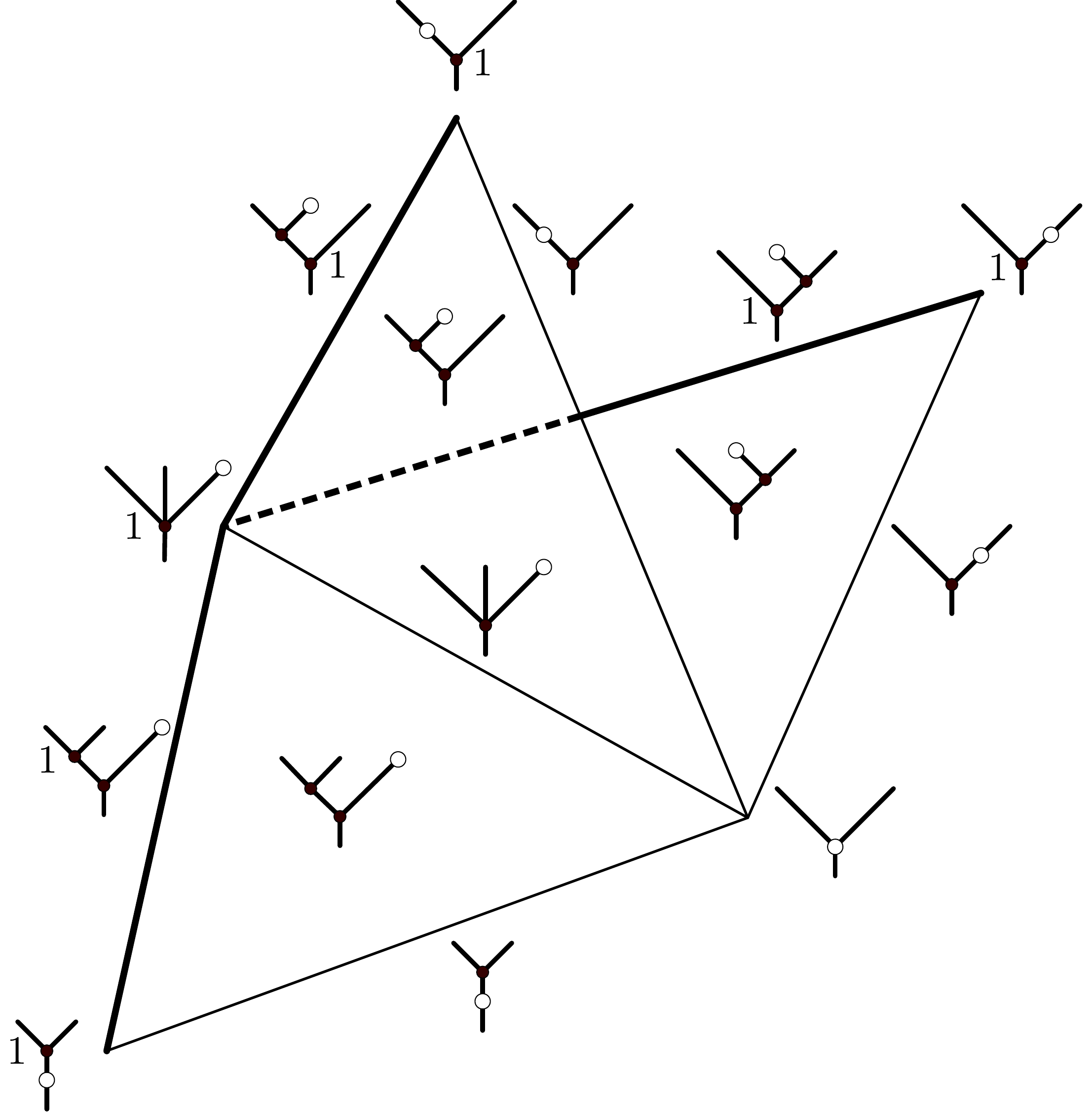}\vspace{-1pt}

\begin{minipage}{400pt}
\caption{Visualizing the space of trees associated to $\calI b^\Lambda(M)(2)$. The left leaf in each tree is labelled by~1, and the right one by~2.}\label{Fig:Ibbar2}
\end{minipage}\vspace{-35pt}
\end{center}
\end{figure}

\newpage

\noindent To each cell of this complex corresponding to a pearled tree $T$, one can assign the space of possible labelling of the vertices of~$T$ by $M$ and $O$ as in Construction~\ref{C8}. This assignment is a cellular
cosheaf, and the realization of this cosheaf is exactly the space $\calI b^\Lambda (M)(k)$, 
see Appendix~\ref{s:A2}.
Note that the part corresponding to the lower filtration term $\partial \calI b^\Lambda(M)(k):=
\calI b_{k-1}^\Lambda(M)(k)$ is the  union of the trees having at least one point labelled by~1.
For $k=1$, it is the part corresponding to the left point of the interval. For $k=2$, it is the union of the
left edges of the triangles that are drawn by fat lines.
\end{expl}

\begin{figure}[!h]
\begin{center}
\includegraphics[scale=0.25]{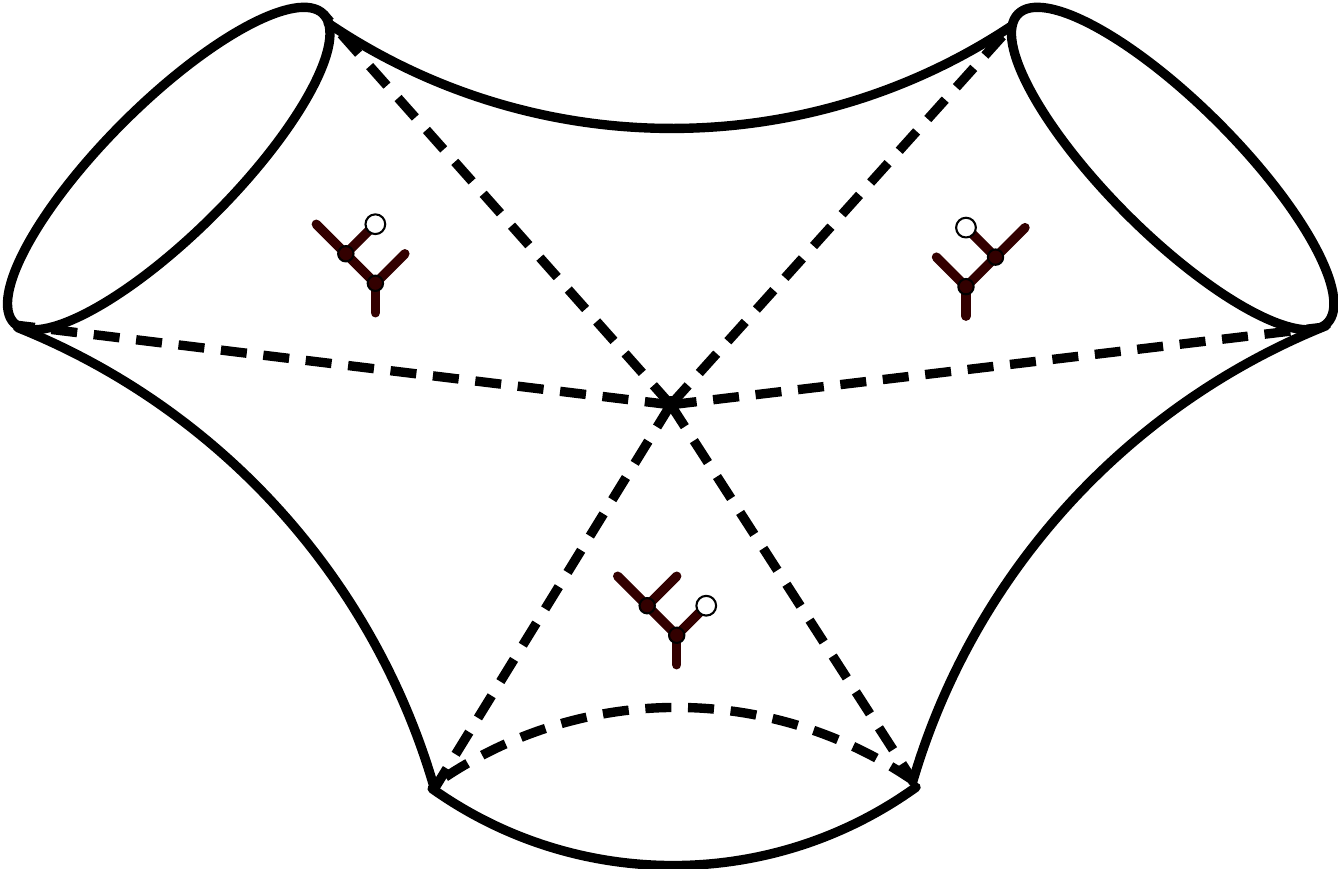}
\caption{The fiber of the map $\calI b^\Lambda(\calF_m)(2)\to \calF_m(2)$.}\label{Fig:3cones}\vspace{-10pt}
\end{center}
\end{figure}

\begin{expl}\label{ex:2FM}
As illustration to the two previous examples above, for the Fulton-MacPherson operad $O=M=\calF_m$, one has 
$\calI b^\Lambda(\calF_m)(0)=*$, $\calI b^\Lambda(\calF_m)(1)=C(\calF_m(2))=C(S^{m-1})=D^m$.
To describe $\calI b^\Lambda(\calF_m)(2)$, consider the projection $\mu'\colon 
\calI b^\Lambda(\calF_m)(2)\to \calF_m(2)=S^{m-1}$. One can easily see that it is a fibration, whose fiber
is an $(m+1)$-disc split into the union of three cones of $D^m$ and a cone of $S^m$ with three holes, see Figure~\ref{Fig:3cones}.
The three cones of $D^m$ correspond to the three triangles in Figure~\ref{Fig:Ibbar2}. Left sides of the triangles
correspond to the bases of the cones. The shared side for each triangle corresponds to the side face.
And the right sides correspond to the axes of the cones. The complement to these 3 cones is the part
of $\calI b^\Lambda(\calF_m)(2)$ corresponding to the segment
$$
\includegraphics[scale=0.3]{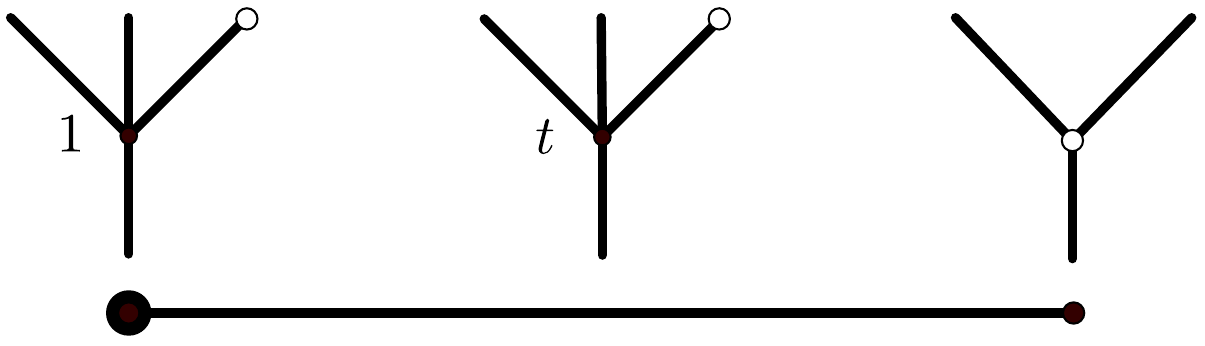}
$$
Note that any preimage of the map $\calF_m(3)\to\calF_m(2)$, $x\mapsto x\circ_3 \ast_0$, that forgets the last point is an $m$-disc
with two disc-shaped holes. The boundary of the first hole corresponds to the limit configurations when point 3 collides with~1; the boundary
of the second hole corresponds to the case when~3 collides with~2; and the outer boundary corresponds to the case when~3 escapes to infinity, or,
equivalently, 1 and~2 collide.   It is easy to see that the part of this $(m+1)$-disc lying in $\partial\calI b^\Lambda(\calF_m)(2)$ is 
exactly its boundary. As a fibration over $S^{m-1}$, the lower cone, that corresponds to the lower triangle in Figure~\ref{Fig:Ibbar2}, 
does not move, while the two other cones rotate one around the other.

\end{expl}

\section{Coherence}\label{S:COH}

In the first subsection we give definition of coherence in terms of homotopy colimit and in the second subsection we explain what this 
definition implies on the level of the Boardman-Vogt resolution.

\subsection{Definition of  coherence and strong coherence}\label{s:coherent}
In this subsection we explain the property of being {\it coherent} for an operad used in Main Theorem~\ref{th:main}. We also define a slightly
stronger, but easier to check property of {\it strong coherence}.

\begin{notat}\label{not:boudnary}
For a category $C$, its full subcategory of non-terminal objects is denoted by $\partial C$.
\end{notat}

\begin{defi}\label{d:psi}
(i) For any $k\geq 0$, the category $\Psi_k$ is defined to have as objects non-planar pearled  trees with $k$ leaves labelled bijectively
by the set $\underline{k}$, whose pearl can have any arity $\geq 0$, and the other vertices are of arity $\geq 2$. The morphisms in
$\Psi_k$ are inner edge contractions.

(ii) For each $\Psi_k$, we denote by $c_k$ its terminal object -- the pearled $k$-corolla, and by $c'_k$, $k\geq 2$, the tree with 2 vertices:
a pearled root of arity one, whose only outgoing edge is attached to the other vertex of arity~$k$.

(iii) We denote by $\Psi'_k$, $k\geq 2$, the subcategory of  $\Psi_k$ of all morphisms except the morphism $c'_k\to c_k$.

\end{defi}

\begin{figure}[!h]
\begin{center}
\includegraphics[scale=0.13]{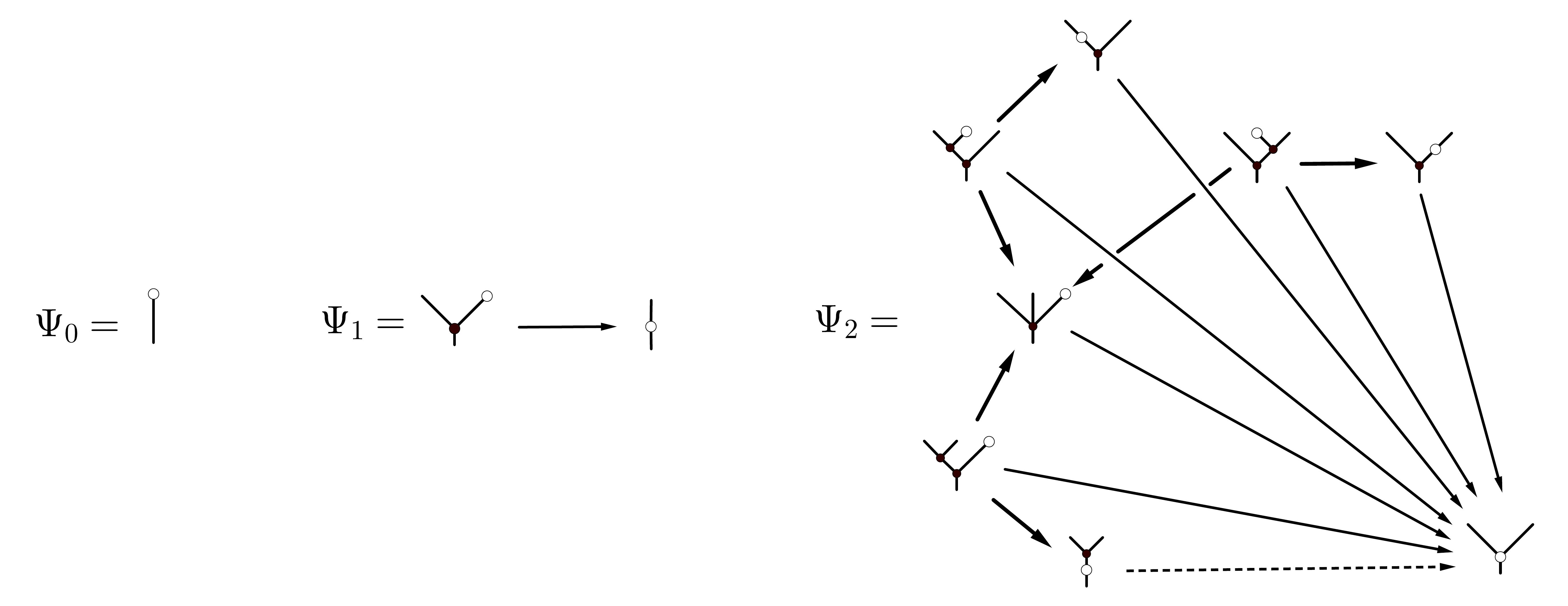}\vspace{-2pt}

\begin{minipage}{420pt}
\caption{Categories $\Psi_0$, $\Psi_1$, $\Psi_2$.  Morphism $c'_2\to c_2$ is shown by a dotted arrow.  We do  not put labels  on the leaves: for $\Psi_0$, its only graph has no leaves; for $\Psi_1$, trees have only one leaf that must be labelled by~1; for $\Psi_2$, the left leaf in each tree is labelled by~1
and the right one by~2. }\label{fig:psi}
\end{minipage}\vspace{-15pt}
\end{center}
\end{figure}

\begin{defi}\label{d:rho}
For a topological operad $O$, an $O$-Ibimodule $M$, and $k\geq 0$, define a $\Psi_k$ shaped diagram
$$
\begin{array}{rcl}%\vspace{7pt}
\rho_k^M\colon \Psi_k&\longrightarrow&Top;\\
T&\mapsto&M\left(|p|\right)\times\displaystyle\prod_{v\in V(T)\setminus\{p\}}O\bigl(|v|\bigr).
\end{array} 
$$
On morphisms it is defined by choosing a planar representative of each pearled tree, and then using the operadic composition and $O$-action on $M$ for each edge contraction. The choice of planar representatives does not matter in the sense that the obtained diagrams are objectwise homeomorphic.
\end{defi}

\begin{defi}\label{d:coherent}
(i) A diagram $\rho\colon\Psi_k\to\Topo$, $k\geq 2$, is called {\it coherent}, if the natural map
\begin{equation}\label{eq:coherent}
\underset {\partial\Psi_k}{\hocolim}\,\rho\, \longrightarrow\, \underset{\Psi'_k}{\hocolim}\, {\rho}
\end{equation}
is a weak equivalence.

(ii) A topological operad $O$ is called {\it coherent}, respectively, {\it $k$-coherent}, if the diagrams $\rho_i^O$ are coherent
for all $i\geq 2$, respectively, for all $i$ in the range $2\leq i\leq k$.
\end{defi}

Note that by definition any weakly doubly reduced operad is 1-coherent. In the next section we show that for a strictly doubly reduced operad $O$, and its any infinitesimal bimodule $M$, one has two homeomorphisms 
$$
\calI b^\Lambda(M)(k)\cong \underset {\Psi_k}{\hocolim}\, \rho_k^M \hspace{15pt}\text{and}\hspace{15pt} \partial\calI b^\Lambda(M)(k)=
\calI b^\Lambda_{k-1}(M)(k) \cong \underset {\partial\Psi_k}{\hocolim}\,\rho_k^M.
$$ 
Thus, informally speaking, an operad is coherent if making some small
hole in the interior of $\calI b^\Lambda(O)(k)$, for any $k\geq 2$, makes it weakly collapsible to the boundary $\partial\calI b^\Lambda(O)(k)$. 

\begin{defi}\label{d:trunk_arity}
For a pearled tree $T$, the {\it arity of its trunk} is the arity of the pearl $p'$ in the tree $T'$ obtained from $T$
by contracting all edges of its trunk to one vertex $p'$.
\end{defi}

For example the arity of the trunk for the pearled tree from Figure~\ref{fig:ptree} is four.

\begin{defi}\label{d:ud}
For $k\geq 2$, define:

(i) $\Psi_k^U$ as a full subcategory of $\Psi_k$ of objects that have no morphism to $c'_k$. 
It consists of trees whose trunk has arity $\geq 2$.

(ii) $\Psi_k^L$ as a full subcategory of $\Psi_k$ whose elements can be reached (by a morphism)  from  objects that have a morphism to $c'_k$. 
It consists of trees either with a pearled root, or with a univalent pearl and a trunk that has only two vertices.

(iii) $\Psi_k^{UL}:=\Psi_k^U\cap\Psi_k^L$. Explicitly, its objects have either a pearled root of arity $\geq 2$, or a trunk with two vertices:
a univalent perl and a root of arity $\geq 3$. 
\end{defi}

The letter $U$ stays for {\it upper}, and the letter $L$ stays for {\it lower}. We think about $\Psi_k^U$ and $\Psi_k^L$ as respectively, upper and lower parts of the category $\Psi_k$. For instance, in Figure~\ref{fig:psi}, $\Psi_2^U$ is the union of two upper squares, $\Psi_2^L$ is the lower square while $\Psi_2^{UL}$ is the central edge: it has
   only two objects and a morphism between them.

\begin{defi}\label{d:scoherent}
(i) A diagram $\rho\colon\Psi_k\to\Topo$ is called {\it strongly coherent} if the natural map
\begin{equation}\label{eq:scoherent}
\underset {\partial\Psi_k^U}{\hocolim}\, \rho \, \longrightarrow \, \rho(c_k)\vspace{-5pt}
\end{equation}
is a weak equivalence.

(ii) A topological operad $O$ is called {\it strongly coherent}, respectively, {\it strongly $k$-coherent}, if the diagrams
$\rho_i^O$ are strongly coherent for all $i\geq 2$, respectively, for all $i$ in the range $2\leq i\leq k$. 
\end{defi}%\vspace{-15pt}

\begin{lmm}\label{l:coherent}
The properties of an operad to be coherent, $k$-coherent, strongly coherent, strongly $k$-coherent are preserved by weak equivalences.
\end{lmm}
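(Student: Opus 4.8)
The plan is to prove the sharper statement that a single weak equivalence of operads $\phi\colon O_1\to O_2$ transfers each of the four properties along $\phi$ in both directions; the general case (a zigzag of weak equivalences) then follows by chaining. The key observation is that, for every $k$, the map $\phi$ induces an objectwise weak equivalence of the $\Psi_k$-shaped diagrams $\rho_k^{O_1}\Rightarrow\rho_k^{O_2}$, after which the defining conditions of (strong) coherence can be compared using homotopy invariance of homotopy colimits together with the two-out-of-three property.

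First I would set up this natural transformation. On an object $T\in\Psi_k$ the map $\rho_k^{O_1}(T)\to\rho_k^{O_2}(T)$ is $\phi$ applied componentwise,
\[
\phi(|p|)\times\prod_{v\in V(T)\setminus\{p\}}\phi(|v|)\colon\ O_1(|p|)\times\prod_{v}O_1(|v|)\ \longrightarrow\ O_2(|p|)\times\prod_{v}O_2(|v|),
\]
and one checks that these maps commute with the inner-edge-contraction morphisms of $\Psi_k$: those morphisms are realized by the operadic composition of $O$, and $\phi$ commutes with operadic compositions, so this indeed defines a natural transformation $\rho_k^{O_1}\Rightarrow\rho_k^{O_2}$. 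Since each $\phi(n)\colon O_1(n)\to O_2(n)$ is a weak homotopy equivalence and a finite product of weak homotopy equivalences of $k$-spaces is again one, this natural transformation is an objectwise weak equivalence. The same formula also exhibits an objectwise equivalence of the diagrams restricted to any of the subcategories $\partial\Psi_k$, $\Psi'_k$, $\Psi_k^U$, $\partial\Psi_k^U$.

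Next I would invoke homotopy invariance of the homotopy colimit: an objectwise weak equivalence of $Top$-valued diagrams over any small category induces a weak equivalence on homotopy colimits. (If one prefers to avoid this as a black box, one can use that the $\rho_k^{O_i}$ are diagrams of cofibrant $k$-spaces over generalized Reedy categories and compute the relevant $\hocolim$'s as realizations of the cellular cosheaves of Appendix~\ref{s:A2}, which are homotopy invariant for the same reason.) Applying this over $\partial\Psi_k$ and $\Psi'_k$, and using the naturality of the comparison map~\eqref{eq:coherent} in the diagram variable, gives a commutative square
\[
\begin{array}{ccc}
\underset{\partial\Psi_k}{\hocolim}\,\rho_k^{O_1} & \longrightarrow & \underset{\Psi'_k}{\hocolim}\,\rho_k^{O_1}\\[4pt]
\downarrow & & \downarrow\\[4pt]
\underset{\partial\Psi_k}{\hocolim}\,\rho_k^{O_2} & \longrightarrow & \underset{\Psi'_k}{\hocolim}\,\rho_k^{O_2}
\end{array}
\]
whose vertical maps are weak equivalences; by two-out-of-three the top arrow is a weak equivalence if and only if the bottom one is, so $\rho_k^{O_1}$ is coherent iff $\rho_k^{O_2}$ is. Running the identical argument with the map~\eqref{eq:scoherent} in place of~\eqref{eq:coherent}, and noting $\rho_k^{O}(c_k)=O(k)$ with $\phi$ acting on it by $\phi(k)$, handles strong coherence; restricting the index to the range $2\le i\le k$ handles the $k$-coherent and strongly $k$-coherent variants.

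I do not expect a genuine obstacle here. The two points that need care are the verification that the componentwise maps respect the structure of the diagrams $\rho_k^{O}$ on the morphisms of $\Psi_k$ (a direct consequence of $\phi$ being an operad morphism) and the appeal to homotopy invariance of $\hocolim$ in the category of $k$-spaces, which is standard. It is worth emphasizing that the argument never uses invertibility of $\phi$ — only the two-out-of-three property — so each of the four properties is transferred along $\phi$ in both directions, and hence along an arbitrary zigzag of weak equivalences.
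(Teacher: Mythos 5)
Your proof is correct and follows the same route as the paper's (one-line) proof: a weak equivalence $\phi\colon O_1\to O_2$ induces an objectwise weak equivalence of the diagrams $\rho_k^{O_1}\Rightarrow\rho_k^{O_2}$, hence weak equivalences of the relevant homotopy colimits, and the conclusion follows by two-out-of-three; you simply spell out the steps the paper leaves implicit.
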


\begin{proof} A weak equivalence $O\to O\rq{}$ induces a weak equivalence of diagrams $\rho_k^O\to\rho_k^{O\rq{}}$, which induces
equivalences of homotopy colimits.
\end{proof}

\begin{thm}\label{th:coherent}
(i) If a diagram $\rho\colon \Psi_k\to\Topo$, $k\geq 2$, is strongly coherent, then it is coherent.

(ii) If an operad is strongly coherent, respectively, strongly $k$-coherent, then it is coherent, respectively, $k$-coherent.
\end{thm}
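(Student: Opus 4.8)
The plan is to prove (i) by two purely formal manipulations of homotopy colimits over the poset $\Psi_k$ — coning off its top element and a cofinality argument — with strong coherence supplying the one non-formal comparison; part (ii) is then immediate by applying (i) to each diagram $\rho_i^O$. First I would record the combinatorial facts needed. Each $\Psi_k$ is a poset ($c_k$ is its unique terminal object, so $\partial\Psi_k=\Psi_k\setminus\{c_k\}$ and $\partial\Psi_k^U=\Psi_k^U\setminus\{c_k\}$), and along every morphism (edge contraction) the arity of the trunk (Definition~\ref{d:trunk_arity}) is non-decreasing: contracting a trunk edge, an edge above the pearl, or an edge strictly inside a subtree hanging off the trunk leaves it unchanged, while contracting the edge joining such a side subtree to a trunk vertex strictly increases it. Hence $\Psi_k^U=\{\text{trunk arity}\ge 2\}$ is a cosieve, and — using that for $k\ge 2$ the trees of trunk arity $1$ are exactly the pearled-root-of-arity-$1$ trees and the trees having a univalent pearl, an arity-$2$ root, and a two-vertex trunk — the full subcategory $\mathcal E:=\Psi_k\setminus\{c_k,c'_k\}$ decomposes as $\mathcal E=\partial\Psi_k^U\sqcup\{\text{trunk arity }1,\ \ne c'_k\}$, with $\partial\Psi_k^U$ still a cosieve in $\mathcal E$.

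Step 1: cone off $c_k$. In the poset $\Psi'_k$ the element $c_k$ is maximal and its strict down-set is exactly $\mathcal E$ (after deleting the morphism $c'_k\to c_k$, every object other than $c'_k$ still maps to $c_k$). Writing $\Psi'_k=\partial\Psi_k\cup(\Psi'_k)_{\le c_k}$, the nerves glue along $N\mathcal E$ and $(\Psi'_k)_{\le c_k}$ has terminal object $c_k$, so one gets a homotopy pushout
\[
\hocolim_{\Psi'_k}\rho\ \simeq\ \hocolim_{\partial\Psi_k}\rho\ \cup^{h}_{\hocolim_{\mathcal E}\rho}\ \rho(c_k).
\]
Therefore the inclusion-induced map $\hocolim_{\partial\Psi_k}\rho\to\hocolim_{\Psi'_k}\rho$ is a weak equivalence as soon as the right leg $\hocolim_{\mathcal E}\rho\to\rho(c_k)$ is one, so it suffices to prove the latter.

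Step 2: cofinality plus strong coherence. Since $c_k$ is terminal in $\Psi_k^U$ we have $\rho(c_k)=\hocolim_{\Psi_k^U}\rho$, and under this identification $\hocolim_{\mathcal E}\rho\to\rho(c_k)$ is the extension to $\mathcal E$ of the strong-coherence map. I would then show the full inclusion $\partial\Psi_k^U\hookrightarrow\mathcal E$ is homotopy cofinal by exhibiting an initial object in $c\downarrow\partial\Psi_k^U$ for every $c\in\mathcal E$: if $c\in\partial\Psi_k^U$ take $(c,\mathrm{id})$; if $c$ has trunk arity $1$ (so $c\ne c'_k$), there is a unique edge of $c$ whose contraction raises the trunk arity — the edge above the pearled root in the first case, the unique non-trunk edge at the arity-$2$ root in the second — so every morphism from $c$ into $\Psi_k^U$ factors uniquely through the single contraction $\bar c$ of that edge, and $\bar c\ne c_k$, so $(\bar c,c\to\bar c)$ is initial. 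Cofinality gives $\hocolim_{\partial\Psi_k^U}\rho\xrightarrow{\simeq}\hocolim_{\mathcal E}\rho$ compatibly with the maps to $\rho(c_k)$; strong coherence says the composite $\hocolim_{\partial\Psi_k^U}\rho\to\rho(c_k)$ is a weak equivalence, so by two-out-of-three so is $\hocolim_{\mathcal E}\rho\to\rho(c_k)$, which by Step~1 finishes (i). For (ii) one applies (i) to $\rho=\rho_i^O$ for all $i\ge 2$ (resp. $2\le i\le k$).

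The step I expect to be the real obstacle is the cofinality check in Step 2 for the trunk-arity-$1$ objects: one must identify precisely which edge contractions of such a tree land in $\Psi_k^U$ and confirm there is a canonical minimal one — this is exactly where the hypothesis $k\ge 2$ and the combinatorics of the trunk are used. Everything else (coning off a maximal element of a poset, the gluing lemma for homotopy pushouts, and Quillen-style cofinality) is formal and valid in $Top$ without any further hypothesis on $\rho$.
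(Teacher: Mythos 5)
Your proof is correct, but it takes a genuinely different route from the paper's. The paper first proves Proposition~\ref{p:coherent}: both $\hocolim_{\partial\Psi_k}\rho$ and $\hocolim_{\Psi'_k}\rho$ decompose as pushouts of an ``upper'' piece (over $\partial\Psi_k^U$, resp.\ $\Psi_k^U$) and a common ``lower'' piece (over $\partial\Psi_k^L$) glued along $\partial\Psi_k^{UL}$; the nontrivial half of this, Proposition~\ref{p:coherent}(ii), is proved by building an explicit deformation retraction of $|\Psi'_k|$ onto $|\Psi_k^U|\cup_{|\partial\Psi_k^{UL}|}|\partial\Psi_k^L|$ (by coning off, simplex by simplex, each almost-cubical piece $|T\downarrow\Psi'_k|$). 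The theorem then follows by matching upper pieces, which is exactly strong coherence. You instead cone off the single top object $c_k$, writing $\hocolim_{\Psi'_k}\rho$ as a homotopy pushout of $\hocolim_{\partial\Psi_k}\rho$ and $\rho(c_k)$ over $\hocolim_{\mathcal E}\rho$ with $\mathcal E=\Psi_k\setminus\{c_k,c'_k\}$, and then replace the deformation retraction by a cofinality argument: the full inclusion $\partial\Psi_k^U\hookrightarrow\mathcal E$ has an initial object in each comma category $c\downarrow\partial\Psi_k^U$, because each trunk-arity-one tree $c\ne c'_k$ has a unique edge whose contraction raises the trunk arity. Your cofinality check is where the combinatorics enter, and it is essentially the same combinatorial fact the paper encodes in Lemma~\ref{l:FM} and Figure~\ref{G3} (initial elements of $T\downarrow\partial\Psi_k^U$). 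Your route is more formal and avoids constructing an explicit homotopy, which buys a shorter proof of this one implication; the paper's route is heavier but yields Proposition~\ref{p:coherent} as a standalone output, which the paper exploits again in Remark~\ref{r:hocolim} (to reinterpret coherence via the inclusion $\partial\calI b^\Lambda(O)(i)\subset\partial'\calI b^\Lambda(O)(i)$) and in Remark~\ref{r:coherent}(ii) (a Mayer--Vietoris partial converse). So neither approach subsumes the other, but both prove the stated implication.
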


Obviously, $(ii)$ follows from $(i)$, so we need to show only $(i)$. The part $(i)$ is a consequence of the following proposition.

To recall, for a diagram $F\colon C\to\Topo$, its homotopy colimit is $\underset{C}{\hocolim}\, F:= |(-\downarrow C)|\otimes_C F(-)$.

\begin{pro}\label{p:coherent}
For any $\rho\colon\Psi_k\to\Topo$, $k\geq 2$, one has

(i) the natural inclusion
\begin{equation}\label{eq:coh1}
\underset{\partial\Psi_k^U}{\hocolim}\,\rho
\underset{ \underset{\partial\Psi_k^{UL}}{\hocolim}\,\rho }{\coprod} 
\underset{\partial\Psi_k^L}{\hocolim}\,\rho\,
\xrightarrow{\cong} \,
\underset{\partial\Psi_k}{\hocolim}\,\rho\vspace{-5pt}
\end{equation}
is a homeomorphism;%\footnote{To recall, for $F\colon C\toTop$, $\underset{C}{\hocolim}\, F:= |(-\downarrow C)|
%\otimes_C F(-)$.}

(ii) the natural inclusion
\begin{equation}\label{eq:coh2}
\underset{\Psi_k^U}{\hocolim}\,\rho
\underset{ \underset{\partial\Psi_k^{UL}}{\hocolim}\,\rho }{\coprod} 
\underset{\partial\Psi_k^L}{\hocolim}\,\rho\,
\xrightarrow{\simeq} \,
\underset{\Psi'_k}{\hocolim}\,\rho\vspace{-5pt}
\end{equation}
is a homotopy equivalence.
\end{pro}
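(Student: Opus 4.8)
The plan is to reduce both statements to purely combinatorial facts about how the indexing category $\Psi_k$ decomposes, together with the standard compatibility of homotopy colimits with gluing along full subcategories. For part~(i), I would first observe that $\partial\Psi_k = \Psi_k^U \cup \Psi_k^L$ as categories (every non-corolla pearled tree either has trunk-arity $\geq 2$, landing it in $\Psi_k^U$, or has a pearled root / univalent pearl with two-vertex trunk, landing it in $\Psi_k^L$ — this is exactly Definition~\ref{d:ud}), and that $\Psi_k^U \cap \Psi_k^L = \Psi_k^{UL}$. Crucially, each of $\Psi_k^U$, $\Psi_k^L$ is a \emph{sieve} (closed under morphisms into it, since edge contractions only decrease the complexity of the tree in the relevant sense), so $\partial\Psi_k^U = \Psi_k^U \cap \partial\Psi_k$, $\partial\Psi_k^L = \Psi_k^L\cap\partial\Psi_k$, and $\partial\Psi_k^{UL} = \Psi_k^U\cap\Psi_k^L\cap\partial\Psi_k$. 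Because $\hocolim$ over a category is computed as a coend $|(-\downarrow C)|\otimes_C F$ and $C\mapsto (-\downarrow C)$ is a colimit-preserving assignment on sieves, the pushout square of indexing categories $\partial\Psi_k^{UL}\to\partial\Psi_k^U$, $\partial\Psi_k^{UL}\to\partial\Psi_k^L$, with pushout $\partial\Psi_k$, induces a genuine pushout of spaces after applying $\hocolim(-,\rho)$; the inclusion in~\eqref{eq:coh1} is then a homeomorphism on the nose. I would spell this out via the explicit bar-construction / nerve model of $\hocolim$, checking that a simplex of $\operatorname{hocolim}_{\partial\Psi_k}\rho$ is a chain of composable morphisms all of whose objects lie in $\partial\Psi_k^U$ or all in $\partial\Psi_k^L$ (a chain cannot ``jump'' between the two complements because the non-shared part of each is again a sieve/cosieve as appropriate).

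For part~(ii) the same gluing principle applies to the decomposition $\Psi_k' = \Psi_k^U \cup \Psi_k^L$ — note $\Psi_k'$ differs from $\Psi_k$ only by deleting the single morphism $c_k'\to c_k$, so $\Psi_k^U$ (defined as objects with no morphism to $c_k'$, which in particular excludes $c_k'$ but \emph{includes} $c_k$) and $\Psi_k^L$ (objects reachable from those mapping to $c_k'$) still cover $\Psi_k'$, with intersection $\Psi_k^{UL}$. This gives a homeomorphism
\[
\operatorname{hocolim}_{\Psi_k^U}\rho \coprod_{\operatorname{hocolim}_{\Psi_k^{UL}}\rho}\operatorname{hocolim}_{\Psi_k^L}\rho \;\xrightarrow{\cong}\; \operatorname{hocolim}_{\Psi_k'}\rho.
\]
To get from here to~\eqref{eq:coh2}, I would show that the inclusions $\partial\Psi_k^{UL}\hookrightarrow \Psi_k^{UL}$ and $\partial\Psi_k^L\hookrightarrow\Psi_k^L$ are \emph{homotopy cofinal in the opposite direction}, i.e. that the induced maps $\operatorname{hocolim}_{\partial\Psi_k^{UL}}\rho\to\operatorname{hocolim}_{\Psi_k^{UL}}\rho$ and $\operatorname{hocolim}_{\partial\Psi_k^L}\rho\to\operatorname{hocolim}_{\Psi_k^L}\rho$ are weak equivalences, \emph{for any} $\rho$. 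This is where the concrete shapes from Definition~\ref{d:ud}(ii),(iii) matter: $\Psi_k^L$ has $c_k'$ as terminal object and $\Psi_k^L\setminus\partial\Psi_k^L = \{c_k'\}$ — wait, more precisely $\partial\Psi_k^L = \Psi_k^L\setminus\{\text{terminal}\}$, so $\operatorname{hocolim}_{\Psi_k^L}\rho \simeq \rho(c_k')$ and the inclusion from $\operatorname{hocolim}_{\partial\Psi_k^L}\rho$ is an equivalence precisely because $\Psi_k^L$ has a terminal object and the over-category $(-\downarrow \Psi_k^L)$ restricted appropriately is contractible; similarly $\Psi_k^{UL}$ has its own terminal object $c_k'$, so the analogous statement holds. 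Hence in the pushout one factor and one leg of the span are replaced by weakly equivalent spaces, and since the maps in question are cofibrations (the inclusions of full subcategories give rise to cofibrations of bar constructions, using that everything is built from CW-type spaces $\rho(T)$), the induced map of pushouts~\eqref{eq:coh2} is a weak equivalence by the gluing lemma.

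The main obstacle, and the step I would spend the most care on, is verifying that all four inclusions appearing in the two pushouts are \emph{cofibrations} of spaces, so that the homotopy pushout computed by $\hocolim$-over-subcategories agrees with the strict pushout and the gluing lemma applies in~\eqref{eq:coh2}. This requires knowing that for a full subcategory inclusion $D\subset C$ which is a sieve or cosieve, the map $\operatorname{hocolim}_D(\rho|_D)\to\operatorname{hocolim}_C\rho$ is a closed cofibration (a ``relatively nice'' inclusion of realizations of simplicial spaces, each level of which is an inclusion of summands — a coproduct over nerve-simplices — hence levelwise a cofibration, and then one invokes that geometric realization of a Reedy-cofibrant levelwise-cofibration is a cofibration in $Top$, the category of $k$-spaces, where these statements behave well per Subsection~\ref{ss:top}). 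Once this is in place, part~(i) is a formal identification of summands and part~(ii) is the gluing lemma fed by the two terminal-object contractibility arguments. A secondary point to nail down is the precise claim that $\Psi_k^U\cup\Psi_k^L=\Psi_k'$ and $=\partial\Psi_k$ in the two cases, and that a chain of morphisms in $\Psi_k'$ (resp. $\partial\Psi_k$) through these subcategories never forces an object outside both — this is a short case analysis on trunk-arity and root-type using Definitions~\ref{d:trunk_arity} and~\ref{d:ud}, and I would present it as a lemma preceding the main argument.
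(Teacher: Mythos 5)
Your treatment of part (i) is fine and agrees with the paper's (one-line) argument: $\partial\Psi_k^U$ and $\partial\Psi_k^L$ are both closed under morphisms out of them, so every chain in $\partial\Psi_k$ lies entirely in one of the two pieces and the nerve-level pushout identification goes through. The problem is part (ii), where your plan rests on two claims that are false. First, the "homeomorphism" $\hocolim_{\Psi_k^U}\rho\coprod_{\hocolim_{\Psi_k^{UL}}\rho}\hocolim_{\Psi_k^L}\rho\cong\hocolim_{\Psi'_k}\rho$ cannot be right as written: $\Psi_k^L$ is a \emph{full} subcategory of $\Psi_k$ containing both $c'_k$ and $c_k$, hence it contains the morphism $c'_k\to c_k$, and the pushout you describe computes $\hocolim_{\Psi_k}\rho$, not $\hocolim_{\Psi'_k}\rho$. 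Second, and more seriously, the assertion that $\hocolim_{\partial\Psi_k^L}\rho\to\hocolim_{\Psi_k^L}\rho$ (and its $UL$ analogue) is a weak equivalence "for any $\rho$, because $\Psi_k^L$ has a terminal object" is not a valid inference. A terminal object makes the inclusion of the \emph{terminal object itself} homotopy cofinal, giving $\hocolim_{\Psi_k^L}\rho\simeq\rho(c_k)$ (note: the terminal object of $\Psi_k^L$ and of $\Psi_k^{UL}$ is $c_k$, not $c'_k$ — the pearled corolla is terminal in all of $\Psi_k$ and lies in both subcategories, while $c'_k\notin\Psi_k^U$). It does \emph{not} make the inclusion of the complement $\partial\Psi_k^L$ cofinal; already for the two-object category $a\to b$ the map $\rho(a)=\hocolim_{\{a\}}\rho\to\hocolim_{\{a\to b\}}\rho\simeq\rho(b)$ is arbitrary. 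Indeed, if the analogous statement held for $\Psi_k^U$, then the map \eqref{eq:scoherent} would be an equivalence for every diagram, strong coherence would be vacuous, and Section~\ref{s:FM} of the paper would be unnecessary — a clear sign the argument proves too much.

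What your plan is missing is the actual mechanism that exploits the \emph{deleted} morphism $c'_k\to c_k$. The paper's proof observes that for each $T$ admitting a morphism to $c'_k$, the over-category $T\downarrow\Psi_k$ is a cube with terminal object $c_k$, and $T\downarrow\Psi'_k$ is an "almost-cube" (the same cube with the single indecomposable morphism $c'_k\to c_k$ removed). The realization of such an almost-cube deformation retracts — via a stereographic projection away from the subterminal vertex $c'_k$ — onto the union of its subcubical boundary part and the back face lying in $|\Psi_k^U|$. Assembling these retractions over all $T$ gives a deformation retraction of $\hocolim_{\Psi'_k}\rho$ onto the subspace corresponding to $\Psi_k^U\cup_{\partial\Psi_k^{UL}}\partial\Psi_k^L$, which is \eqref{eq:coh2}. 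Note in particular that the retraction of the "lower" part necessarily pushes material into the upper part (onto the back faces in $|\Psi_k^{UL}|$), so the equivalence cannot be established by treating $\Psi_k^L$ in isolation and then gluing, as your plan attempts. If you want to keep a gluing-style organization, the correct intermediate statement is that $\hocolim_{\Psi_k^L\cap\Psi'_k}\rho$ retracts onto $\hocolim_{\partial\Psi_k^L}\rho\cup_{\hocolim_{\partial\Psi_k^{UL}}\rho}\hocolim_{\Psi_k^{UL}}\rho$ relative to its intersection with the upper part — but proving that still requires the almost-cube retraction, so nothing is saved.
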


Indeed, this shows that both sides of~\eqref{eq:coherent} are described as a union of an upper and  a lower parts. By proposition above, their
lower parts are always equivalent, while the equivalence of upper parts is exactly the property of being strongly coherent (as the terminal object
of $\Psi_k^U$ is $c_k$, one gets $\underset{\Psi_k^U}{\hocolim}\,\rho\simeq \rho(c_k)$).

\begin{proof}[Proof of Proposition~\ref{p:coherent}]
(i) follows from the fact that $\partial \Psi_k$ is a union of $\partial\Psi_k^U$ and $\partial\Psi_k^L$, and there is no morphism from the objects
of their intersection $\partial\Psi_k^{UL}$ outside this subcategory.\vspace{5pt}

To prove (ii) we construct a deformation retraction of the target in~\eqref{eq:coh2} onto its source. Figure~\ref{fig:coh_retr} is an example how the deformation retraction looks like in case $k=2$. For this purpose, we introduce the following notation:
\begin{itemize}
\item[$\blacktriangleright$] A {\it cubical category} $C_i$, $i\geq 0$, is the $i$-th power of the category $\{0\to 1\}$ with 2 objects and 1 morphism between them.
\item[$\blacktriangleright$] A {\it subcubical category} is $\partial C_i$, $i\geq 0$, see Notation~\ref{not:boudnary}.
\item[$\blacktriangleright$] An {\it almost cubical category} is the category $C'_i$, $i\geq 1$, obtained from $C_i$ by removing only one morphism $\tau'\to\tau$
to its terminal object. The objects  $\tau$ and $\tau'$  are called {\it almost terminal}  and {\it subterminal} objects of  $C'_i$, respectively.
\end{itemize}

Note that $\tau'\to\tau$ must be indecomposable, meaning that all  coordinates of $\tau'$ except exactly one are~1s. \vspace{5pt}

Cubical and subcubical categories are well-known objects. The realization $|C_i|$ is the $i$-dimensional cube, and $|\partial C_i|$ is the barycentric subdivision of an $(i-1)$-simplex. \vspace{5pt}

Let us describe $|C'_i|$. We call {\it front face} the $(i-1)$-subcube of $|C_i|$, whose terminal object is $\tau'$. The parallel to it $(i-1)$-subcube
containing $\tau$ gets called {\it back face}. One can easily see that $|C'_i|$ is the cube $|C_i|$ minus the interior of the pyramid $P$ 
with apex $\tau$ and base the front face, and minus the interiors of all faces of $P$ that contain the edge $(\tau',\tau)$. 

$$
\includegraphics[scale=0.22]{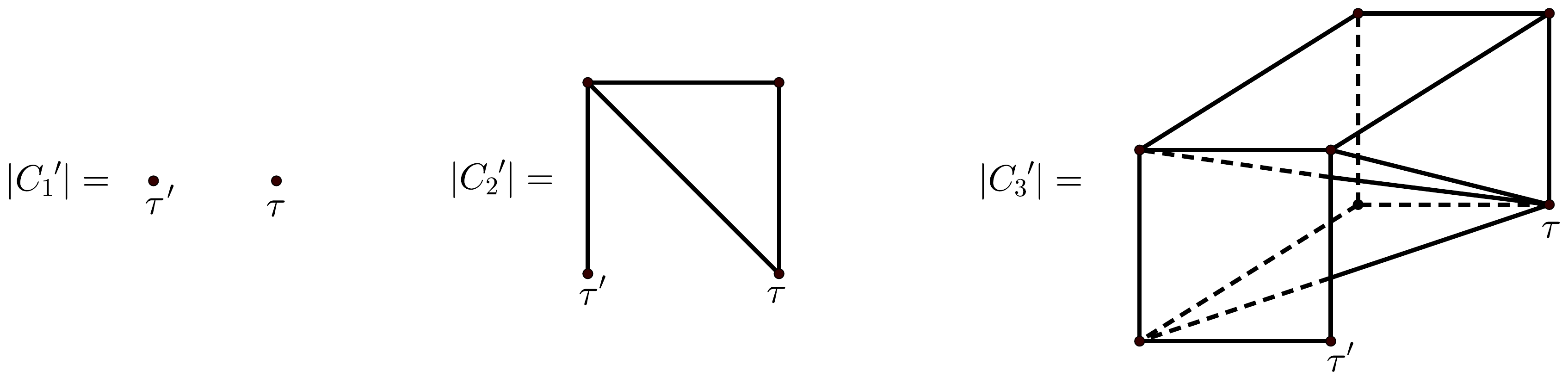}%\vspace{-5pt}
$$

Define projection $p\colon |C'_i|\to |C'_i|$ as the identity map on the front face and as the stereographic projection from $\tau'$ to the union of faces in the
boundary of 
the cube $|C_i|$ that do not contain $\tau'$. One can easily see that the homotopy
\begin{equation}\label{eq:cubic_retr}
H(t)=(1-t)\cdot id +t\cdot p
\end{equation}
defines a deformation retraction from $|C'_i|$ onto the union of $|\partial C_i|$ and the back face.

\begin{figure}[!h]
\begin{center}
\includegraphics[scale=0.09]{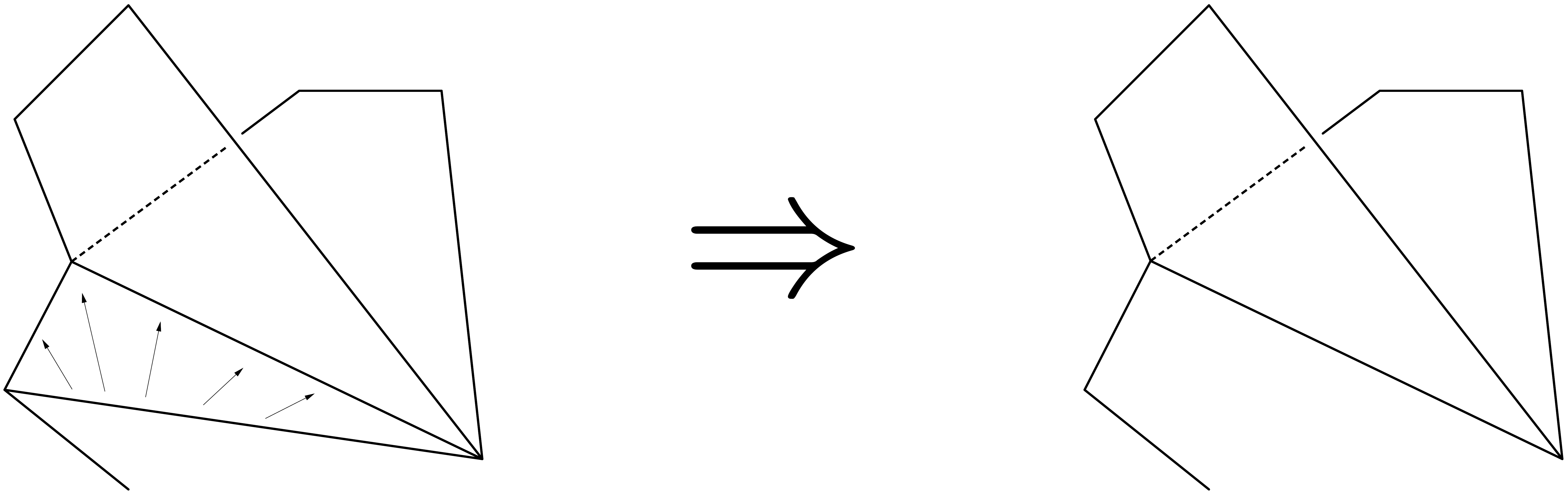}\vspace{-5pt}
\caption{Deformation retraction of $\underset{\partial\Psi'_2}{\hocolim}\,\rho$ onto $\underset{\Psi_2^U}{\hocolim}\,\rho
\underset{ \underset{\partial\Psi_2^{UL}}{\hocolim}\,\rho }{\coprod} 
\underset{\partial\Psi_2^L}{\hocolim}\,\rho$.}\label{fig:coh_retr}
\end{center}
\end{figure}

Now we apply this construction to prove our proposition. We will define a deformation retraction $H$ from $|\Psi'_k|$ onto
$|\Psi_k^U|\underset{|\partial\Psi_k^{UL}|}{\coprod} |\partial\Psi_k^{L}|$. The reader can easily see that our retraction can be  
extended to homotopy colimits. For any tree $T\in \Psi_k$, the category $(T\downarrow\Psi_k)\subset\Psi_k$ is cubical with 
initial object $T$, terminal object $c_k$, 
and the dimension of
the cube being the number of internal edges of $T$. If there is no map $T\to c'_k$, i.e. $T\in\Psi_k^U$, then $T\downarrow\Psi'_k$ is
the same cube lying entirely in $\Psi_k^U$, and we define $H$ as the constant identity on $|T\downarrow\Psi'_k|\subset|\Psi'_k|$. 
If there is a map $T\to c'_k$, then $T\downarrow \Psi'_k$ is almost cubical, whose almost terminal object is $c_k$ and subterminal object is $c'_k$. 
Note that $|T\downarrow\Psi'_k|\cap|\Psi_k^U|$ is its back face and $|T\downarrow\Psi'_k|\cap|\partial\Psi_k^L|$ is its subcubical part.
Define $H$ on $|T\downarrow\Psi'_k|\subset|\Psi'_k|$ by formula~\eqref{eq:cubic_retr}. One has $|\Psi'_k|=\bigcup_{T\in\Psi_k}|T\downarrow
\Psi'_k|$.
It is easy to see that $H$ is compatibly defined on all such pieces of $|\Psi'_k|$ and produces the required deformation retraction.
\end{proof}

\begin{rmk}\label{r:coherent}
(i) Being coherent does not imply being strongly coherent. Two non-equivalent spaces might become weakly equivalent after attaching the same space. For example, let $T$ be an open tubular neighborhood of a non-trivial knot $K$  in $S^3$, and let $S^1\subset\partial T$ be a circle
bounding a disc $D^2$ that transversely intersects~$K$. Then  attaching this disc $D^2$  to $S^1$ and to $S^3\setminus T$ produces 
in both cases
a contractible space even though the initial inclusion $S^1\subset S^3\setminus T$ is not an equivalence.

(ii) Note, however, that if the map~\eqref{eq:coherent} is a weak equivalence, then by the Exactness Axiom (and
Proposition~\ref{p:coherent}), the map~\eqref{eq:scoherent}
must induce isomorphism for any homology and cohomology theory.
\end{rmk}

\begin{expl}
Let us check that the Fulton-MacPherson operad $\calF_2$ is strongly 2-coherent. The category $\partial\Psi_2^U$
is as follows:
$$
\includegraphics[scale=0.2]{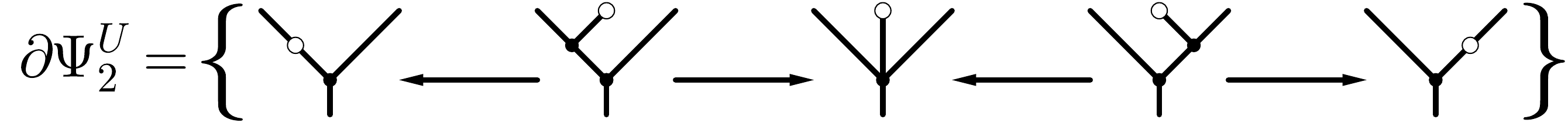}
$$
Thus, one has to check that the map
\begin{equation}\label{eq:Psi_2U}
\hocolim\bigl( \calF_2(2)\xleftarrow{pr_1}\calF_2(2)\times\calF_2(2)\xrightarrow{\circ_1}\calF_2(3)
\xleftarrow{\circ_2}\calF_2(2)\times\calF_2(2)\xrightarrow{pr_1}\calF_2(2)\bigr)
\longrightarrow \calF_2(2)
\end{equation}
is an equivalence. Here, $pr_1$ is the projection to the first factor. The natural maps from each entry to $\calF_2(2)$ are: identity for the leftmost and rightmost ones;
$pr_1$ from both $\calF_2(2)\times\calF_2(2)$; the map forgetting the second point 
$x\mapsto x\circ_2 \ast_0$, for $\calF_2(3)$.
 As shown  in Figure~\ref{G0}, the space $\mathcal{F}_{2}(3)$ is homeomorphic to a solid torus minus two open solid tori that turn one around the other.\vspace{5pt}

\begin{figure}[!h]
\begin{center}
\includegraphics[scale=0.24]{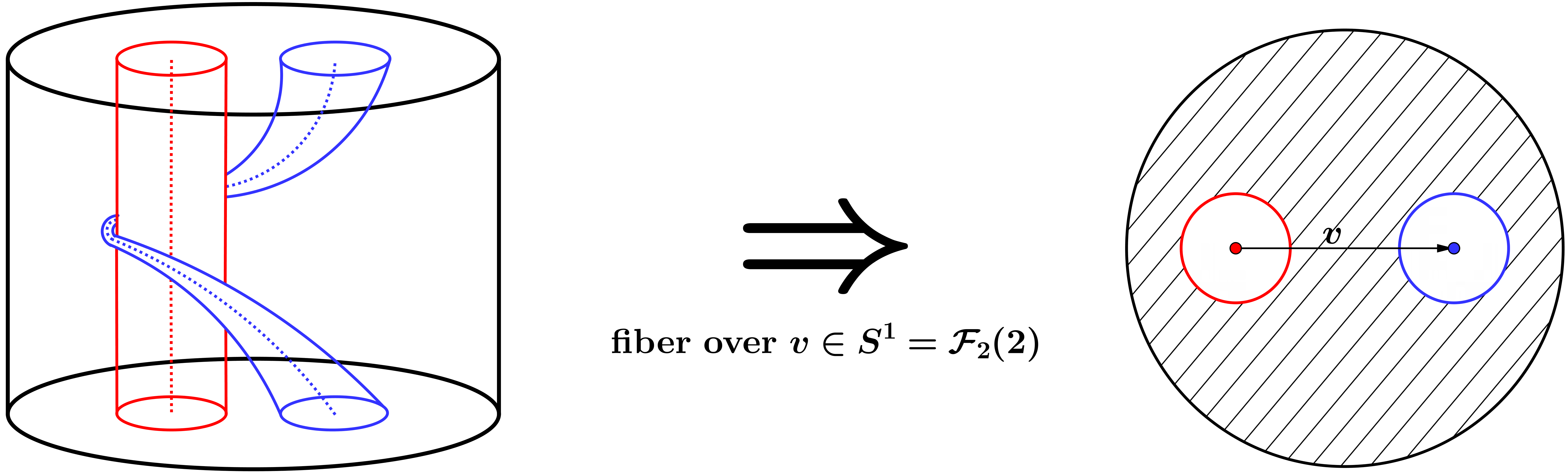}
\caption{Illustration of $\mathcal{F}_{2}(3)$ together with the fiber over $v\in \mathcal{F}_{2}(2)$.}\label{G0}\vspace{-5pt}
\end{center}
\end{figure}

\noindent Indeed,  the map $\mathcal{F}_{2}(3)\rightarrow \mathcal{F}_{2}(2)$ is a fiber bundle. If $v\in \mathcal{F}_{2}(2)=S^{1}$, then the fiber over $v$ consists of (possibly infinitesimal) configurations of $3$ points in which the direction between the points $1$ and $3$ is fixed to be $v$. The boundary of $\calF_2(3)$ has three connected components, that all are tori. One inner boundary torus corresponds to configurations, where $2$ collided with $1$,  the other one 
 to configurations, where $2$ collided with $3$. The outside boundary torus represents limit configurations in which the point $2$ escapes to infinity or, equivalently, the points $1$ and $3$ are infinitsimally close one to another. 
On the other hand, the inclusions $\circ_1$ and $\circ_2$ in the diagram~\eqref{eq:Psi_2U} produce the two inner boundary tori. Taking homotopy colimit corks in these two holes in $\calF_2(3)$. As a result the homotopy colimit above is homeomorphic to $S^1\times D^2\simeq\calF_2(2)$.

To compare, if we were to check that $\calF_2$ is 2-coherent, one would need to verify that the map 
\begin{equation}\label{eq:Psi_2U2}
\underset{\partial\Psi_2}{\hocolim}\,\rho_2^{\calF_2}\,\longrightarrow
\underset{\Psi_2^U}{\hocolim}\,\rho_2^{\calF_2}
\underset{ \underset{\partial\Psi_2^{UL}}{\hocolim}\,\rho_2^{\calF_2} }{\coprod} 
\underset{\partial\Psi_2^L}{\hocolim}\,\rho_2^{\calF_2}
\end{equation}
is a weak equivalence. The first space $\underset{\partial\Psi_2}{\hocolim}\,\rho_2^{\calF_2}$ is homeomorphic to $S^1\times S^2$, obtained 
from $\underset{\partial\Psi_2^U}{\hocolim}\,\rho_2^{\calF_2}\cong S^1\times D^2$, described above,   by \lq\lq{}corking\rq\rq{} with a solid torus the outside part of the boundary of $\calF_2(3)$. The right-hand side of~\eqref{eq:Psi_2U2} is homeomorphic to $S^1$ times the space $D^3$ from which one removes 
the interior of the lower cone,
see Figure~\ref{Fig:3cones}.
\end{expl}

\begin{expl}
The operad $\calB_0$, denoted by $\Lambda$ in this paper, is obviously strongly coherent. In Section~\ref{s:FM} we   prove that the Fulton-MacPherson operad $\mathcal{F}_{m}$ and, therefore, the little discs operad $\calB_m$ are strongly coherent, $m\geq 1$. Since the homotopy colimits commute, one has that $\calB_\infty$ is also strongly coherent. 
This statement is equivalent to the fact that the categories $\partial\Psi_k^U$, $k\geq 2$, are contractible, which
is not obvious, but can be checked directly.
Unfortunately, these are the only  examples of  coherent operads that we know. 
\end{expl}

\subsection{Boardman-Vogt resolution as a homotopy colimit}\label{s:hocolim}

Recall Subsection~\ref{ss:cof2}, where the Boardman-Vogt resolution $\calI b^\Lambda(M)$ of an $O$-Ibimodule is defined. Elements
of $\calI b^\Lambda(M)(k)$ are triples $[\, T\, ; \, \{a_v\},\, \{t_v\}\,]$, where $T\in \textbf{ptree}_k^{\geq 1}$ is a planar pearled tree, whose non-pearl vertices have arity $\geq 1$; its leaves are labelled
by a permutation $\sigma\in\Sigma_k$; $\{a_v\}$ is a collection of elements in $M$ and $O$ labelling the vertices of~$T$; and $\{t_v\}$
is an admissible collection of numbers in $[0,1]$ labelling $V(T)\setminus\{p\}$. It will be convenient sometimes to assume that the pearl is also labelled
by~0. \lq\lq{}Towards the pearl orientation\rq\rq{} of edges enables the set $V(T)$ of vertices of~$T$ with a   \lq\lq{}toward the pearl\rq\rq{} poset structure, for which the pearl $p$
is the minimal element. The space $H(T)\subset [0,1]^{V(T)}$ of admissible labels is the space of pointed order-preserving maps $V(T)\to [0,1]$. {\it Pointed} means
$p\mapsto 0$; {\it order preserving} translates into a set of inequalities
\[
t_{s(e)}\geq t_{t(e)},
\]
for any internal edge $e\in E^{int}(T)$. For example, for the tree
$$
\includegraphics[scale=0.3]{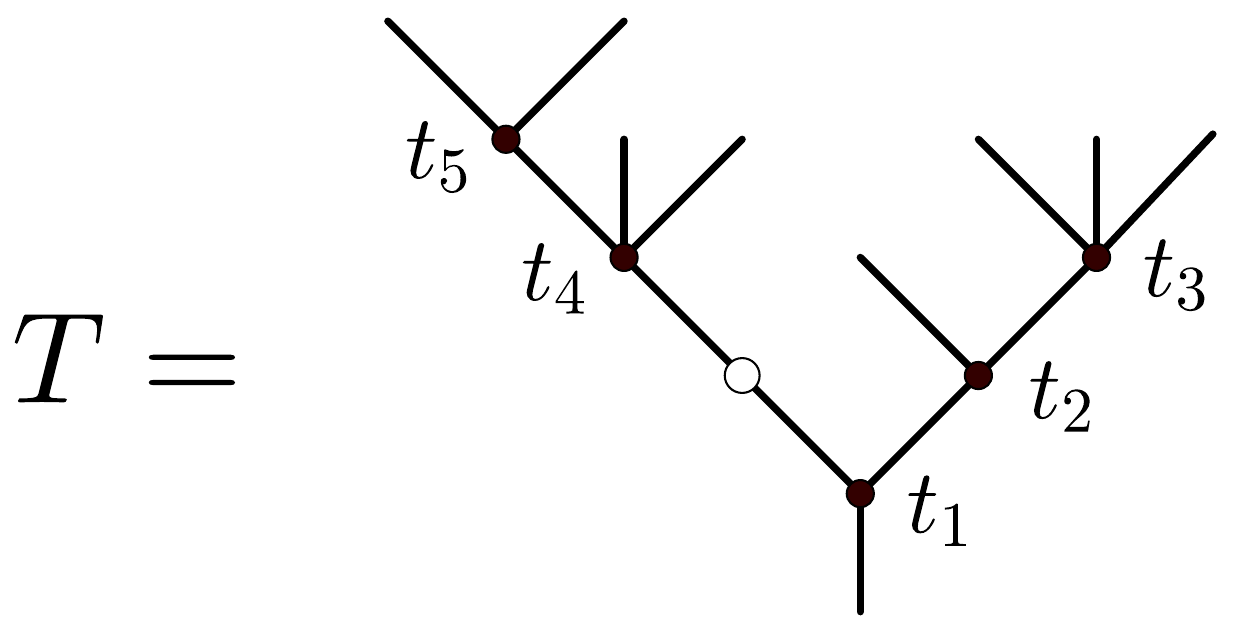}
$$
the polytope $H(T)$ is given by the inequalities $0\leq t_1,t_2,t_3,t_4,t_5\leq 1$, $t_1\leq t_2\leq t_3$ and $t_4\leq t_5$. \vspace{5pt}

In case $O$ is doubly reduced, the set $\textbf{ptree}_k^{\geq 1}$  can be replaced by its subset $\textbf{ptree}_k^{\geq 2}$ of  trees, whose 
 non-pearl vertices have
arity $\geq 2$. One has an obvious projection $\pi_k\colon \textbf{ptree}_k^{\geq 2}\to \Psi_k$, that forgets the planar structure. We say that an element
$x\in \calI b^\Lambda(M)(k)$ is labelled by $T_0\in\Psi_k$ if $x$ has a representative $[\, T\, ; \, \{a_v\},\, \{t_v\}\,]$ with $\pi_k(T)=T_0$. 
Recall also that we denote by $\partial \calI b^\Lambda(M)(k)$ the $(k-1)$-th filtration term $\calI b_{k-1}^\Lambda(M)(k)$.\vspace{5pt}

Since $\Psi_k$ is a poset, $T\downarrow \Psi_k$ is viewed below as a subcategory of $\Psi_k$.

\begin{thm}\label{th:hocolim}
For a doubly reduced topological operad $O$, and an $O$-Ibimodule $M$, one has a natural homeomorphism:
\begin{equation}\label{eq:delta}
\delta_k\colon\underset{\Psi_k}{\hocolim}\, \rho_k^M\, \longrightarrow\, \calI b^\Lambda(M)(k),\quad k\geq 0.
\end{equation}
Moreover, 
\begin{itemize}
\item[(i)] it sends homeomorphically $\underset{\partial\Psi_k}{\hocolim}\, \rho_k^M$ onto $\partial \calI b^\Lambda(M)(k)$;

\item[(ii)] for any $T\in\Psi_k$, it sends homeomorphically $\underset{T\downarrow \Psi_k}{\hocolim}\, \rho_k^M$ onto the subspace of elements 
labelled by the trees from~$T\downarrow\Psi_k$.
\end{itemize}
\end{thm}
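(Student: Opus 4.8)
The plan is to exhibit both sides as coends over the category $\Psi_k$ and to compare them by one purely combinatorial map. For $T\in\Psi_k$ write
\[
H(T)=\bigl\{\,t\colon V(T)\to[0,1]\ \big|\ t_p=0,\ t_{s(e)}\geq t_{t(e)}\ \text{for all }e\in E^{int}(T)\,\bigr\}
\]
for the polytope of admissible labels of Subsection~\ref{s:hocolim}; a contraction $T\to T/e$ identifies $H(T/e)$ with the face $\{t_{s(e)}=t_{t(e)}\}$ of $H(T)$, so that $H$ is a contravariant functor $\Psi_k^{op}\to Top$ with $H(c_k)=\ast$. Re-reading Construction~\ref{C8} in the doubly reduced case, the relations $(iii)$ and $(iv)$ defining $\calI b^\Lambda(M)(k)$ are precisely the coend identifications for the pair of diagrams $H(-)$ and $\rho_k^M$, whereas $(i)$, $(ii)$ and the arity~$\geq 2$ condition are what it costs to pass from labelled planar trees to the objects of $\Psi_k$ together with the labelling spaces of Definition~\ref{d:rho}; this yields a homeomorphism
\[
H(-)\otimes_{\Psi_k}\rho_k^M\ \cong\ \calI b^\Lambda(M)(k),
\]
which is just the cellular cosheaf description of $\calI b^\Lambda(M)(k)$. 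The only delicate point here is that the two quotient topologies agree, which is handled with the facts on $k$-spaces recalled in Subsection~\ref{ss:top} and spelled out in Appendix~\ref{s:A2}. On the other hand, $\underset{\Psi_k}{\hocolim}\,\rho_k^M=|(-\downarrow\Psi_k)|\otimes_{\Psi_k}\rho_k^M$ by definition. So it suffices to produce a natural transformation $\phi\colon|(-\downarrow\Psi_k)|\Rightarrow H(-)$ of $\Psi_k^{op}$-diagrams which is objectwise a homeomorphism, and to set $\delta_k:=\phi\otimes_{\Psi_k}\rho_k^M$.

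Each space $|T\downarrow\Psi_k|$ is the geometric realization of the nerve of the Boolean poset of subsets of $E^{int}(T)$, hence a ball of dimension $|E^{int}(T)|=|V(T)|-1$ whose boundary is the union of the \emph{inner} facets $|(T/e)\downarrow\Psi_k|$, one for each $e\in E^{int}(T)$, together with a complementary corner ball; and $H(T)$ is a ball of the same dimension, whose facets are the \emph{contraction} facets $H(T/e)$ and the \emph{composite} facets $\{t_v=1\}$, one for each top vertex $v$. I would construct $\phi$ by induction on $|E^{int}(T)|$, the case $T=c_k$ being $\mathrm{id}_\ast$. In the inductive step the homeomorphisms $\phi_{T/e}$ already at hand glue, by naturality, to a homeomorphism from the corner ball of inner facets of $|T\downarrow\Psi_k|$ onto the union of the contraction facets of $H(T)$ --- which is therefore itself a ball, with complementary ball the closure of the composite facets. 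As both corner balls are polyhedral codimension-one sub-balls of the respective boundary spheres, the Alexander trick extends this homeomorphism first across the boundary spheres and then across the whole balls to a homeomorphism $\phi_T\colon|T\downarrow\Psi_k|\to H(T)$ which by construction restricts to the $\phi_{T/e}$ and carries the complementary corner onto the closure of the composite facets. The family $(\phi_T)_T$ is then an isomorphism in the diagram category $Top^{\Psi_k^{op}}$, so $\delta_k=\phi\otimes_{\Psi_k}\rho_k^M$ is a homeomorphism $\underset{\Psi_k}{\hocolim}\,\rho_k^M\to H(-)\otimes_{\Psi_k}\rho_k^M\cong\calI b^\Lambda(M)(k)$. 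Naturality in $M$ is clear since $\phi$ depends only on $k$.

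For part~$(i)$ one runs the same argument over the subcategory $\partial\Psi_k=\Psi_k\setminus\{c_k\}$: here $T\downarrow\partial\Psi_k$ is the poset of proper subsets of $E^{int}(T)$, so $|T\downarrow\partial\Psi_k|$ is the complementary corner ball above, and $\phi_T$ carries it homeomorphically onto the closure of the composite facets of $H(T)$. Assembling, $\delta_k$ restricts to a homeomorphism from $\underset{\partial\Psi_k}{\hocolim}\,\rho_k^M$ onto the locus of points of $\calI b^\Lambda(M)(k)$ having a representative with some vertex labelled~$1$, i.e.\ the composite locus. Since $O$ is doubly reduced, removing a top vertex of arity $j\geq 2$ from such a composite tree of arity $k$ drops the number of leaves by $j-1\geq 1$, so this composite locus is exactly $\calI b_{k-1}^\Lambda(M)(k)=\partial\calI b^\Lambda(M)(k)$. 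For part~$(ii)$ one restricts instead to the up-set $T\downarrow\Psi_k$, which is closed under further contraction, so that $T'\downarrow(T\downarrow\Psi_k)=T'\downarrow\Psi_k$ for every $T'\geq T$ and $\phi$ restricts; then $\delta_k$ carries $\underset{T\downarrow\Psi_k}{\hocolim}\,\rho_k^M$ homeomorphically onto $\bigcup_{T'\geq T}\mathrm{im}\bigl(H(T')\times\rho_k^M(T')\bigr)$, which is the subspace of elements labelled by trees from $T\downarrow\Psi_k$.

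The main obstacle is the inductive construction of the \emph{natural} family of homeomorphisms $\phi_T\colon|T\downarrow\Psi_k|\to H(T)$: objectwise these are merely balls of equal dimension, but they are combinatorially distinct --- $|T\downarrow\Psi_k|$ has $2|E^{int}(T)|$ facets while $H(T)$ has one facet per inner edge and one per top vertex --- so there is no canonical (linear or simplicial) identification, and the $\phi_T$ must be built by hand with the Alexander-trick extensions at successive stages chosen to remain compatible with all earlier ones. The remaining points are essentially routine: the identification $H(-)\otimes_{\Psi_k}\rho_k^M\cong\calI b^\Lambda(M)(k)$ as $k$-spaces --- checking that the only identifications in Construction~\ref{C8} are the coend relations for $H$ and that the quotient topologies match, deferred in part to Appendix~\ref{s:A2} --- and the handling of pearls of arity $0$ or $1$, which only relabels pearls and leaves and so changes nothing essential.
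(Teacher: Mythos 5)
Your overall strategy is the same as the paper's: both proofs regard $\calI b^\Lambda(M)(k)$ as the coend of the polytopes $H(T)$ of admissible labels against $\rho_k^M$, reduce the theorem to producing a family of homeomorphisms $|T\downarrow\Psi_k|\to H(T)$ compatible with the face inclusions $|T/e\downarrow\Psi_k|\hookrightarrow|T\downarrow\Psi_k|$ and $H(T/e)\hookrightarrow H(T)$, and then read off $(i)$ and $(ii)$ from which boundary pieces go where (the lower corner $|T\downarrow\partial\Psi_k|$ to the composite facets $\{t_v=1\}$, the upper corner to the contraction facets). Where you differ is in how these homeomorphisms are produced. The paper writes them down explicitly: it fixes images $z_{T'}\in H(T')$ of the cube's vertices by the formula~\eqref{eq:z}, extends affinely over the maximal simplices of the cube, and proves the result is a homeomorphism by induction with a concrete case analysis ($H(T)$ factors as a product when the pearl meets several inner edges, and is a pyramid over $H(T/e)$ with apex the all-ones point otherwise). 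You build them abstractly by an Alexander-trick induction, which, if completed, is a legitimate alternative; it trades the explicit formula for point-set topology of balls in spheres.

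There is, however, a real gap in your inductive step. You conclude that $A'=\bigcup_e H(T/e)$ is a ball because it is the image of the upper corner under the glued $\phi_{T/e}$, and you then assert that, being a ``polyhedral codimension-one sub-ball'' of $\partial H(T)$, its complementary region is again a ball, so that the Alexander trick applies. But the complement statement (Newman's theorem in the PL category, or generalized Schoenflies with a bicollaring hypothesis in the topological one) requires $A'$ to be a PL ball, or at least to have locally flat frontier; knowing only that $A'$ is the homeomorphic image of a ball under your inductively constructed map gives neither, since Alexander-trick extensions at earlier stages are not PL. This is precisely the geometric content that the paper's explicit construction supplies for free. The gap is fillable without changing your architecture: the contraction facets of the convex polytope $H(T)$ are exactly the facets containing the vertex $t\equiv 0$, and the composite facets are exactly those not containing it, so radial projection from a point just beyond that vertex exhibits both unions as PL $(d-1)$-balls meeting along a PL $(d-2)$-sphere; after that your Schoenflies/Alexander extension (which, as your gluing shows, is constrained by naturality only on the upper corner) goes through, and the rest of your argument, including the identifications in $(i)$ and $(ii)$, matches the paper's.
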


\begin{proof}

By definition, our homotopy colimit %\footnote{To recall, for $F\colon C\toTop$, $\underset{C}{\hocolim}\, F:= |(-\downarrow C)|
%\otimes_C F(-)$.}
  is a quotient space 
\begin{equation}\label{eq:hocolim_union}
\underset{\Psi_k}{\hocolim}\, \rho_k^M\, =
\left. \left( \coprod_{T\in \Psi_k} |T\downarrow \Psi_k| \times \rho_k^M(T)\right)\right/\sim.
\end{equation}
We will define $\delta_k$ on each summand of~\eqref{eq:hocolim_union}:
\[
\delta_{T}\colon |T\downarrow\Psi_k|\times\rho_k^M(T) \longrightarrow \calI b^\Lambda (M)(k).
\]

Recall that
\[
\rho_k^M(T)=M(|p|)\times\prod_{v\in V(T)\setminus\{p\}} O(|v|).
\]
Recall also that for  defining  $\rho_k^M$ we need to choose a planar representative for each $T\in\Psi_k$. Abusing notation we denote by $T$
this planar tree. (Because of the relation~$(ii)$ of Construction~\ref{C8} this choice does not matter.) Thus an element $a\in \rho_k^M(T)$ is a collection of labels $\{a_v\}$ of vertices of~$T$. For $(x,a)\in |T\downarrow\Psi_k|\times\rho_k^M(T)$, we set
\begin{equation}\label{eq:delta2}
\delta_{T}(x,a)= [\, T\, ;\, a, \, f_{T}(x)\, ],
\end{equation}
where $f_{T}$ is a continuous map
\[
f_{T}\colon |T\downarrow\Psi_k|\longrightarrow H(T)
\]
from the cube $|T\downarrow\Psi_k|$ to the polytope $H(T)$ of admissible labels of $T$. The maps $f_{T}$, $T\in\Psi_k$, are defined below. (Note that both polytopes have the same dimension $d=|E^{int}(T)|=|V(T)\setminus\{p\}|$.) In other words, the idea of our homeomorphism is to replace
the labelling by elements from  the polytopes $H(T)$ with the labelling from  the cubes $|T\downarrow\Psi_k|$, $T\in\Psi_k$. Figure~\ref{F5} gives an example of such cube and Figure~\ref{F6} describes the image of the vertices of this cube in $H(T)$ by the map $f_T$ that we are going to construct. 

\begin{figure}[!h]
\begin{center}
\includegraphics[scale=0.15]{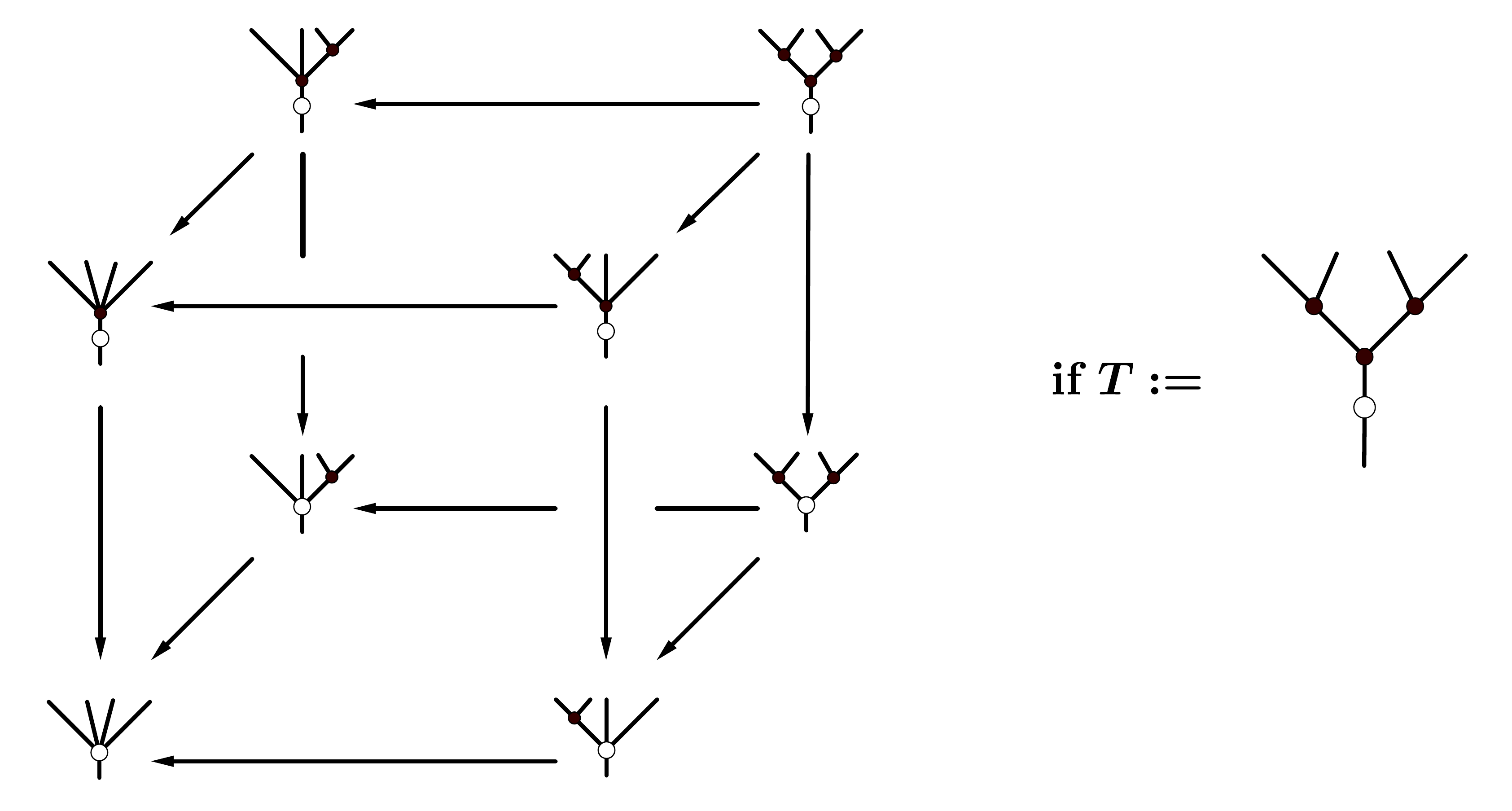}\vspace{-5pt}
\caption{The cube $T\downarrow\Psi_4$ associated to the planar pearled tree $T$.}\label{F5}\vspace{-10pt}
\end{center}
\end{figure}

To make the maps $\delta_{T}$, $T\in\Psi_k$, compatible with the relations of homotopy colimit~\eqref{eq:hocolim_union}, for each morphism
$T\xrightarrow{\alpha} T'$ in $\Psi_k$, the following square must commute:
\begin{equation}\label{eq:relab_compat}
\xymatrix{
|T'\downarrow\Psi_k|\ar@{^{(}->}[d]\ar[r]^{f_{T'}}&H(T')\ar@{^{(}->}[d]^{\alpha^*}\\
|T\downarrow\Psi_k|\ar[r]^{f_{T}}& H(T)
}
\end{equation}
Indeed, this would send the relations of the homotopy colimit to  relations $(iii)$ and $(iv)$ of Construction~\ref{C8}. The left vertical arrow here
is a subcubical inclusion (induced by an inclusion of cubical posets); the right vertical arrow $\alpha^*$ is the precomposition with the map 
of vertices
$\alpha_*\colon V(T)\to V(T')$ induced by the edge contraction~$\alpha$. The cube $|T\downarrow\Psi_k|$ is a union of $d!$ simplices
of dimension~$d$ 
corresponding to the maximal chains in $T\downarrow\Psi_k$. We will define $f_{T}$ on each vertex of the cube, and then extend inside each 
such simplex linearly. Note also that if all the functions $f_{T'}$, $T'\in (T\downarrow\Psi_k)$, $T'\neq T$, are already (compatibly) defined,
then the only freedom to extend this map to a map on the whole cube $|T\downarrow\Psi_k|$ would be the choice of
 $z_T:=f_T(T)\in H(T)$ the image  of the initial element $T$ of this cube.\vspace{5pt}

Choose any collection $z=\{z_{T'}\}$ of points $z_{T'}\in H(T')$, $T'\in \Psi_k$. For any $T\in\Psi_k$ and any $T\xrightarrow{\alpha}T'$, define 
$f^z_T(T'):= \alpha^*(z_{T'})$, and then extend $f^z_T$ on the maximal simplices of $|T\downarrow\Psi_k|$ by linearity. 
As a result, the compatibility of the face inclusions~\eqref{eq:relab_compat} is satisfied, and thus by means of~\eqref{eq:delta2} these $f^z_T$ 
determine a continuous map $\delta_k^z\colon \underset{\Psi_k}{\hocolim}\, \rho_k^M\, \longrightarrow\, \calI b^\Lambda(M)(k)$. 
For $\delta_k^z$ to be a homeomorphism, all the maps $f^z_T\colon |T\downarrow\Psi_k|\to H(T)$, $T\in\Psi_k$, must be homeomorphisms. \vspace{5pt}

For any $T\in\Psi_k$, define $MAX(T)\subset V(T)\setminus\{p\}$ as the set of maximal elements of 
$V(T)\setminus\{p\}$ with respect to the
\lq\lq{}toward the pearl\rq\rq{} partial order. We then define
\begin{equation}\label{eq:z}
z_{T'}\colon V(T')\longrightarrow [0\,,\,1]\,;\quad\, v\longmapsto 
\left\{
\begin{array}{cl}\vspace{4pt}
1-\frac{1}{2^{d(v\,;\,p)}}, & \text{if } v\notin MAX(T'); \\ 
1, & \text{otherwise},
\end{array} 
\right.
\end{equation}
where $d(v\,;\,p)$ is the distance between $v$ and $p$, i.e.  the number of edges in the path between them. 
We claim that for this collection of $z_{T'}\in H(T')$, $T'\in\Psi_k$, each map $f_T:=f^z_T$ is a homeomorphism. The proof is by induction over
the dimension $d$ of the cubes. If $d=0$, i.e. $T$ is a pearled corolla, both $|T\downarrow\Psi_k|$ and $H(T)$ are points and the statement is obvious.
Now, consider any $d\geq 1$. There are two cases.

\begin{figure}[!h]
\begin{center}
\includegraphics[scale=0.17]{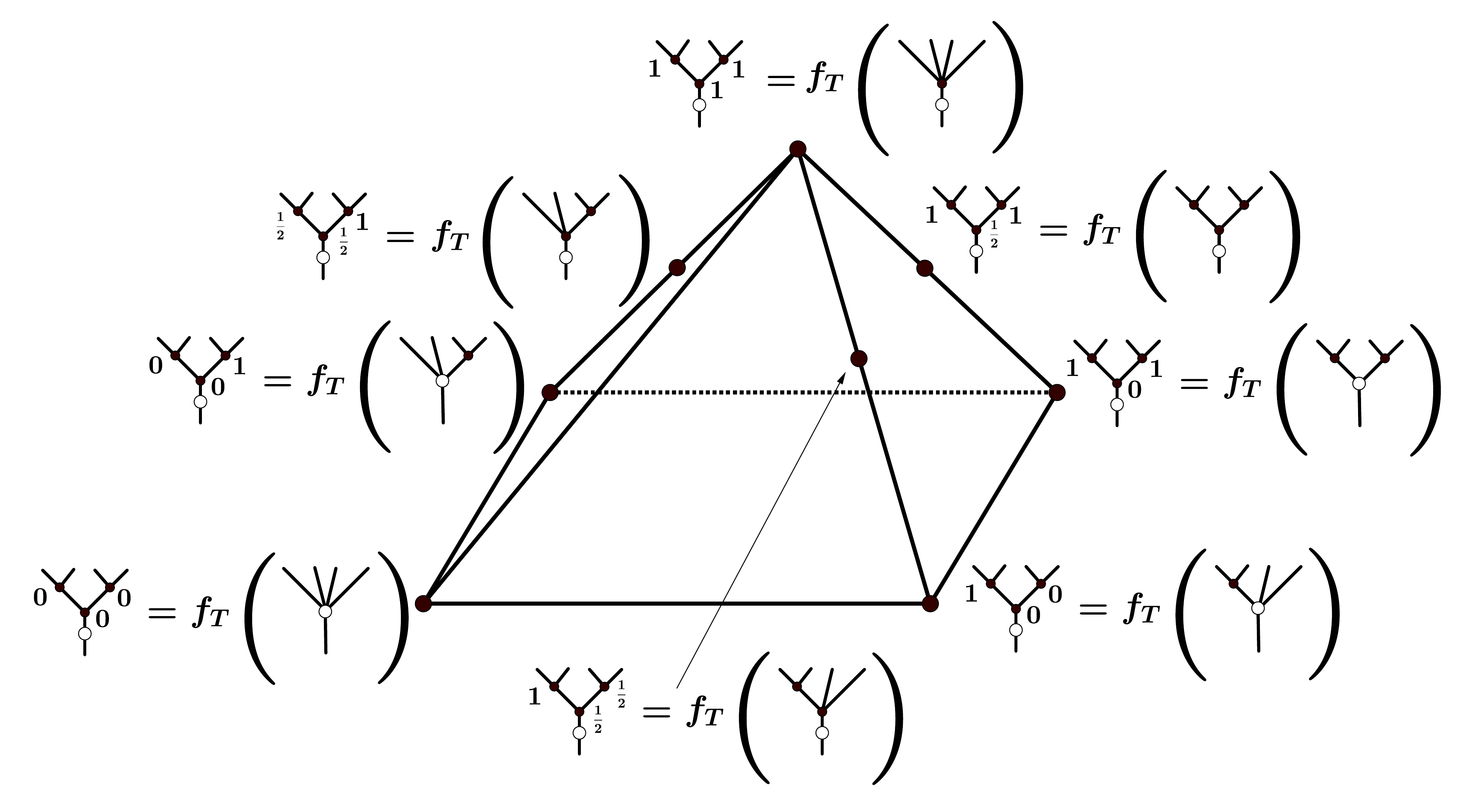}\vspace{-10pt}
\caption{The image of the vertices in the polytope $H(T)$ associated to the cube in Figure \ref{F5}.}\label{F6}\vspace{-10pt}
\end{center}
\end{figure}

{\bf Case 1:} The pearl $p$ of $T$ is adjacent to more than one inner edge.  In this case the polytope $H(T)$ is a product of polytopes
\[
H(T)=\prod_i H(T_i),
\]
where $T_i$ is a pearled tree obtained from $T$ by cutting off all the internal edges adjacent to $p$ (together with all the subtrees not containing $p$)
except one. On the other hand, the cube $|T\downarrow\Psi_k|$ also factors into a similar product (of subcubes this time). Since our formula for
$z_{T'}\colon V(T')\to [0,1]$  uses only the distance to the pearl, the map $f_{T}$ respects this factorization. By induction, for smaller cubes
the map is a homeomorphism, therefore it is so for their product.

 {\bf Case 2:} The pearl $p$ of $T$ is connected to only one inner vertex $v_0$. We call $e:=(p,v_0)$ the {\it distinguished edge} of $T$. 
 In this case $H(T)$ is a pyramide $P$ with base $H(T/e)$, that corresponds to $t_{v_0}=0$, and with the apex $A$ corresponding to $t_{v_0}=1$,
 which forces the constant labelling $(V(T)\setminus\{p\})\to 1$, see Figure~\ref{F6}. By the induction hypothesis, the $(d-1)$-subcube 
 $|T/e\downarrow\Psi_k|\subset |T\downarrow\Psi_k|$ is sent homeomorphically to the base of~$P$. Let $T_1\in T\downarrow\Psi_k$ be the tree
 obtained from $T$ by contracting its all non-distinguished edges. By definition of $f_T$, one has $f_{T}(T_1)=A$ is the apex
 of the pyramide. Each of the remaining $(2^{d-1}-1)$
 trees $T'\in (T\downarrow\Psi_k)$, $ T'\neq T_1$, $T'\notin (T/e\downarrow\Psi_k)$, is sent in the middle of the side edge of $P$ between 
 the apex $A$ and the vertex $f_T(T'/e)$ of the base cube:
 \[
 f_T(T')=\frac 12 (f_T(T'/e)+f_T(T_1)).
 \]
 This formula is a consequence of our choice of $z_{T'}$, see \eqref{eq:z}, and also the fact that the function $f_T(T_1)\colon (V(T)\setminus\{p\})\to [0,1]$ is constantly one.\vspace{5pt}

 To convince ourselves that $f_T$ is a homeomorphism from a cube to a pyramide, one can pull the apex $A$ down along the
 side  edge of constant labels
 (the only one that does not have a vertex from $f_T(T\downarrow\Psi_k)$ in the middle):
 
 $$
 \includegraphics[scale=0.2]{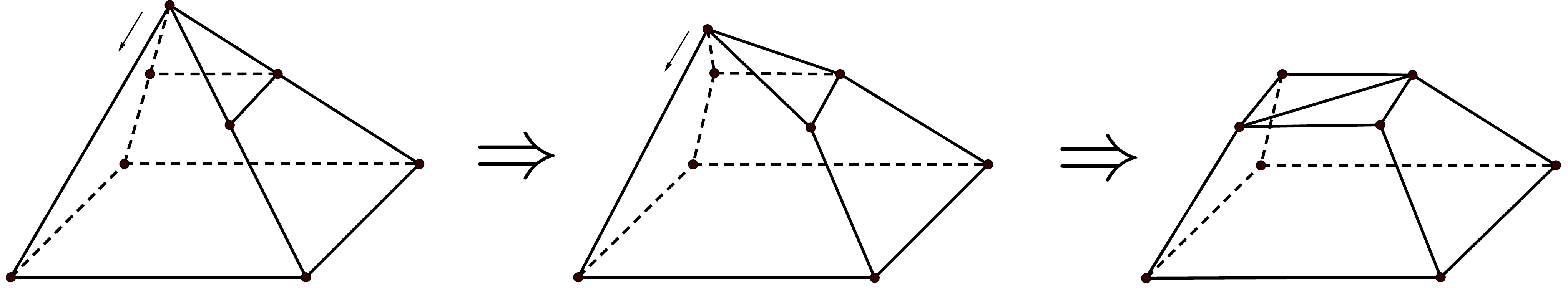}
 $$

For the statement $(i)$ of the theorem, recall that $\partial \calI b^\Lambda(M)(k)$ 
consists of points $[\, T\, ;\, \{a_v\}\, ;\{t_v\}\, ]$ for which at least one $t_v=1$. 
We get that $(i)$  follows from the fact that any tree $T\in\Psi_k$ except the terminal one  $c_k$ has a non-empty set $MAX(T)$ and also from
our choice of~$z$, see~\eqref{eq:z}. To verify~$(ii)$ we note that all the elements in the image of $\delta_{T}$ are labelled by $T$, and one also has
\[
\delta_k\left(\underset{T\downarrow \Psi_k}{\hocolim}\, \rho_k^M\,\right) =\bigcup_{T'\in(T\downarrow\Psi_k)}\operatorname{Im} \delta_{T'}.%\vspace{-20pt}
\]
\end{proof}

\begin{rmk}\label{r:hocolim}
The constructed homeomorphism $\delta_k$ sends the subspace $$\underset{\Psi_k^U}{\hocolim}\,\rho_k^M
\underset{ \underset{\partial\Psi_k^{UL}}{\hocolim}\,\rho_k^M }{\coprod} 
\underset{\partial\Psi_k^L}{\hocolim}\,\rho_k^M\,,\quad k\geq 2,$$  to the subspace  denoted $\partial'\calI b^\Lambda(M)(k)
\subset \calI b^\Lambda(M)(k)$,
which has all the points in the boundary  $\partial\calI b^\Lambda(M)(k)$ 
plus all the elements labelled by pearled trees
from $\Psi_k^U$. Thus by Proposition~\ref{p:coherent}~$(ii)$, a doubly reduced  operad $O$ is $k$-coherent if and only if for any $i$ in the range 
$2\leq i\leq k$, the inclusion $\partial\calI b^\Lambda(O)(i)\subset \partial'\calI b^\Lambda(O)(i)$ is a weak equivalence.
\end{rmk}

\section{Fulton-MacPherson operad $\calF_m$}\label{s:FM}

The Fulton-MacPherson operad $\mathcal{F}_m$ is a well known model  of the little discs operad $\mathcal{B}_m$. 
In case $m=1$, $\calF_1$ is  the Stasheff $A_\infty$ operad: $\calF_1(k)$ is a disjoint union of $k!$ associahedra $A_{k-2}$~\cite{Stasheff63}, \cite[Theorem~4.19]{Sinha04}.
It appeared first  in a work by Getzler-Jones~\cite{Getzler94} and also in papers   by Kontsevich~\cite{Kontsevich99,Kontsevich03} and by Markl~\cite{Markl99}. Since then it had a countless number of applications
in mathematical physics and topology. Here are some of them~\cite{CFT02,Kontsevich01,Salvatore99,Sinha04,Sinha09,Turchin13,Volic14}.
In this section we provide one more application of this operad, which together with Main Theorem~\ref{th:main} completes the proof of Main Theorem~\ref{th:delooping1}.

\begin{thm}\label{th:FM}
For any $m\geq 1$, the Fulton-MacPherson operad $\calF_m$ is strongly coherent.
\end{thm}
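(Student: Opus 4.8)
The plan is to prove strong coherence of $\calF_m$ directly from Definition~\ref{d:scoherent}, i.e. to show that for every $k\geq 2$ the natural map
\[
\underset{\partial\Psi_k^U}{\hocolim}\,\rho_k^{\calF_m}\,\longrightarrow\,\rho_k^{\calF_m}(c_k)=\calF_m(k)
\]
is a weak equivalence. The geometric picture is that $\rho_k^{\calF_m}(c_k)=\calF_m(k)$ is the Fulton--MacPherson compactification of the (normalized) configuration space $C(k,\R^m)$, and that the source of the map is glued out of the smaller pieces $\rho_k^{\calF_m}(T)=\calF_m(|p|)\times\prod_{v\neq p}\calF_m(|v|)$ indexed by pearled trees $T$ whose trunk has arity $\geq 2$. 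My first step would be to identify, for each $T\in\Psi_k^U$, the image of the canonical map $\rho_k^{\calF_m}(T)\to\calF_m(k)$ (obtained by grafting/operadic composition): it lands in the stratum of limit configurations where the $k$ points are grouped into the clusters specified by the leaves-above-each-vertex partition, and such a $T$ with trunk of arity $\geq 2$ corresponds exactly to a nontrivial bracketing of the $k$ points near the \emph{outer} boundary (points escaping together to infinity, equivalently collapsing toward the barycenter). So the whole source maps into a closed neighborhood of the outer stratum of $\partial\calF_m(k)$, and the claim becomes: this neighborhood of the outer boundary of $\calF_m(k)$ is homotopy equivalent to all of $\calF_m(k)$, which is plausible because $\calF_m(k)$ is a manifold with corners that deformation-retracts onto a collar of that boundary face. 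The two explicit examples worked out in the excerpt (the map~\eqref{eq:Psi_2U}, where $\partial\Psi_2^U$ corks the two \emph{inner} tori of $\calF_2(3)$ to recover $S^1\times D^2\simeq\calF_2(2)$) are exactly the $k=2$, $m=2$ instance of this, and should be taken as the model computation.

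Concretely I would carry out the argument inductively on $k$, building a Mayer--Vietoris / gluing scheme from the poset structure of $\Psi_k^U$. For the inductive step I would stratify $\partial\Psi_k^U$ according to the distinguished subtree hanging above the trunk: writing a tree $T\in\partial\Psi_k^U$ as the trunk-vertex (of arity $r\geq 2$) with subtrees $T_1,\dots,T_r$ grafted on its $r$ inputs, the summand $\rho_k^{\calF_m}(T)$ factors as $\calF_m(r)\times\prod_i \rho_{k_i}^{\calF_m}(T_i)$, and $T$ runs over $\partial\Psi_k^U$ precisely when the trunk vertex is non-terminal (so the contraction morphisms are exactly the simultaneous contractions inside the $T_i$'s together with contracting the distinguished edge below the trunk vertex when $T_i$ is itself a corolla). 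This exhibits
\[
\underset{\partial\Psi_k^U}{\hocolim}\,\rho_k^{\calF_m}
\;\simeq\;
\operatorname{hocolim}\Bigl(\;\coprod_{\text{partitions}}\calF_m(r)\times\prod_i \bigl(\text{analogous hocolim or }\rho\text{-term in arity }k_i\bigr)\Bigr),
\]
which is the cone-point / boundary description of a smaller copy of the same type of homotopy colimit, one arity down. Matching this against the combinatorial description of $\partial\calF_m(k)$ (a union over the same partitions of $\calF_m(r)\times\prod_i\calF_m(k_i)$, glued along lower strata) and using that each $\calF_m(k_i)$ is, by the inductive hypothesis together with Theorem~\ref{th:coherent}, equivalent to the corresponding boundary piece, I would conclude that the source of~\eqref{eq:scoherent} is equivalent to a deleted neighborhood of the outer face, and finally invoke the manifold-with-corners collaring/contractibility of $\calF_m(k)$ to identify that with $\calF_m(k)$ itself.

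I expect the main obstacle to be the last point made in the excerpt: the statement is equivalent to the contractibility of the categories $\Psi_k^U$ for $k\geq 2$ (since $\rho_k^{\calF_m}$ is a diagram of contractible or configuration-space-type spaces over $\Psi_k^U$ whose colimit over the \emph{whole} of $\Psi_k^U$ is already $\simeq\calF_m(k)$, the real content is that restricting to $\partial\Psi_k^U$ does not change the homotopy type), and proving $|\Psi_k^U|$ contractible is genuinely combinatorial and not formal. The cleanest route is probably to produce an explicit contraction of the nerve: there is a natural "all points move to the barycenter" / outermost-bracket functor that should give a deformation of $|\Psi_k^U|$ onto $|c_k{\downarrow}\Psi_k^U|=\{c_k\}$, but verifying it is well-defined on morphisms and compatible with the $\calF_m$-labels (so that it lifts to the homotopy colimit and matches the collar retraction of $\calF_m(k)$) is where the work lies; I would organize this precisely as the cube-by-cube / subcube-inclusion compatibility check already used in the proof of Proposition~\ref{p:coherent}, adapting the retraction formula~\eqref{eq:cubic_retr} to the outer-boundary stratification of $\calF_m(k)$. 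An alternative, possibly shorter, argument is to use the homeomorphism $\delta_k$ of Theorem~\ref{th:hocolim} and Remark~\ref{r:hocolim}: it suffices to show that $\partial'\calI b^\Lambda(\calF_m)(k)$ (which contains the outer boundary plus all $\Psi_k^U$-labelled points) is a deformation retract of $\calI b^\Lambda(\calF_m)(k)$, and for $m\geq 1$ this can be checked on the explicit cell structure, pushing the interior of each top cell toward its outer face — this is the approach I would ultimately try to write up, with the $k=2$ case of Figure~\ref{Fig:3cones} as the guiding picture.
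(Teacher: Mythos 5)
There is a genuine gap. Your central difficulty is that you try to run the argument inside $\calF_m$ itself, where the ``outer boundary / points escaping to infinity'' picture does not exist: $\calF_m(k)$ is the compactification of $C(k,\R^m)$ \emph{modulo translations and rescalings}, so its strata only record collisions, the pearl of a tree $T\in\Psi_k$ is invisible in the composite $\rho_k^{\calF_m}(T)\to\calF_m(k)$, and the union of the images of these composites for $T\in\partial\Psi_k^U$ is just (part of) $\partial\calF_m(k)$, which is certainly not equivalent to $\calF_m(k)$. The whole content is in the extra gluing performed by the homotopy colimit, and your inductive Mayer--Vietoris scheme never says how that gluing is controlled. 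The paper's key move, which is missing from your proposal, is to replace $\rho_k^{\calF_m}$ by the objectwise equivalent diagram $\rho_k^{\calIF_m}$, where $\calIF_m(k)$ is the compactification of $C(k,\R^m)$ with \emph{no} quotient: there each value $\rho_k^{\calIF_m}(T)=\calIF_m(T)$ is literally the closed stratum of the manifold with corners $\calIF_m(k)$ labelled by the pearled tree $T$, and (after checking that the restricted diagram is cellular cofibrant, using that every $T\downarrow\partial\Psi_k^U$ has an initial object) the homotopy colimit over $\partial\Psi_k^U$ becomes an honest subspace $A(k)\subset\calIF_m(k)$, namely the union of all open strata except those labelled by $c_k$ and $c'_k$. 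Strong coherence is then proved by showing that the restriction of the projection $\mu_\calI\colon\calIF_m(k)\to\calF_m(k)$ to $A(k)$ has contractible (disc) point-preimages, via an explicit stratum-by-stratum analysis of $\mu_\calI^{-1}(x)$. Nothing in your proposal supplies a substitute for this fiberwise step.

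Two further specific errors. First, your claim that the statement ``is equivalent to the contractibility of the categories $\Psi_k^U$'' is only correct when all the values of the diagram are contractible (i.e.\ for $\calB_\infty$); for finite $m$ the spaces $\calF_m(j)$ are not contractible and contractibility of the indexing poset neither implies nor is implied by strong coherence of $\rho_k^{\calF_m}$. Second, your ``alternative, possibly shorter'' route targets the wrong statement: a deformation retraction of $\calI b^\Lambda(\calF_m)(k)$ onto $\partial'\calI b^\Lambda(\calF_m)(k)$ would establish neither coherence (which, by Remark~\ref{r:hocolim}, requires the inclusion $\partial\calI b^\Lambda(\calF_m)(k)\subset\partial'\calI b^\Lambda(\calF_m)(k)$ to be an equivalence) nor strong coherence (which compares $\underset{\partial\Psi_k^U}{\hocolim}\,\rho$ with $\rho(c_k)=\calF_m(k)$, not with $\calI b^\Lambda(\calF_m)(k)$); indeed $\partial\calI b^\Lambda(\calF_m)(k)\hookrightarrow\calI b^\Lambda(\calF_m)(k)$ is already not an equivalence for $k=2$.
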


In the first three subsections we prove this theorem. 
In the last subsection we outline an alternative construction of a Reedy cofibrant replacement
of $\calF_m$ as a bimodule over itself. We denote it by $\calBF_m$. It is used in Subsection~\ref{ss:FM5},
where we explain the initial geometrical idea for the delooping result. In the following, we recall the main properties of $\calF_m$. We refer the reader to~\cite[Section~5]{Volic14}, where this operad and the conventions below about infinitesimal configurations are explained in full detail. 
\begin{itemize}
\item[$\blacktriangleright$] $\calF_m(0)=\calF_m(1)=\ast$ and $\calF_m(k)$, with $k\geq 2$, is a manifold with corners whose interior is the configuration space $C(k\,;\,\mathbb{R}^{d})$, of $k$ distinct points in $\mathbb{R}^{d}$, quotiented out by translations and rescalings. 
\item[$\blacktriangleright$] The strata of $\calF_m(k)$, with $k\geq 2$, are encoded by non-planar rooted trees with $k$ leaves (labelled by $1,\ldots, k$) whose all internal vertices are of valence $\geq 3$. The codimension of the stratum encoded by a tree $T$ is $|V(T)|-1$. The closure of the stratum encoded by $T$ is homeomorphic to $\prod_{v\in V(T)}\calF_m(|v|)$. We denote this closure by $\calF_m(T)$. It is a manifold with corners whose interior is the stratum we started with.
\item[$\blacktriangleright$] By construction, $\calF_m$ is an operad in compact semi-algebraic sets, see~\cite{Volic14}.
\end{itemize}

\begin{figure}[!h]
\begin{center}
\includegraphics[scale=0.18]{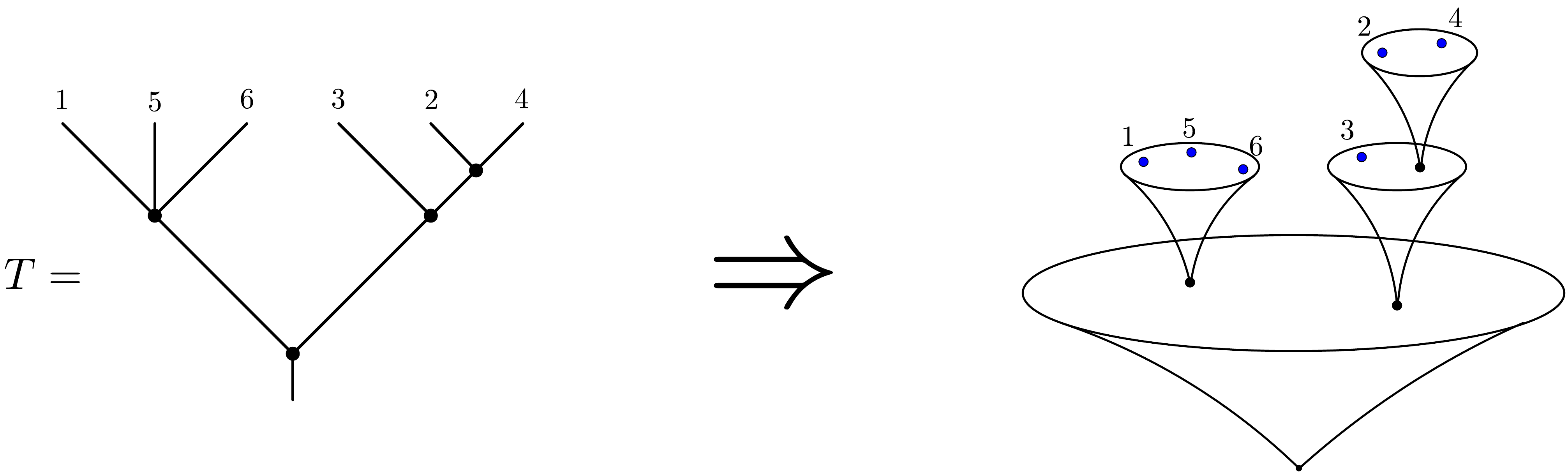}\vspace{-5pt}
\begin{minipage}{420pt}
\caption{A tree $T$ together with an element in the interior of $\calF_m(T)$. It represents an infinitesimal configuration in a stratum of $\calF_m(6)$:  points $1$, $5$, and $6$ collided together, and so did $2$, $3$, and $4$. But the distance between the points $2$ and $4$ is infinitesimally small compared to the distance between $2$ and $3$.}\label{Fig:FM}
\end{minipage}\vspace{-10pt}
\end{center}
\end{figure}

%\newpage

%
%
%For $m=1$, each component $\calF_1(k)$ is homeomorphic as a manifold with corners to a disjoint union of $k!$  associahedra $A_{k-2}$, $k\geq 2$.

In order to prove that the Fulton-MacPherson operad is strongly coherent, we consider an alternative  cofibrant replacement of the Fulton-MacPherson operad in the category $\Lambda\Ibimod_{\calF_m}$ that uses compactification of configuration spaces. Similarly to the Fulton-MacPherson operad, the components of the infinitesimal bimodule so obtained are manifolds with corners. By studying the geometry of its stratification, we  prove Theorem~\ref{th:FM}. 

\subsection{Infinitesimal bimodule $\calIF_m$}\label{ss:IF}

In this subsection, we describe a different model of a  Reedy cofibrant replacement of $\calF_m$ as an infinitesimal bimodule over $\calF_m$. We denote this model by $\calIF_m$. In case $m=1$, the component $\calIF_1(k)$ is  a disjoint union of $k!$ cyclohedra $C_k$~\cite{Bott94,Lambrechts102}.  Similarly to the Fulton-MacPherson operad $\calF_m$, the components of $\calIF_m$ are also obtained as Axelrod-Singer-Fulton-MacPherson compactifications of configuration spaces (see~\cite{Sinha04}), but this time we don't quotient out by any kind of translations or rescalings. More precisely, we view those spaces as configuration spaces of $(k+1)$ points in $S^m=\mathbb{R}^{m}\cup \{\infty\}$, where the $(k+1)$st point is fixed to be $\infty$, and then we take the compactification in question. \vspace{5pt}

Explicitly, $\calIF_m$ is described in the work of the second author~\cite{Turchin13}. In order to see that the obtained spaces have the structure of an infinitesimal bimodule over $\calF_m$, one should parametrize the strata from the "flat" point of view. That is, instead of looking at how points approach $\infty$, one should look at how the points are located with repect to each other when they escape to infinity (for details see~\cite{Turchin13}). Here are the main properties of $\calIF_m$.
\begin{itemize}
\item[$\blacktriangleright$] Each $\calIF_m(k)$, with $k\geq 0$, is a smooth manifold with corners, whose interior is $C(k\,;\,\mathbb{R}^{m})$. 
\item[$\blacktriangleright$]  The strata of $\calIF_m(k)$ are encoded by non-planar pearled trees with $k$ labelled leaves. Their pearl could be of any valence $\geq 1$, while all the other vertices must be of valence $\geq 3$. The codimension of a stratum encoded by a pearled tree $T$ is $|V(T)|-1$.
\item[$\blacktriangleright$] The closure of the stratum encoded by $(T\,,\,p)$, denoted by $\calIF_m(T)$, is homeomorphic to 
$$
\calIF_m(|p|)\times \prod_{v\in V(T)\setminus \{p\}}\calF_m(|v|).$$
\end{itemize}
Another important property for us is that $\calIF_m$ is an infinitesimal bimodule over $\calF_m$ in the category of semi-algebraic compact spaces. 
Figure~\ref{Fig:FM2} gives an example of an infinitesimal  configuration in which points $2$, $6$, $7$, $1$, $8$ escape to infinity; the points $3$, $4$ and $5$ stay inside $\mathbb{R}^{m}$ (the shaded disc in the figure corresponds to the pearl of the tree), but $4$ and $5$ collide. Also while escaping to infinity $2$, $6$, $7$ stay close to each other, and the same happen with the pair $1$ and $8$.
\begin{figure}[!h]
\begin{center}
\includegraphics[scale=0.17]{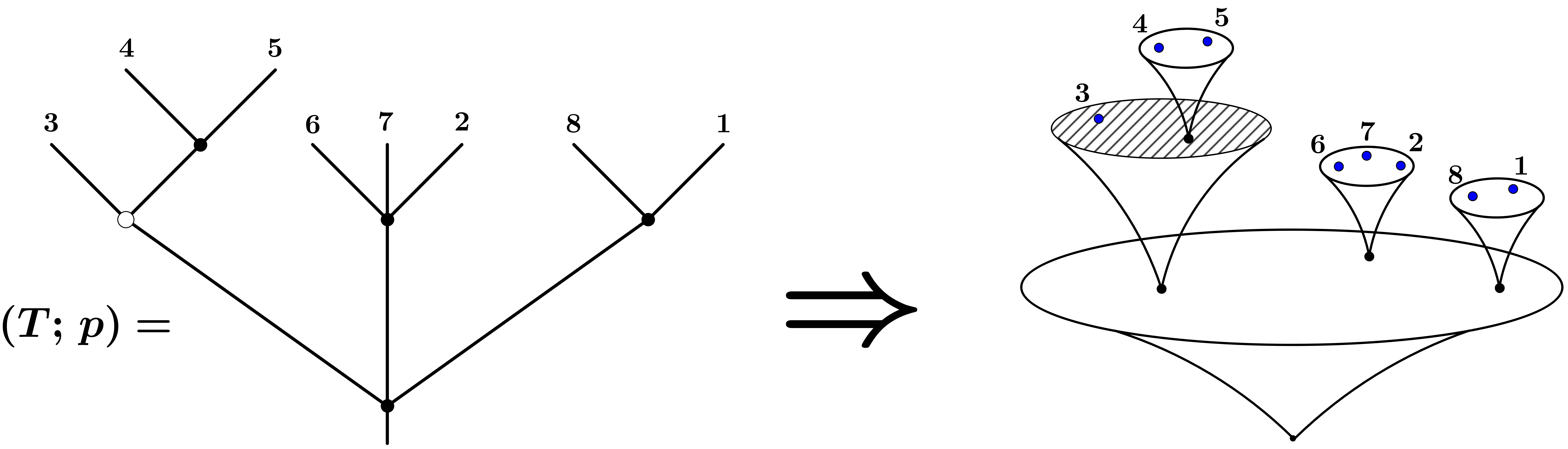}
\caption{A pearled tree $(T\,;\,p)$ together with an element in the interior of $\calIF_m(T)$.}\label{Fig:FM2}\vspace{-19pt}
\end{center}
\end{figure}

\begin{rmk}\label{r:FM1}
We believe that $\calIF_m$ and $\calI b^\Lambda(\calF_m)$ are homeomorphic as  $(\calF_m)_{>0}$-Ibimodules. Example~\ref{ex:2FM}
and the projection  that we study in the next subsection 
show that this is the case for the arities $k\leq 2$. If true, this would be similar to the fact that the Boardman-Vogt resolution of $(\calF_m)_{>0}$
as operad is homeomorphic to $(\calF_m)_{>0}$, see~\cite[Proposition~3.7]{Salvatore99}.
\end{rmk}

%
%
%\begin{conj}
%We believe that the infinitesimal bimodules $\calIF_m$ and $\mathcal{I}b_{>0}(\calF_m)$ are homeomorphic. 
%\end{conj}
%

\subsection{Projection map $\calIF_m\rightarrow \calF_m$}

In this subsection we want to understand a little bit better the geometry of $\calIF_m$. The interior of $\calIF_m$ is $C(k\,;\,\mathbb{R}^{m})$. 
It turns out that the natural quotient map of interiors can be extended to a smooth map 
$$
\mu_\calI:\calIF_m(k)\longrightarrow \calF_m(k), \hspace{15pt}\text{with }k\geq 0.
$$
These maps turn out to respect the infinitesimal bimodule structure over  $\calF_m$ making $\calIF_m$ into 
a Reedy cofibrant replacement of $\calF_m$ as $\calF_m$-Ibimodule. Moreover, by construction, these maps are semi-algebraic. In the following subsection, we will be using not only the statement of Proposition~\ref{G5}, but also the description of the stratified structure of the preimages given in the proof. 

\begin{pro}\label{G5}
Let $\mu_\calI:\calIF_m(k)\longrightarrow \calF_m(k)$ be the map described above. For any point $x\in \calF_m(k)$, with $k\geq 2$, the preimage $\mu_\calI^{-1}(x)$ is an $(m+1)$-dimensional disc.
\end{pro}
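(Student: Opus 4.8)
The plan is to prove that $\mu_\calI^{-1}(x)$ is an $(m+1)$-disc by analyzing the stratification of $\calIF_m(k)$ over a fixed point $x \in \calF_m(k)$ and identifying the fiber with an explicit compactified configuration space. First I would recall that $x \in \calF_m(k)$ is, up to translation and rescaling, a (possibly infinitesimal) configuration of $k$ points in $\R^m$; choosing a representative $\tilde x$ of this configuration in $C(k,\R^m)$ (or its appropriate compactified stratum), a point in $\mu_\calI^{-1}(x)$ records the additional data of \emph{where} this configuration sits in $\R^m$ relative to the fixed point at infinity, i.e.\ the translation and scaling that were quotiented out in passing from $\calIF_m$ to $\calF_m$, together with the limiting data when the configuration escapes to infinity or collapses to a point. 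Thus heuristically the fiber is the space of pairs (translation vector $\in \R^m$, scaling factor $\in (0,\infty)$) suitably compactified, which is an $m+1$-dimensional manifold: $\R^m \times (0,\infty) \cong \R^{m+1}$, and the claim is that the compactification is a closed disc.

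The key steps, in order, would be: (1) Describe $\mu_\calI^{-1}(x)$ stratum by stratum. Over the open stratum (interior of $\calF_m(k)$, i.e.\ $x \in C(k,\R^m)/(\text{transl., scaling})$), the fiber's interior is exactly $\R^m \times (0,\infty)$, the choices of placing the normalized configuration into $\R^m$. (2) Identify the boundary strata: these arise from the pearled trees $(T,p)$ mapping to the tree encoding $x$. Contracting the trunk of such a $T$ must recover the stratum of $x$; the remaining freedom — the subtree above/around the pearl recording escape-to-infinity behavior — parametrizes the boundary. Using the homeomorphism $\calIF_m(T) \cong \calIF_m(|p|)\times\prod_{v\neq p}\calF_m(|v|)$ from Subsection~\ref{ss:IF}, together with the fixed values of the $\calF_m(|v|)$ factors imposed by fixing $x$, one reads off that each boundary face of the fiber is itself a product of lower-dimensional such fibers and simplices/cubes. (3) Assemble these strata and show the result is a manifold with corners of dimension $m+1$ which is contractible, hence (being also simply connected with the right local structure, or by an explicit coordinate chart / radial deformation) homeomorphic to $D^{m+1}$. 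A clean way to finish is to exhibit an explicit semi-algebraic homeomorphism: compactify $\R^m \times (0,\infty)$ by adding the ``escape to infinity'' boundary (a copy of a sphere's worth of directions, reorganized via the trees) and the ``shrink to a point'' boundary, and check this compactification is a ball; alternatively, produce a proper map to $[0,1]$ with contractible fibers or use that $\calIF_m(k) \to \calF_m(k)$ is a fiber bundle with the stated fiber, as suggested by Example~\ref{ex:2FM} and Remark~\ref{r:FM1}.

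Concretely I would set up coordinates: fix the representative configuration so that, say, its ``center of mass'' is at the origin and its ``diameter'' is $1$; then a point of the fiber is given by a translation $t \in \R^m$ and a scale $\lambda \in (0,\infty)$, and I pass to coordinates $(t, s)$ with $s = \lambda/(1+\lambda) \in (0,1)$ or similar, so the fiber interior becomes $\R^m \times (0,1)$. The faces $s \to 0$ (configuration shrinks to a point, which in $\calIF_m$ means it collapses into the pearl becoming univalent — this is the ``$\calIF_m(1) = C(\calF_m(2))$'' type phenomenon generalized) and $s \to 1$ (configuration blows up / all points escape to infinity together) must be compactified; the $t \to \infty$ directions get glued in according to how the escaping points organize. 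The main work is checking that these gluings are exactly those dictated by the tree combinatorics and that the resulting closed semi-algebraic set has no wrong-codimension strata, so that it is a manifold with corners; then contractibility (via scaling $t \to 0$, $s \to$ fixed value) plus the corner structure forces it to be $D^{m+1}$.

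The hard part will be step (2): carefully matching the boundary strata of the fiber $\mu_\calI^{-1}(x)$ with the pearled trees over $x$ and verifying that the face identifications are precisely right — in particular handling the compactification at infinity, where several points may escape in clustered groups (encoded by the non-pearl part of $T$), and making sure the local structure near these corners is that of a manifold with corners of the correct dimension rather than something singular. This requires using the ``flat'' parametrization of the strata of $\calIF_m$ from~\cite{Turchin13} and the explicit product decomposition $\calIF_m(T) \cong \calIF_m(|p|) \times \prod_{v \neq p} \calF_m(|v|)$, intersecting each such closed stratum with the fiber $\mu_\calI^{-1}(x)$ and checking the dimension count $|V(T)| - 1$ for the codimension behaves correctly once the $\calF_m$-coordinates are frozen by $x$. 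Once the stratified structure is pinned down, recognizing the total space as $D^{m+1}$ is comparatively routine (it is a contractible compact manifold with corners of dimension $m+1$ whose boundary is a sphere, or one writes an explicit radial homeomorphism).
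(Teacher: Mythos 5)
Your outline matches the paper's proof in all essential respects: you identify the fiber interior as the $(m+1)$-ball of translations and rescalings, propose to classify the boundary strata by the pearled trees mapping to the tree encoding $x$, and finish by recognizing the result as a contractible compact manifold with corners whose boundary is a sphere. The paper carries out your step (2) concretely via a four-way classification (Types I--IV: pearl at a vertex of $T$, pearl in the middle of an edge, univalent pearl attached at a vertex, univalent pearl attached on an edge) and checks that Types II, III, IV assemble into the $m$-sphere bounding the Type I open $(m+1)$-ball.

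One caution on your closing remark that the disc identification is ``comparatively routine'' once the stratification is pinned down: the paper explicitly invokes the topological $h$-cobordism theorem (i.e.\ the Smale--Freedman--Perelman resolution of the topological Poincar\'e conjecture in all dimensions) to pass from ``contractible compact manifold with corners with sphere boundary'' to ``homeomorphic to a disc,'' and your suggested alternative of writing an explicit radial homeomorphism is not attempted there. The paper sidesteps this by noting that for its downstream purposes only contractibility is needed, which follows elementarily from the interior being an open ball. Similarly, your fallback suggestion to use that $\mu_\calI$ is a fiber bundle is, in the paper, stated only as a conjecture (see the remark following the proposition and Remark~\ref{r:FM1}), so it cannot be taken as an input.
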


\begin{proof}
Let $T$ be a tree encoding a stratum of $\calF_m(k)$, with $k\geq 2$. Then, the preimage under $\mu_\calI$ of this stratum consists of the strata of $\calIF_m(k)$ indexed by  pearled trees $(T'\,;\,p)$ of one of the four types:
\begin{itemize}
\item[$\blacktriangleright$] Type I. Tree $(T'\,;\,p)$ is obtained by putting a pearl in one of the vertices of~$T$.
\begin{center}
\includegraphics[scale=0.3]{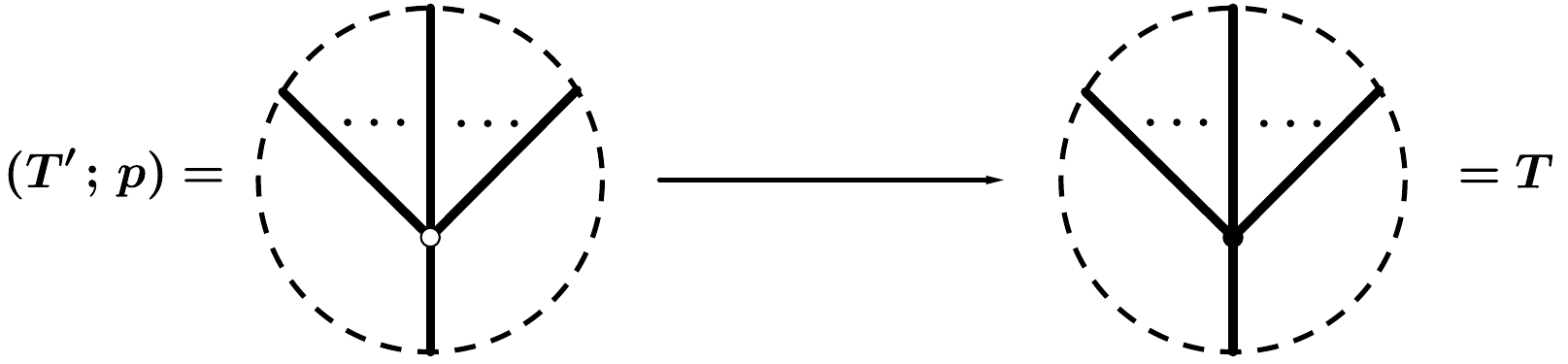}
\end{center}
\item[$\blacktriangleright$] Type II. Tree $(T'\,;\,p)$ is obtained by putting a pearl in the middle of one of the edge of $T$.
\begin{center}
\includegraphics[scale=0.3]{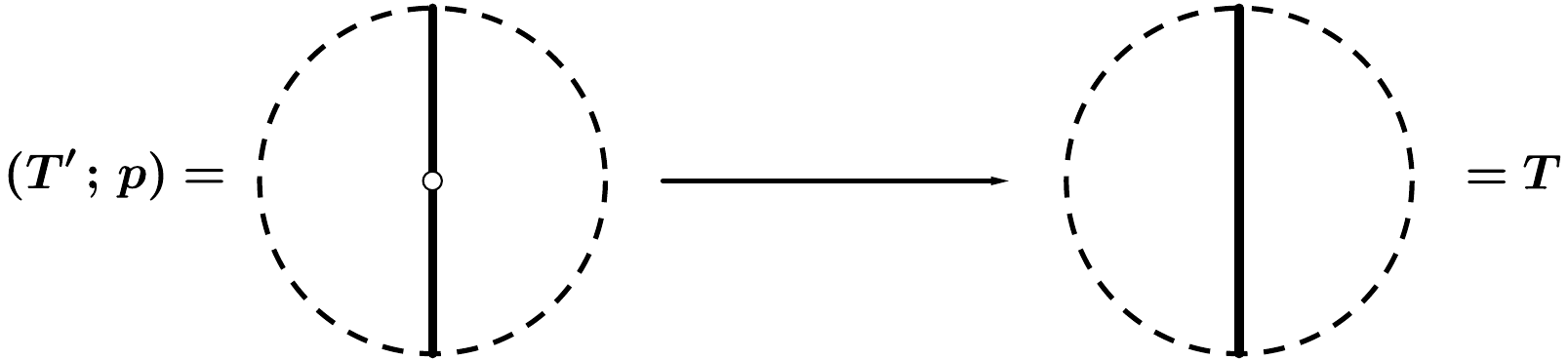}
\end{center}
\item[$\blacktriangleright$] Type III. Tree $(T'\,;\,p)$ is obtained  by attaching a univalent pearl to a vertex of $T$.
\begin{center}
\includegraphics[scale=0.3]{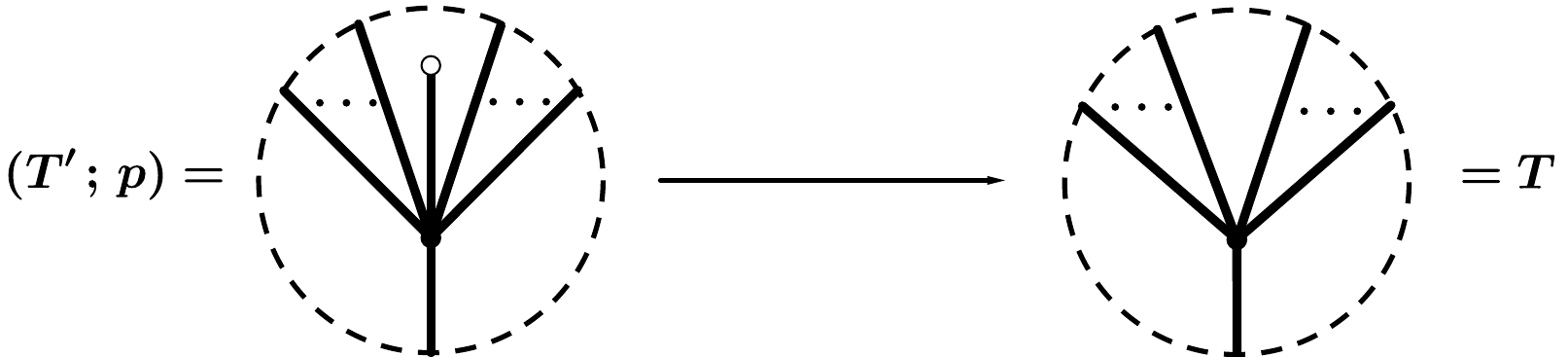}
\end{center}
\item[$\blacktriangleright$] Type IV. Tree $(T'\,;\,p)$ is obtained by attaching a univalent pearl to the middle of an edge of $T$.
\begin{center}
\includegraphics[scale=0.3]{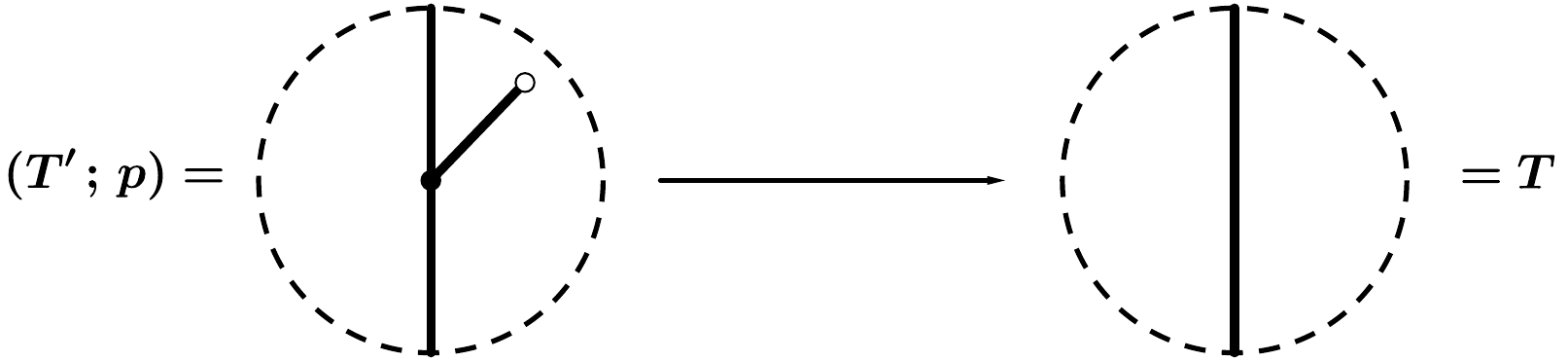}
\end{center}
\end{itemize}
The figures above show how locally $(T'\,;\,p)$ differs from $T$. Outside the dotted circles $(T'\,;\,p)$ and $T$ are identical. \vspace{5pt}

Consider a point $x\in \calF_m(k)$, with $k\geq 2$. We want to analyze the contribution to the preimage $\mu_\calI^{-1}(x)$ of the strata above and show that these contributions glue together into a closed $(m+1)$-disc. To start with, let us consider  $x$  in the interior of $\calF_m(k)$, or in other words let it lie in the stratum encoded by the $k$-corolla:

%\begin{figure}[!h]
\begin{center}
\includegraphics[scale=0.4]{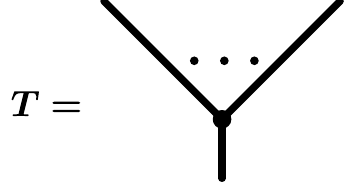}.
\end{center}
%\end{figure}

\noindent The part of the preimage of $x$ of Type I is the part in the interior of $\calIF_m(k)$, labelled by the pearled $k$-corolla. This part is an open $(m+1)$-ball (it is the set $\mathbb{R}^{m}\times \mathbb{R}_{+}$ of translations and rescalings). This part is exactly the interior of the preimage disc. The Type II preimage lies in the strata labelled by the pearled trees  
\begin{center}
\includegraphics[scale=0.4]{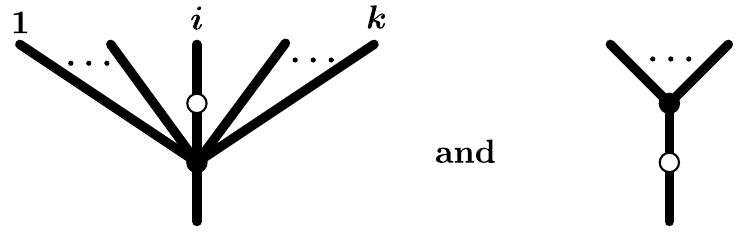}.
\end{center}
There are $k+1$ such trees. The preimage corresponding to each of these trees is homeomorphic to the interior of $\calIF_m(1)$, which is an open $m$-disc. Their closures are $k+1$ disjoint closed $m$-discs, whose boundary spheres are the Type IV preimages corresponding respectively to the pearled trees
\begin{center}
\includegraphics[scale=0.4]{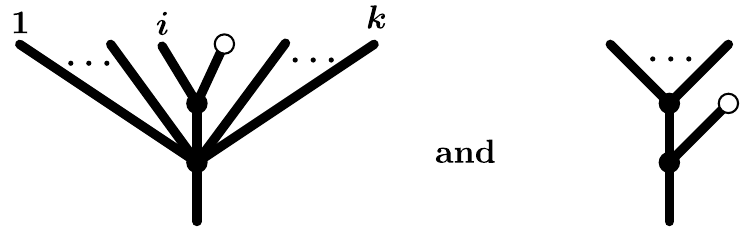}.
\end{center}
We are left to analyze the Type III preimage, corresponding to the pearled tree
\begin{center}
\includegraphics[scale=0.4]{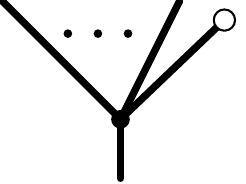}.
\end{center}
Note that this preimage is the same as the interior part of the preimage under the map $\calF_m(k+1)\rightarrow \calF_m(k)$ that forgets the last point, and is $\mathbb{R}^{m}$ minus $k$ points. The compactification of this part is a closed $m$-disc minus $k$ disjoint open sub-discs. There are $k+1$ spheres in the boundary, which are exactly the Type IV preimages. Note that the Type II, III, and IV preimages all together form an $m$-sphere. On the other hand, by using the methods developed by D.Sinha in~\cite{Sinha04}, one can show that $\mu_\calI^{-1}(x)$ is
a manifold with corners, whose boundary is a sphere stratified as above. This manifold with corners must be contractible since any topological manifold with boundary is homotopy equivalent to its interior.  But then, by the topological $h$-cobordism theorem, which is true for all dimensions, $\mu_\calI^{-1}(x)$ must be homeomorphic to a closed disc. (The topological $h$-cobordism theorem is a combination of many deep results in the theory of manifolds due to Smale, Freedman and Perelman. For our purposes we only need that these preimages are contractible, which as we have seen follows from the fact that their interiors are contractible being open $(m+1)$-balls.)%\vspace{20pt}
\begin{figure}[!h]
\begin{center}
\includegraphics[scale=0.15]{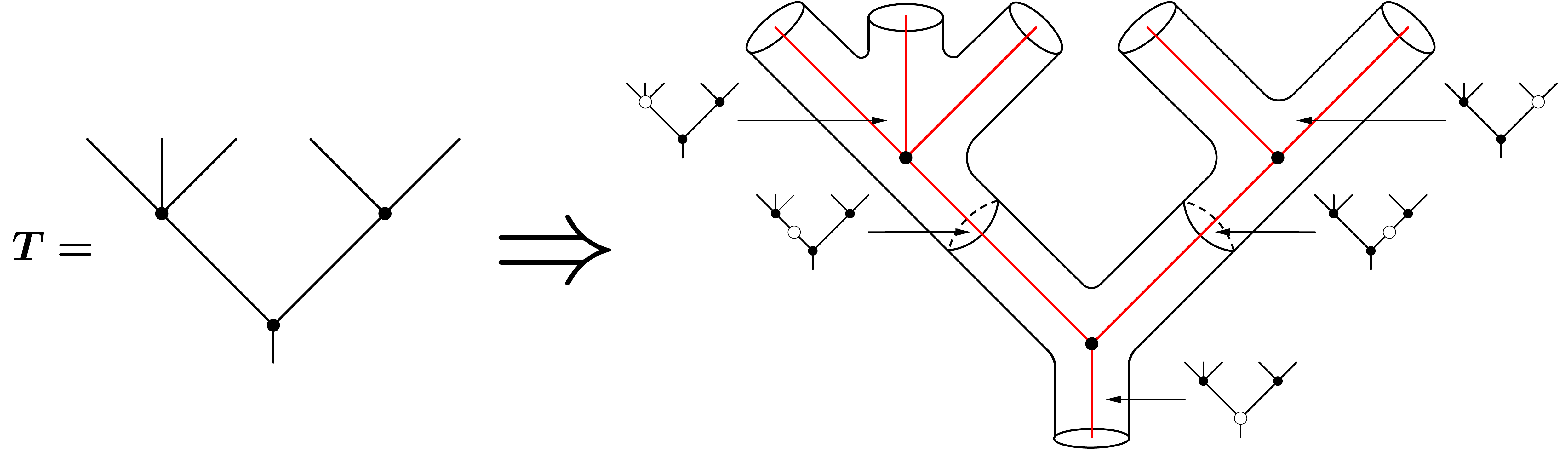}
\caption{The preimage $\mu_\calI^{-1}(x)$ with $x$ in the stratum indexed by $T$.}\label{Fig:preimage}\vspace{-15pt}
\end{center}
\end{figure}

Now, consider the general case $x\in \calF_m(k)$, where $x$ lies in the stratum indexed by any tree $T$. We claim that $\mu_\calI^{-1}(x)$ is a union of 
$|V(T)|$ closed $(m+1)$-discs glued with each other along $m$-discs lying in their boundary spheres, so that $\mu_\calI^{-1}(x)$ looks like a tubular neighborhood of the tree $T$ embedded in $\mathbb{R}^{m+1}$. The interior of those $(m+1)$-discs are Type I parts of the preimage (one disc for each pearled Type I tree $(T'\,;\,p)$). The $m$-discs are Type II preimages. Their boundary $(m-1)$-spheres are Type IV preimages. The parts of the boundaries of $(m+1)$-discs, that look like $m$-discs with holes, are Type III preimages. Figure~\ref{Fig:preimage} gives an example of a tree $T$ and how the corresponding preimage $\mu_\calI^{-1}(x)$ looks like. 
\end{proof}

%\newpage

\begin{rmk}
It was shown in~\cite{Volic14} that the map $\pi\colon\calF_m(k+1)\rightarrow \calF_m(k)$, that forgets one of the points, is a semi-algebraic fiber bundle, whose fibers are closed $m$-discs with $k$ holes.
We believe that our map $\mu_\calI:\calIF_m(k)\longrightarrow \calF_m(k)$ is also a semi-algebraic fiber bundle (that contains $\pi$ as a subbundle). Moreover, we expect that this bundle is  topologically trivial. 
\end{rmk}

\subsection{Fulton-MacPherson operad is strongly coherent}

This subsection is devoted to the proof of Theorem~\ref{th:FM}. Recall definitions and notation from Subsection~\ref{s:coherent}. 
%In the following, we denote by $\mathbf{F}^{d}:\Gamma_{k}\rightarrow\Topo$ and $\mathbf{IF}^{d}:\Gamma_{k}\rightarrow\Topo$ the functors (\ref{G2}) assigned to $\calF_m$ and $\calIF_m$, respectively, viewed as infinitesimal bimodule over $\calF_m$. 
 Because the projection $\mu_\calI:\calIF_m\longrightarrow \calF_m$ is a weak equivalence of infinitesimal bimodules, it induces an equivalence of diagrams $\rho_k^{\calIF_m}\to\rho_k^{\calF_m}$, $k\geq 0$. Thus it is enough for us to prove strong coherence of the diagrams $\rho_k^{\calIF_m}$,
 $k\geq 2$. We note also that for any pearled tree $T\in\Psi_k$, the functor $\rho_k^{\calIF_m}$ assigns a space, which is nothing but the
 stratum of $\calIF_m$ labelled by~$T$:
 \[
 \rho_k^{\calIF_m}(T)=\calIF_m(T).
 \]
% 
% 
% So, one has a commutative diagram
%$$
%\xymatrix{
%hocolim_{\Gamma_{k}^{1}}\,\, \mathbf{IF}^{d} \ar[r]^{\simeq} \ar[d] & hocolim_{\Gamma_{k}^{2}}\,\, \mathbf{F}^{d} \ar[d]\\
%\calIF_m(k) \ar[r]^{\simeq} & \calF_m(k)
%}
%$$
%Both horizontal arrows are weak equivalences. To prove the regularity of $\calF_m$, we need to show that the right vertical map is a weak equivalence. This follow from the fact that the left vertical map is a weak equivalence, or, equivalently, what we will actually prove, that the composition of the left vertical map and the lower horizontal map is a weak equivalence. 
%
%Note that the strata of $\calIF_m(k)$ are encoded by the elements of $\Gamma_{k}$, and the functor $\mathbf{IF}^{d}:\Gamma_{k}\rightarrow\Topo$, assigning to a planar pearled tree $T$ the closure $\calIF_m(T)$ of the corresponding stratum in $\calIF_m(k)$, is naturally homeomorphic to the functor $\mathbf{IF}^{d}$. It follows that the left vertical map gets factored through $colim_{\Gamma_{k}^{1}}\,\,\mathbf{IF}^{d}$. We will need the following result that prove at the end of this subsection. 

\begin{pro}\label{p:FM}
The following natural map
$$
\underset{\partial\Psi_k^U}{\hocolim}\, \rho_k^{\calIF_m}\, \longrightarrow 
\underset{\partial\Psi_k^U}{\operatorname{colim}}\, \rho_k^{\calIF_m}
$$
is a weak equivalence.
\end{pro}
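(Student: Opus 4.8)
The plan is to show that the diagram $\rho_k^{\calIF_m}$ restricted to $\partial\Psi_k^U$ is already \emph{cofibrant enough} that its homotopy colimit computes the ordinary colimit. The standard criterion here is that a diagram over a Reedy category (or more concretely a diagram indexed by a poset of the shape of $\partial\Psi_k^U$, which is a subcubical-type poset with objects ordered by edge contractions) has $\hocolim\simeq\colim$ provided all the structure maps are \emph{cofibrations} and the diagram is \emph{Reedy cofibrant}, i.e. all the relevant latching maps are cofibrations. So the first step is to identify $\partial\Psi_k^U$ as a (generalized Reedy, in fact a poset) indexing category, note that $\rho_k^{\calIF_m}(T)=\calIF_m(T)$, and recall from Subsection~\ref{ss:IF} that each $\calIF_m(T)$ is a compact manifold with corners, and from the description of the stratification that for a morphism $T\to T'$ (an edge contraction), the induced map $\calIF_m(T)\to\calIF_m(T')$ is the inclusion of a closed face. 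These face inclusions of manifolds with corners are closed cofibrations of $k$-spaces.

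The second step is to verify the latching (Reedy cofibrancy) condition: for each $T\in\partial\Psi_k^U$, the map from the latching object $L_T\rho_k^{\calIF_m}=\colim_{T'\to T,\, T'\neq T}\calIF_m(T')$ into $\calIF_m(T)$ must be a cofibration. Geometrically, $L_T$ is exactly the union of the proper closed faces of the manifold-with-corners $\calIF_m(T)$ that already come from strictly smaller trees, i.e. (a piece of) the boundary $\partial\calIF_m(T)$ viewed as a stratified space; the point is that this is a closed subcomplex of $\calIF_m(T)$ and hence a cofibration. Here I would lean on the semi-algebraic / manifold-with-corners structure: a manifold with corners admits a CW (or cellular) structure compatible with its face stratification, so every union of closed faces sits inside it as a subcomplex, which gives the cofibration. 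Once both conditions hold, a standard result (e.g. \cite[Section 18.5]{Hirschhorn03} or the enriched analogue \cite{Batanin17}, or directly the projection lemma for homotopy colimits of Reedy cofibrant diagrams of spaces) gives that the canonical map $\hocolim_{\partial\Psi_k^U}\rho_k^{\calIF_m}\to\colim_{\partial\Psi_k^U}\rho_k^{\calIF_m}$ is a weak equivalence, in fact a homotopy equivalence.

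I expect the main obstacle to be the second step: checking that the latching maps are cofibrations amounts to understanding precisely which closed faces of the manifold-with-corners $\calIF_m(T)$ are hit by trees in $\partial\Psi_k^U$ (as opposed to all of $\Psi_k$), and confirming that their union is still a closed (sub)cellular subspace rather than something pathological. This requires unwinding the four types of strata from the proof of Proposition~\ref{G5} together with the combinatorial description of which contractions stay inside $\Psi_k^U$ (trees whose trunk has arity $\geq 2$); one must make sure that restricting to $\partial\Psi_k^U$ does not break the "closed union of faces" property. A clean way to package this is to observe that $\partial\Psi_k^U$ is a \emph{sieve} inside $\Psi_k^U$ (closed under precomposition with morphisms in $\Psi_k$ in the relevant direction), so the corresponding subspace of $\calIF_m(c_k)$-type strata is genuinely a closed stratified subspace, and then invoke the manifold-with-corners cell structure. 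The remaining verifications — that the structure maps are closed cofibrations and that the two model-categorical hypotheses assemble into the statement — are routine given the homotopy-theoretic machinery already set up in Section~\ref{s:ht} and the geometric input of Subsections~\ref{ss:IF}–\ref{ss:IF}.
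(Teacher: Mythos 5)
Your overall strategy — show that $\rho_k^{\calIF_m}$ restricted to $\partial\Psi_k^U$ is a cofibrant diagram in the projective (equivalently, since $\partial\Psi_k^U$ is a direct poset, the Reedy) model structure, and then appeal to the fact that $\hocolim\to\colim$ is a weak equivalence for cofibrant diagrams — is exactly the right one and is the one the paper uses. But there is a genuine gap where you yourself suspect one: the verification that the latching maps of the \emph{restricted} diagram are cofibrations. The latching object of the restricted diagram at $T$ is $\colim_{T'\to T,\,T'\neq T,\,T'\in\partial\Psi_k^U}\calIF_m(T')$, and this need \emph{not} equal the naive union of the corresponding closed faces inside $\calIF_m(T)$: two faces $\calIF_m(T_1)$, $\calIF_m(T_2)$ with $T_1,T_2\in\partial\Psi_k^U$ may intersect in a face $\calIF_m(T'')$ with $T''\notin\Psi_k^U$ (i.e.\ $T''$ has trunk arity one), in which case the colimit would fail to identify the shared points. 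The observation you offer — that $\partial\Psi_k^U$ is a sieve inside $\Psi_k^U$ — is true but does not address this, because the issue is that $\Psi_k^U$ is \emph{not} a sieve inside $\Psi_k$: edge-blowups can drop the trunk arity from $\geq 2$ to $1$, so a face of a $\Psi_k^U$-stratum may well be indexed by a tree outside $\Psi_k^U$.

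What repairs this, and what the paper proves as Lemma~\ref{l:FM}, is that for every $T\in\Psi_k$ other than $c_k$ and $c'_k$, the undercategory $T\downarrow\partial\Psi_k^U$ has an \emph{initial} element $T_{\min}$. This is precisely the combinatorial input that guarantees the latching colimit really is the closed union of faces: the shared face $\calIF_m(T'')$ above is contained in $\calIF_m(T''_{\min})$, which lies in the indexing poset and maps to both $T_1$ and $T_2$. The paper in fact packages this differently from you: it first shows that $\rho_k^{\calIF_m}$ over all of $\Psi_k$ is \emph{cellular} cofibrant, by filtering by dimension of strata and attaching free cells generated by the pairs $\partial\calIF_m(T)\hookrightarrow\calIF_m(T)$ (here the latching issue is invisible, since over $\Psi_k$ the latching object really is the full boundary); and then it uses Lemma~\ref{l:FM} to show that restriction along $\partial\Psi_k^U\hookrightarrow\Psi_k$ sends each free cell $X\times\hom_{\Psi_k}(T;-)$ to either $\emptyset$ or the free cell $X\times\hom_{\partial\Psi_k^U}(T_{\min};-)$, so cellular cofibrancy is preserved under restriction. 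So the two proofs land at the same projective-cofibrancy conclusion, but your direct latching-map computation over $\partial\Psi_k^U$ requires Lemma~\ref{l:FM} to even identify what the latching object is — without stating and proving it, the argument does not close.
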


\begin{proof}
Let $C$ be a small category. We consider the category $\Topo^{C}$ of diagrams of shape $C$ with its {\it projective model structure}, in which weak equivalences and fibrations are, respectively, objectwise weak equivalences and objectwise fibrations~\cite[Section~11.6]{Hirschhorn03}. In case $F:C\rightarrow\Topo$ is cofibrant, the natural map 
$$
\underset C\hocolim \, F\longrightarrow \underset C{\operatorname{colim}}\, F
$$
is a weak equivalence \cite[Theorem~11.6.8~(2)]{Hirschhorn03}. To prove our proposition we need to show that $\left. \rho_k^{\calIF_m}\right|_{\partial \Psi_k^U}$
 is a cofibrant diagram. 

An example of a cofibrant diagram is a functor of the form $X\times \hom_{C}(c\,;\,-)$, where $c\in C$  and $X$ is a cofibrant space. The cofibrancy (left lifting property) follows from the fact (Yoneda lemma) that
$$
\Nat_{C}(X\times \hom_{C}(c\,;\,-) \,,\,F)=\Map(X\,;\,F(c)).
$$ 
In case $X\hookrightarrow Y$ is a cofibration of spaces, one similarly gets that the natural transformation 
$$
X\times \hom_{C}(c\,;\,-)\longrightarrow Y\times \hom_{C}(c\,;\,-) 
$$
is a cofibration in $\Topo^{C}$. More generally, since a pushout of a cofibration is a cofibration, we get that the right vertical map in any pushout diagram of the form 
\begin{equation}\label{eq:pushout}
\xymatrix{
X\times \hom_{C}(c\,;\,-) \ar[r] \ar@{^{(}->}[d] & F \ar@{^{(}->}[d] \\
Y\times \hom_{C}(c\,;\,-) \ar[r] & F'
}
\end{equation}
is a cofibration. Here $F\in\Topo ^{C}$ is any diagram of shape $C$, and the upper horizontal map is obtained from some $f:X\rightarrow F(c)$ applying the Yoneda lemma. This type of a map $F\hookrightarrow F'$ will be called \textit{cellular inclusion}. In what follows, a diagram of shape $C$ is said to be \textit{cellular cofibrant} if it is obtained from the constantly empty diagram by a finite sequence of cellular inclusions. 

Obviously, being cellular cofibrant implies being cofibrant. Let us now show that  $\left. \rho_k^{\calIF_m}\right|_{\partial \Psi_k^U}$
 is cellular cofibrant. To do it, we show first that $\rho_k^{\calIF_m}\in\Topo^{\Psi_k}$ is cellular cofibrant. For this purpose, we take the "skeleton filtration"
$$
\emptyset \subset F_{(kd-k)}\rho_k^{\calIF_m} \subset F_{(kd-k+1)}\rho_k^{\calIF_m}\subset \cdots \subset F_{(kd)}\rho_k^{\calIF_m} = 
\rho_k^{\calIF_m},
$$
where $F_{(j)}\rho_k^{\calIF_m}$ assigns to a planar pearled tree $T$ the union of strata in $\calIF_m(T)$ of dimension $\leq j$. Then each inclusion
 $F_{(kd-i-1)}\rho_k^{\calIF_m} \hookrightarrow F_{(kd-i)}\rho_k^{\calIF_m}$ can be seen as a sequence of cell inclusions: we consecutively attach strata
  labelled by  trees $T$ with $i$ internal vertices different from the pearl. For such cell inclusion (see~\eqref{eq:pushout}), the corresponding cofibration $X\hookrightarrow Y$ in this case is $\partial \calIF_m(T)\hookrightarrow\calIF_m(T)$ (recall that $\calIF_m(T)$ is a manifold with corners, and $\partial\calIF_m(T)$ is its boundary). The element $c$ is the tree $T$ itself, and the corresponding attachment map $X\to F(c)$ is the identity map.

\begin{lmm}\label{l:FM}
In case $T=c_k$ or $c'_k$, the poset $T\downarrow\partial\Psi_k^U$ is empty; for any other $T\in\Psi_k$, the poset $T\downarrow\partial\Psi_k^U$ 
has an initial element.
\end{lmm}

\begin{proof}[Proof of Lemma~\ref{l:FM}]
Indeed, $c_k$ is the terminal element and does not have  any morphism to $\partial\Psi_k^U$. Similarly with $c'_k$: besides the identity map, it only has a morphism to $c_k$.   If $T\in \partial \Psi_k^U$,   the initial element is $T$ itself. Finally, for $T\notin\Psi_k^U$  and $T\neq c'_k$, the 
poset $T\downarrow\partial\Psi_k$ is non-empty, but it has an initial element as shown in Figure~\ref{G3}.

\begin{figure}[!h]
\begin{center}
\vspace{5pt}
\includegraphics[scale=0.4]{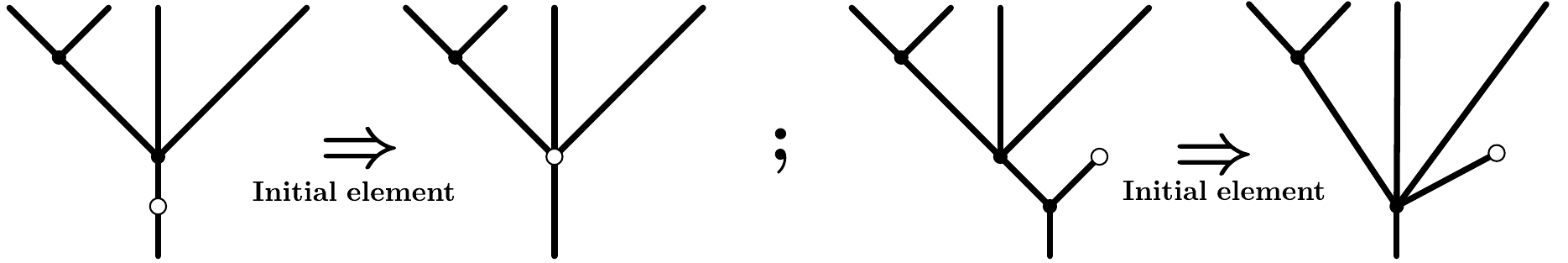}
\caption{Examples of planar pearled trees together with their initial elements.}\label{G3} \vspace{-25pt}
\end{center}
\end{figure}

\end{proof}

 This property implies that any functor of the form $X\times \hom_{\Psi_{k}}(T\,; - )$ when restricted on $\partial\Psi_k^U$ is either constantly empty (in case $T$ is $c_k$ or $c'_k$) or is $X\times \hom_{\partial\Psi_k^U}(T_{min}\,;-)$, where $T_{min}$ is the initial element of the set $T\downarrow
 \partial\Psi_k^U$. As a consequence, any cellular cofibrant $G\in\Topo^{\Psi_{k}}$ remains cellular cofibrant when restricted on $\partial\Psi_k^U$. This proves our proposition.
\end{proof}

From now on, it is easy to see that $\underset{\partial\Psi_k^U}{\operatorname{colim}}\, \rho_k^{\calIF_m}$, denoted by $A(k)$ for shortness, is a subspace of $\calIF_m(k)$, which, as a consequence of Lemma~\ref{l:FM}, is the union of all its (open) strata except  those labelled by $c_k$ and $c'_k$. The first one is the interior of $\calIF_m(k)$, and the second one is part of its boundary. To finish the proof of the strong coherence we are left to show that the inclusion $A(k)\subset \calIF_m(k)$
is an equivalence, or, equivalently, the projection $\mu_\calI|_{A(k)}\colon A(k)\to \calF_{m}(k)$ is one.
 The latter map is a semi-algebraic map between compact spaces and, as we will see,  has contractible preimages,
which implies that the map is an equivalence~\cite[Corollary~1.3]{Lacher69} and~\cite[Corollary~2]{Dydak88}. Indeed, $\mu_\calI$ sends the open strata encoded by $c_k$ and $c'_k$ in the interior of $\calF_m(k)$. Thus for $x\in\partial \calF_m(k)$, $\mu_\calI^{-1}(x)\subset A(k)$ and is a closed $(m+1)$-disc by
Proposition~\ref{G5}. On the other hand, if $x$ is in the interior of $\calF_m(k)$, then  $\mu_\calI^{-1}(x)$ is a manifold with corners which looks like a tubular neighborhood of a $k$-corolla in $\mathbb{R}^{m+1}$:

\begin{center}
\includegraphics[scale=0.15]{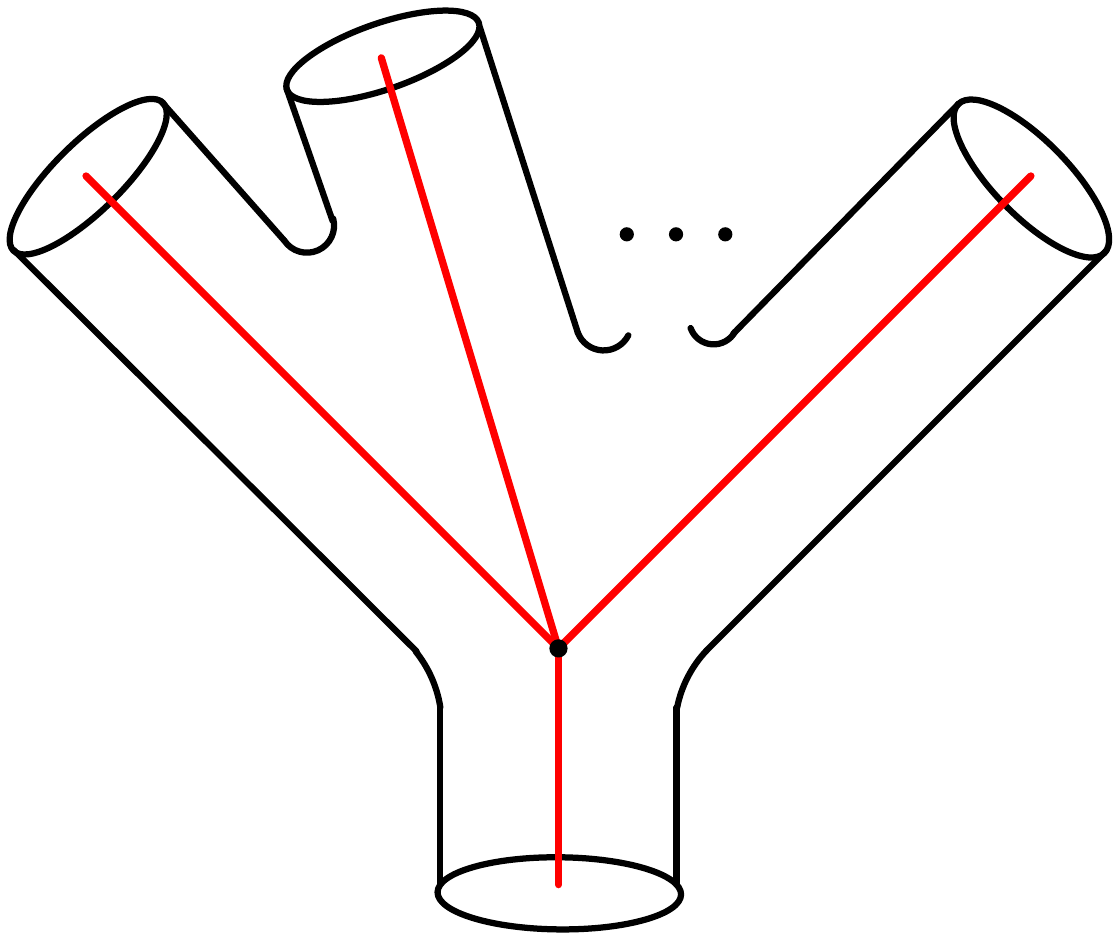}%\vspace{-10pt}
\end{center}

\noindent See the proof of Proposition \ref{G5}. The part lying in the stratum encoded by $c_k$ is the interior, and the part encoded by $c'_k$ is the bottom (open) $m$-disk. Thus, $\mu_\calI^{-1}(x)\cap A(k)$ is homeomorphic to a closed $m$-disc, and therefore is contractible. We conclude that $\mu_\calI|_{A(k)}$ is a weak equivalence. This finishes the proof of Theorem~\ref{th:FM}.

\subsection{Bimodule $\mathcal{BF}_m$}\label{ss:BF}

As we mentioned in the introduction, our initial idea for the proof of Main Theorem~\ref{th:delooping1} was to use geometrical cofibrant replacements
of $\calF_m$ as an infinitesimal bimodule and as a bimodule over itself. This geometrical replacement as Ibimodule was described in Subsection~\ref{ss:IF} and 
it was  used for the proof of the strong coherence of $\calF_m$. For the completeness of the picture, in this subsection we outline the idea of the construction of the
geometrical
bimodule replacement $\calBF_m$ of $\calF_m$. For $m=1$, the component $\calBF_1(k)$ is expected to be a 
disjoint union of $k!$ multiplihedra $M_{k-1}$~\cite{Stasheff63}.  Similarly to $\calIF_m$ which is conjectured to be
 homeomorphic to $\calI b^\Lambda(\calF_m)$ as an $(\calF_m)_{\geq 0}$-Ibimodule, and also similarly to the operad
$(\calF_m)_{>0}$ itself, which is known to be homeomorphic to its operadic Boardman-Vogt replacement~\cite{Salvatore99}, we conjecture that $\calBF_m$ is homeomorphic to $\calB^\Lambda(\calF_m)$ as an $(\calF_m)_{>0}$-bimodule, implying that all the componens
$\calB^\Lambda(\calF_m)(k)$, $k\geq 0$, are naturally topological manifolds with boundary.\vspace{5pt}

The  components $\mathcal{BF}_m(k)$, with $k\geq 1$,  of this hypothetical bimodule should be obtained as compactification of the spaces $C(k\,;\,\mathbb{R}^{m})$ quotiented out by translations only. For example, $\calBF_m(1)=*$ and $\calBF_m(2)=S^{m-1}\times [0,{+}\infty]\cong
\calF_m(2)\times [-1,1]$, compare with Example~\ref{ex:1compl}. Here are the main properties that one expects:
\begin{itemize}
\item[$\blacktriangleright$] $\calBF_m(0)=*$ and  $\mathcal{BF}_m(k)$, with $k\geq 1$, is a smooth manifold with corners, whose interior is $C(k\,;\,\mathbb{R}^{m})$ quotiented out by translations. 
\item[$\blacktriangleright$]  The strata of $\mathcal{BF}_m(k)$, with $k\geq 1$, are encoded by non-planar trees with section having $k$ labelled leaves, whose pearls have arity $\geq 1$, and all the other vertices are of arity $\geq 2$. The codimension of the stratum encoded by a tree with section $(T\,;\,V^{p}(T))$ is $|V(T)\setminus V^{p}(T)|$.
\item[$\blacktriangleright$] The closure of the stratum encoded by $(T\,,\,V^{p}(T))$, denoted by $\mathcal{BF}_m(T)$, is homeomorphic to 
$$
\prod_{p\in V^{p}(T)}\mathcal{BF}_{m}(|p|)\times \prod_{v\in V(T)\setminus V^{p}(T)}\mathcal{F}_{m}(|v|).
$$
\end{itemize}
Similarly to the infinitesimal bimodule case, one should expect that $\mathcal{BF}_{m}$ is a bimodule over $\mathcal{F}_{m}$ in the category of semi-algebraic compact spaces. As example, Figure~\ref{fig:bf}  shows a limit configuration in which points $1$, $2$, $3$ have finite non-zero distance to each other. Furthermore, points $4$ and $5$ collided whereas the distance between $1$ and $4$ is infinity. 
\begin{figure}[!h]
\begin{center}
\includegraphics[scale=0.2]{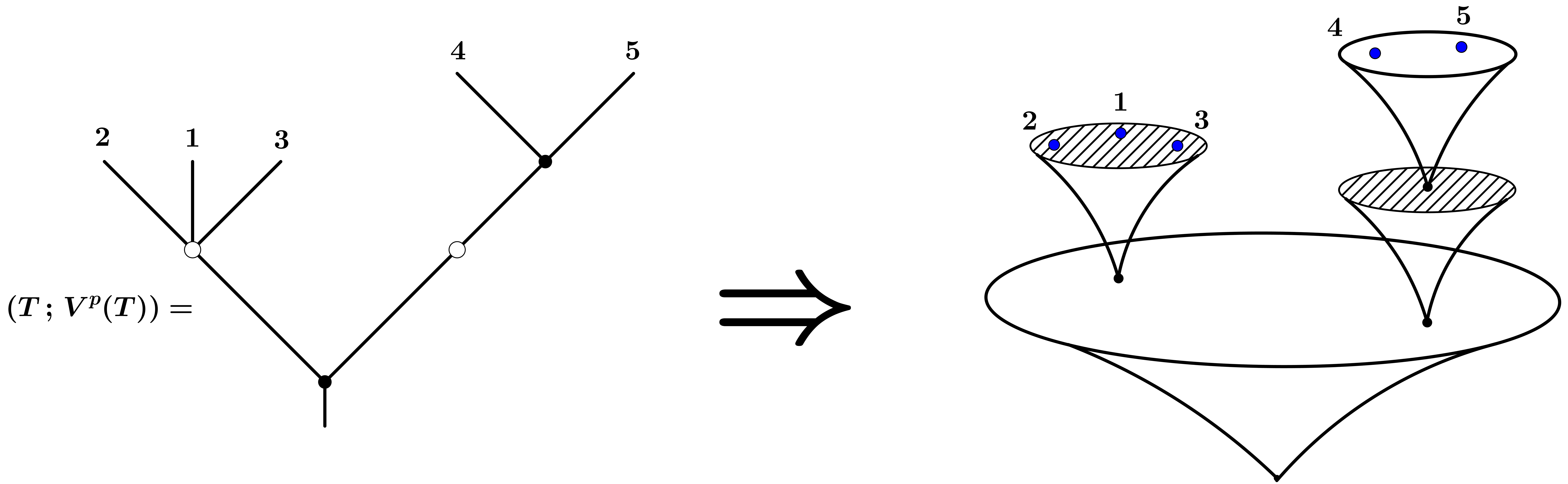}
\caption{A tree with section $(T\,;\,V^{p}(T))$ together with an element in the interior of $\mathcal{BF}_m(T)$.}\label{fig:bf}\vspace{-10pt}
\end{center}
\end{figure}

One expects that the obvious projections
\[
C(k,\R^m)\big/\text{(translations)} \longrightarrow C(k,\R^m)\big/\text{(translations $+$ rescalings)}.
\]
extend to maps
\[
\mu_B\colon\calBF_m(k)\to \calF_m(k)
\]
that respect the $\calF_m$-bimodule structure and thus turn $\calBF_m$ into a Reedy cofibrant replacement of $\calF_m$ as a bimodule over itself.

\section{Map between the towers}\label{S:MAP}

The goal of this section is to construct a map between the towers from Main Theorem~\ref{th:main}. 
The first two subsections are heavily combinatorial. If the reader at any moment feels bored, lost,
or annoyed, they are strongly encouraged to look at the last and short Subsection~\ref{ss:FM5},
which gives a geometrical insight for the constructions of Subsections~\ref{s:alt}-\ref{s:map}.

\subsection{An alternative cofibrant replacement of $O$ as $O$-Ibimodule}\label{s:alt}

In Section~\ref{s:cof}, we give a functorial way to get cofibrant replacements  in the categories  of bimodules and infinitesimal bimodules over an operad $O$.  This allows us to understand the mapping 
spaces of both towers in~\eqref{eq:equiv_general1}. However, in order to get an explicit map between the 
two towers, we need a refined version of this replacement for $O$ as $O$-Ibimodule. 
The replacements $\overline{\calI b}{}^\Sigma(O)$ and $\overline{\calI b}{}^\Lambda(O)$ that we produce in this section topologically  are not 
very much different from the replacements ${\calI b}^\Sigma(O)$ and ${\calI b}^\Lambda(O)$, see for example Theorem~\ref{H5}, but combinatorially they are 
more involved  being split in much more smaller pieces. \vspace{5pt}

First, we give a general construction.

\begin{const}\label{D7}
Let $N$ be an $O$-Ibimodule, and $M$ an $O$-bimodule endowed with an $O$-bimodules
 map $\mu\colon M\rightarrow O$ and a basepoint  $\ast_1^{M}\in M(1)$, such that 
 $\mu(\ast_1^M)=*_1\in O(1)$. From these data, we will construct an $O$-Ibimodule $N{\ltimes}M$.
 The sequence $N{\ltimes}M$ is defined as a quotient of $N{\circ_O} M$, see Notation~\ref{not:circO}. To recall
 $N{\circ_O} M$ itself is a quotient of $N\circ M$ by
 \begin{itemize} 
 \item[$i)$]  the relation equalizing the right $O$-action on $N$ and the left $O$-action on $M$:\vspace{-1pt}
%the compatibility between the left structure of $M$ and the right structure of $N$ over $O$:
$$
\includegraphics[scale=0.4]{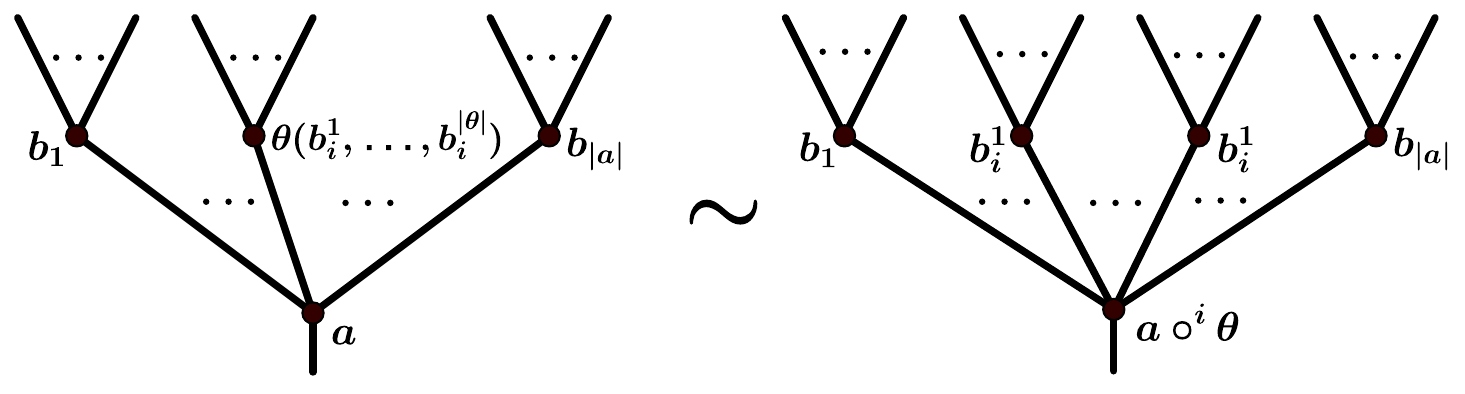}\vspace{-1pt}
$$
In particular, if $\theta\in O(0)$ is an arity zero element and $\gamma_0\colon O(0)\to M(0)$ is the arity zero left action, the above relation looks as follows:\vspace{3pt}
$$
\hspace{-15pt}\includegraphics[scale=0.4]{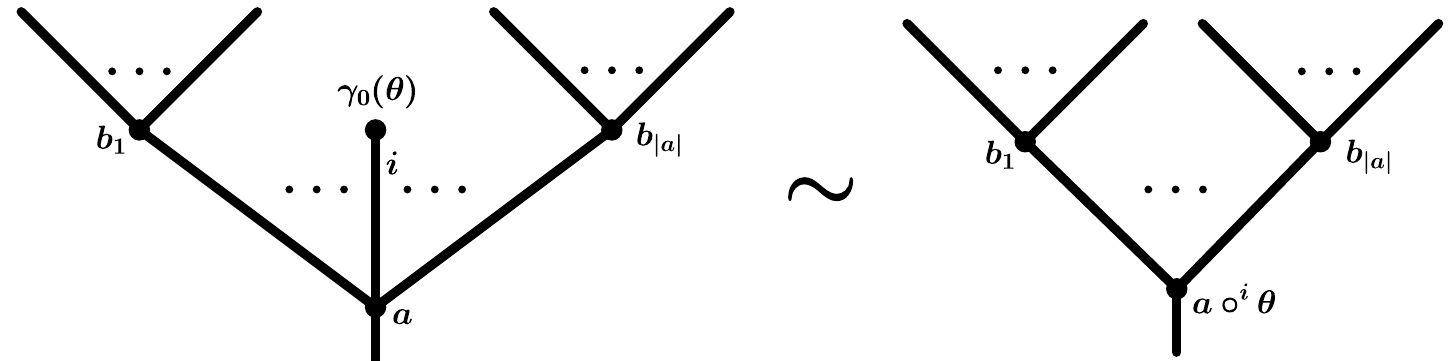}\vspace{-1pt}
$$
\end{itemize}
We impose one additional relation on $N{\circ_O}M$.

  \begin{itemize}
\item[$ii)$] For $\theta\in O$,  $a\in N$,  and $b_1,\ldots,b_{|\theta|+|a|-1}\in M$, 
and setting $i'=|b_{1}|+\cdots + |b_{i-1}|+1$:
$$
\includegraphics[scale=0.4]{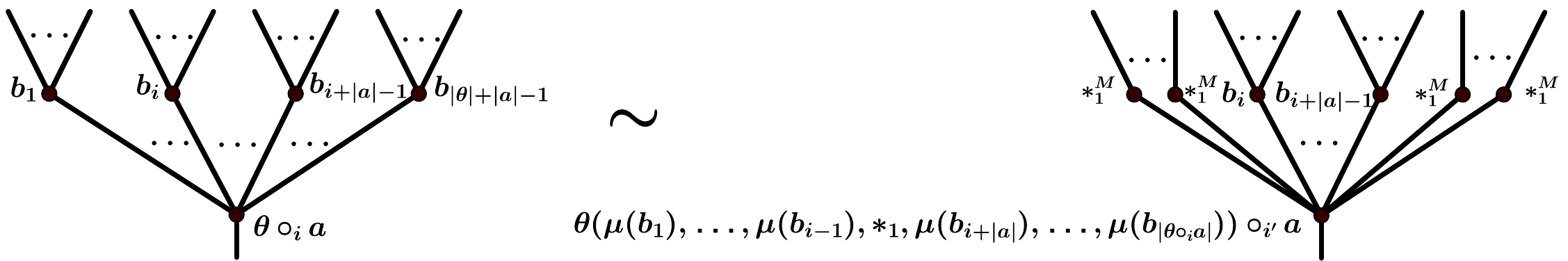}\vspace{3pt}
$$
\end{itemize}
Elements of $N{\ltimes}M$ will be denoted as $[a\{b_1,\ldots,b_{|a|}\}\, ;\, \sigma]$, where $a\in N$,
$b_1,\ldots,b_{|a|}\in M$, $\sigma\in \Sigma_{|b_1|+\ldots +|b_{|a|}|}$, where $\sigma$ is the labelling
of the leaves.
 The infinitesimal left $O$-action on $N{\ltimes}M$ is defined as follows:
$$
\begin{array}{lrll}\vspace{4pt}
\circ_{i}: & O(n)\times N{\ltimes}M(m) & \longrightarrow & N{\ltimes}M(n+m-1);\\
&\theta\,;\,[a(\{b_{1},\ldots,b_{|a|}\}\, ;\, \sigma] &\longmapsto & [(\theta\circ_{i}a)\{\underset{i-1}{\underbrace{\ast_1^{M},\ldots,\ast_1^{M}}},b_{1},\ldots,b_{|a|},\underset{n-i}{\underbrace{\ast_1^{M},\ldots,\ast_1^{M}}}\}\, ;\, id_{\Sigma_{n}}\circ_{i}\sigma].
\end{array} 
$$
 It is easy to see that the formula above enables $N{\circ_O} M$ with a well-defined infinitesimal left $O$-action. 
 By construction, $N{\circ_O} M$ has a right $O$-action.
 It is also straightforward that the  relation $(ii)$ respects both these actions and 
 forces the compatibility between them, thus turning $N{\ltimes}M$ into an $O$-Ibimodule.
\end{const}

Assuming that $O$ is reduced, we will be interested in two particular cases:
\begin{equation}\label{eq:barIb}
\overline{\calI b}{}^\Sigma(O):= \calI b^\Lambda(O){\ltimes}\calB^\Sigma(O)\hspace{15pt} \text{ and } \hspace{15pt}
\overline{\calI b}{}^\Lambda(O):= \calI b^\Lambda(O){\ltimes}\calB^\Lambda(O).
\end{equation}
In both cases we choose  the basepoint $\ast_1^\calB\in \calB^\Sigma(O)(1)$ and $\ast_1^\calB\in \calB^\Lambda(O)(1)$ to be the pearled 1-corolla labelled by $\ast_1\in O(1)$. \vspace{5pt}

Similarly to the previous section, we introduce a filtration in $\overline{\mathcal{I}b}{}^\Sigma(O)$
and $\overline{\mathcal{I}b}{}^\Lambda(O)$ according to the number of geometrical inputs. We start with 
$\overline{\mathcal{I}b}{}^\Sigma(O)$. We say that an element is prime if it is not obtained as a result of the $O$-action (other than by the action
of $\ast_1\in O(1)$).  Let $x=[a\{b_1,\ldots,b_{|a|}\}\, ;\, \sigma]$ be an element in $\overline{\mathcal{I}b}{}^\Sigma(O)$. 
Recall that $a\in\calI b^\Lambda(O)$ can be seen as a pearled tree whose all vertices (except the pearl) have arity $\geq 1$ and are labelled by
elements of $O$ and non-pearl vertices in addition are labelled by numbers in $[0,1]$. Similarly, 
$b_i\in \calB^\Sigma(O)$ 
are trees with a section without restriction on the arity of their vertices (except that vertices below the section must have arity $\geq 1$), and whose all vertices are labelled by elements
in $O$ and in addition non-pearls are labelled by numbers in $[0,1]$. Such an element is
prime if the two conditions hold:
\begin{itemize}
\item  the root of $a$ is a pearl or if it is not a pearl, it is labelled by a number strictly smaller than~1.
\item the vertices of $b_i$, $i=1,\ldots, |a|$, above the section are all labelled by numbers strictly smaller than~1.
\end{itemize}
The first condition means that $x$ is not an image of a non-trivial left $O$-action, and the second one 
means that $x$ is not an image of a non-trivial right action. If one of the conditions above does not hold,
the element is composite. In the latter case, one can assign to $x$ its prime component by removing the root 
 of $a$ (in case it is labelled by~1) together with  all $b_i$, such that the $i$-th leaf of $a$ is directly  connected to the root; and, 
 in addition, by removing all 
 vertices from the (non-removed) elements $b_j$, $j=1,\ldots, |a|$, that are above the section and are labelled by~1. 
  Figure~\ref{D8} gives an example of an element and its prime component.
\begin{figure}[!h]
\hspace{13pt}%\begin{center}
\includegraphics[scale=0.12]{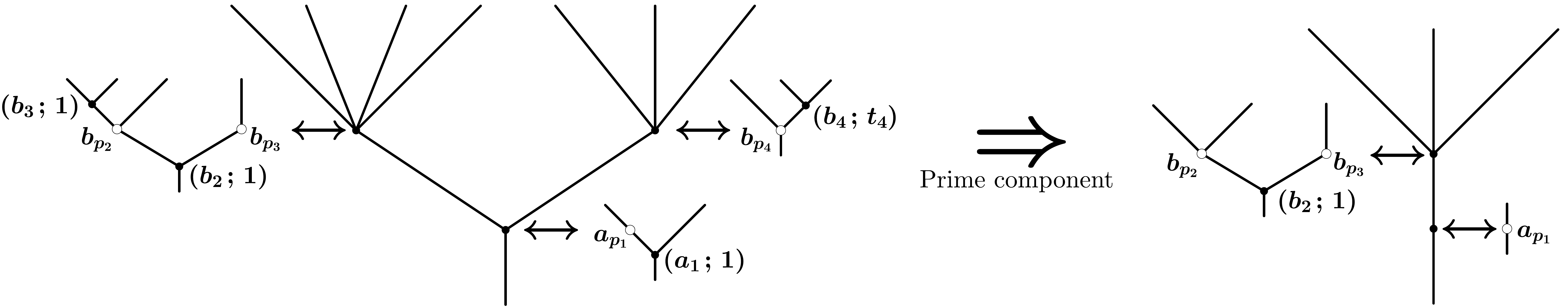}
\caption{Illustration of a composite point in $\overline{\mathcal{I}b}{}^\Sigma(O)$ together with its prime component.}\label{D8}\vspace{-5pt}
%\end{center}
\end{figure}

A prime point $x$ is in the $k$-th filtration term  $\overline{\mathcal{I}b}{}_{k}^\Sigma(O)$ if its number of geometrical inputs (which is the arity of $x$ plus the total number of univalent vertices of all $b_i$) is smaller than $k$. Similarly, a composite point is in the $k$-th filtration term if its prime component is in  $\overline{\mathcal{I}b}{}_{k}^\Sigma(O)$. For instance, the composite point  in Figure \ref{D8} is an element in the third filtration term. Finally, $\overline{\mathcal{I}b}{}_{k}^\Sigma(O)$ is an infinitesimal bimodule over $O$ and one has the following filtration in  $\overline{\mathcal{I}b}{}^\Sigma(O)$:
\begin{equation}\label{E6}
\xymatrix{
\overline{\mathcal{I}b}{}_{0}^\Sigma(O) \ar[r] & \overline{\mathcal{I}b}{}_{1}^\Sigma(O)\ar[r] & \cdots \ar[r] & \overline{\mathcal{I}b}{}_{k-1}^\Sigma(O) \ar[r] & \overline{\mathcal{I}b}{}_{k}^\Sigma(O) \ar[r] & \cdots \ar[r] & \overline{\mathcal{I}b}{}^\Sigma(O)
}
\end{equation}

\begin{thm}\label{th:proj_bar_cof}
Assume that $O$ is reduced well-pointed and $\Sigma$-cofibrant. The objects $\overline{\mathcal{I}b}{}^\Sigma(O)$ and $\TT_{k}\overline{\mathcal{I}b}{}_{k}^\Sigma(O)$ are cofibrant replacements of $O$ and $\TT_{k}O$ in the categories $\Sigma\Ibimod_{O}$ and $\TT_{k}\Sigma\Ibimod_{O}$,
 respectively. 
\end{thm}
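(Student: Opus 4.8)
The plan is to mimic the proof strategy used for the Boardman-Vogt resolutions in Section~\ref{s:cof} (Theorems~\ref{C7} and~\ref{th:BV_proj_bimod}), namely to exhibit $\overline{\mathcal{I}b}{}^\Sigma(O)$ as a cofibrant object built from the free $O$-Ibimodule by a (possibly transfinite) sequence of cell attachments along the filtration~\eqref{E6}, and to produce a zigzag of weak equivalences connecting it to $O$. First I would identify the bottom of the filtration: the $0$-th term $\overline{\mathcal{I}b}{}_{0}^\Sigma(O)$ should be the free $O$-Ibimodule $\calF_{Ib}^\Sigma(M_0)$ generated by the $\Sigma$-sequence concentrated in arity zero with value $O(0)$, exactly as in the proof of Theorem~\ref{C7}; this uses that $O$ is reduced and that the basepoint $\ast_1^\calB$ is the pearled $1$-corolla labelled by $\ast_1$.

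Second I would show that each inclusion $\overline{\mathcal{I}b}{}_{k-1}^\Sigma(O)\to \overline{\mathcal{I}b}{}_{k}^\Sigma(O)$ is a cofibration. The key point is that the prime elements of a fixed number of geometrical inputs span free cells. Concretely, for each $k$ one forms the $\Sigma$-sequence $X_k$ of all prime elements of $\overline{\mathcal{I}b}{}^\Sigma(O)$ with exactly $k$ geometrical inputs and its subsequence $\partial X_k$ of those prime elements having at least one non-pearl vertex of $a$, or of some $b_i$ above the section, labelled by~$1$; one checks that $\partial X_k\to X_k$ is a cofibration of $\Sigma$-sequences (it is a closed inclusion of semi-algebraic type, cofibrant on both sides since $O$ is $\Sigma$-cofibrant and well-pointed, so the cube/pyramid cells are honest CW-type cells), and that $\overline{\mathcal{I}b}{}_{k}^\Sigma(O)$ is the pushout in $\Sigma\Ibimod_O$ of $\overline{\mathcal{I}b}{}_{k-1}^\Sigma(O)\leftarrow\calF_{Ib}^\Sigma(\partial X_k)\to\calF_{Ib}^\Sigma(X_k)$, using relations $(i)$ and $(ii)$ of Construction~\ref{D7} to verify that gluing in the free cell exactly reproduces all the composite elements. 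Since each cell attached at the $k$-th stage has arity $\leq k$, one also gets the homeomorphism statement $\TT_k\overline{\mathcal{I}b}{}_k^\Sigma(O)\cong \TT_k\overline{\mathcal{I}b}{}^\Sigma(O)$ implicitly, and hence $\TT_k\overline{\mathcal{I}b}{}_k^\Sigma(O)$ cofibrant in $\TT_k\Sigma\Ibimod_O$.

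Third, for the weak equivalence, I would not map directly to $O$ but rather to ${\calI b}^\Sigma(O)$ (or ${\calI b}^\Lambda(O)$), exploiting that the latter is already known to be a cofibrant replacement of $O$ by Theorem~\ref{C7} / Proposition~\ref{p:BV_reedy_ibimod}. There is a natural $O$-Ibimodule map $\overline{\mathcal{I}b}{}^\Sigma(O)={\calI b}^\Lambda(O){\ltimes}\calB^\Sigma(O)\to {\calI b}^\Sigma(O)$ induced by the bimodule map $\mu\colon\calB^\Sigma(O)\to O$ (collapsing the trees $b_i$ via $\mu$ and re-grafting), fitting into a commuting triangle with $\mu'$ on both sides down to $O$; this is the statement alluded to by Theorem~\ref{H5}. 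To see it is a weak equivalence one filters both sides by number of geometrical inputs and argues inductively that on associated graded pieces the map is, up to the contractibility of the spaces of $[0,1]$-labellings and of $\mu$-fibers of $\calB^\Sigma(O)$-cells, a homotopy equivalence of the corresponding cells. By left properness and 2-out-of-3, combined with Theorem~\ref{C7}, one concludes $\overline{\mathcal{I}b}{}^\Sigma(O)\to O$ is a weak equivalence, and similarly for the truncated versions.

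The main obstacle I anticipate is the bookkeeping in step two: correctly matching the combinatorics of the ``prime/composite'' decomposition of elements of ${\calI b}^\Lambda(O){\ltimes}\calB^\Sigma(O)$ — where a composite point can be composite both because the root of $a$ is labelled by~$1$ (non-trivial left action) and because some $b_i$ has a top vertex labelled by~$1$ (non-trivial right action) — with the pushout presentation, so that relations $(i)$ and $(ii)$ of Construction~\ref{D7} are exactly what is needed and no spurious identifications appear. In other words, the delicate part is verifying that $\partial X_k$ is precisely the pullback boundary (the ``matching'' subspace) so that the pushout square is genuinely a cell attachment; once that is set up, cofibrancy and the equivalence follow the established pattern of Section~\ref{s:cof}.
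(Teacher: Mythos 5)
Your cofibrancy argument is essentially the paper's: the paper also only sketches this half, asserting that each inclusion in the filtration~\eqref{E6} is a sequence of cell attachments (with the same caveat you raise about setting up the attaching maps correctly), so there is nothing to object to there beyond a terminological slip --- as written your $\partial X_k$ is empty, since a \emph{prime} element by definition has all relevant labels strictly less than $1$; you mean the closure of the set of prime elements, with $\partial X_k$ its non-prime locus. The weak-equivalence half is where you genuinely diverge. The paper does not pass through $\calI b^{\Lambda}(O)$ at all: it builds an explicit deformation retraction of $\overline{\calI b}{}^\Sigma(O)$ onto the image of $O$, by (a) constructing a section $c\colon O\to\calB^\Sigma(O)$ of $\mu$ and an explicit homotopy $H$ retracting $\calB^\Sigma(O)$ onto its image (first pulling down vertices above the section not vertically connected to a leaf, then using relations $(v)$ and $(iv)$), (b) applying $H$ to each $b_i$ in $[a\{b_1,\ldots,b_{|a|}\};\sigma]$, (c) using relation $(i)$ of Construction~\ref{D7} to convert the resulting left actions on the $b_i$ into right actions on $a$, so that each $b_i$ becomes $\ast_1^{\calB}$, and (d) contracting the parameters of $a$. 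A key point there, which your route also has to confront, is filtration-preservation in each arity, needed for the truncated statement; the paper arranges the stages of $H$ so that $\TT_k c$ and the retraction preserve the $k$-th filtration term.

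Your alternative --- collapse each $b_i$ via $\mu$ to get a map $\overline{\calI b}{}^\Sigma(O)\to\calI b^{\Lambda}(O)$ and invoke two-out-of-three with Theorem~\ref{C7} --- is a legitimate strategy (left properness plays no role; only two-out-of-three for weak equivalences is used), but the step you wave at is the hard one. The cells of the two filtrations do not correspond bijectively: a single cell attachment on the $\calI b^{\Lambda}(O)$ side pulls back to a whole family of attachments on the $\overline{\calI b}{}^\Sigma(O)$ side (the $b_i$ carry their own internal filtration by univalent vertices and section data), so ``contractibility of the $\mu$-fibers'' does not by itself give an equivalence of associated graded pieces, and you would end up reproving something very close to the paper's homotopy $H$ to compare them. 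Note also that Theorem~\ref{H5} is a \emph{homeomorphism} only for the $\Lambda$-version $\overline{\calI b}{}^{\Lambda}(O)=\calI b^{\Lambda}(O)\ltimes\calB^{\Lambda}(O)$, where no univalent vertices occur; it is not the statement you need for $\calB^\Sigma(O)$, where the comparison is only a weak equivalence and must be proved. So the skeleton is sound and genuinely different from the paper's, but the central claim of your third step is currently unsubstantiated.
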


\begin{proof}
%The fact that the objects $\overline{\mathcal{I}b}(O)$ and $T_{k}(\overline{\mathcal{I}b}_{k}(O))$ are cofibrant is a consequence of Lemma \ref{E8} proved in Section \ref{D9}. 
%
%
 We start by checking that the two objects are weakly equivalent to $O$ and $\TT_{k}O$ respectively. More precisely, we prove that the map $O(j)\to\overline{\mathcal{I}b}{}_k^\Sigma(O)(j)$  sending a point $\theta\in O$ to the point $[a\{ b_{1},\ldots,b_{|a|}\}\,;\,id]$, where $a$ is a pearled corolla labelled by $\theta$ while all $b_{i}=\ast_{1}^\calB$, is a deformation retract
 for any $k\geq j$. (Note that the image of this map lies in the $j$-th filtration term $O(j)\to\overline{\mathcal{I}b}{}_j^\Sigma(O)(j)$.)

For this purpose, we introduce a map of sequences $c\colon O\rightarrow \mathcal{B}^\Sigma(O)$ sending a point $\theta\in O$ to the point $[T\,;\,\{t_{v}\}\,;\,\{x_{v}\}]$ in which $T$ is a planar tree with section such that the root is indexed by the pair $(\theta\,;\,1)$ while the other vertices are bivalent pearls labelled by the unit $\ast_{1}\in O(1)$. We check that the map $c$ is a homotopy equivalence whose homotopy inverse is the bimodule map $\mu$, see~\eqref{G8}. We start by contracting the output edges of univalent vertices 
above the section using  the homotopy below. A vertex $v$ in a tree $T$ is said {\it vertically 
connected to a leaf} if there is a path from $v$ to a leaf that always goes upward.
$$
\begin{array}{cccl}\vspace{4pt}
h\colon&\mathcal{B}^\Sigma(O)\times [0\,;\,1] & \longrightarrow & \mathcal{B}^\Sigma(O); \\ 
&\left[ T\,;\,\{t_{v}\}\,;\,\{x_{v}\}\right]\,;\,t & \longmapsto & \left[ T\,;\,\{t\cdot t_{D(v)}+(1-t)\cdot t_{v}\}\,;\,\{x_{v}\}\right],
\end{array} 
$$
where $D(v)=v$ for a vertex below the section or a pearl. Otherwise, $D(v)$ is the first vertex 
(possibly itself)  vertically connected to a leaf in the path joining $v$ to its closest pearl. By convention, if such vertex does not exist, then $D(v)$ is  the closest pearl and $t_{D(v)}$ is fixed to be $0$. This homotopy pulls down and thus removes all the vertices above the section that are not vertically connected to a leaf. Note that this homotopy 
preserves filtration.

%Then we do a similar contraction for such vertices below or on the section. Notice that a univalent vertex on the section can be seen as the one below labelled by~0.  We similarly define $D(v)$ to be the first vertex on the path from it to the root that is vertically connected to a leaf, and then use the same formula for homotopy,  
%except that this time we fix the parameters for the vertices above the section.

Applying relation $(v)$ of Construction~\ref{B0}, we obtain that the sequence $\mathcal{B}^\Sigma(O)$ is homotopy equivalent to a sub-sequence (sequence of subobjects)  $\mathcal{B}^\Lambda(O)$ formed by points without univalent vertices. Note that any such element of arity $i$ lies in the $i$-th filtration term, thus 
for the further retraction we do not need it to preserve filtration.\vspace{5pt}

Then, we contract all the vertices above the section using the homotopy which brings the parameters to $0$. Using the axiom $(iv)$ of Construction \ref{B0}, we get that the sequence $\mathcal{B}^\Sigma(O)$ is homotopy equivalent to the sub-sequence formed by points without vertices above the section and such that the pearls are indexed by the unit $*_1$. Finally, we bring the parameters below the section to $1$. This proves that $c$ is a homotopy equivalence. Furthermore, the reader can check that 
$\TT_{k}c\colon\TT_kO\to\TT_k\calB^\Sigma_k(O)$ is a well defined homotopy equivalence since we contract first the univalent vertices. We denote by $H$ the homotopy retraction so obtained.
\begin{figure}[!h]
\begin{center}
\includegraphics[scale=0.5]{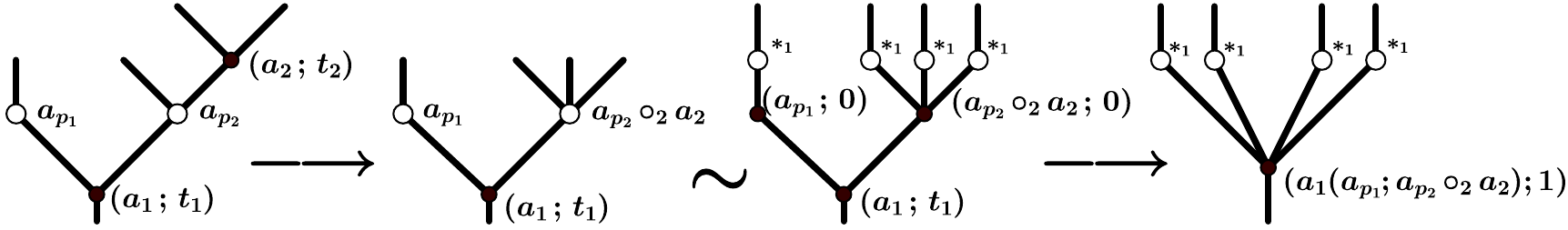}\vspace{-5pt}
\caption{Illustration of the homotopy retraction $H$.}\label{Fig:H}\vspace{-5pt}
\end{center}
\end{figure}

We extend the homotopy $H$ to the sequence $\overline{\mathcal{I}b}{}^\Sigma(O)$ by sending a pair 
$([a\{b_{1},\ldots,b_{|a|}\}\,;\,\sigma]\,;\,t)$ to the element $[ a\{H(b_{1}\,;\,t),\ldots,H(b_{|a|}\,;\,t)\,\}\,;\,\sigma]$. This proves that this sequence is weakly equivalence to the sub-sequence formed by points in which the elements $b_{i}$ are in the image of $c$. In other words, each $b_i$ is a  result of a left $O_{>0}$ action
on the elements $\ast_1^\calB$. Applying relation $(i)$ of Construction~\ref{D7}, this left action can be replaced to the right action on $a$.
Therefore,  the sequence $\overline{\mathcal{I}b}{}^\Sigma(O)$ is homotopy equivalent to the sub-sequence formed by points $[a\{b_{1},\ldots,b_{|a|}\}\,;\,id]$ in which the elements $b_{i}=\ast_1^\calB$ are corollas indexed by the unit $*_1$. Finally, we bring the parameters indexing the vertices of the pearled tree 
$a$ to $0$. We get  that $\overline{\mathcal{I}b}{}^\Sigma(O)$ is homotopy equivalent to $O$. The reader can check that the same argument works for the truncated case since the homotopy $H$ for each arity $j$ preserves the
$k$-th  filtration term, $k\geq j$.

The proof of the cofibrancy is obtained similarly by showing that each inclusion in the filtration~\eqref{E6} is a sequence of cell attachments, thus a 
cofibration, see Appendix~\ref{s:A1} for a proof of a similar statement.
\end{proof}

\subsubsection{Homeomorphism between $\mathcal{I}b^\Lambda(O)$ and $\overline{\mathcal{I}b}{}^\Lambda(O)$.}\label{ss:alt1}
Now we will look at the semi-direct product $\overline{\calI b}{}^\Lambda(O)=\calI b^\Lambda(O){\ltimes}\calB^\Lambda(O)$.
First thing to notice is that if in the general construction of $N{\ltimes}M$ one has $M(0)=*$, then $N{\circ_O} M$ as a symmetric sequence is the same as $N{\circ_{O_{>0}}} M_{>0}$. As a consequence, for the elements
$[a\{b_1,\ldots,b_{|a|}\}\, ;\, \sigma]\in  \overline{\calI b}{}^\Lambda(O)$, one can assume that all $b_i$
are of  positive arity. Note also that $\calB^\Lambda(O)$ can be viewed as an ($O{-}O_{>0}$)-subbimodule
of $\calB^\Sigma(O)$ (as being spanned by trees with section without univalent vertices). Thus,
$\overline{\calI b}{}^\Lambda(O)$ is also an infinitesimal $O_{>0}$-subbimodule of $\overline{\calI b}{}^\Sigma(O)$.
We consider the filtration in $\overline{\calI b}{}^\Lambda(O)$ induced by this inclusion and filtration~\eqref{E6}: 
 \begin{equation}\label{eq:filtr_bar_reedy}
\xymatrix{
\overline{\mathcal{I}b}{}_{0}^\Lambda(O) \ar[r] & \overline{\mathcal{I}b}{}_{1}^\Lambda(O)\ar[r] & \cdots \ar[r] & \overline{\mathcal{I}b}{}_{k-1}^\Lambda(O) \ar[r] & \overline{\mathcal{I}b}{}_{k}^\Lambda(O) \ar[r] & \cdots \ar[r] & \overline{\mathcal{I}b}{}^\Lambda(O).
}
\end{equation}
Theorem~\ref{th:proj_bar_cof} tells us that $\overline{\mathcal{I}b}{}^\Sigma(O)$ is
a cofibrant replacement of $O$ as $O$-Ibimodule. In the Reedy setting the same statement holds, 
moreover we actually have a homeomorphism.

\begin{thm}\label{H5}
For any reduced operad $O$, there  is a filtration preserving homeomorphism of $\calO$-Ibimodules $\gamma
\colon \mathcal{I}b^\Lambda(O)\rightarrow\overline{\mathcal{I}b}{}^\Lambda(O)$.  
\end{thm}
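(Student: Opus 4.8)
The plan is to construct $\gamma$ explicitly on representatives and check that it descends to a well-defined continuous bijection which is also bicontinuous, filtration preserving, and compatible with all the structure maps. The basic idea is that both $\calI b^\Lambda(O)(k)$ and $\overline{\calI b}{}^\Lambda(O)(k)$ are built out of the same combinatorial data — pearled trees with vertices labelled by $O$ and by numbers in $[0,1]$ — but organized differently: in $\calI b^\Lambda(O)$ a single tree carries all the labels (below the pearl the labels are part of the ``operad part'' of the tree, and the pearl is labelled by an element of $O$ viewed as an $O$-Ibimodule element), whereas in $\overline{\calI b}{}^\Lambda(O) = \calI b^\Lambda(O){\ltimes}\calB^\Lambda(O)$ one has a pearled tree $a\in\calI b^\Lambda(O)$ together with trees-with-section $b_1,\dots,b_{|a|}\in\calB^\Lambda(O)$ grafted on the leaves of $a$. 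Since $O$ itself is the argument (not a general Ibimodule), the element labelling the pearl of $a$ is just a point of $O$, and one can ``expand'' it: an element of $\calI b^\Lambda(O)$ whose pearl is labelled by $\theta\in O$ and whose subtree above the pearl is a genuine labelled tree should be matched with the element of $\overline{\calI b}{}^\Lambda(O)$ in which $a$ is the trivial pearled corolla labelled by $\theta$ and the $b_i$ record the part of the original tree that sat above the pearl. Conversely, given $[a\{b_1,\dots,b_{|a|}\}]$, one grafts each $b_i$ (which is a tree with section whose part below the section can be absorbed into $a$ using the right $O$-action, and whose part above the section becomes the ``operad above the pearl'' part) onto $a$ to produce a single pearled tree in $\calI b^\Lambda(O)$.

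The key steps, in order, are: (1) define $\gamma$ on representatives by the grafting/ungrafting procedure just described, being careful to track the two families of height parameters $\{t_v\}$ — those of $a$ and those of the $b_i$ — and how they combine into the single family of parameters of the target tree in $\calI b^\Lambda(O)$ (this is where one uses that in $\overline{\calI b}{}^\Lambda(O)$ the parameters on $a$ and on each $b_i$ range independently over admissible subsets of cubes, and in $\calI b^\Lambda(O)$ they must satisfy the single ``toward the pearl'' monotonicity, so the bijection on parameters is the obvious identification of the two polytope descriptions, exactly as in the proof of Theorem~\ref{th:hocolim}); (2) verify $\gamma$ is well-defined, i.e.\ that relations $(i)$--$(iv)$ of Construction~\ref{C8} on the source correspond precisely to relations $(i)$--$(v)$ of Construction~\ref{B0} together with relations $(i)$--$(ii)$ of Construction~\ref{D7} on the target, and that the unit relation $(i)$ and $\Sigma$-relation $(ii)$ match up; (3) construct the inverse map by the reverse procedure and check the two composites are the identity; (4) check continuity of both $\gamma$ and $\gamma^{-1}$ — since everything is assembled from quotients of coproducts over trees of products of $O(|v|)$'s and cubes, and the category $Top$ of $k$-spaces behaves well under the quotients and subspaces involved (Subsection~\ref{ss:top}), it suffices to check continuity summand-by-summand, where the maps are continuous by inspection; (5) check $\gamma$ intertwines the infinitesimal left $O$-action, the right $O$-action, and the $\Lambda$-action (the $\ast_0$-action) — the left action on both sides is defined by grafting a corolla at the root and labelling the new vertex by $1$, and the definition of the semi-direct product left action in Construction~\ref{D7} was arranged precisely so that it corresponds to the left action on $\calI b^\Lambda(O)$; (6) check $\gamma$ preserves the filtration by geometrical inputs, which is immediate since the geometrical inputs of $[a\{b_1,\dots,b_{|a|}\}]$ (arity plus univalent vertices of the $b_i$) equal those of the grafted pearled tree.

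I would organize the bookkeeping using the homotopy-colimit picture already set up: both sides restricted to a fixed underlying non-planar pearled tree, or rather to the cells indexed by trees in $\Psi_k$, decompose as realizations of cellular cosheaves over cube complexes, and the map $\gamma$ is induced by a map of indexing posets together with the linear ``relabelling'' homeomorphisms $f_T\colon|T\downarrow\Psi_k|\to H(T)$ from the proof of Theorem~\ref{th:hocolim}. Phrasing it this way lets one reduce well-definedness, bijectivity, and bicontinuity to statements about the combinatorics of pearled trees and trees-with-section, avoiding any genuinely analytic work.

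The main obstacle, I expect, is the careful matching of the height parameters and the equivalence relations across the two constructions — in particular, making precise how the ``below-the-section'' part of each $b_i$ in $\calB^\Lambda(O)$ gets absorbed via relation $(i)$ of Construction~\ref{D7} into the right $O$-action on $a$ so that the result is an honest pearled tree in $\calI b^\Lambda(O)$ with no leftover structure, and conversely how a pearled tree in $\calI b^\Lambda(O)$ is canonically cut along its pearl into an $a$-part and a $b$-part. One must check that this cutting/grafting is independent of all the choices (planar representatives, the order in which relations are applied) and that it really is inverse to grafting on the nose, not just up to the equivalence relations. Once the combinatorial dictionary is nailed down, everything else — continuity, equivariance, filtration — follows formally, and the fact that $\overline{\calI b}{}^\Lambda(O)$ is then a Reedy cofibrant replacement of $O$ follows from Proposition~\ref{p:BV_reedy_ibimod} transported across the homeomorphism $\gamma$.
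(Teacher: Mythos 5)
There is a real gap. The forward map you sketch---cut at the pearl, set $a$ to the pearled corolla labelled by the original pearl's label $\theta$, and push everything that sat above the pearl into the $b_i$---cannot be the homeomorphism $\gamma$. First, for a pearled tree in $\calI b^\Lambda(O)$ whose pearl is not the root, the trunk carries nontrivial $O$-labels and $[0,1]$-parameters; your recipe discards them. Second, even when the pearl is the root, your image would have $a$ always a pearled corolla, so you would not hit the elements of $\overline{\calI b}{}^\Lambda(O)$ whose $a$-part has internal vertices, and $\gamma$ would fail to be surjective. Third, each $b_i$ must be an element of $\calB^\Lambda(O)$ and hence must carry a section, and your recipe gives no rule for placing it.

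The paper's $\gamma$ does not cut at the pearl. It makes two cuts whose positions are dictated by the height parameters: writing $j(v)$ for the vertex of the trunk closest to $v$, a vertex $v$ goes below the first cut if $t_v<\frac{2t_{j(v)}+1}{3}$, between the two cuts if $\frac{2t_{j(v)}+1}{3}<t_v<\frac{t_{j(v)}+2}{3}$, and above the second cut if $t_v>\frac{t_{j(v)}+2}{3}$. The part below the first cut (which automatically contains the whole trunk) becomes $a$, the connected pieces above it become the $b_i$, the second cut places the section inside each $b_i$, and the parameters are rescaled by the three indicated linear bijections onto $[0,1]$. These formulas are the content of the theorem: they are precisely what you defer to ``the obvious identification of the two polytope descriptions.''

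Pointing at the proof of Theorem~\ref{th:hocolim} does not supply the missing identification. That proof builds maps $f_T\colon |T\downarrow\Psi_k|\to H(T)$ from cubes to order polytopes, realizing $\calI b^\Lambda(M)(k)$ as a homotopy colimit over $\Psi_k$. Here one needs instead a \emph{subdivision} of each parameter polytope $H(T)$ according to cut position, with different regions landing on different index data $(a;\,b_1,\dots,b_{|a|})$ in $\overline{\calI b}{}^\Lambda(O)(k)$; this is a refinement of cell structures, not a cube-to-polytope bijection. Your cellular-cosheaf bookkeeping is essentially what the paper uses to establish continuity of $\gamma^{-1}$ (Lemma~\ref{l:cellularsheaf} applied to the refinement $\gamma_\Comm^{-1}$ in the commutative case), so that part of your plan is sound---but it only works once the two-cut formulas are in hand, and those formulas are the step your proposal is missing.
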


\begin{proof}
We will build the homeomorphism $\gamma$ explicitly. Let $[T\,;\, \{a_{v}\}\,;\, \{t_{v}\}]$ be a point in $\mathcal{I}b^\Lambda(O)$ and let 
$[\, a\{b_{1},\ldots,b_{|a|}\}\,;\,\sigma\,]$,  where $\sigma$ is the permutation labelling the leaves of $T$,  be its image under $\gamma$ that we are going to construct.  The elements $a$, $b_{1},\ldots, b_{|a|}$ are obtained by taking two cuts in $[T\,;\, \{a_{v}\}\,;\, \{t_{v}\}]$. 
 The part below the lower cut produces $a\in \mathcal{I}b^\Lambda(O)$, and the connected pieces above the lower cut produce $b_i$s.  The second cut determines the position of the sections in 
 $b_{i}\in \mathcal{B}^\Lambda(O)$.  Both cuts are horizontal in the sense that the  path from any leaf to the root must meet each cut exactly once.  The lower cut should go above the pearl and thus above all the vertices on the trunk (the path between 
 the pearl and the root). In general position   such cut  passes only through edges and does not pass through vertices. But if it does, then 
 the vertices on the lower cut can be both considered as ones below the cut (and thus belonging to $a$), or as ones above  it (and thus belonging
 to $b_i$). If we decide to put such vertex below the cut, it would mean that $a$ is an image of a non-trivial right $O$-action; if we put such vertex above,
 then the corresponding $b_i$ is an image of a non-trivial left $O$-action. The equalizing condition $(i)$ of Construction~\ref{D7} between the left and right actions, would
 imply  that either way produces the same element in $\overline{\mathcal{I}b}{}^\Lambda(O)$. For the upper cut, if it  goes through a vertex, then
 the vertex becomes a pearl of the corresponding $b_i$, if it goes through an edge, then in $b_i$  one replaces this edge by a pearled 1-corolla labelled by
 $\ast_1\in O(1)$. \vspace{-10pt}
 
 \begin{figure}[!h]
\begin{center}
\includegraphics[scale=0.5]{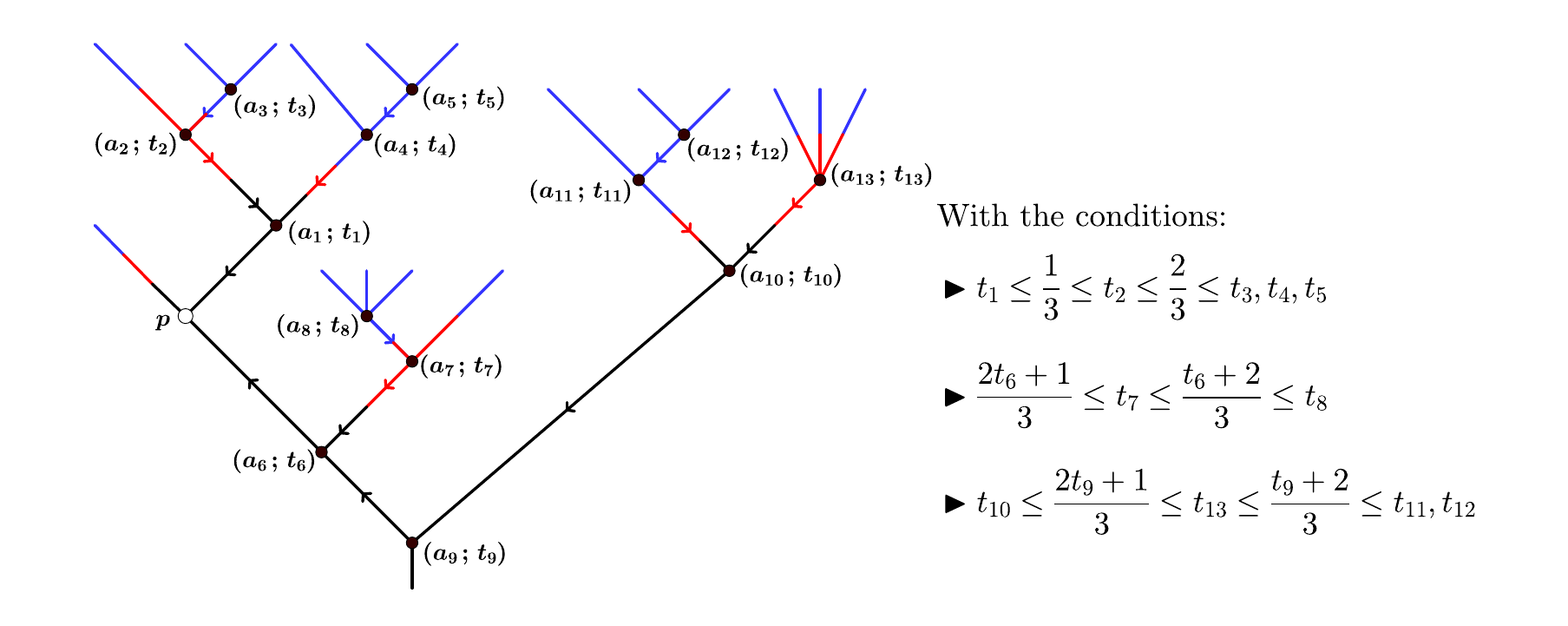}
\caption{Example of a point in $\mathcal{I}b^\Lambda(O)$.}\label{E0}\vspace{-30pt}
\end{center}
\end{figure} 

\newpage

  Let $v$ be a vertex of $T$ labelled by $(a_v,t_v)$. For the pearl $p$ in the formulas below, we  set~$t_p=0$. Let $j(v)$ be the
  closest to $v$  vertex on the trunk. We say that $v$ 
  \begin{itemize}
  \item is below the first cut if $t_v<\frac{2t_{j(v)}+1}{3}$,
  \item is on the first cut if $t_v=\frac{2t_{j(v)}+1}{3}$,
  \item is between the two cuts if $\frac{2t_{j(v)}+1}{3}<t_v<\frac{t_{j(v)}+2}{3}$,
  \item is on the second cut if $t_v=\frac{t_{j(v)}+2}{3}$,
  \item is above the second cut if $t_v>\frac{t_{j(v)}+2}{3}$.
  \end{itemize}
 For a vertex $j(v)$ on the trunk, we simply devide the segment
$[t_{j(v)},1]$ into 3 equal intervals to determine where the 2 cuts pass. Note in particular that these formulas ensure that  all the 
vertices of the trunk are below the first cut.  As illustration of this construction, we consider an example in Figure~\ref{E0}.\vspace{5pt}

  For every vertex on the first cut we decide whether it is above or below the cut, 
  and then
  move slightly the cut accordingly. As explained above, the resulting element is always the same regardless of our choice.  The point $a=[T_{a}\,;\,\{a_{v}^{0}\}\,;\, \{t_{v}^{0}\}]$ is given by the subtree $T_{a}$ of $T$ having the vertices $v$ below the first cut. 
The points $\{a_{v}^{0}\}$ labelling the vertices of $T_{a}$ are  the same and the set of real numbers $\{t_{v}^{0}\}$ indexing its  vertices  is defined by an increasing linear bijection $\left[t_{j(v)},\frac{2t_{j(v)}+1}{3}\right]\to \left[t_{j(v)},1\right]$. Explicitly,
   $
   t_v^0:=3t_v-2t_{j(v)}.%\vspace{-10pt}
   $
   
 \begin{figure}[!h]
\begin{center}
\includegraphics[scale=0.4]{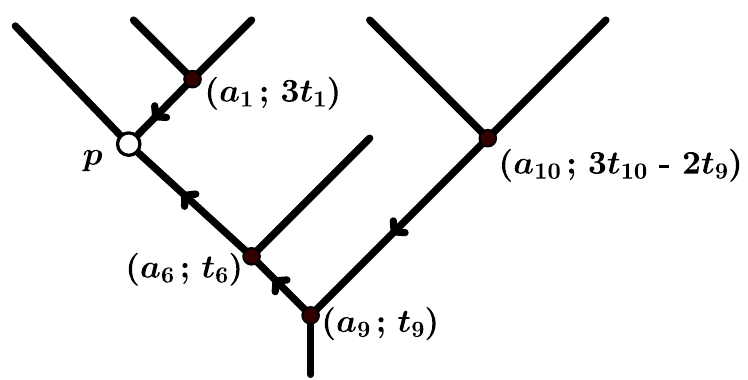}\vspace{-10pt}
\caption{The point $a$ associated to Figure \ref{E0}.}\vspace{-10pt}
\end{center}
\end{figure} 
   
%\newpage   
   
   For the points $b_{j}=[T_{j}\,;\,\{b_{v}^{j}\}\,;\, \{t_{v}^{j}\}]\in \mathcal{B}^\Lambda(O)$, their corresponding  trees are the pieces above the first cut. The pearls of $T_j$ are the vertices on the upper cut. If the upper cut crosses an edge, as explained above, we create a pearl labelled by 
   $\ast_1\in O(1)$. Besides these new vertices that we label by $\ast_1$, all  the other vertices will be labelled by the same elements $b_v^j=a_v\in O$. 
   The numbers $t_v^j$ are determined as follows: if $v$ is between the two cuts, one uses  the decreasing linear bijection
   $\left[\frac{2t_{j(v)}+1}{3},\frac{t_{j(v)}+2}{3}\right]\to [0,1]$. If $v$ is above the second cut, one uses the increasing linear bijection
   $\left[\frac{t_{j(v)}+2}{3},1\right]\to [0,1]$. Explicitly,
  $$
t_{v}^{j}:=\left\{
\begin{array}{cl}\vspace{5pt}
\dfrac{3t_{v}-t_{j(v)}-2}{1-t_{j(v)}}, & \text{if the vertex }v\text{ is above the section}; \\  
\dfrac{3t_{v}-t_{j(v)}-2}{t_{j(v)}-1},  &  \text{if the vertex }v\text{ is below the section}.
\end{array} 
\right.%\vspace{-15pt}
$$
  
\begin{figure}[!h]
\begin{center}
\includegraphics[scale=0.4]{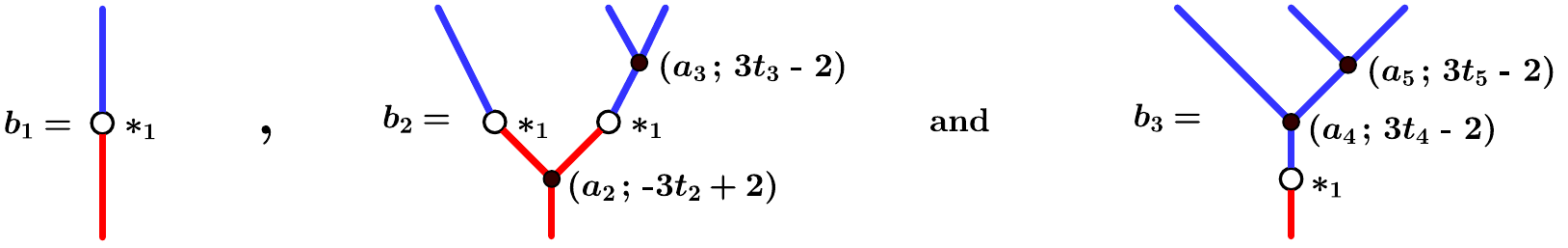}\vspace{-5pt}
\caption{The points $b_{1}$, $b_{2}$ and $b_{3}$ associated to Figure \ref{E0}.}\vspace{-25pt}
\end{center}
\end{figure} 

\newpage

 \begin{figure}[!h]
\begin{center}
\includegraphics[scale=0.4]{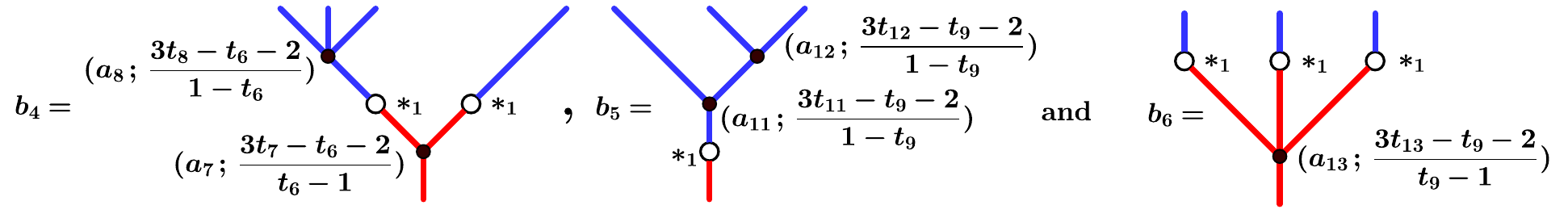}\vspace{-5pt}
\caption{The points $b_{4}$, $b_{5}$ and $b_{6}$ associated to Figure \ref{E0}.}\vspace{-5pt}
\end{center}
\end{figure}

 One can easily see that our map $\gamma$ is well defined and bijective. In particular, one would not need to divide by zero: because of the condition
 $t_{s(e)}\geq t_{t(e)}$ on the numbers labelling the vertices and relation~$(iii)$ of Construction~\ref{C8}, a vertex of the trunk labelled by~1 can
 only be connected to leaves or other vertices on the trunk. To check that this map is continuous, one can do it first for its inverse map $\gamma^{-1}$. 
 Note that its inverse can be similarly defined by \lq\lq{}forgetting\rq\rq{} the two sections -- instead of dividing by numbers that can tend to zero, we will be multiplying by such numbers. Thus the map 
 $\gamma^{-1}$ is continuous and bijective. We check first that it is a homeomorphism for the
 commutative operad $\Comm$. In the latter case $\calI b^\Lambda(\Comm)(k)$ is the space of non-planar pearled trees, whose vertices are labelled
 by numbers and non-pearled vertices have arity $\geq 2$. It is a cell complex, and the map $\gamma_\Comm^{-1}\colon\overline{\calI b}{}^\Lambda(\Comm)(k)\to\calI b^\Lambda(\Comm)(k)$ is a cellular refinement.
 On the other hand, both spaces $\overline{\calI b}{}^\Lambda(O)(k)$ and 
 $\calI b^\Lambda(O)(k)$ can be obtained as realizations of cellular cosheaves on
  $\overline{\calI b}{}^\Lambda(\Comm)(k)$ and $\calI b^\Lambda(\Comm)(k)$, respectively,
  see Appendix~\ref{s:A2}. Moreover, the first cosheaf is  homeomorphic to 
  the pullback of the second one obtained  along the homeomorphism  (refinement) $\gamma_\Comm^{-1}$. We remark, however, that in case $O(1)\neq *$, the value of the cocheaf on a cell of 
  $\calI b^\Lambda(\Comm)(k)$ is not merely given by the product of  $O(|v|)$s, but will also account for forgotten under the map
  $\calI b^\Lambda(O)(k)\to \calI b^\Lambda(\Comm)(k)$ bivalent vertices and labelled by the elements of $O(1)$. However, one can still apply Lemma~\ref{l:cellularsheaf} to conclude that the map $\gamma^{-1}$ 
  is
  a homeomorphism of realizations of cellular cosheaves.

%\noindent The map is well defined due to the axioms of Construction \ref{D7}. In particular, if $t_{j(v;p)}=1$, then the map is continuous because of the axiom $(ii)$. By construction, the map is a homeomorphism which preserves the infinitesimal bimodule structure over $O_{>0}$. \vspace{-15pt}
\end{proof}

\begin{expl}\label{ex:2compl2}
Let $O$ be doubly reduced. Then $\overline{\mathcal{I}b}{}^\Lambda(O)(0)$ and $\overline{\mathcal{I}b}{}^\Lambda(O)(1)$
have exactly the same combinatorial decomposition as $\mathcal{I}b^\Lambda(O)(0)$ and $\mathcal{I}b^\Lambda(O)(1)$, respectively,
see Example~\ref{ex:2compl}.
This is so because all the corresponding pearled trees of arity $\leq 1$ have all their vertices on the trunk.  There are exactly two
pearled trees of arity~2 that have a vertex outside the trunk: \includegraphics[scale=0.06]{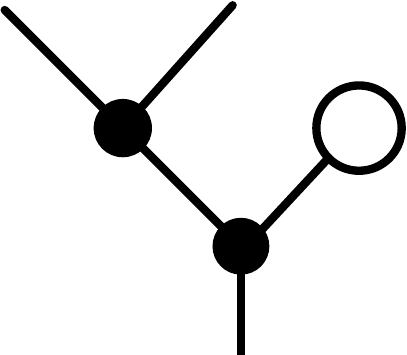}, 
\includegraphics[scale=0.06]{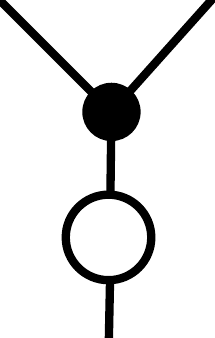}. The corresponding strata of  $\mathcal{I}b^\Lambda(O)(2)$ are subdivided in two
in  $\overline{\mathcal{I}b}{}^\Lambda(O)(2)$; the other strata in  $\overline{\mathcal{I}b}{}^\Lambda(O)(2)$ remain the same, see Figure~\ref{Fig:2compl2}.\vspace{-10pt}
\begin{figure}[!h]
\begin{center}
\includegraphics[scale=0.18]{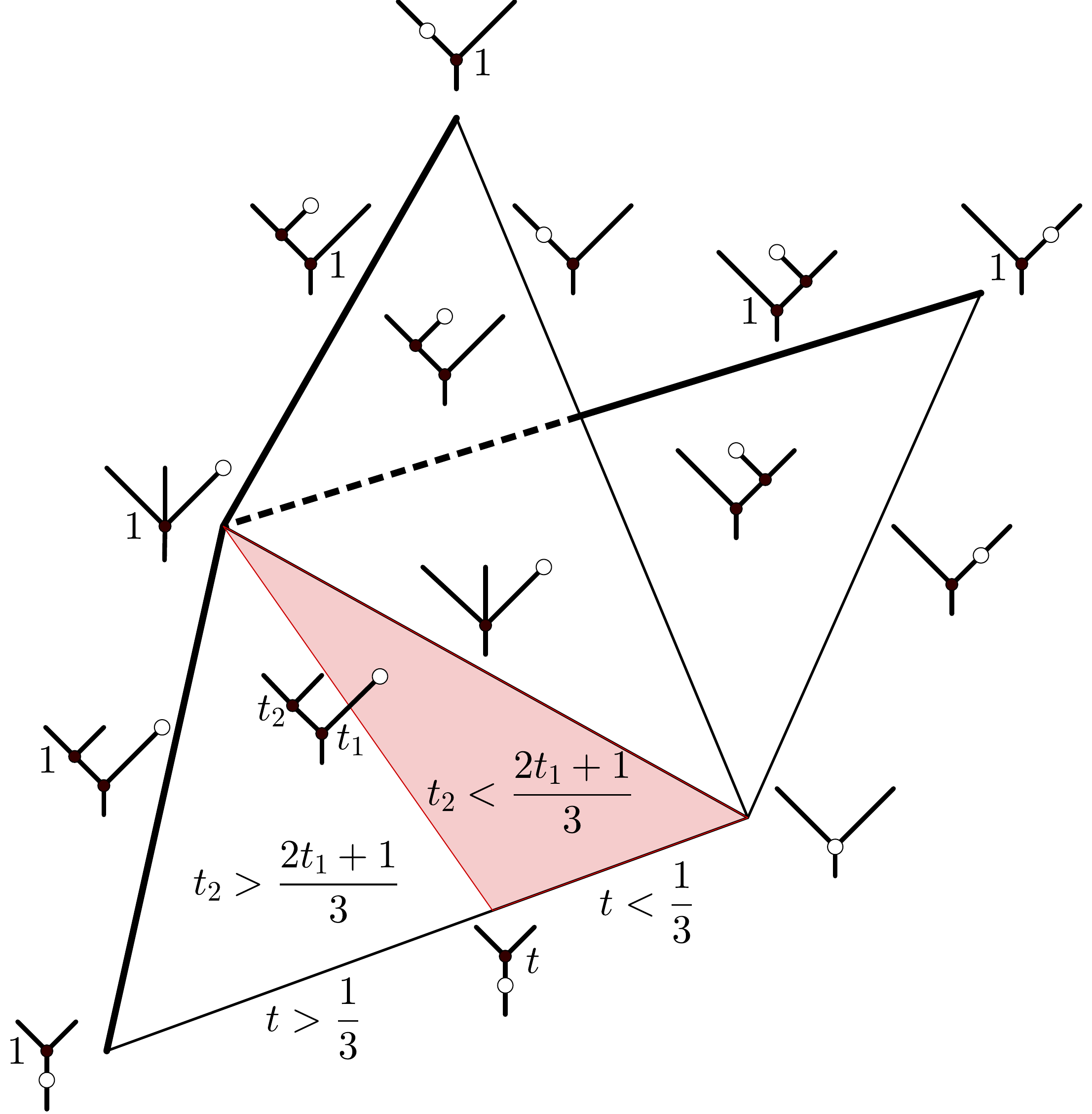}\vspace{-5pt}
\caption{Visualising the space of trees associated to $\overline{\mathcal{I}b}{}^\Lambda(O)(2)$.}\label{Fig:2compl2}\vspace{-25pt}
\end{center}
\end{figure}
\end{expl}

\subsection{Construction of the map}\label{s:map}
%For the next two sections our goal will be to prove Main Theorem~\ref{th:main} in case $O$ is doubly reduced.

\begin{thm}\label{th:main_2reduced}
Given a map $\calO\to M$ of $O$-bimodules, where $O$ is a doubly reduced, $\Sigma$-cofibrant topological operad, one has 
a natural morphism between the towers
\begin{equation}\label{eq:equiv_general1_red}
\xi_k\colon  \Map_*\left(\Sigma O(2), \TT_k\Bimod_O^h\left(\TT_kO,\TT_kM
\right)\right) \to \TT_k\Ibimod_O^h\left(\TT_kO,\TT_kM\right),\, k\geq 0,
\end{equation}
producing  at the limit $k=\infty$:
\begin{equation}\label{eq:equiv_general2_red}
\xi\colon  \Map_*\left(\Sigma O(2), \Bimod_O^h\left(O,M
\right)\right) \to
\Ibimod_O^h\left(O,M\right),
\end{equation}
which is an equivalence of towers (respectively, equivalence up to stage $k$), provided $M(0)=*$ and $O$ is
coherent (respectively, $k$-coherent).
\end{thm}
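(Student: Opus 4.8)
The strategy is to construct the map $\xi_k$ explicitly using the cofibrant replacements built in Sections~\ref{s:cof} and~\ref{s:alt}, then prove it is an equivalence by a cellular/filtration induction that reduces to the coherence hypothesis. First I would model the source: by Proposition~\ref{p:BV_reedy_bimod} and~\eqref{eq:der_bimod_vs_tr2}, the space $\TT_k\Bimod_O^h(\TT_kO,\TT_kM)$ is homeomorphic to $\Bimod_O(\calB^\Lambda_k(O),M)$, and pointed maps out of $\Sigma O(2)$ into it are, by the cartesian-closedness discussion of Subsection~\ref{ss:top} (formula~\eqref{eq:exp1} and the homeomorphism $\Map_*(X/X_0,Y)\cong \Map((X,X_0),(Y,*))$), the same as maps of pairs $(C(O(2)),O(2))\to(\Bimod_O(\calB^\Lambda_k(O),M),\eta)$, where $C(O(2))$ is the cone. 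By Theorem~\ref{H5} and the homeomorphism $\calI b^\Lambda(O)(1)\cong C(O(2))$ from Example~\ref{D2}, this cone is exactly $\overline{\calI b}{}^\Lambda(O)(1)$. So a point of the source is a compatible family, for each $b\in\calB^\Lambda_k(O)$, of elements of $M$ parametrized by $\overline{\calI b}{}^\Lambda(O)(1)$, vanishing (i.e.\ equal to $\eta(b)$) on the base $O(2)\subset\overline{\calI b}{}^\Lambda(O)(1)$.

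Next I would build $\xi_k$. The key is the semidirect product $\overline{\calI b}{}^\Lambda(O) = \calI b^\Lambda(O)\ltimes\calB^\Lambda(O)$ of Construction~\ref{D7}: an element of $\overline{\calI b}{}^\Lambda(O)(n)$ is $[a\{b_1,\dots,b_{|a|}\};\sigma]$ with $a\in\calI b^\Lambda(O)$ and $b_i\in\calB^\Lambda(O)$. Given a point $\Phi$ of the source (a map of pairs as above), one evaluates it on each $b_i$, obtaining a labelling of the "outer" part by $M$, while the $a$-part records the infinitesimal-bimodule combinatorics; by the relations of Construction~\ref{D7} this assembles into a well-defined map $\overline{\calI b}{}^\Lambda_k(O)\to M$ of $k$-truncated $O$-Ibimodules, hence by Theorem~\ref{H5} and~\eqref{eq:der_ibimod_vs_tr2} a point of $\TT_k\Ibimod_O^h(\TT_kO,\TT_kM)$. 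One checks this is continuous, natural in $k$ (compatibility with the tower maps is built in since all filtrations are compatible), and functorial in $M$. The subtlety here is the basepoint bookkeeping: the pointedness condition on $\Phi$ (vanishing on $O(2)$) is exactly what makes the formula respect relation~$(ii)$ of Construction~\ref{D7} and the identification of the pearl labelled by $\ast_1\in O(1)$ with the unit $\ast_1^\calB$, so that the output genuinely lands in $\overline{\calI b}{}^\Lambda(O)$ rather than in something larger.

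Finally, to show $\xi_k$ is an equivalence under $k$-coherence I would argue by induction on the filtration degree $i\le k$, comparing the two sides arity by arity. Using Theorem~\ref{th:hocolim}, each $\calI b^\Lambda(O)(i)$ is $\hocolim_{\Psi_i}\rho_i^O$ and its boundary $\partial\calI b^\Lambda(O)(i)=\calI b^\Lambda_{i-1}(O)(i)$ is $\hocolim_{\partial\Psi_i}\rho_i^O$; by Remark~\ref{r:hocolim} the intermediate space $\partial'\calI b^\Lambda(O)(i)$ (boundary plus the elements labelled by $\Psi_i^U$) maps into $\calI b^\Lambda(O)(i)$ by a weak equivalence precisely when $O$ is $i$-coherent. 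On the mapping-space side, the passage from stage $i-1$ to stage $i$ is controlled by the cell attached in arity $i$ (one cell when $O$ is doubly reduced, by the remarks after Proposition~\ref{p:BV_reedy_ibimod}), and the corresponding relative term is a mapping space out of the pair $(\calI b^\Lambda(O)(i),\partial'\calI b^\Lambda(O)(i))$ into $\Bimod_O^h$-spaces. The inductive step then says: $\xi_i$ is an equivalence if $\xi_{i-1}$ is and the inclusion $\partial'\calI b^\Lambda(O)(i)\subset\calI b^\Lambda(O)(i)$ is an equivalence, i.e.\ exactly $i$-coherence. The main obstacle I anticipate is precisely the organization of this last comparison: one must show that the cell-attachment structure of $\overline{\calI b}{}^\Lambda_k(O)$ (via the filtration~\eqref{eq:filtr_bar_reedy} and Theorems~\ref{th:proj_bar_cof},~\ref{H5}) matches up, under $\xi$, with the pushout decomposition of the source obtained from the filtration of $\calB^\Lambda_k(O)$, so that the relative terms on the two sides are identified up to the coherence equivalence; handling the weakly-doubly-reduced case rather than the strictly doubly reduced one (where $O(1)\simeq *$ but not $=*$) will require the Reedy-model-structure left-properness of Theorem~\ref{th:bim_reedy_model}~(iii) and the zigzag of Theorem~\ref{th:zigzag} to reduce to the strict case, which is where Main Theorem~\ref{th:main} proper (as opposed to Theorem~\ref{th:main_2reduced}) gets proved.
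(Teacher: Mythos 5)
Your overall architecture matches the paper's: construct $\xi_k$ explicitly from the semidirect product $\overline{\calI b}{}^\Lambda(O)=\calI b^\Lambda(O)\ltimes\calB^\Lambda(O)$, identify the source with maps of pairs out of $(C(O(2)),O(2))\cong(\calI b^\Lambda(O)(1),\partial\calI b^\Lambda(O)(1))$, and prove the equivalence by induction along the tower, with coherence entering through a comparison of subspaces of $\calI b^\Lambda(O)(k)$. However, there is a genuine error at the pivot of the argument. You assert (twice) that $i$-coherence is the statement that the inclusion $\partial'\calI b^\Lambda(O)(i)\subset\calI b^\Lambda(O)(i)$ is a weak equivalence. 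That is not what Remark~\ref{r:hocolim} says, and it is not the condition the induction needs. Coherence is the statement that $\partial\calI b^\Lambda(O)(i)\subset\partial'\calI b^\Lambda(O)(i)$ is a weak equivalence. The reason this is the relevant inclusion is visible once you set up the induction step properly: after showing that the tower maps $\TT_i\calI\to\TT_{i-1}\calI$ and $\TT_i\calB\to\TT_{i-1}\calB$ are Serre fibrations (a step your sketch omits entirely, and which requires the equivariant pushout--product argument of Proposition~\ref{p:fibration}), one compares fibers. The fiber $F$ on the Ibimodule side is a space of section extensions over $\overline{\calI b}{}^\Lambda(O)(i)/\Sigma_i$ relative to $\partial\overline{\calI b}{}^\Lambda(O)(i)/\Sigma_i$; the fiber $F_1$ on the bimodule side is identified, via showing $\xi_i|_{F_1}$ is a homeomorphism onto its image, with sections relative to the larger subspace $\partial_1\overline{\calI b}{}^\Lambda(O)(i)$ (the boundary together with all elements $[a\{b_1,\ldots,b_{|a|}\};\sigma]$ with $|a|\geq 2$). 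By the section-extension lemma, $F_1\to F$ is an equivalence iff $\partial\hookrightarrow\partial_1$ is one, and $\partial_1$ deformation retracts onto $\partial'$ — whence $\partial\hookrightarrow\partial'$, i.e.\ coherence. With your condition ($\partial'\hookrightarrow$ the full space) the two fibers are not compared at all, so the induction does not close.

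Two further omissions worth noting: the subspace $\partial_1\overline{\calI b}{}^\Lambda(O)(i)$ and the deformation retraction onto $\partial'$ are where the homeomorphism $\gamma$ of Theorem~\ref{H5} (the two-cut description) actually gets used, not merely as a cosmetic identification of the target; and the final paragraph about the weakly doubly reduced case is out of scope for this statement, which concerns strictly doubly reduced $O$ only.
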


In this subsection we describe this map $\xi_\bullet$  explicitly.  The theorem is proved in 
Subsection~\ref{ss:proof:2reduced}. 
%%In Section~\ref{s:coherent} we explain the term {\it coherent}. In Section~\ref{s:FM}
%we prove that the Fulton-Macpherson operad is coherent, and thus our Main Theorem~\ref{th:main} can be used for the functor calculus.
%The notion of coherence is expressed in terms of homotopy colimits. Section~\ref{s:homeo} is another preparatory technical one where 
%we explain how these colimits are related to the Boardman-Vogt replacement of $O$ as $O$-Ibimodule. And finally
 % The notion of coherent operad will be introduced in the next section while the theorem will be proved in Section~\ref{s:proof}. 
 Let us start with a general construction.
\begin{const}\label{const:map}
Let $O$, $M$, and $N$ be as in Construction~\ref{D7} of $N{\ltimes}M$. Namely, $N$ is an $O$-Ibimodule, $M$ is an $O$-bimodule
endowed with a map $\mu\colon M\to O$ and a basepoint $\ast_1^M\in M(1)$ such that $\mu(\ast_1^M)=\ast_1\in O(1)$.  Let in addition assume that
$O$ is reduced ($O(0)=\{*_0\}$) and one is given a map $\mu'\colon N\to O$ of $O$-Ibimodules. Assume also that $K$ is an $O$-bimodule endowed with an
$O$-bimodules map $\eta\colon O\to K$. 

\noindent Define $\partial N(1)$ as the image of the infinitesimal left $O$-action $\circ_1\colon O(2)\times N(0)\to N(1)$.  
We will construct a map 
\begin{equation}\label{eq:xi}
\xi\colon \Map_*\left( N(1)/\partial N(1), \Bimod_O(M,K)\right) \to
\Ibimod_O(N{\ltimes}M,K).
\end{equation}
(Note that these are actual mapping spaces, not the derived ones.)  Here the basepoint for the space $\Bimod_O(M,K)$ is $\eta\circ \mu$. 

\noindent Consider the maps 
\[
\tau_i\colon N(n)\xrightarrow{j_i} N(1)\xrightarrow{q} N(1)/\partial N(1),\quad 1\leq i\leq n,
\]
where 
$$
\begin{array}{lccl}%\vspace{5pt}
j_{i}: & N(n) & \longrightarrow & N(1); \\ 
& x & \longmapsto & \big(\cdots\big(\,\big(\,\big(\cdots\big( x\circ^{n}\ast_0\big)\cdots\big)\circ^{i+1}\ast_0\big)\circ^{i-1}\ast_0\big)\cdots\big)\circ^{1}\ast_0.
\end{array} 
$$
Equivalently, $j_i$ is the $\Lambda$-map corresponding to the inclusion
\[
\underline{1}\hookrightarrow\underline{n},\quad 1\mapsto i.
\]
Let $g\in \Map_*\left( N(1)/\partial N(1), \Bimod_O(M,K)\right)$, for any element 
$[a\{b_1,\ldots,b_{|a|}\}\, ;\, \sigma]\in N{\ltimes}M$, we set
\begin{equation}\label{eq:xi2}
\xi(g)\left(\, [a\{b_1,\ldots,b_{|a|}\}\, ;\, \sigma]\,\right) =
\mu'(a)\bigl( g(\tau_1(a))(b_1),\ldots,g(\tau_{|a|}(a))(b_{|a|}) \bigr)\cdot \sigma.
\end{equation}

\end{const}

\begin{lmm}\label{l:xi}
The formula~\eqref{eq:xi2} produces a well-defined and continuous map~\eqref{eq:xi}.
\end{lmm}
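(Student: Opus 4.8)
The plan is to verify directly that formula~\eqref{eq:xi2} respects all the relations defining $N{\ltimes}M$ as a quotient of $N\circ_O M$, and then that the resulting map is a morphism of $O$-Ibimodules, finally checking continuity. First I would unwind the right-hand side of~\eqref{eq:xi2}: given $g$, each $g(\tau_i(a))$ is a bimodule map $M\to K$, so $g(\tau_i(a))(b_i)\in K$ has arity $|b_i|$; applying the left $O$-action of $\mu'(a)\in O(|a|)$ to the tuple $(g(\tau_1(a))(b_1),\ldots,g(\tau_{|a|}(a))(b_{|a|}))$ lands in $K(|b_1|+\cdots+|b_{|a|}|)$, and postcomposing with $\sigma$ keeps the arity, so the target arity matches that of $[a\{b_1,\ldots,b_{|a|}\}\,;\,\sigma]$. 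The $\Sigma$-equivariance in $\sigma$ is built into the formula, so I only need to worry about the two generating relations.

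The heart of the argument is checking relation $(i)$ of Construction~\ref{D7} (equalizing the right $O$-action on $N$ with the left $O$-action on $M$) and relation $(ii)$. For relation $(i)$: if we move an operadic element $\theta\in O(r)$ from sitting on a leaf $i$ of $a$ (as part of a right action $a\circ^i\theta$) to sitting on the root of $b_i$ (as a left action $\theta(b_{i_1},\ldots,b_{i_r})$), I need $\mu'(a\circ^i\theta)(\ldots,g(\tau_i(a\circ^i\theta))(\theta(\ldots)),\ldots)$ to equal the expression coming from the other side. Since $\mu'$ is a map of $O$-Ibimodules, $\mu'(a\circ^i\theta)=\mu'(a)\circ^i\theta=\mu'(a)\circ_i\theta$ (the right infinitesimal action agrees with the usual right action). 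Crucially, $\tau_i(a\circ^i\theta)=\tau_i(a)$ because plugging $\ast_0$ into all leaves except the $i$-th collapses the grafted $\theta$ (all its leaves also get filled by $\ast_0$, and $\mu(\ast_0)$-type considerations together with reducedness make the composite land in the image of $O(2)\times N(0)\to N(1)$, i.e. it is unaffected modulo $\partial N(1)$) --- this is exactly why one quotients by $\partial N(1)$, and I expect this to be the step requiring the most care. Then using that $g(\tau_i(a))$ is a \emph{bimodule} map, $g(\tau_i(a))(\theta(b_{i_1},\ldots,b_{i_r}))=\theta(g(\tau_i(a))(b_{i_1}),\ldots,g(\tau_i(a))(b_{i_r}))$, and the left $O$-action on $K$ being associative lets us absorb this $\theta$ into $\mu'(a)\circ_i\theta$. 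Relation $(ii)$ is checked analogously, using the compatibility between the infinitesimal left action on $N$, the map $\mu'$, and the bimodule structure, together with the definition of $\tau_i$ in terms of the $\Lambda$-maps; the identity $i'=|b_1|+\cdots+|b_{i-1}|+1$ and associativity of the $K$-action do the bookkeeping.

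Next I would check that $\xi(g)$ is a morphism of $O$-Ibimodules. The infinitesimal right $O$-action on $N{\ltimes}M$ acts by grafting into a leaf of some $b_i$; since $g(\tau_i(a))$ is a bimodule map hence intertwines right $O$-actions, and since $\tau_i(a)$ is unchanged (we only altered a $b$-factor, not $a$), this is immediate from the definition of the right action on $K$. For the infinitesimal left action $\theta\circ_j[a\{b_\bullet\}\,;\,\sigma]$: by Construction~\ref{D7} this equals $[(\theta\circ_j a)\{\ast_1^M,\ldots,\ast_1^M,b_1,\ldots,b_{|a|},\ast_1^M,\ldots\}\,;\,\ldots]$; applying~\eqref{eq:xi2}, the new $\ast_1^M$ factors contribute $g(\tau_\bullet(\theta\circ_j a))(\ast_1^M)$, and since $g$ is \emph{pointed} (basepoint $\ast_1^M\mapsto$ basepoint, and the basepoint of $\Bimod_O(M,K)$ is $\eta\circ\mu$, so $g(\ast)(\ast_1^M)=\eta(\mu(\ast_1^M))=\eta(\ast_1)=\ast_1^K$) these are units and disappear; what remains is $\mu'(\theta\circ_j a)$ applied to the old tuple, and $\mu'(\theta\circ_j a)=\theta\circ_j\mu'(a)$ since $\mu'$ is an Ibimodule map, which is exactly the infinitesimal left action on $K$. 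One must also check that, again, $\tau_i(\theta\circ_j a)$ equals $\tau_{i'}(a)$ for the correspondingly shifted index, which follows from functoriality of $j_\bullet$ with respect to the $\Lambda$-structure. Finally, continuity of $\xi(g)$ follows because~\eqref{eq:xi2} is built out of continuous operations (the $O$-action maps on $K$, the evaluation maps $\Bimod_O(M,K)\times M\to K$ which are continuous in the category of $k$-spaces by~\eqref{eq:exp1}, and the maps $\tau_i$, $\mu'$), and continuity of $\xi$ itself in the variable $g$ follows similarly using the exponential law~\eqref{eq:exp1} and~\eqref{eq:map_prod}; I would phrase this by exhibiting $\xi$ as adjoint to a manifestly continuous map out of $\Map_*(N(1)/\partial N(1),\Bimod_O(M,K))\times(N{\ltimes}M)(n)$ for each $n$.
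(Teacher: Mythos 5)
Your overall structure---verify the generating relations of $N{\ltimes}M$, then check the Ibimodule morphism axioms, then handle continuity via the exponential law---matches the paper's strategy. The continuity discussion is fine. But your verification of relation~$(i)$ of Construction~\ref{D7} contains a genuine error, and you have mislocated where the $\partial N(1)$ mechanism is needed.

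Concretely: for $a\in N(n)$, $\theta\in O(r)$, and a leaf $j\in\{i,\ldots,i+r-1\}$ of $a\circ^i\theta$ corresponding to a leaf of $\theta$, one computes $j_j(a\circ^i\theta)=j_i(a)\circ^1\theta'$ with $\theta'\in O(1)$ the result of filling \emph{all but one} of the leaves of $\theta$ with $\ast_0$. Your claim that ``all its leaves also get filled by $\ast_0$'' is therefore incorrect --- the leaf you are keeping is itself a leaf of $\theta$ --- and $\theta$ collapses to $O(1)$, not to $O(0)$. Consequently $j_j(a\circ^i\theta)$ is a right infinitesimal action by $O(1)$ on $j_i(a)$, which does not land in $\partial N(1)$ (that subspace is the image of the left infinitesimal action $O(2)\times N(0)\to N(1)$). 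Indeed, if $\tau_j(a\circ^i\theta)$ were the basepoint of $N(1)/\partial N(1)$, the formula would give $\eta\circ\mu$ at that slot and would \emph{not} match the other representative, so your claimed resolution is self-defeating. What actually makes the middle-range indices agree is that $\theta'=\ast_1$ when $O(1)=\ast$ (which is the situation in which $\xi^\Lambda_k$ is used in Subsection~\ref{ss:proof:2reduced}). Meanwhile, the $\partial N(1)$ argument you invoked is precisely what the paper uses for relation~$(ii)$: when $a=x\circ_i a'$ with $x\in O(\ell)$, $\ell\geq 2$, and $j$ labels a leaf attached directly to $x$, then filling all other leaves with $\ast_0$ collapses $a'$ to $N(0)$ and $x$ to $O(2)$, so $j_j(a)\in\partial N(1)$, killing the corresponding factor. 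You dismissed relation~$(ii)$ as ``checked analogously,'' but it is there --- not at $(i)$ --- that this mechanism does its work, and the content you skipped is exactly the content you misapplied.
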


\begin{proof}
Let us check first that~\eqref{eq:xi2} produces the same result for different representatives of the same element in $N{\ltimes}M$:
\begin{itemize}
\item \eqref{eq:xi2} respects the symmetric group action and the permutation of the $b_i$s as inputs in $a$ by construction;
\item \eqref{eq:xi2} respects the relation $(i)$ of Construction~\ref{D7} because $\mu'\colon N\to O$ respects the right $O$-action and $g(t)
\colon M\to K$ respects the left $O$-action for any $t\in N(1)/\partial N(1)$;
\item \eqref{eq:xi2} respects relation $(ii)$ of Construction~\ref{D7} because $j_i(a)\in\partial N(1)$ if $a$ is a result of an
infinitesimal left action by some $x\in O(\ell)$, $\ell\geq 2$, and $i$ labels a leaf connected directly to $x$. 
\end{itemize}

As a result $\xi(g)\colon N{\ltimes}M\to K$ is a well-defined map of $\Sigma$ sequences.  To see that it is an $O$-Ibimodules map,
we need to check that it respects the $O$-action:
\begin{itemize}
\item \eqref{eq:xi2} respects the right $O$-action, because  each $g(t)\colon M\to K$ respects the right action.
\item \eqref{eq:xi2} respects the infinitesimal left action by the construction of this action on $N{\ltimes}M$ and $K$ and also 
because we  have again $\tau_i(a)\in \partial N(1)$ for the corresponding inputs of $a$.
\end{itemize}%\vspace{-10pt}

Now, let us check that $\xi$ is continuous. The space $\Ibimod_O(N{\ltimes}M,K)$ is topologized as a subspace in the product
\[
\prod_{k\geq 0} \Map(N{\ltimes}M(k),K(k)).
\]
Thus, it is enough to check that the projection to each factor is continuous. By Lemma~\ref{l:q_map},  for a quotient space $X/{\sim}$, the mapping space
$\Map(X/{\sim},Y)$ is a subspace of $\Map(X,Y)$. The space $N{\ltimes}M(k)$ is a quotient of $N{\circ} M(k)$. Thus we should check that
the corresponding map to $\Map(N{\circ}M(k),K(k))$ is continuous. On the other hand
\[
\Map\left(\coprod_\alpha X_\alpha,Y\right)=\prod_\alpha\Map(X_\alpha,Y),
\]
and $N{\circ}M(k)$ is a disjoint union of the components whose elements have the form $[a\{b_1,\ldots,b_{|a|}\}\, ;\, \sigma]$,
where the arities of $a$, $b_i$s, and the shuffle permutation $\sigma$ are fixed. Thus we are again reduced to show that the projection
to each corresponding factor is continuous. But for any such component $X_\alpha$ of $N{\circ}M(k)$, it is an easy exercise that the explicit
formula~\eqref{eq:xi2} defines a continuous map in the corresponding mapping space $\Map(X_\alpha,K(k))$. 

\end{proof}

%\begin{lmm}\label{l:xi2}
%Provided $O$ is a doubly reduced operad, the map $\xi$ of Construction~\ref{const:map} is a homeomorphism on its image. 
%\end{lmm}
%
%
%\begin{proof}
%The space $\Map_*\left( N(1)/\partial N(1), \Bimod_O(M,K)\right)$ is topologized as a subspace of
%\[
%\Map\left(N(1),\prod_{k\geq 0} \Map(M(k),K(k))\right)\cong
%\prod_{k\geq 0}\Map\bigl(N(1),\Map(M(k),K(k))\bigr)\cong
%\prod_{k\geq 0}\Map(N(1)\times M(k), K(k)).
%\]
%The first homeomorphism follows from the definition of the product topology, the second one uses the adjunction between the hom and the product
%functors that holds for compactly generated spaces.
%
%Let $i$ be the inclusion
%\[
%i\colon \Map_*\left( N(1)/\partial N(1), \Bimod_O(M,K)\right) \hookrightarrow
%\prod_{k\geq 0}\Map(N(1)\times M(k), K(k)).
%\]
%We will show that there is a continuous map
%\[
%\pi\colon \Ibimod_O(N{\ltimes}M,K)\to
%\prod_{k\geq 0}\Map(N(1)\times M(k), K(k)),
%\]
%such that $\pi\circ \xi =i$. (That\rq{}s where we need $O(1)=*$.) This, obviously, implies the result. The $k$-th component $\pi_k$ of $\pi$ is defined to send $f\in
%\Ibimod_O(N{\ltimes}M,K)$ to the composition
%$$
%\begin{array}{ccccc}%\vspace{5pt}
%N(1)\times M(k) &\longrightarrow& N{\ltimes}M(k)&  \longrightarrow & K(k); \\ 
%(x,y) & \longmapsto & [ x\{y\}\, ,\, id] & \longmapsto& f(\, [x\{y\}\, ,\, id]\,).
%\end{array} 
%$$
%One easily checks that $\pi_k$ is continuous.
%\end{proof}
%

Now, let $O$ and $M$ be as in Theorem~\ref{th:main_2reduced}. We apply Construction~\ref{const:map} to $\overline{\calI b}{}^\Sigma(O)= \calI b^\Lambda(O){\ltimes}\calB^\Sigma(O)$ and 
$\overline{\calI b}{}^\Lambda(O)= \calI b^\Lambda(O){\ltimes}\calB^\Lambda(O)$. Let also $M\rightarrow M^f$ be a Reedy fibrant 
replacement of $M$ as $O$-bimodule, which is automatically a Reedy fibrant replacement as $O$-Ibimodule. Recall from Example~\ref{D2}
that $\calI b^\Lambda(O)(1)=C(O(2))$. It is also easy to see that $\partial \calI b^\Lambda(O)(1)$ is the base
 of this cone. 

\begin{cordef}\label{cordef}
One has well-defined and continuous maps
\begin{equation}\label{eq:equiv_red_pr1}
\xi_k^\Sigma\colon  \Map_*\left(\Sigma O(2), \TT_k\Bimod_O\left(\TT_k \calB_k^\Sigma(O),\TT_kM
\right)\right) \to \TT_k\Ibimod_O\left(\TT_k \overline{\calI b}{}_k^\Sigma(O),\TT_kM\right),\, k\geq 0,
\end{equation}
producing  at the limit $k=\infty$:
\begin{equation}\label{eq:equiv_red_pr2}
\xi^\Sigma\colon  \Map_*\left(\Sigma O(2), \Bimod_O\left(\calB^\Sigma(O),M
\right)\right) \to
\Ibimod_O\left(\overline{\calI b}{}^\Sigma(O),M\right);
\end{equation}
and maps
\begin{equation}\label{eq:equiv_red_pr3}
\xi_k^\Lambda\colon  \Map_*\left(\Sigma O(2), \TT_k\Bimod_O\left(\TT_k \calB^\Lambda(O),\TT_kM^f
\right)\right) \to \TT_k\Ibimod_O\left(\TT_k \overline{\calI b}{}^\Lambda(O),\TT_kM^f\right),\, k\geq 0,
\end{equation}
producing  at the limit $k=\infty$:
\begin{equation}\label{eq:equiv_red_pr4}
\xi^\Lambda\colon  \Map_*\left(\Sigma O(2), \Bimod_O\left(\calB^\Lambda(O),M^f
\right)\right) \to
\Ibimod_O\left(\overline{\calI b}{}^\Lambda(O),M^f\right),
\end{equation}
which realize the maps~\eqref{eq:equiv_general1_red} and~\eqref{eq:equiv_general2_red}.
%
%(ii) These maps $\xi_k^\Sigma$, $\xi^\Sigma$, $\xi_k^\Lambda$, $\xi^\Lambda$ are homeomorphisms on their image.
\end{cordef}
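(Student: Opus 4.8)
The plan is to specialize Construction~\ref{const:map} together with Lemma~\ref{l:xi}. For the projective statement I take $N=\calI b^\Lambda(O)$, an $O$-Ibimodule carrying the projection $\mu'\colon\calI b^\Lambda(O)\to O$ of~\eqref{eq:mu_lambda_ibimod}; the ``$M$'' of Construction~\ref{const:map} is $\calB^\Sigma(O)$, an $O$-bimodule carrying the projection $\mu\colon\calB^\Sigma(O)\to O$ of~\eqref{D3} and the basepoint $\ast_1^\calB$ (the pearled $1$-corolla labelled by $\ast_1$, so $\mu(\ast_1^\calB)=\ast_1$); and ``$K$'' is the given $M$ with its $O$-bimodules map $\eta\colon O\to M$. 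With these choices $N{\ltimes}\calB^\Sigma(O)=\overline{\calI b}{}^\Sigma(O)$ by~\eqref{eq:barIb}, so Lemma~\ref{l:xi} immediately yields a well-defined continuous map~\eqref{eq:xi} with target $\Ibimod_O(\overline{\calI b}{}^\Sigma(O),M)$. It remains to rewrite the source: by the definition of $\partial N(1)$ in Construction~\ref{const:map} and since $O$ is doubly reduced, Example~\ref{D2} identifies $\calI b^\Lambda(O)(1)$ with the cone $C(O(2))$ and $\partial\calI b^\Lambda(O)(1)$ with the base $O(2)\subset C(O(2))$, whence $\calI b^\Lambda(O)(1)/\partial\calI b^\Lambda(O)(1)\cong\Sigma O(2)$; this turns the source into $\Map_*\bigl(\Sigma O(2),\Bimod_O(\calB^\Sigma(O),M)\bigr)$ and produces~\eqref{eq:equiv_red_pr2}. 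Repeating the construction with $\calB^\Lambda(O)$ in place of $\calB^\Sigma(O)$ and with $M^f$ in place of $M$ gives~\eqref{eq:equiv_red_pr4}.

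Next I would settle the two structural points. First the basepoint: feeding the constant map at $\eta\circ\mu$ into~\eqref{eq:xi2} and using that $\eta$ is a bimodule map, the value on $[a\{b_1,\ldots,b_{|a|}\}\,;\,\sigma]$ becomes $\eta$ applied to $\mu'(a)\bigl(\mu(b_1),\ldots,\mu(b_{|a|})\bigr)\cdot\sigma$, which is precisely $\eta$ composed with the canonical projection $\overline{\calI b}{}^\Sigma(O)\to O$; thus $\xi^\Sigma$ is pointed, sending the basepoint to the basepoint $\eta$ of $\Ibimod_O(\overline{\calI b}{}^\Sigma(O),M)$, and likewise for the other three maps. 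Second the truncation/filtration compatibility: the formula~\eqref{eq:xi2} is local in arity, in that its value on an element of arity $n$ only involves $g$ on arity-$1$ elements, the bimodule maps $g(\tau_i(a))$ evaluated on the $b_i$ (whose arities are bounded by $n$), and the left $O$-action~\eqref{A3} on $M$ in arity $n$. Hence $\xi^\Sigma$ restricts along each filtration term $\overline{\calI b}{}_k^\Sigma(O)$, is compatible with the filtration inclusions, and strictly commutes with the structure maps of the two towers, giving the tower morphisms~\eqref{eq:equiv_red_pr1}; the Reedy case~\eqref{eq:equiv_red_pr3} is identical.

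Finally, to see that these maps realize~\eqref{eq:equiv_general1_red} and~\eqref{eq:equiv_general2_red} I would invoke the replacement results already established. As $O$ is doubly reduced, $\gamma_0=\mathrm{id}_*$ is a cofibration, so Theorem~\ref{th:BV_proj_bimod} makes $\calB^\Sigma(O)$ and $\TT_k\calB_k^\Sigma(O)$ cofibrant replacements of $O$ and $\TT_kO$ as ($k$-truncated) $O$-bimodules, while Theorem~\ref{th:proj_bar_cof} does the same for $\overline{\calI b}{}^\Sigma(O)$ and $\TT_k\overline{\calI b}{}_k^\Sigma(O)$ as $O$-Ibimodules; every projective $O$-bimodule being fibrant, the equivalences~\eqref{eq:der_bimod_vs_tr1} and~\eqref{eq:der_ibimod_vs_tr1} (together with the evident analogue for $\overline{\calI b}{}^\Sigma$, whose filtration attaches cells of arity $\leq k$ at stage $k$) identify~\eqref{eq:equiv_red_pr1}--\eqref{eq:equiv_red_pr2} with the derived maps~\eqref{eq:equiv_general1_red}--\eqref{eq:equiv_general2_red}. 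The Reedy maps~\eqref{eq:equiv_red_pr3}--\eqref{eq:equiv_red_pr4} model the same derived maps via Propositions~\ref{p:BV_reedy_bimod} and~\ref{p:BV_reedy_ibimod}, the homeomorphism $\gamma\colon\calI b^\Lambda(O)\xrightarrow{\cong}\overline{\calI b}{}^\Lambda(O)$ of Theorem~\ref{H5}, and~\eqref{eq:der_bimod_vs_tr2}--\eqref{eq:der_ibimod_vs_tr2}; the two descriptions agree because the projective/Reedy comparison equivalences (Theorems~\ref{th:bimod_maps} and~\ref{th:id_ibim_equiv}) intertwine them along the natural inclusions $\calB^\Lambda(O)\hookrightarrow\calB^\Sigma(O)$, $\overline{\calI b}{}^\Lambda(O)\hookrightarrow\overline{\calI b}{}^\Sigma(O)$ and the fibrant replacement $M\to M^f$. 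The one step that really needs care, and the expected main obstacle, is precisely this bookkeeping of the truncation filtrations on both sides — verifying that $\xi_k$ is genuinely defined at the $k$-truncated level and strictly commutes with the tower maps — which, however, reduces to the locality of~\eqref{eq:xi2} recorded above.
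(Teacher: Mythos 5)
Your proposal is correct and follows essentially the same route as the paper: the paper also obtains all four maps by specializing Construction~\ref{const:map} (via Lemma~\ref{l:xi}) to $\overline{\calI b}{}^\Sigma(O)=\calI b^\Lambda(O){\ltimes}\calB^\Sigma(O)$ and $\overline{\calI b}{}^\Lambda(O)=\calI b^\Lambda(O){\ltimes}\calB^\Lambda(O)$, identifies $\calI b^\Lambda(O)(1)/\partial\calI b^\Lambda(O)(1)$ with $\Sigma O(2)$ using Example~\ref{D2}, and defines $\xi_k^\Sigma$, $\xi_k^\Lambda$ as truncations of $\xi^\Sigma$, $\xi^\Lambda$. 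Your additional checks of the basepoint condition and of the arity-locality of~\eqref{eq:xi2} (which the paper leaves implicit) are accurate and consistent with the replacement results it invokes.
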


The maps $\xi_k^\Sigma$ and $\xi_k^\Lambda$ are defined as truncations of $\xi^\Sigma$ and $\xi^\Lambda$, the latter being produced by 
Construction~\ref{const:map}.

\subsection{Alternative proof of Main Theorem~\ref{th:delooping1}}\label{ss:FM5}
In this subsection we explain our initial geometric approach for the delooping. The main reason we 
present it
here, it gives some insight on  Constructions~\ref{D7} and~\ref{const:map} as well as on the proof
of Main Theorem~\ref{th:main} given in the next section.\vspace{5pt}

Recall  Subsections~\ref{ss:IF} and~\ref{ss:BF}, where  the infinitesimal bimodule $\calIF_m$ and the bimodule $\calBF_m$ are defined.
Thanks to the maps $\mu_I\colon\calIF_m\to\calF_m$ and $\mu_B\colon\calBF_m\to\calF_m$ of (infinitesimal) bimodules,
Constructions~\ref{D7} and~\ref{const:map} can be applied. Elements of the $\calF_m$-Ibimodule $\calIF_m{\ltimes}\calBF_m$ are
configurations of points in $\R^m$ labelled by elements from $\calBF_m$. When points collide, the labels get multiplied by means of the left 
$\calF_m$-action on $\calBF_m$. Points can escape to infinity, the labels in the latter case 
(by the last relation $(ii)$ of Construction~\ref{D7}) get shrunken in size 
by means of the projection $\mu_B\colon\calBF_m\to\calF_m$, which is a quotient by rescalings. The space $\calIF_m(1)\cong D^m$ 
is obtained as compactification of $C(1,\R^m)=\R^m$. Thus, $\calIF_m(1)/\partial\calIF_m(1)\cong\R^m\cup\{\infty\}\cong S^m$ 
is the one-point compactification of $\R^m$. The map $\xi$ has also a geometrical description
\begin{equation}\label{eq:xi_altern}
\hspace{-30pt}\begin{array}{l}\vspace{5pt}
\xi\colon\Omega^m\Bimod_{\calF_m}(\calBF_m,M^f)\cong
\Map_*\left(\R^m\cup\{\infty\}, \Bimod_{\calF_m}(\calBF_m,M^f)\right) \\ 
\hspace{240pt}\longrightarrow \Ibimod_{\calF_m}(\calIF_m{\ltimes}\calBF_m,M^f).
\end{array} 
\end{equation}
Let $g\in \Map_*\left(\R^m\cup\{\infty\}, \Bimod_{\calF_m}(\calBF_m,M^f)\right)$ and let $a\in\calIF_m(k)$ and $b_1,\ldots,b_k\in\calBF_m$.
For simplicity, let us assume that $a=(a_1,\ldots,a_k)\in C(k,\R^m)\subset\calIF_m(k)$ is in the interior. Then the map $\xi$ applies $g(a_i)$
to each $b_i$, and then get them multiplied by means of the left action by $\mu_I(a)\in\calF_m(k)$:
\[
\xi(g)\left(\, [a\{b_1,\ldots,b_{k}\}\, ;\, \sigma]\,\right) =
\mu_I(a)\bigl( g(a_1)(b_1),\ldots,g(a_k)(b_{k}) \bigr)\cdot \sigma.
\]
Our initial idea of the proof was to use the map~\eqref{eq:xi_altern} and  to show that it is an equivalene. 
One would need to show  that $\calIF_m{\ltimes}\calBF_m$ is homeomorphic to $\calIF_m$ as an $(\calF_m)_{>0}$-Ibimodule. One has a map $\zeta\colon\calIF_m(1)\times\calBF_m(k)
\to\calIF_m{\ltimes}\calBF_m(k)$, $(a,b)\mapsto a\{b\}$. Note that both the source and the target are manifolds 
of the same dimension $mk$. An important step in the proof is to show that $\calIF_m{\ltimes}\calBF_m(k)$
private the interior of the image of~$\zeta$ deformation retracts on the boundary $\partial\bigl(
\calIF_m{\ltimes}\calBF_m(k)\bigr)$.\vspace{5pt}

  Eventually we realized that this
geometrical construction of $\ltimes$ works in a more general context and in particular can be nicely applied to the Boardman-Vogt type
replacements. %So, finally we don\rq{}t use $\calIF_m{\ltimes}\calBF_m$, but we mention this geometrical construction
%as, hopefully, it might give a better insight on Constructions~\ref{D7} and~\ref{const:map}. 
  We reiterate that we conjecture that
$\calIF_m$ and $\calBF_m$ are homeomorphic to $\calI b^\Lambda(\calF_m)$ and $\calB^\Lambda(\calF_m)$, respectively, as (infinitesimal)
$(\calF_m)_{>0}$-bimodules, and thus the 
construction that we finally use should not be much different from the initial geometrical one.

\section{Proof of Main Theorems~\ref{th:main} and~\ref{th:main3}}\label{s:proof}
In the first subsection we prove Main Theorem~\ref{th:main} in the special case of a strongly doubly reduced operad $O$, 
which is Theorem~\ref{th:main_2reduced}. In the second subsection, using homotopy theory methods, we complete the proof of our theorem
by generalizing it to the case of any weakly doubly reduced operad. In the third subsection we describe different explicit maps realizing the equivalence
of the theorem. In the last subsection we formulate and sketch a proof of a more general statement --
Main Theorem~\ref{th:main3}.

\subsection{Main Theorem~\ref{th:main}. Special case: $O$ is doubly reduced}\label{ss:proof:2reduced}
To prove Theorem~\ref{th:main_2reduced}, we constructed in Subsection~\ref{s:map} an explicit map of towers:
\begin{equation}\label{eq:equiv_red_pr1x}
\xi_k^\Lambda\colon  \Map_*\left(\Sigma O(2), \TT_k\Bimod_O\left(\TT_k \calB^\Lambda(O),\TT_kM^f
\right)\right) \to \TT_k\Ibimod_O\left(\TT_k \overline{\calI b}{}^\Lambda(O),\TT_kM^f\right),
\end{equation}
where $M\xrightarrow{\simeq}M^f$ is a Reedy fibrant replacement of $M$ as $O$-bimodule. In this subsection we show that if $O$ is $k$-coherent,
\eqref{eq:equiv_red_pr1x} is a weak equivalence. The proof is by induction over $k$. To simplify notation, the source of $\xi_k^\Lambda$
will be denoted by
\[
\TT_k\calB:= \Map_*\left(\Sigma O(2), \TT_k\Bimod_O\left(\TT_k \calB^\Lambda(O),\TT_kM^f
\right)\right),
\]
and the target by
\[
\TT_k\calI:= \TT_k\Ibimod_O\left(\TT_k \overline{\calI b}{}^\Lambda(O),\TT_kM^f\right).
\]
Without loss of generality and also to simplify notation, we will be assuming that $M$ is already Reedy fibrant $M=M^f$.

\vspace{3pt}

{\bf Initialization: $k=0$ and $1$.} For $k=0$, it is easy to see that $\TT_0\calB=\TT_0\calI=*$, because 
$\calB^\Lambda(O)(0)=\overline{\calI b}{}^\Lambda(O)(0)=M(0)=*$. For $k=1$, the map $\xi_1^\Lambda$ is always a homeomorphism,
that\rq{}s why the condition of 1-coherence is vacuous. Indeed, $\calB^\Lambda(O)(1)=*$, see Example~\ref{ex:1compl}, and the
space
\[
\TT_1\Bimod_O(T_1\calB^\Lambda(O),T_1M)\cong\Map(*,M(1))=M(1).
\]
Thus, $\TT_1\calB\cong \Map_*(\Sigma O(2),M(1))$, where $M(1)$ is pointed at $\ast_1^M:=\eta(\ast_1)$ (recall that one has
a map $\eta\colon O\to M$ of $O$-bimodules). One also has $\overline{\calI b}{}^\Lambda(O)(1)\cong C(O(2))$ is the cone with the base 
$\partial\overline{\calI b}{}^\Lambda(O)(1)\cong O(2)$, see Examples~\ref{D2} and~\ref{ex:2compl2}. Therefore, $\TT_1\calI$
is a space of maps
\[
\overline{\calI b}{}^\Lambda(O)(1)\to M(1)
\]
with prescribed behavior on $\partial \overline{\calI b}{}^\Lambda(O)(1)$. Let $\theta{\circ_1}\ast_0^\calI\in\partial \overline{\calI b}{}^\Lambda(O)(1)$, where $\theta\in O(2)$,
and $\ast_0^\calI$ is the only point of $\overline{\calI b}{}^\Lambda(O)(0)$. For any map of 1-truncated $O$-Ibinodules
$f\colon\TT_1 \overline{\calI b}{}^\Lambda(O)\to \TT_1M$, one has
\[
f(\theta{\circ_1}\ast_0^\calI)=\theta{\circ_1}f(\ast_0^\calI)=\theta{\circ_1}\ast_0^M=\theta{\circ_1}\eta(\ast_0)=
\theta(\eta(\ast_0),\eta(\ast_1))=
\eta(\theta(\ast_0,\ast_1))=\eta(\ast_1)=\ast_1^M.
\]
The first equality is because $f$ respects the infinitesimal left $O$ action by $\theta$; the second and third ones are because $M(0)$ has only one point
$\ast_0^M$; the fourth one is the definition of the infinitesimal left $O$ action on $M$, see Example~\ref{ex:ibimod2}; the fifth one is because $\eta$ respects the left $O$ action by $\theta$; the sixth  one is because $O(1)=*$.\vspace{5pt}

Thus, $\TT_1\calI$ is also homeomorphic to $\Map_*(\Sigma O(2),M(1))$.  Note that all elements of $\overline{\calI b}{}^\Lambda(O)(1)$ have 
the form $[\, a\{\ast_1^\calB\}\, , \, id\, ]= a\{\ast_1^\calB\}$, where $a\in \calI b^\Lambda(O)(1)=C(O(2))$ and $\ast_1^\calB$ is the only point
of $\calB^\Lambda(O)(1)$. Given $g\in\TT_1\calB\cong \Map_*(\Sigma O(2),M(1))$, one has that $\xi_1^\Lambda(g)\in\TT_1\calI$ sends
\[
a\{\ast_1^\calB\}\mapsto\mu'(a)\left(\, g(\tau_1(a))(\ast_1^\calB)\,\right)=g(a)(\ast_1^\calB).
\]
One has $\mu'(a)=\ast_1$ is the only element in $O(1)$, so we can ignore its action; and $\tau_1(a)=a$. Finally, $g(a)(\ast_1^\calB)$ is
exactly the image of the element $a\in \calI b^\Lambda(O)(1)=C(O(2))$ under the map
\[
C(O(2))\to\Sigma O(2)\xrightarrow{g}M(1).
\]
This completes the proof that $\xi_1^\Lambda$ is a homeomorphism.

\vspace{3pt}

{\bf Induction step.} Assuming that the map $\xi_{k-1}^\Lambda$ is an equivalence and that $O$ is $k$-coherent, we show that $\xi_k^\Lambda$
is a weak equivalence.

\begin{pro}\label{p:fibration}
The maps between the stages of the towers  $\pi_k^\calI\colon\TT_k\calI\to\TT_{k-1}\calI$, $\pi_k^\calB\colon \TT_k\calB\to\TT_{k-1}\calB$
are Serre fibrations.
\end{pro}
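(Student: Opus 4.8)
The plan is to exhibit both maps $\pi_k^\calI$ and $\pi_k^\calB$ as pullbacks (along the restriction to truncation level $k-1$) of fibrations built out of the cell-attachment structure of the cofibrant replacements, and then invoke the fact that mapping spaces out of a relative cell complex into a fibrant object give Serre fibrations when one restricts along a subcomplex. More precisely, recall from~\eqref{eq:der_bimod_vs_tr2} and~\eqref{eq:der_ibimod_vs_tr2} that with $M=M^f$ Reedy fibrant, the derived mapping spaces are honest mapping spaces $\Bimod_O(\calB_k^\Lambda(O),M)$ and $\Ibimod_O(\overline{\calI b}{}_k^\Lambda(O),M)$ (the latter using the homeomorphism of Theorem~\ref{H5} identifying $\overline{\calI b}{}^\Lambda(O)$ with $\calI b^\Lambda(O)$ filtration-wise, so that the same filtration~\eqref{eq:filtr_bar_reedy} is available). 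So $\TT_k\calB = \Map_*(\Sigma O(2),\Bimod_O(\calB_k^\Lambda(O),M))$ and $\TT_k\calI=\Ibimod_O(\overline{\calI b}{}_k^\Lambda(O),M)$, and the tower maps are induced by the inclusions $\calB_{k-1}^\Lambda(O)\hookrightarrow\calB_k^\Lambda(O)$ and $\overline{\calI b}{}_{k-1}^\Lambda(O)\hookrightarrow\overline{\calI b}{}_k^\Lambda(O)$.

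The key input is that each of these inclusions is a (possibly transfinite) sequence of cell attachments in the respective category of $O$-bimodules / $O$-Ibimodules, as established in Theorem~\ref{th:BV_proj_bimod}/Proposition~\ref{p:BV_reedy_bimod} and Theorem~\ref{C7}/Proposition~\ref{p:BV_reedy_ibimod}, and for $\overline{\calI b}{}^\Lambda$ in Theorem~\ref{th:proj_bar_cof} together with Theorem~\ref{H5}. For a single cell attachment $N' = N\coprod_{\calF(\partial X)}\calF(X)$ along a cofibration of $\Sigma$-sequences $\partial X\to X$, the adjunction between free functor and forgetful functor gives a pullback square of mapping spaces
\begin{equation}\label{eq:pb_cell}
\xymatrix{
\Bimod_O(N',M)\ar[r]\ar[d] & \Map^\Sigma(X,M)\ar[d]\\
\Bimod_O(N,M)\ar[r] & \Map^\Sigma(\partial X,M),
}
\end{equation}
and similarly for Ibimodules; here $\Map^\Sigma(-,-)$ is the space of maps of $\Sigma$-sequences. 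Since $M$ is objectwise fibrant and $\partial X\to X$ is an objectwise cofibration, the right vertical map $\Map^\Sigma(X,M)\to\Map^\Sigma(\partial X,M)$ is a Serre fibration (it is the map $\prod_j\Map(X(j),M(j))\to\prod_j\Map(\partial X(j),M(j))$, a product of fibrations of spaces by the SM7 axiom for $Top$, using that in the $\Lambda$-cofibrant-generated setting each generating cofibration is a genuine inclusion of cofibrant spaces). A pullback of a Serre fibration is a Serre fibration, so the left vertical map in~\eqref{eq:pb_cell} is a Serre fibration. Composing (and taking the transfinite composition, which preserves Serre fibrations as an inverse limit of fibrations over a tower — here there are only finitely many relevant arities, so this is harmless, or one cites closure of Serre fibrations under sequential inverse limits) we conclude that $\Bimod_O(\calB_k^\Lambda(O),M)\to\Bimod_O(\calB_{k-1}^\Lambda(O),M)$ and $\Ibimod_O(\overline{\calI b}{}_k^\Lambda(O),M)\to\Ibimod_O(\overline{\calI b}{}_{k-1}^\Lambda(O),M)$ are Serre fibrations. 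For the bimodule side, one then applies $\Map_*(\Sigma O(2),-)$, which preserves Serre fibrations since $\Sigma O(2)$ is a (based) CW-ish space and $\Map_*(A,-)$ takes fibrations to fibrations; this gives that $\pi_k^\calB$ is a Serre fibration, and $\pi_k^\calI$ is one directly.

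The main obstacle I expect is the bookkeeping for the Ibimodule side: one must check that the filtration~\eqref{eq:filtr_bar_reedy} of $\overline{\calI b}{}^\Lambda(O)$ really is a filtration by cell attachments in $\TT_k\Ibimod_O$ and that, crucially, only cells of arity $\le k$ are attached up to the $k$-th stage (so that truncating is compatible with~\eqref{eq:pb_cell}); this is exactly the content of Theorem~\ref{th:proj_bar_cof}, whose proof refers to Appendix~\ref{s:A1}, combined with the filtration-preserving homeomorphism of Theorem~\ref{H5}. A secondary technical point is ensuring that all the $\Sigma$-sequence maps $\partial X\to X$ arising are objectwise cofibrations between well-pointed spaces, so that $\Map(X(j),M(j))\to\Map(\partial X(j),M(j))$ is genuinely a Serre fibration; this follows from $O$ being $\Sigma$-cofibrant and well-pointed and the explicit description of the attaching cells (they are built from products of $O(|v|)$'s and cubes $[0,1]^n$, with subcomplexes given by the prescribed boundary subsets), so it reduces to a routine verification.
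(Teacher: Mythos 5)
Your outline takes the right high-level strategy — exhibit the tower maps as pullbacks of fibrations built from the filtration of the cofibrant replacements — and correctly reduces the $\calB$-side to the $\calI$-side via $\Map_*(\Sigma O(2),-)$. But there is a genuine gap at the crucial step, and it is precisely the one that the Reedy/$\Lambda$ setting is designed to control.

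The pullback square~\eqref{eq:pb_cell} you write, with top-right corner $\Map^\Sigma(X,M)=\prod_j\Map_{\Sigma_j}(X(j),M(j))$, is the one arising from the \emph{projective} free--forgetful adjunction $\calF_B^\Sigma\colon\Sigma\Seq\rightleftarrows\Sigma\Bimod_O$. However, the filtration $\calB_{k-1}^\Lambda(O)\hookrightarrow\calB_k^\Lambda(O)$ is \emph{not} a pushout of $\calF_B^\Sigma$-cells in the category of reduced $O$-bimodules: what Proposition~\ref{p:BV_reedy_bimod} actually gives (via Theorem~\ref{th:bim_reedy_model}~(ii)) is that the restriction to positive arities is a projective cell attachment of $O_{>0}$-bimodules. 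Promoting this back to reduced $O$-bimodules, the full mapping space $\Lambda\Bimod_O(\calB_k^\Lambda(O),M)$ is a \emph{proper subspace} of $\Sigma_{>0}\Bimod_{O_{>0}}(\calB_k^\Lambda(O)_{>0},M_{>0})$: one must additionally ask compatibility with the right $*_0$-action (the $\Lambda$-structure), i.e.\ $f(x\circ^i *_0)=f(x)\circ^i *_0$. Your square computes the larger $O_{>0}$-bimodule mapping space and forgets this condition, so it is not the pullback square for $\TT_k\calI\to\TT_{k-1}\calI$. If instead one insists on using the $\Lambda$-adjunction $\calF_B^\Lambda$, then the top-right corner becomes a $\Lambda_{>0}$-sequence mapping space, which is a limit and \emph{not} a product of $\Map_{\Sigma_j}(X(j),M(j))$'s — your ``product of fibrations by SM7 for $Top$'' no longer applies.

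The paper's proof circumvents this by working arity-by-arity rather than cell-by-cell and by making the $\Lambda$-constraint explicit via the \emph{matching object}. Concretely, it writes $\TT_k\calI\to\TT_{k-1}\calI$ as the pullback of the pullback-corner map
\[
\Map_{\Sigma_k}\bigl(\overline{\calI b}{}^\Lambda(O)(k), M(k)\bigr) \longrightarrow
\Map_{\Sigma_k}\bigl(\overline{\calI b}{}^\Lambda(O)(k), \MM(M)(k)\bigr)
\underset{\Map_{\Sigma_k}\bigl(\partial\overline{\calI b}{}^\Lambda(O)(k), \MM(M)(k)\bigr)}{\times}
\Map_{\Sigma_k}\bigl(\partial\overline{\calI b}{}^\Lambda(O)(k), M(k)\bigr),
\]
and invokes the equivariant pushout--product Lemma~\ref{l:greg}: since $\partial\overline{\calI b}{}^\Lambda(O)(k)\hookrightarrow\overline{\calI b}{}^\Lambda(O)(k)$ is a $\Sigma_k$-cofibration and $M(k)\to\MM(M)(k)$ is a $\Sigma_k$-fibration (this is exactly what Reedy fibrancy of $M^f$ buys you), this corner map is a Serre fibration, and pullbacks of Serre fibrations are Serre fibrations. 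The same pattern, with $\calI b^\Lambda(O)(1)\times\calB^\Lambda(O)(k)$ in place of $\overline{\calI b}{}^\Lambda(O)(k)$, handles $\pi_k^\calB$. So the missing ingredient in your argument is precisely the matching object and the Reedy fibrancy of $M^f$; without them the $\Lambda$-compatibility of the arity-$k$ maps is an extra condition you have not accounted for.
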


We prove this proposition at the end of this subsection.\vspace{5pt}

One has a morphism of fiber sequences
\begin{equation}
\xymatrix{
F_1\ar[d]\ar[r]&\TT_k\calB\ar[r]^{\pi_k^\calB}\ar[d]^{\xi_k^\Lambda}&\TT_{k-1}\calB\ar[d]^{\xi_{k-1}^\Lambda}_{\simeq}\\
F\ar[r]&\TT_k\calI\ar[r]^{\pi_k^\calI}&\TT_{k-1}\calI
},
\label{eq:morph_fibr}
\end{equation}
where $F_1$ is the fiber over some $g\in\TT_{k-1}\calB$, and $F$ is the  fiber over $\xi_{k-1}^\Lambda(g)$. Since $\xi_{k-1}^\Lambda$ is an equivalence by induction hypothesis,
to prove that $\xi_k^\Lambda$ is so, it is enough to show that the map between the fibers $F_1\to F$ is an equivalence for any choice
of~$g$. We will describe the fibers $F$ and $F_1$ as certain spaces of section extensions.\vspace{5pt}

For a Serre fibration $\pi\colon E\to B$, denote by $\Gamma(\pi;B)$ the space of global sections of $\pi$. For $A\subset B$, and a partial section
$s\colon A\to E$, denote by $\Gamma_s(\pi;B,A)$ the space of sections $f\colon B\to E$, which extend $s$, i.e.
$f|_A=s$. \vspace{5pt}

In order to avoid heavy notation, below, every map induced (or determined) by $g$ is denoted by~$g_*$. It should not be understood
as a name of a map, but rather as a label \lq\lq{}made by~$g$\rq\rq{}. We will be always explicit a map between which spaces is discussed
to avoid any confusion. \vspace{5pt}

The fiber $F$ is a space of $\Sigma_k$-equivariant maps
\begin{equation}\label{eq:f}
f\colon \overline{\calI b}{}^\Lambda(O)(k)\to M(k)
\end{equation}
satisfying two conditions. First, it is determined by $g$ on all non-prime elements, i.e. on $\partial \overline{\calI b}{}^\Lambda(O)(k)$ ---
one applies $\xi_{k-1}^\Lambda(g)$ on their prime component and then the $O$-action to see what their image in $M(k)$ should be. 
Secondly, it should respect the $\Lambda$-structure, i.e. the right action by $O(0)$. The first condition means that the upper triangle
in the square below must commute:
\begin{equation}\label{eq:sq_2tr_I}
\xymatrix{
\partial \overline{\calI b}{}^\Lambda(O)(k)\ar[r]^{g_*}\ar@{^{(}->}[d]& M(k)\ar@{->>}[d]\\
 \overline{\calI b}{}^\Lambda(O)(k)\ar[r]^{g_*}\ar@{.>}[ru]^{f}&\MM(M)(k)
 }.
 \end{equation}
 And the second condition translates that the lower triangle in~\eqref{eq:sq_2tr_I} must commute. Here, the right arrow is the matching map,
 which is a fibration as we assumed that $M$ is Reedy fibrant. The lower map is a composition
 \[
  \overline{\calI b}{}^\Lambda(O)(k)\to \MM(\overline{\calI b}{}^\Lambda(O))(k)\xrightarrow{g_*}\MM(M)(k),
  \]
  where the first arrow is the matching map, and the second one is $\MM(g)(k)$. \vspace{5pt}
  
  Consider the pullback of the right map of~\eqref{eq:sq_2tr_I} along its lower one. We get a fibration of $\Sigma_k$-spaces
  \[
  \pi\colon E\to \overline{\calI b}{}^\Lambda(O)(k).
  \]
  Since $\overline{\calI b}{}^\Lambda(O)(k)$ is $\Sigma_k$-cofibrant, and the property of being Serre fibration is local,
  \[
  \pi/\Sigma_k\colon E/\Sigma_k\to \overline{\calI b}{}^\Lambda(O)(k)/\Sigma_k
  \]
  is still a Serre fibration. It is easy to see that $F$ is the space of section extensions:
  \begin{equation}\label{eq:F}
  F=\Gamma_{g_*}\left(\pi/\Sigma_k; \overline{\calI b}{}^\Lambda(O)(k)/\Sigma_k, \partial\overline{\calI b}{}^\Lambda(O)(k)/\Sigma_k\right),
  \end{equation}
  where by $g_*$ we denote the section $\partial \overline{\calI b}{}^\Lambda(O)(k)/\Sigma_k\to E/\Sigma_k$ induced by the upper
   map of~\eqref{eq:sq_2tr_I}.\vspace{5pt}
   
   The points of $F_1$ can similarly be described as the space of $\Sigma_k$-equivariant maps 
   \begin{equation}\label{eq:f1}
   f_1\colon \calI b^\Lambda(O)(1){\times}\calB^\Lambda(O)(k)\to M(k)
   \end{equation}
   making the two triangles in the diagram 
   \begin{equation}\label{eq:sq_2tr_B}
\xymatrix{
\partial \bigl( \calI b^\Lambda(O)(1){\times}\calB^\Lambda(O)(k)\bigr)\ar[r]^(0.66){g_*}\ar@{^{(}->}[d]& M(k)\ar@{->>}[d]\\
 \calI b^\Lambda(O)(1){\times}\calB^\Lambda(O)(k)\ar[r]^(0.66){g_*}\ar@{.>}[ru]^{f_1}&\MM(M)(k)
 }
 \end{equation}
commute, where
\[
\partial \bigl( \calI b^\Lambda(O)(1){\times}\calB^\Lambda(O)(k)\bigr) =
\partial\calI b^\Lambda(O)(1){\times}\calB^\Lambda(O)(k)\bigcup
\calI b^\Lambda(O)(1){\times}\partial\calB^\Lambda(O)(k).
\]
We use this notation later in this subsection. Here, $\partial\calB^\Lambda(O)(k)=\calB^\Lambda_{k-1}(O)(k)
\subset\calB^\Lambda(O)(k)$ is the subspace of non-prime elements.\vspace{5pt}

On the first part of the boundary the upper map of~\eqref{eq:sq_2tr_B} is the composition
\[
\partial \calI b^\Lambda(O)(1){\times}\calB^\Lambda(O)(k)\xrightarrow{pr_2} \calB^\Lambda(O)(k)\xrightarrow{\mu}
O(k)\xrightarrow{\eta} M(k).
\]
On the second part, this map
\[
\calI b^\Lambda(O)(1){\times}\partial\calB^\Lambda(O)(k)\xrightarrow{g_*}M(k)
\]
is determined by the fact that for any $x\in \calI b^\Lambda(O)(1)$, the map $g(x)$ is a map of $(k-1)$-truncated $O$-bimodules and thus
can be uniquely extended to a map of $O$-bimodules $\calB_{k-1}^\Lambda(O)\to M$, see~\eqref{eq:der_bimod_vs_tr2}. \vspace{5pt}

Now let $\widetilde{g}\in F_1$. Its arity $\leq (k-1)$ part is $g$, and the arity $k$ part is $f_1$ which makes commute~\eqref{eq:sq_2tr_B}.
Define the subspace $\partial_1 \overline{\calI b}{}^\Lambda(O)(k)\subset \overline{\calI b}{}^\Lambda(O)(k)$ to contain 
$\partial \overline{\calI b}{}^\Lambda(O)(k)$ together with all the elements of the form $[\, a\{b_{1},\ldots,b_{|a|}\}\,;\,\sigma\,]$, $|a|\geq 2$.
We claim that the arity~$k$ part of $\xi_k^\Lambda(\widetilde{g})\in F$ is uniquely determined by~$g$ on 
$\partial_1 \overline{\calI b}{}^\Lambda(O)(k)$.  Indeed, $\xi_k^\Lambda(\widetilde{g})$ sends $[\, a\{b_{1},\ldots,b_{|a|}\}\,;\,\sigma\,]$ to 
\begin{equation}\label{eq:gtilde}
\mu'(a)\bigl( \widetilde{g}(\tau_1(a))(b_1),\ldots,\widetilde{g}(\tau_{|a|}(a))(b_{|a|}) \bigr)\cdot \sigma
=
\begin{cases}
\mu'(a)\bigl( g(\tau_1(a))(b_1),\ldots,g(\tau_{|a|}(a))(b_{|a|}) \bigr)\cdot \sigma& |a|\geq 2;\\
f_1(a,b_1)& |a|=1.
\end{cases}
\end{equation}
 by the fact that $|b_i|\leq k-1$ for all $i=1\ldots |a|$ 
if $|a|\geq 2$, and in case $|a|=1$ one has $\mu'(a)=\ast_1$ and thus its action can be ignored. \vspace{5pt}

Thus $\xi_k^\Lambda$ sends $F_1$ onto the subspace of $F$ described as the space of $\Sigma_k$-equivariant maps~\eqref{eq:f}
making the diagram
\begin{equation}\label{eq:sq_2tr_II}
\xymatrix{
\partial_1 \overline{\calI b}{}^\Lambda(O)(k)\ar[r]^{g_*}\ar@{^{(}->}[d]& M(k)\ar@{->>}[d]\\
 \overline{\calI b}{}^\Lambda(O)(k)\ar[r]^{g_*}\ar@{.>}[ru]^{f}&\MM(M)(k)
 }
 \end{equation}
 commute. We claim that this subspace of $F$ is exactly the image of $F_1$ and moreover $\xi_k^\Lambda|_{F_1}$ is a homeomorphism on its image.
 To prove it we construct the inverse.  Let $\zeta$ be the map
 \[
 \begin{array}{rccc}
 \zeta\colon&  \calI b^\Lambda(O)(1){\times}\calB^\Lambda(O)(k)&\longrightarrow & \overline{\calI b}{}^\Lambda(O)(k);\\
 & (a,b)&\mapsto& a\{b\}.
 \end{array}
 \]
 Let us check that 
 \begin{equation}\label{eq:subset}
 \zeta\bigl( \,\partial\bigl( \calI b^\Lambda(O)(1){\times}\calB^\Lambda(O)(k)\bigr)\, \bigr)\subset
 \partial_1 \overline{\calI b}{}^\Lambda(O)(k).
 \end{equation}
 Recall that $\partial\overline{\calI b}{}^\Lambda(O)(k)$ consists of elements $[\, a\{b_{1},\ldots,b_{|a|}\}\,;\,\sigma\,]$, for which either $a$
 is the image of the arity $\geq 2$ infinitesimal  left $O$-action, or one of $b_i$ is the image of the arity $\geq 2$ right $O$-action. As a 
 consequence
 \[
  \zeta\bigl(\, \partial \calI b^\Lambda(O)(1){\times}\calB^\Lambda(O)(k)\, \bigr)\subset  \partial \overline{\calI b}{}^\Lambda(O)(k)
 \subset\partial_1 \overline{\calI b}{}^\Lambda(O)(k).
 \]
 One has 
 \[
 \partial\calB^\Lambda(O)(k) = \partial_l\calB^\Lambda(O)(k)\cup \partial_r\calB^\Lambda(O)(k),
 \]
 where $\partial_l$, respectively $\partial_r$, is the image  of the arity $\geq 2$ left, respectively right, $O$-action. One similarly has
 \[
  \zeta\bigl(\,  \calI b^\Lambda(O)(1){\times}\partial_r\calB^\Lambda(O)(k)\, \bigr)\subset  \partial \overline{\calI b}{}^\Lambda(O)(k).
  \]
  Using relation~$(i)$ of Construction~\ref{D7} (right-left action equalizing), we get
  \[
   \zeta\bigl(\,  \calI b^\Lambda(O)(1){\times}\partial_l\calB^\Lambda(O)(k)\, \bigr)\subset  \partial_1 \overline{\calI b}{}^\Lambda(O)(k).
  \]
Given $f$ that makes commute~\eqref{eq:sq_2tr_II}, we define $f_1:=f\circ\zeta$. Using~\eqref{eq:gtilde} for the case $|a|=1$ 
and~\eqref{eq:subset}, one easily checks that this assignment produces the inverse of $\xi_k^\Lambda|_{F_1}$. \vspace{5pt}

We get that $F_1\cong\xi_k^\Lambda(F_1)\subset F$ can also be described as a space of section extensions similarly to~\eqref{eq:F},
and the inclusion $F_1\to F$ in these terms is described as
\begin{equation}\label{eq:F-F1}
\Gamma_{g_*}\left(\pi/\Sigma_k; \overline{\calI b}{}^\Lambda(O)(k)/\Sigma_k, \partial_1\overline{\calI b}{}^\Lambda(O)(k)/\Sigma_k\right)
\hookrightarrow
\Gamma_{g_*}\left(\pi/\Sigma_k; \overline{\calI b}{}^\Lambda(O)(k)/\Sigma_k, \partial\overline{\calI b}{}^\Lambda(O)(k)/\Sigma_k\right).
  \end{equation}
The theorem now follows from a lemma and a proposition.

\begin{lmm}\label{l:sections}
Given a Serre fibration $\pi\colon E\to B$, cofibrant inclusions $X\subset Y\subset Z$ with $X$ cofibrant, and with $X\subset Y$ a weak equivalence,
for any partial section $s\colon Y\to E$, the natural inclusion
\[
\Gamma_s(\pi;Z,Y)\hookrightarrow \Gamma_s(\pi;Z,X)
\]
is a weak equivalence.
\end{lmm}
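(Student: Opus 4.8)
The plan is to exhibit the inclusion in question as the pullback of a weak equivalence along a fibration, and then to invoke right properness of $Top$. Throughout I will write $\Gamma(\pi;A)$ for the space of sections of $\pi$ over a subspace $A\subseteq B$ (equivalently the fibre of $\Map(A,E)\to\Map(A,B)$ over the inclusion), and I use that $\pi$ restricts to a Serre fibration over each of $X$, $Y$, $Z$.

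First I would record that restriction of sections along a cofibration yields a fibration of section spaces. Applying the pushout--product axiom ($SM7$) for $Top$ to the cofibration $Y\hookrightarrow Z$ and the fibration $\pi$, the map $\Map(Z,E)\to\Map(Y,E)\times_{\Map(Y,B)}\Map(Z,B)$ is a fibration; pulling back along the point $Z\hookrightarrow B$ of $\Map(Z,B)$ identifies it with the restriction map $\Gamma(\pi;Z)\to\Gamma(\pi;Y)$, which is therefore a fibration. Pulling this back along the (closed) inclusion $\Gamma_{s|_X}(\pi;Y,X)\hookrightarrow\Gamma(\pi;Y)$ produces a fibration
\[
R\colon\Gamma_{s|_X}(\pi;Z,X)\longrightarrow\Gamma_{s|_X}(\pi;Y,X),
\]
whose total space is $\Gamma_{s|_X}(\pi;Z,X)$, whose fibre over the point $s$ is $\Gamma_s(\pi;Z,Y)$, and whose fibre inclusion is exactly the map of the lemma.

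Next I would show the base $\Gamma_{s|_X}(\pi;Y,X)$ is weakly contractible. This space is precisely the space of lifts in the commuting square whose left edge is the acyclic cofibration $X\hookrightarrow Y$, whose right edge is the restricted fibration $E|_Y\to Y$, and whose bottom edge is $\mathrm{id}_Y$; equivalently it is the fibre over $(s|_X,\mathrm{id}_Y)$ of the map $\Map(Y,E|_Y)\to\Map(X,E|_Y)\times_{\Map(X,Y)}\Map(Y,Y)$. By $SM7$ this last map is an acyclic fibration, acyclic because $X\hookrightarrow Y$ is an acyclic cofibration, and $(s|_X,\mathrm{id}_Y)$ lies in its image since acyclic cofibrations lift against fibrations; as the fibre of an acyclic fibration over a point of its image is weakly contractible, $\Gamma_{s|_X}(\pi;Y,X)$ is weakly contractible.

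Finally, the inclusion $\{s\}\hookrightarrow\Gamma_{s|_X}(\pi;Y,X)$ is a weak equivalence, being the inclusion of a point into a weakly contractible space. Since $Top$ is right proper and $R$ is a fibration, the pullback of this weak equivalence along $R$ — which is exactly the inclusion $\Gamma_s(\pi;Z,Y)\hookrightarrow\Gamma_s(\pi;Z,X)$ — is again a weak equivalence, which is the assertion of the lemma. The argument is essentially all formal; the one point requiring care is the passage from the ambient $SM7$ statements for mapping spaces to their relative versions for section spaces — in particular, that restriction of sections along $Y\hookrightarrow Z$ is a fibration even though the ambient base $B$ need not be cofibrant — together with keeping straight which pullbacks of which fibrations produce $R$ and identify its fibre. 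I expect this bookkeeping, rather than anything conceptual, to be the main obstacle.
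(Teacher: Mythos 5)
Your proof is correct, and it reaches the same conclusion via a slightly different decomposition than the paper's. Both arguments rest on the same two facts extracted from SM7: restriction of section spaces along a cofibration is a fibration, and along the trivial cofibration $X\hookrightarrow Y$ it is a trivial fibration. The paper organizes this as a morphism of fiber sequences
\[
\Gamma_s(\pi;Z,Y)\to\Gamma(\pi;Z)\to\Gamma(\pi;Y), \qquad
\Gamma_s(\pi;Z,X)\to\Gamma(\pi;Z)\to\Gamma(\pi;X),
\]
with the identity on the middle term and a trivial fibration $\Gamma(\pi;Y)\to\Gamma(\pi;X)$ on the right, and reads off the equivalence of fibers. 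You instead first restrict to sections extending $s|_X$, exhibit the map of the lemma as the fiber inclusion of the restriction fibration $R\colon\Gamma_{s|_X}(\pi;Z,X)\to\Gamma_{s|_X}(\pi;Y,X)$, and show the base $\Gamma_{s|_X}(\pi;Y,X)$ is weakly contractible (again by SM7 acyclicity), closing with right properness. Your version makes the contractible base explicit and carefully verifies that $(s|_X,\mathrm{id}_Y)$ lies in the image of the acyclic fibration (which is automatic since $s$ itself is a preimage); the paper's version is terser, delegating the final step to the long exact sequence comparison for fibrations. Invoking right properness is a mild overkill here — the needed fact is simply that the fiber inclusion of a fibration over a weakly contractible base is a weak equivalence — but it is a valid appeal, as $Top$ is right proper (all objects being fibrant). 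Both routes are fine; the paper's is shorter, yours is more explicit about where each hypothesis is used.
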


\begin{pro}\label{p:d1}
If $O$ is a doubly reduced and $k$-coherent operad, the inclusion $\partial\overline{\calI b}{}^\Lambda(O)(k)
\hookrightarrow \partial_1\overline{\calI b}{}^\Lambda(O)(k)$
is a weak equivalence.
\end{pro}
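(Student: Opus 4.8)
\begin{sproof}
The plan is to transport the inclusion to the model $\calI b^\Lambda(O)$ via the homeomorphism $\gamma$ of Theorem~\ref{H5}, and then to squeeze it between the two subspaces $\partial\calI b^\Lambda(O)(k)$ and $\partial'\calI b^\Lambda(O)(k)$ of Remark~\ref{r:hocolim}, handling one half by $k$-coherence and the other by an explicit homotopy.

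First I would record that, by Theorem~\ref{H5}, the homeomorphism $\gamma\colon\calI b^\Lambda(O)\xrightarrow{\cong}\overline{\calI b}{}^\Lambda(O)$ is filtration-preserving, so it restricts to a homeomorphism $\partial\calI b^\Lambda(O)(k)\cong\partial\overline{\calI b}{}^\Lambda(O)(k)$ between the $(k-1)$-st filtration terms. Thus Proposition~\ref{p:d1} is equivalent to the assertion that the inclusion $\partial\calI b^\Lambda(O)(k)\hookrightarrow\gamma^{-1}\bigl(\partial_1\overline{\calI b}{}^\Lambda(O)(k)\bigr)$ is a weak equivalence. Next I would unwind the two-cut description of $\gamma$ from the proof of Theorem~\ref{H5}: given $[T;\{a_v\};\{t_v\}]\in\calI b^\Lambda(O)(k)$, the first component $a$ of its image is the part $T_a$ of $T$ below the lower cut --- the trunk, the pearl, and every non-trunk vertex $v$ with $t_v<\tfrac{2t_{j(v)}+1}{3}$, where $j(v)$ is the trunk vertex nearest to $v$ --- and $|a|$ equals the number of edges of $T$ crossing that cut. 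Since the lower cut lies above the whole trunk, $|a|=|T_a|$ is always at least the arity of the trunk of $T$; consequently every element labelled (in the sense of Subsection~\ref{s:hocolim}) by a pearled tree in $\Psi_k^U$ is sent by $\gamma$ to an element with $|a|\geq 2$, and one gets the chain
\[
\partial\calI b^\Lambda(O)(k)\ \subseteq\ \partial'\calI b^\Lambda(O)(k)\ \subseteq\ \gamma^{-1}\bigl(\partial_1\overline{\calI b}{}^\Lambda(O)(k)\bigr),
\]
with $\partial'\calI b^\Lambda(O)(k)$ as in Remark~\ref{r:hocolim}.

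Given this, the two remaining inclusions are treated separately. For $\partial\calI b^\Lambda(O)(k)\hookrightarrow\partial'\calI b^\Lambda(O)(k)$ I would simply invoke $k$-coherence of $O$: by Theorem~\ref{th:hocolim}, Proposition~\ref{p:coherent} and the reformulation in Remark~\ref{r:hocolim}, this inclusion is a weak equivalence. For $\partial'\calI b^\Lambda(O)(k)\hookrightarrow\gamma^{-1}\bigl(\partial_1\overline{\calI b}{}^\Lambda(O)(k)\bigr)$, which should hold with no coherence hypothesis, I would construct an explicit homotopy $H$ on $\gamma^{-1}\bigl(\partial_1\overline{\calI b}{}^\Lambda(O)(k)\bigr)$ with $H_0=\mathrm{id}$: it pushes down linearly the parameter $t_v$ of every non-trunk vertex $v$ lying below the lower cut towards $t_{j(v)}$, and leaves every other parameter fixed. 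One checks that the inequalities $t_{s(e)}\geq t_{t(e)}$ are preserved (the vertices below the lower cut form a subset closed towards the trunk, and all of them satisfy $t_v\geq t_{j(v)}$), that $H$ stays inside $\gamma^{-1}\bigl(\partial_1\overline{\calI b}{}^\Lambda(O)(k)\bigr)$ because lowering these parameters only enlarges $T_a$ and so keeps $|a|\geq 2$, and that $H$ preserves $\partial'\calI b^\Lambda(O)(k)$ --- points of $\partial\calI b^\Lambda(O)(k)$ are kept there since the vertices being moved automatically have parameter $<1$, and a $\Psi_k^U$-labelling is never destroyed. At time $1$ all the moved vertices satisfy $t_v=t_{j(v)}$, so by relation $(iii)$ of Construction~\ref{C8} they collapse into the trunk and the resulting element has a trunk of arity $|T_a|\geq 2$, hence lies in $\partial'\calI b^\Lambda(O)(k)$. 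Thus $H_1$ is a homotopy inverse to the inclusion, which is therefore a homotopy equivalence. Combining the two halves with the $2$-out-of-$3$ property for weak equivalences yields Proposition~\ref{p:d1}.

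The delicate point --- where essentially all the work sits --- is the purely combinatorial bookkeeping: pinning down $\gamma^{-1}\bigl(\partial_1\overline{\calI b}{}^\Lambda(O)(k)\bigr)$ exactly in terms of the position of the lower cut, and verifying that the pushing-down homotopy is well defined on equivalence classes, continuous, $\Sigma_k$-equivariant, and compatible with the quotient relations defining $\calI b^\Lambda(O)$ and $\overline{\calI b}{}^\Lambda(O)$. None of this uses coherence: the only input that requires $O$ to be ($k$-)coherent is the weak equivalence $\partial\calI b^\Lambda(O)(k)\hookrightarrow\partial'\calI b^\Lambda(O)(k)$, exactly as one expects, since Proposition~\ref{p:d1} is the single place in the induction where the coherence hypothesis of Theorem~\ref{th:main_2reduced} is consumed.
\end{sproof}
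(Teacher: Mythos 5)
Your overall architecture coincides with the paper's: transport everything through the filtration-preserving homeomorphism $\gamma_k$ of Theorem~\ref{H5}, establish the chain $\partial\calI b^\Lambda(O)(k)\subseteq\partial'\calI b^\Lambda(O)(k)\subseteq\partial_1\calI b^\Lambda(O)(k):=\gamma_k^{-1}\bigl(\partial_1\overline{\calI b}{}^\Lambda(O)(k)\bigr)$, dispose of the first inclusion by $k$-coherence via Remark~\ref{r:hocolim}, and deformation-retract $\partial_1$ onto $\partial'$. Your identification of $\partial_1\calI b^\Lambda(O)(k)$ in terms of the position of the lower cut is also correct. The gap is in the retraction itself: the homotopy you propose is not continuous. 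You move $t_v$ for every non-trunk vertex \emph{strictly below} the lower cut and freeze every vertex on or above it, so for $s>0$ the map $H_s$ jumps as $t_v$ crosses the threshold $\tfrac{2t_{j(v)}+1}{3}$. This threshold is crossed freely inside $\partial_1$ (for instance by a non-trunk vertex of an element labelled by a tree in $\Psi_k^U$, which lies in $\partial_1$ for every value of that vertex's parameter), and the two one-sided limits of $H_s$ at the threshold are genuinely distinct points of $\calI b^\Lambda(O)(k)$, since no relation of Construction~\ref{C8} identifies them.

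The discontinuity is not a removable detail: tapering the displacement to zero near the cut restores continuity but then $H_1$ no longer merges the distinguished vertex into the trunk, so type-three elements with parameters near the cut fail to land in $\partial'$; while moving the distinguished vertex $W$ unconditionally all the way to $t_r$ is continuous but pulls elements of $\partial$ with $t_W=1$ out of $\partial'$ (indeed out of $\partial_1$) at intermediate times. The paper's retraction resolves exactly this tension: it is the identity on $\partial'$, and on the remaining elements (trunk of arity one, $t_W\le\tfrac{2t_r+1}{3}$, no label equal to $1$) it simultaneously pulls $t_W$ toward $t_r$ \emph{and} stretches the interval $[t_W,t_{max}]$ of the remaining labels toward $1$, stopping as soon as $t_{max}$ reaches $1$; the inequality $1-t_W\ge\tfrac23(1-t_r)$ guaranteed by the cut condition is what makes this formula degenerate to the identity as the element approaches $\partial\calI b^\Lambda(O)(k)$ or the $\Psi_k^U$-labelled locus, giving continuity. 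So the retraction must terminate either by contracting the edge $(r,W)$ or by entering $\partial$, and coupling the two motions is the essential idea your proposal is missing.
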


To finish the proof of the theorem we notice that both spaces $\partial\overline{\calI b}{}^\Lambda(O)(k)$ and $\partial_1\overline{\calI b}{}^\Lambda(O)(k)$
are $\Sigma_k$-cofibrant. Therefore, their quotients by $\Sigma_k$ are equivalent to their homotopy quotients and by Proposition~\ref{p:d1}
these quotients are equivalent to each other. We then apply Lemma~\ref{l:sections} to conclude that the map $F_1\to F$ is an equivalence.
\hfill $\square$.

\begin{proof}[Proof of Lemma~\ref{l:sections}]
One has a morphism of fiber sequences:
\begin{equation}
\xymatrix{
\Gamma_s(\pi;Z,Y)\ar[d]\ar[r]&\Gamma(\pi;Z)\ar[r]\ar@{=}[d]&\Gamma(\pi;Y)\ar[d]^{\simeq}\\
\Gamma_s(\pi;Z,X)\ar[r]&\Gamma(\pi;Z)\ar[r]&\Gamma(\pi;X)
}.
\end{equation}
Since $X\hookrightarrow Y$ is a trivial cofibration, the right arrow is a trivial fibration. As a consequence, the left arrow, i.e. map between the fibers,
is also an equivalence.
\end{proof}

\begin{proof}[Proof of Proposition~\ref{p:d1}]
Instead of the initial Definition~\ref{d:coherent} of $k$-coherence, we will be using equivalent to it condition that the maps
\[
\partial\calI b^\Lambda(O)(i)\hookrightarrow\partial' \calI b^\Lambda(O)(i),\quad 2\leq i\leq k,
\]
are equivalences, see Remark~\ref{r:hocolim}. Recall also  the homeomorphism
\begin{equation}\label{eq:homeo}
\gamma_k\colon \calI b^\Lambda(O)(k)\xrightarrow{\cong} \overline{\calI b}{}^\Lambda(O)(k)
\end{equation}
constructed in Subsubsection~\ref{ss:alt1}. Since $\gamma_k$ is filtration preserving, it sends 
$\partial \calI b^\Lambda(O)(k)$ homeomorphically onto $\partial \overline{\calI b}{}^\Lambda(O)(k)$.
Denote by $\partial_1 \calI b^\Lambda(O)(k):=\gamma_k^{-1}\left( \partial_1 \overline{\calI b}{}^\Lambda(O)(k)\right)$. 
We claim that $\partial' \calI b^\Lambda(O)(k)$ is a deformation retract of $\partial_1 \calI b^\Lambda(O)(k)$, 
which implies the statement of our proposition. To recall, $\partial' \calI b^\Lambda(O)(k)$ consists of two
(non-disjoint) types of elements:
\begin{itemize}
\item elements in $\partial \calI b^\Lambda(O)(k)$;
\item elements labelled by pearled trees from $\Psi_k^U$
\end{itemize}

%For a tree $T\in\Psi_k$, we define the {\it arity of its trunk} as the sum of arities of the vertices on the trunk minus the length of the trunk. 
Recall Definitions~\ref{d:trunk_arity}, \ref{d:ud}. The trees from $\Psi_k^U$ are exactly those whose trunk has arity $\geq 2$. The trees from $\Psi_k\setminus\Psi_k^U$, i.e. trees
whose trunk has arity one, are of two types: either its root is pearled and has arity one, or its root has arity two and it is connected to
a univalent pearl:

\[
\includegraphics[scale=0.25]{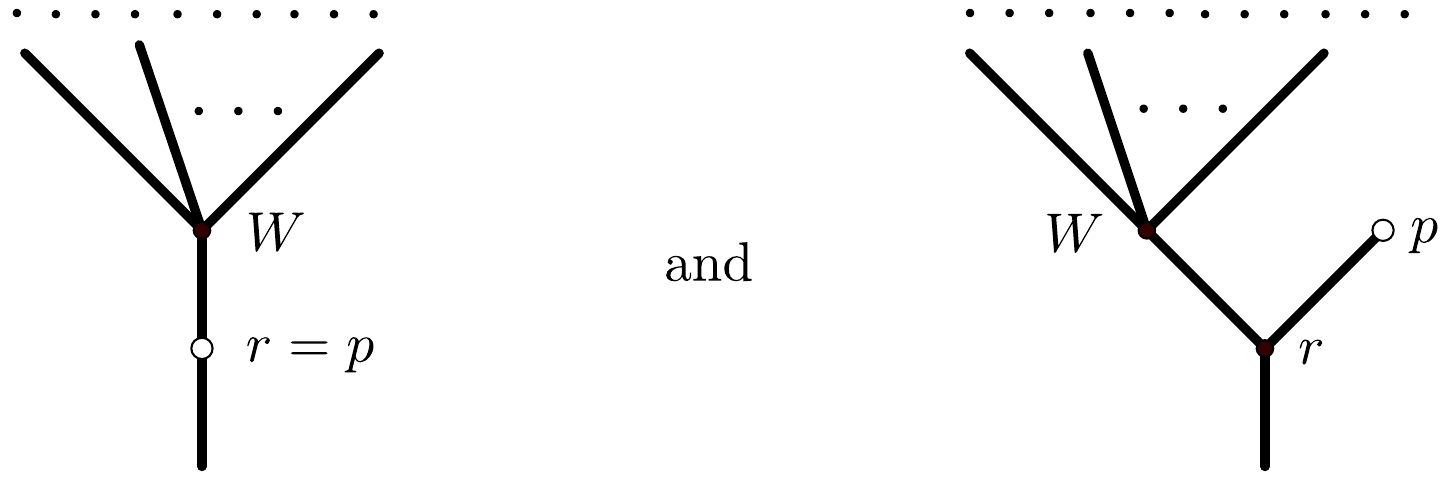}
\]

In both cases there is only one non-pearl  vertex, that we denote by $W$, connected to the root. 

The elements of $\partial_1 \calI b^\Lambda(O)(k)$ are of three (non-disjoint) types:
\begin{itemize}
\item elements in $\partial \calI b^\Lambda(O)(k)$;
\item elements labelled by pearled trees from $\Psi_k^U$;
\item elements labelled by pearled trees from $\Psi_k\setminus\Psi_k^U$, and whose only non-pearl vertex $W$ connected to the root $r$
is labelled by $t_W\leq \frac{2t_r+1}{3}$ (as before, if $r=p$, we set $t_r=0$).
\end{itemize}
Indeed, elements of $\partial_1 \calI b^\Lambda(O)(k)$ are either from $\partial \calI b^\Lambda(O)(k)$ (first type) or are
preimages under $\gamma_k$ of $[\, a\{b_{1},\ldots,b_{|a|}\}\,;\,\sigma\,]$, with $|a|\geq 2$. The latter type of elements 
are those in $\calI b^\Lambda(O)(k)$ for which the lower cut of the homeomorphism $\gamma_k$ either crosses
at least two edges or, as a limit case, at least one vertex. If an element in $\calI b^\Lambda(O)(k)$  is labelled by a tree
$T\in \Psi_k^U$ whose trunk has arity $\geq 2$, this condition is always satisfied. If it is labelled by a tree $T\in\Psi_k\setminus\Psi_k^U$
whose trunk has arity one, this condition is satisfied only if the lower cut goes through or above $W$, which is exactly given by the inequality
of the type~3 points. 

\begin{figure}[!h]
\begin{center}
\includegraphics[scale=0.09]{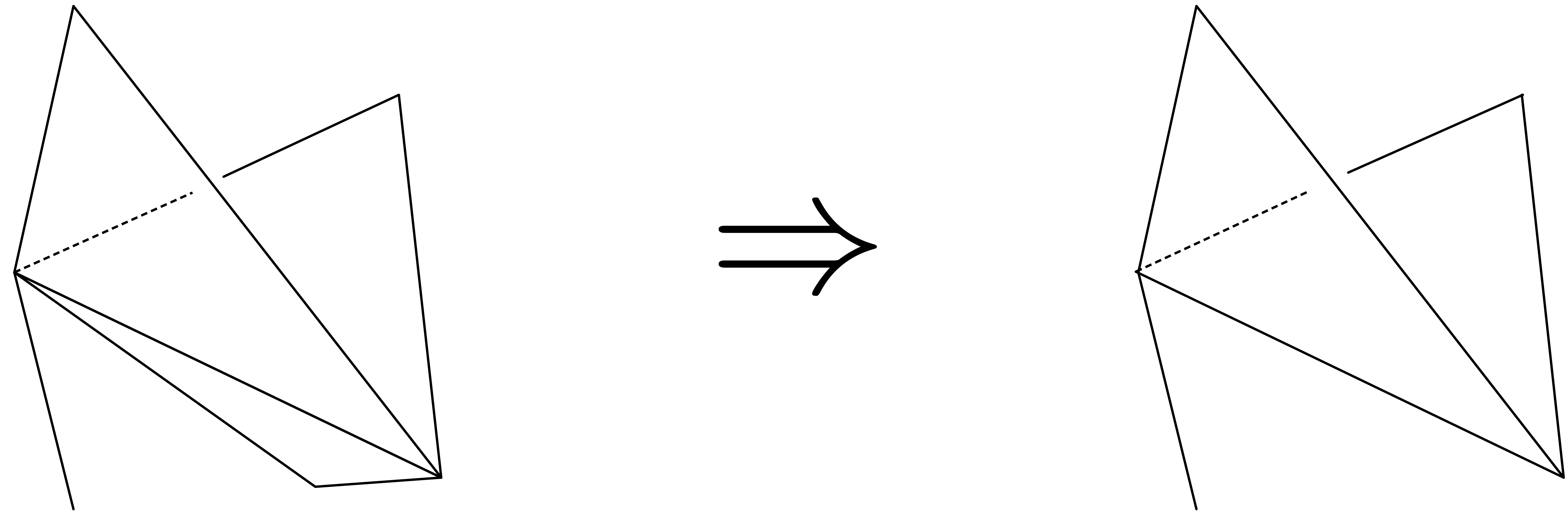}\vspace{-5pt}
\caption{Deformation retraction of $\partial_1 \calI b^\Lambda(O)(2)$ onto $\partial' \calI b^\Lambda(O)(2)$.}\label{Fig:Homot}\vspace{-15pt}
\end{center}
\end{figure}

We define a deformation retraction $H$ of $\partial_1 \calI b^\Lambda(O)(k)$ onto $\partial' \calI b^\Lambda(O)(k)$ as follows. 
$H$ is set to be identity on the elements of $\partial_1 \calI b^\Lambda(O)(k)$ of the first and second type, and it is defined as
\[
H[T\,;\,\{a_{v}\}\,;\,\{t_{v}\}]= [T\,;\,\{a_{v}\}\,;\,\{h(t,t_{v})\}]
\]
on the elements of the third type. %, where $h(t,-)$ is constant on $t_r$ and linear on all the other $t_v$s.
   Let
\[
t_{max}:=\underset{v\in V(T)}{Max}(t_v).%\vspace{-25pt}
\]

%\newpage

\noindent one has $t_r\leq t_W\leq t_{max}\leq 1$. In particular if $t_{max}=1$
or $t_W=t_r$ the homotopy $h(t,-)$ is the constant identy since such element
lies already in $\partial' \calI b^\Lambda(O)(k)$. In general, for any $t\in[0,1]$, $h(t,-)$  is linear on $[t_W,t_{max}]$ and $h(t,t_r)=t_r$.
It starts by pulling the left end $t_W$ toward the root:
\[
h(t,t_W)=t_W+t(t_r-t_W),
\]
while expanding the length of the image of $[t_W,t_{max}]$: 
\[
h(t,t_{max})-h(t,t_W)= (1+t)(t_{max}-t_W);
\]
in case $h(t,t_{max})$ reaches 1, the homotopy stops. It is easy to compute that the latter case happens when $t_{max}-t_W\geq \frac{1-t_r}{2}$, and
this happens at 
\[
t=\frac{1-t_{max}}{t_{max}-2t_W+t_r}.
\]
Otherwise the homotopy continues until $t=1$, in which case the edge $(r,W)$ gets contracted. 
The condition $t_W\leq \frac{2t_r+1}{3}$ implies $1-t_W\geq \frac 23(1-t_r)>\frac{1-t_r}2$. Thus when $t_{max}$
approaches~1, the homotopy gets closer and closer to the constant identity. We conclude that the homotopy $H$ is continuously defined.
%
%
%the identity map $[t_W,1]\to [t_W,1]$ at $t=0$ and the
%increasing linear bijection $[t_W,1]\to[t_r,1]$ at $t=1$. Explicitly,
%\[
%h(t,t_v)=\begin{cases}
%t_r,& v=r;\\
%(1-t)t_v+t\left(t_r+\frac{1-t_r}{1-t_W}(t_v-t_W)\right),& \text{otherwise}.
%\end{cases}
%\]
  Figure~\ref{Fig:Homot} shows how the retraction works for $k=2$, compare with Figure~\ref{Fig:2compl2}.

\end{proof}

\begin{proof}[Proof of Proposition~\ref{p:fibration}]
We need a lemma which is an easy exercise. (Hint: use  %the equivariant pushout-product 
Lemma~\ref{l:pushprod}.)

\begin{lmm}\label{l:greg}
Let $G$ be a finite group. Consider the projective model structure in $\Topo^G$. 
If $X\hookrightarrow Y$ is a cofibration in $\Topo^G$ with $X$ cofibrant, and $E\to B$ is a fibration in $\Topo^G$, then 
the following map is a Serre fibration
\[
\Map_G(Y,E)\to \Map_G(Y,B)\bigtimes_{\Map_G(X,B)}\Map_G(X,E).
\]
\end{lmm}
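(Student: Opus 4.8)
The plan is to verify the right lifting property by hand, using that $Top^G$ is a $Top$-enriched model category. Write $i\colon X\hookrightarrow Y$ for the given cofibration, $p\colon E\to B$ for the given fibration, and $\langle i,p\rangle$ for the map in the statement. Since its source $\Map_G(Y,E)$ and target $\Map_G(Y,B)\times_{\Map_G(X,B)}\Map_G(X,E)$ are ordinary $k$-spaces, it suffices to show that $\langle i,p\rangle$ has the right lifting property with respect to the generating trivial cofibrations $j_n\colon D^n\times\{0\}\hookrightarrow D^n\times[0,1]$, $n\geq 0$, of $Top$. I would use that $Top^G$ is tensored and cotensored over $Top$ via $K\otimes Z:=K\times Z$ with $G$ acting trivially on $K$, so that $\Map_G(Z,-)$ is the cotensor. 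The standard two-variable adjunction attached to this tensoring identifies a lift of $j_n$ against $\langle i,p\rangle$ with a lift of $p$ against the pushout--product map
\[
j_n\,\hat\otimes\, i\colon\ (D^n\times[0,1])\times X\ \underset{(D^n\times\{0\})\times X}{\cup}\ (D^n\times\{0\})\times Y\ \longrightarrow\ (D^n\times[0,1])\times Y
\]
in $Top^G$, where $D^n\times\{0\}$ and $D^n\times[0,1]$ carry the trivial $G$-action.

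So the whole statement reduces to: $j_n\,\hat\otimes\, i$ is a trivial cofibration in the projective model structure on $Top^G$. Granting this, $p$ has the right lifting property against it (being a projective fibration), and unwinding the adjunction produces the required lift, proving $\langle i,p\rangle$ is a Serre fibration. This last reduction is precisely the equivariant pushout--product Lemma~\ref{l:pushprod}, applied with the trivial cofibration $j_n$ (viewed with trivial $G$-action) in one slot and the cofibration $i$ with cofibrant source in the other.

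The point that needs care — and the place where I expect to spend most of the effort making the argument airtight — is that $j_n$, regarded as a map of trivial $G$-spaces, is \emph{not} a cofibration of the projective model structure on $Top^G$ when $G\neq\{e\}$, since every projective cell of $Top^G$ is free and the trivial $G$-space $D^n\times\{0\}$ admits no projectively cofibrant model. Hence Lemma~\ref{l:pushprod} must be invoked in its mixed, $Top$-enriched form (pushout--product of a cofibration of \emph{spaces} against a cofibration of $G$-spaces), rather than internally to $Top^G$. To see that this mixed form holds I would reduce to generating cofibrations: express $i$ as a retract of a relative cell complex built from the free generating cofibrations $G\cdot(S^{m-1}\hookrightarrow D^m)$, use the identity $K\otimes(G\cdot A)=G\cdot(K\times A)$ together with the fact that $G\cdot(-)\colon Top\to Top^G$ is left Quillen (its right adjoint, the forgetful functor, preserves fibrations and trivial fibrations), to get
\[
j_n\,\hat\otimes\,\bigl(G\cdot(S^{m-1}\hookrightarrow D^m)\bigr)=G\cdot\bigl(j_n\,\hat\times\,(S^{m-1}\hookrightarrow D^m)\bigr),
\]
where the inner pushout--product $j_n\,\hat\times\,(S^{m-1}\hookrightarrow D^m)$ is a trivial cofibration of spaces by the ordinary pushout--product axiom for $(Top,\times)$. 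Each such map is therefore a trivial cofibration in $Top^G$, and closing up under cobase change, transfinite composition, and retracts shows $j_n\,\hat\otimes\, i$ is a trivial cofibration in $Top^G$, as needed. This completes the plan; the only genuinely non-formal input is the cell-wise verification of the mixed pushout--product statement, everything else being adjunction bookkeeping.
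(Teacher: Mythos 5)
Your argument is correct and is exactly the route the paper intends: Lemma~\ref{l:greg} is left there as "an easy exercise" with the hint to use the equivariant pushout--product Lemma~\ref{l:pushprod}, and your transposition of the lifting problem for $j_n$ against the pullback-corner map into a lifting problem for $p$ against $j_n\,\hat\otimes\,i$ is the standard way to carry that out. Note only that Lemma~\ref{l:pushprod} as stated already covers your "mixed" case --- it requires just one of the two legs to be a genuine projective ($\Gamma$-)cofibration, the other merely an equivariant map that is an underlying cofibration, which is what $j_n$ with trivial $G$-action is --- so your cell-by-cell verification via $K\otimes(G\cdot A)=G\cdot(K\times A)$ is a correct but strictly speaking redundant re-proof of the lemma rather than a needed supplement.
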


One has two pullback squares:
%$$
%\Map_{\Sigma_k} \left( \overline{\calI b}{}^\Lambda(O)(k), \MM(M)(k)\right)
%\varprod_{\Map_{\Sigma_k} \left( \partial\overline{\calI b}{}^\Lambda(O)(k), \MM(M)(k)\right)}
%\Map_{\Sigma_k} \left( \partial\overline{\calI b}{}^\Lambda(O)(k), M(k)\right)
%$$

\[
\xymatrix{
\TT_k\calI\ar[r]\ar[d] & \Map_{\Sigma_k} \left( \overline{\calI b}{}^\Lambda, M\right)\ar[d]\\
\TT_{k-1}\calI\ar[r] &  \Map_{\Sigma_k} \left( \overline{\calI b}{}^\Lambda, \MM(M)\right)
\underset{\Map_{\Sigma_k} \left( \partial\overline{\calI b}{}^\Lambda, \MM(M)\right)}{\scalebox{2.0}{$\bigtimes$}}%{\displaystyle\prod}
\Map_{\Sigma_k} \left( \partial\overline{\calI b}{}^\Lambda, M\right)
}
\]
%\[
%\xymatrix{
%\TT_k\calI\ar[r]\ar[d] & \Map_{\Sigma_k} \left( \overline{\calI b}{}^\Lambda(O)(k), M(k)\right)\ar[d]\\
%\TT_{k-1}\calI\ar[r] &  \Map_{\Sigma_k} \left( \overline{\calI b}{}^\Lambda(O)(k), \MM(M)(k)\right)
%\underset{\Map_{\Sigma_k} \left( \partial\overline{\calI b}{}^\Lambda(O)(k), \MM(M)(k)\right)}{\varprod}
%\Map_{\Sigma_k} \left( \partial\overline{\calI b}{}^\Lambda(O)(k), M(k)\right)
%};
%\]
%$$
%\Map_{\Sigma_k}\left(   \calI b^\Lambda(1){\times}\calB^\Lambda, \MM(M) \right)
%\underset{\Map_{\Sigma_k}\left(  \partial\left( \calI b^\Lambda(1){\times}\calB^\Lambda\right), \MM(M) \right)}{\varprod}
%\Map_{\Sigma_k}\left(  \partial\left( \calI b^\Lambda(1){\times}\calB^\Lambda\right), M\right)
%$$

$$
\xymatrix{
\TT_k\calB\ar[rr]\ar[d] & & \Map_{\Sigma_k}\left(   \calI b^\Lambda(1){\times}\calB^\Lambda, M \right)\ar[d] & \\
\TT_{k-1}\calB\ar[r] & \Map_{\Sigma_k}\left(   \calI b^\Lambda(1){\times}\calB^\Lambda, \MM(M) \right)\hspace{-30pt} & \underset{\Map_{\Sigma_k}\left(  \partial\left( \calI b^\Lambda(1){\times}\calB^\Lambda\right), \MM(M) \right)}{\scalebox{2.0}{$\bigtimes$}}  & \hspace{-30pt}\Map_{\Sigma_k}\left(  \partial\left( \calI b^\Lambda(1){\times}\calB^\Lambda\right), M\right)
}
$$

%\underset{\Map_{\Sigma_k}\left(  \partial\left( \calI b^\Lambda(1){\times}\calB^\Lambda\right), \MM(M) \right)}{\bigtimes} 
%
%
%\xymatrix{
%\TT_k\calB\ar[r]\ar[d] &  \Map_{\Sigma_k}\left(   \calI b^\Lambda(1){\times}\calB^\Lambda, M \right)\ar[d]\\
%\TT_{k-1}\calB\ar[r] &\Map_{\Sigma_k}\left(   \calI b^\Lambda(1){\times}\calB^\Lambda, \MM(M) \right)
%\underset{\Map_{\Sigma_k}\left(  \partial\left( \calI b^\Lambda(1){\times}\calB^\Lambda\right), \MM(M) \right)}{\scalebox{2.0}{$\bigtimes$}}%{\displaystyle\prod}
%\Map_{\Sigma_k}\left(  \partial\left( \calI b^\Lambda(1){\times}\calB^\Lambda\right), M\right)
%} 
Here for shortness, $\overline{\calI b}{}^\Lambda$, $\calB^\Lambda$, $\calI b^\Lambda(1)$, $M$, $\MM(M)$ stay for
$\overline{\calI b}{}^\Lambda(O)(k)$, $\calB^\Lambda(O)(k)$, $\calI b^\Lambda(O)(1)$, $M(k)$, $\MM(M)(k)$, respectively.
%\[
%\xymatrix{
%\TT_k\calB\ar[r]\ar[d] &  \Map_{\Sigma_k}\left(   \calI b^\Lambda(O)(1){\times}\calB^\Lambda(O)(k), M(k) \right)\\
%\TT_{k-1}\calB\ar[r] &\Map_{\Sigma_k}\left(   \calI b^\Lambda(O)(1){\times}\calB^\Lambda(O)(k), \MM(M)(k) \right)
%\underset{\Map_{\Sigma_k}\left(  \partial\bigl( \calI b^\Lambda(O)(1){\times}\calB^\Lambda(O)(k)\bigr), \MM(M)(k) \right)}{\varprod}
%\Map_{\Sigma_k}\left(  \partial\bigl( \calI b^\Lambda(O)(1){\times}\calB^\Lambda(O)(k)\bigr), M(k) \right)
%}.
%\]
By Lemma~\ref{l:greg}, the right arrows in the squares are Serre fibrations. Since fibrations are preserved by pullbacks, the left arrows
are also fibrations.
\end{proof}

\subsection{Main Theorem~\ref{th:main}. General case: $O$ is weakly doubly reduced}\label{ss:proof:gen}
We need to recall some general facts  on the homotopy theory of undercategories. We thank B.~Fresse for prividing us  the three
lemmas below. The last Lemma~\ref{l:fresse3}, which follows from Lemmas~\ref{l:fresse1} and~\ref{l:fresse2},
is crucial for the proof of Main Theorem~\ref{th:main}.\vspace{5pt} 

    Given a model category $C$, and $c\in C$, the undercategory $(c\downarrow C)$ is endowed with a model structure, in which
a morphism is a weak equivalence, fibration, or cofibration if it is one in~$C$, see~\cite[Section~7.6.4]{Hirschhorn03}. Given a morphism 
$f\colon c_1\to c_2$, one has an adjunction
\begin{equation}\label{eq:f_adj}
f^!\colon(c_1\downarrow C)\leftrightarrows (c_2\downarrow C)\colon f^*,
\end{equation}
where $f^*$ is the precomposition with $f$, and $f^!$ sends $c_1\to x$ to $c_2\to c_2\sqcup_{c_1}x$. The following lemma is well-known, see for example~\cite[Proposition 2.5]{Rezk02} or~\cite[Proposition~16.2.4]{MayPonto}. 

\begin{lmm}\label{l:fresse1}
In case $C$ is a left proper model category, and $f\colon c_1\to c_2$ is a weak equivalence, the adjunction~\eqref{eq:f_adj}
is a Quillen equivalence. 
\end{lmm}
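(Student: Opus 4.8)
\textbf{Proof plan for Lemma~\ref{l:fresse1}.}

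The plan is to verify directly that the adjunction~\eqref{eq:f_adj} is a Quillen pair and then that it is a Quillen equivalence, using left properness exactly at the point where the cobase-change functor $f^!$ needs to be homotopically well-behaved. First I would check that $(f^!,f^*)$ is a Quillen adjunction: since weak equivalences, fibrations, and cofibrations in an undercategory $(c\downarrow C)$ are created by the forgetful functor to $C$, and $f^*$ is precomposition with $f$ (hence strictly commutes with that forgetful functor), it is immediate that $f^*$ preserves fibrations and trivial fibrations. Therefore $f^!$ is a left Quillen functor. This part uses no properness.

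Next I would show the derived unit and counit are weak equivalences. For a fibrant object $c_2\to y$ in $(c_2\downarrow C)$, the counit $f^!f^*(y)\to y$ is the canonical map $c_2\sqcup_{c_1}y\to y$ induced by $f\colon c_1\to c_2$ and $\mathrm{id}_y$; since $f$ is a weak equivalence, I want this pushout corner map to be a weak equivalence. This is precisely where left properness enters: for a cofibrant object $c_1\to x$ in $(c_1\downarrow C)$ (so $c_1\to x$ is a cofibration in $C$), the pushout $c_2\sqcup_{c_1}x$ receives a map from $x$ along the weak equivalence $f$, and left properness guarantees $x\to c_2\sqcup_{c_1}x$ is a weak equivalence; combined with two-out-of-three this controls the comparison maps. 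Concretely, I would take a cofibrant replacement $c_1\to Qx \xrightarrow{\simeq} x$ of an object in $(c_1\downarrow C)$, form the pushout square
\[
\xymatrix{
c_1\ar[r]\ar[d]_{f} & Qx\ar[d]\\
c_2\ar[r] & c_2\sqcup_{c_1}Qx,
}
\]
and invoke left properness to conclude $Qx\to c_2\sqcup_{c_1}Qx$ is a weak equivalence; since the derived functor $Lf^!$ is computed on $Qx$, this shows the derived unit $Qx\to f^*Lf^!(Qx)$ is a weak equivalence. A symmetric but easier argument (not needing properness, just that $f$ is a weak equivalence) handles the derived counit on fibrant objects.

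The main obstacle I anticipate is bookkeeping the cofibrant/fibrant replacements carefully enough that the properness application is to an actual pushout of a cofibration along the weak equivalence $f$, rather than to some derived version of it; once the square above is set up correctly the verification is routine. Since both the derived unit and the derived counit are then weak equivalences, the Quillen pair~\eqref{eq:f_adj} is a Quillen equivalence, which is the assertion of the lemma. (Alternatively, one may simply cite~\cite[Proposition~16.2.4]{MayPonto} as indicated in the statement, but the argument above makes the role of left properness transparent.)
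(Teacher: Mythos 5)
The paper does not supply a proof of Lemma~\ref{l:fresse1}; it simply cites~\cite[Proposition~16.2.4]{MayPonto}, so there is no internal argument to compare against. Your plan is the standard one and is essentially correct: the Quillen adjunction part is immediate since $f^*$ commutes with the forgetful functors creating (co)fibrations and weak equivalences, and left properness gives exactly the statement that for a cofibrant $c_1\to x$ in $(c_1\downarrow C)$, the corner map $x\to c_2\sqcup_{c_1}x$ of the pushout of $f$ along that cofibration is a weak equivalence, which controls the derived unit.

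One inaccuracy, though, is the parenthetical remark that the derived counit argument is ``easier (not needing properness, just that $f$ is a weak equivalence).'' It does need left properness, in exactly the same way. The derived counit at a fibrant $c_2\to y$ is not $c_2\sqcup_{c_1}y\to y$ computed naively, but $c_2\sqcup_{c_1}y'\to y$ where $c_1\to y'\xrightarrow{\simeq}y$ is a cofibrant replacement of $f^*(y)$ in $(c_1\downarrow C)$; showing this is a weak equivalence amounts (via two-out-of-three, since the composite $y'\to c_2\sqcup_{c_1}y'\to y$ is the replacement map $y'\to y$) to showing $y'\to c_2\sqcup_{c_1}y'$ is a weak equivalence, and that is once again the pushout of the weak equivalence $f$ along the cofibration $c_1\to y'$, i.e.\ left properness. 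The cleaner way to avoid this double-counting is to observe that $f^*$ not only preserves but \emph{reflects} all weak equivalences (it is the identity on underlying objects of $C$), so by the standard criterion for Quillen equivalences it suffices to check the derived unit; the counit check is then subsumed rather than ``symmetric and easier.'' With that one phrase adjusted, your argument is a sound direct proof of the lemma.
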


Now, given a Quillen adjunction
\begin{equation}\label{eq:adj_LR}
L\colon C\leftrightarrows D\colon R
\end{equation}
between two model categories, $c\in C$, $d\in D$, and a map
\begin{equation}\label{eq:phi} 
\phi\colon c\to Rd,
\end{equation}
one has an adjunction
\begin{equation}\label{eq:adj_LRhat}
\hat L\colon (c\downarrow C)\leftrightarrows(d\downarrow D)\colon\hat R,
\end{equation}
where $\hat R$ sends $d\to d_1$ to the composition $c\xrightarrow{\phi} Rd\to Rd_1$, and $\hat L$ sends $c\to c_1$ to the
pushout map $d\to d\sqcup_{Lc}Lc_1,$ where $Lc\to d$ is the adjunct of $\phi$. 

\begin{lmm}\label{l:fresse2}
If~\eqref{eq:adj_LR} is a Quillen equivalence, $c$ is cofibrant, $d$ is fibrant,~\eqref{eq:phi} is a weak equivalence,   and $D$ is left proper, then
the adjunction~\eqref{eq:adj_LRhat} is a Quillen equivalence.
\end{lmm}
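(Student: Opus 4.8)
The plan is to verify the two conditions in the definition of a Quillen equivalence for the adjunction~\eqref{eq:adj_LRhat}: first, that $(\hat L,\hat R)$ is a Quillen adjunction, and second, that for every cofibrant object $(c\to c_1)$ of $(c\downarrow C)$ and every fibrant object $(d\to d_1)$ of $(d\downarrow D)$, a map $\hat L(c\to c_1)\to (d\to d_1)$ is a weak equivalence in $(d\downarrow D)$ if and only if its adjunct $(c\to c_1)\to \hat R(d\to d_1)$ is a weak equivalence in $(c\downarrow C)$. Recall that weak equivalences, fibrations, and cofibrations in an undercategory $(x\downarrow E)$ are detected in $E$; in particular the forgetful functors $(c\downarrow C)\to C$ and $(d\downarrow D)\to D$ preserve and reflect all three classes. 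I would first dispose of the Quillen adjunction claim: $\hat R$ is, up to the forgetful functors, just $R$ followed by nothing, so it preserves fibrations and trivial fibrations; equivalently $\hat L$ preserves cofibrations and trivial cofibrations because $L$ does and pushouts of (trivial) cofibrations along arbitrary maps are (trivial) cofibrations. This step is routine and uses only that $(L,R)$ is Quillen.

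For the equivalence condition the key is to factor the situation through the case already handled by Lemma~\ref{l:fresse1}. First I would treat the ``model'' case where $\phi$ is replaced by the adjunct structure map $Lc\to d$: consider the adjunction $(Lc\downarrow D)\rightleftarrows (d\downarrow D)$ induced by the map $\psi\colon Lc\to d$ (the adjunct of $\phi$). Since $\phi$ is a weak equivalence between a cofibrant object $c$ and the fibrant object $Rd$, and $(L,R)$ is a Quillen equivalence, its adjunct $\psi\colon Lc\to d$ is also a weak equivalence (this is precisely the standard ``derived adjunction detects equivalences on cofibrant/fibrant objects'' fact). By hypothesis $D$ is left proper, so Lemma~\ref{l:fresse1} applies to $\psi$ and gives that $\psi^!\colon (Lc\downarrow D)\rightleftarrows (d\downarrow D)\colon\psi^*$ is a Quillen equivalence. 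Next, the adjunction $(c\downarrow C)\rightleftarrows (Lc\downarrow D)$ obtained by applying $L$ objectwise (sending $c\to c_1$ to $Lc\to Lc_1$, with right adjoint sending $Lc\to d_1$ to $c\to R(Lc)\to Rd_1$ precomposed with the unit $c\to RLc$) is a Quillen equivalence: on underlying categories it is just $(L,R)$, and since $c$ is cofibrant and everything downstairs lies under the cofibrant object $Lc$, the detection of weak equivalences is unchanged — one checks directly that the underlying maps of the unit and counit are the unit and counit of $(L,R)$, which are weak equivalences on cofibrant resp.\ fibrant objects. Composing these two Quillen equivalences gives that $(c\downarrow C)\rightleftarrows (d\downarrow D)$ built from $\psi$ is a Quillen equivalence, and one checks that this composite adjunction is exactly $(\hat L,\hat R)$: unwinding, $\hat R(d\to d_1)$ is $c\to RLc\to Rd\to Rd_1$, whose composite $c\to Rd$ is $R\psi$ precomposed with the unit, i.e.\ $\phi$ up to the standard triangle identity; and $\hat L(c\to c_1)=d\sqcup_{Lc}Lc_1$ is precisely $\psi^!L(c\to c_1)$.

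The main obstacle is bookkeeping rather than conceptual: I must be careful that the factorization $(\hat L,\hat R)=\psi^!L\circ(\text{pushforward along }\psi)$ holds \emph{on the nose} (not merely up to natural isomorphism through the triangle identities), and that the intermediate category $(Lc\downarrow D)$ is legitimate — here $Lc$ need not be cofibrant, but Lemma~\ref{l:fresse1} only asks for left properness of $D$ and for $\psi$ to be a weak equivalence, both of which hold, so no cofibrancy of $Lc$ is needed. The cofibrancy of $c$ and fibrancy of $d$ enter precisely once, in upgrading ``$\phi$ is a weak equivalence'' to ``$\psi$ is a weak equivalence'' via the Quillen equivalence $(L,R)$; and left properness of $D$ enters precisely once, in invoking Lemma~\ref{l:fresse1} for $\psi$. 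Assembling these, the ``2 out of 3'' property for Quillen equivalences (a composite of Quillen equivalences is a Quillen equivalence) finishes the proof.
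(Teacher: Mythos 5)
Your proof is correct, and it takes a genuinely different route from the paper's. The paper proves Lemma~\ref{l:fresse2} by \emph{directly} verifying the two conditions of a Quillen equivalence: for a cofibration $c\to c_1$ it factors $c_1\to RS(d\sqcup_{Lc}Lc_1)$ through $RSLc_1$ and uses left properness to control the pushout; dually, for a fibration $d\to d_1$ it studies the map $d\sqcup_{Lc}L\hat Q Rd_1\to d_1$. You instead \emph{factor the adjunction itself} as a composite $(c\downarrow C)\rightleftarrows(Lc\downarrow D)\rightleftarrows(d\downarrow D)$, identify the second factor with the undercategory change-of-base adjunction from Lemma~\ref{l:fresse1} along $\psi\colon Lc\to d$, and observe that the first factor inherits the Quillen-equivalence property from $(L,R)$ because everything in $(c\downarrow C)$ lies under the cofibrant $c$. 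Your route delegates all use of left properness to Lemma~\ref{l:fresse1}, avoids explicit fibrant/cofibrant replacements, and makes the single point where the hypotheses ``$c$ cofibrant, $d$ fibrant, $(L,R)$ Quillen equivalence'' enter (upgrading $\phi$ to $\psi$) unmistakably clear; the price is the bookkeeping you flag, namely that $(\hat L,\hat R)$ is the composite on the nose, which you verify correctly. One very minor imprecision: the parenthetical ``the unit and counit of $(L,R)$\ldots are weak equivalences on cofibrant resp.\ fibrant objects'' is not literally true without fibrant/cofibrant replacement in the target; but this plays no role in your actual argument, which correctly reduces the derived equivalence condition for the first factor to the derived equivalence condition for $(L,R)$ applied to $c_1$ cofibrant and $d_1$ fibrant.
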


\begin{proof}
First, note that since $c$ is cofibrant, $d$ is fibrant, and~\eqref{eq:adj_LR} is a Quillen equivalence, the adjunct of~\eqref{eq:phi},
i.e. the map $Lc\to d$ is a weak equivalence. Let  $S$ be the fibrant replacement functor in $D$. We need to show two conditions. The first one is that for any cofibration $c\to c_1$,
the map
$
c_1\to RS\left(d\sqcup_{Lc}Lc_1\right)
$
is a weak equivalence. (The second condition is formulated and checked at the end.)
This map factors as
\[
c_1\to RSLc_1\to RS\left(d\sqcup_{Lc}Lc_1\right),
\]
where the first map is an equivalence because $c_1$ is cofibrant and~\eqref{eq:adj_LR} is a Quillen equivalence; the second map is 
a weak equivalence, because $R$ preserves equivalences between fibrant objects (being right adjoint
in a Quillen adjunction) and the map
\[
Lc_1\to d\sqcup_{Lc} Lc_1
\]
is an equivalence being a pushout of  a weak equivalence $Lc\to d$ along a cofibration $Lc\to Lc_1$
(using left properness of $D$).\vspace{5pt}

For any map $c\to c_1$, let $c\to\hat{Q}c_1\xrightarrow{\sim}c_1$ denote its canonical factorization
into a cofibration and a trivial fibration. In other words, $\hat Q$ is the cofibrant replacement functor in
$c\downarrow C$. 
The second condition that we need to check is that for any morphism $d\to d_1$ with fibrant target, the map
$d\sqcup_{Lc}L\hat{Q}Rd_1\to d_1$ is an equivalence. Consider the composition
\[
L\hat{Q}Rd_1\to d\sqcup_{Lc}L\hat{Q}Rd_1\to d_1.
\]
The first map here is an equivalence, because it is a pushout of a weak equivalence $Lc\to d$ along a
 cofibration $Lc\to L\hat{Q}Rd_1$ (and $D$ is left proper). The composition of the two $L\hat{Q}Rd_1\to d_1$ is an equivalence,
 because $\hat{Q}Rd_1$ is cofibrant, $d_1$ is fibrant, $\hat{Q}Rd_1\to Rd_1$ is an equivalence
  (and~\eqref{eq:adj_LR} is a Quillen equivalence).   By two out of three axiom, the map 
  $d\sqcup_{Lc}L\hat{Q}Rd_1\to d_1$ is also an equivalence.

\end{proof}

\begin{lmm}\label{l:fresse3}
In case~\eqref{eq:adj_LR} is a Quillen equivalence between left proper model categories, $R$ preserves weak equivalences,
and~\eqref{eq:phi} is a weak equivalence, then for any $c\in C$, $d\in D$,  \eqref{eq:adj_LRhat} is a Quillen equivalence.
\end{lmm}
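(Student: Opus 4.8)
The statement is a direct corollary of Lemmas~\ref{l:fresse1} and~\ref{l:fresse2}, and the plan is to reduce it to them by replacing the given objects $c$ and $d$ by a cofibrant and a fibrant model, respectively, while keeping track of the map~\eqref{eq:phi}. So the first step is: choose a cofibrant replacement $q\colon Qc\xrightarrow{\simeq}c$ in $C$ and a fibrant replacement $j\colon d\xrightarrow{\simeq}Sd$ in $D$. Because $R$ preserves weak equivalences by hypothesis, $Rj\colon Rd\to RSd$ is a weak equivalence; composing, we obtain a weak equivalence
\[
\phi'\colon Qc\xrightarrow{q}c\xrightarrow{\phi}Rd\xrightarrow{Rj}RSd,
\]
with source cofibrant and target of the form $R(\text{fibrant})$. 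Here the composite $\phi\circ q$ is a weak equivalence by the two-out-of-three property.

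The second step is to apply Lemma~\ref{l:fresse2} to the datum $(\phi'\colon Qc\to RSd)$: since $Qc$ is cofibrant, $Sd$ is fibrant, $\phi'$ is a weak equivalence, $D$ is left proper, and~\eqref{eq:adj_LR} is a Quillen equivalence, the induced adjunction
\[
\hat L\colon (Qc\downarrow C)\rightleftarrows (Sd\downarrow D)\colon\hat R
\]
is a Quillen equivalence. The third step is to connect this to the adjunction~\eqref{eq:adj_LRhat} associated to the original $\phi\colon c\to Rd$. For this I would use Lemma~\ref{l:fresse1} twice — once on each side. On the $C$-side, $q\colon Qc\to c$ is a weak equivalence and $C$ is left proper, so $q^!\colon (Qc\downarrow C)\rightleftarrows(c\downarrow C)\colon q^*$ is a Quillen equivalence. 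On the $D$-side, $j\colon d\to Sd$ is a weak equivalence and $D$ is left proper, so $j^!\colon (d\downarrow D)\rightleftarrows(Sd\downarrow D)\colon j^*$ is a Quillen equivalence.

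The final step is a diagram-chase of compatibility: I must check that the square of adjunctions
\[
\xymatrix{
(Qc\downarrow C)\ar@<2pt>[r]^{\hat L}\ar@<2pt>[d]^{q^!}&(Sd\downarrow D)\ar@<2pt>[l]\ar@<2pt>[d]\\
(c\downarrow C)\ar@<2pt>[r]\ar@<2pt>[u]^{q^*}&(d\downarrow D)\ar@<2pt>[l]\ar@<2pt>[u]^{j^*}
}
\]
commutes up to natural isomorphism, where the bottom horizontal adjunction is~\eqref{eq:adj_LRhat} for $\phi$ and the right vertical one is $j^!\dashv j^*$ (or rather its inverse equivalence). Concretely, one verifies that $\hat R$ for $\phi'$ composed with $j^*$ agrees, up to the canonical identifications, with $q^*$ composed with $\hat R$ for $\phi$ — this is just unwinding the definitions of $\hat R$ (precompose with $\phi$, then postcompose), of $q^*$ and $j^*$ (precompose with $q$, resp. with $j$), and using functoriality of pushouts. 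Once the square commutes, the $2$-out-of-$3$ property for Quillen equivalences (applied along the composite $(c\downarrow C)\xrightarrow{q^*}(Qc\downarrow C)\xrightarrow{\hat L}(Sd\downarrow D)\xrightarrow{j^*{}^{-1}}(d\downarrow D)$, all three of which are Quillen equivalences) forces $\hat L\dashv\hat R$ for $\phi$ to be a Quillen equivalence, which is the claim. The only mildly delicate point — and the one I would write out carefully — is the commutativity of that square of adjunctions, i.e. checking that the three left-adjoint (or three right-adjoint) composites genuinely agree as functors, not merely on homotopy categories; everything else is a bookkeeping invocation of the two preceding lemmas and left properness.
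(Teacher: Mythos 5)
Your proof is correct and follows the same route as the paper's. The paper also forms the chain $(Qc\downarrow C)\rightleftarrows (c\downarrow C)\rightleftarrows (d\downarrow D)\rightleftarrows (Sd\downarrow D)$, invokes Lemma~\ref{l:fresse1} for the two outer adjunctions and Lemma~\ref{l:fresse2} (using that $R$ preserves weak equivalences to ensure $Rd\to RSd$ is an equivalence) for the composite, and then applies the two-out-of-three property for Quillen equivalences twice; the commutativity check you flag as the delicate step is precisely what underlies the paper's one-line claim that "the composition of the three is a Quillen equivalence by Lemma~\ref{l:fresse2}."
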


\begin{proof}
Let $Q$ denote the cofibrant replacement functor in $C$ and $S$ denote the fibrant replacement functor in $D$. 
One has a composition of adjunctions:
\[
(Qc\downarrow C)\leftrightarrows (c\downarrow C)\leftrightarrows (d\downarrow D)\leftrightarrows (Sd\downarrow D).
\]
The left and right ones are Quillen equivalences by Lemma~\ref{l:fresse1}. The composition of the three is a Quillen equivalence by Lemma~\ref{l:fresse2}
(here we use that $R$ preserves weak equivalences as one needs $Rd\to RSd$ to be a weak equivalence). 
Applying twice the two out of three property of Quillen equivalences, we get that the middle adjunction is also a Quillen equivalence. 
\end{proof}

%This last lemma will be crucial for the proof of Main Theorem~\ref{th:main}.

%For any weak equivalence of Reedy cofibrant operads $\phi\colon W\xrightarrow{\simeq}W_1$, the induction 
%restriction adjunction
%\begin{equation}\label{eq:M1}
%\xymatrix{
%W\ar[r]^\eta\ar[d]^\phi_\simeq&M\ar[d]^\simeq\\
%W_1\ar[r]^{\eta_1}&M_1
%}
%\end{equation}
%is a Quillen equivalence by Theorem~\ref{th:bimod_ind_restr}.

\begin{pro}\label{p:M1}
 Let $\phi\colon W\xrightarrow{\simeq}W_1$ be a weak equivalence of Reedy cofibrant operads.
 
(i) The adjunction 
 \begin{equation}\label{eq:W_W1_adj_under}
\hat\phi_B^!\colon(W\downarrow\Lambda\Bimod_W)\leftrightarrows(W_1\downarrow\Lambda\Bimod_{W_1})\colon\hat\phi_B^*
\end{equation}
is a Quillen equivalence.

(ii)  Let
$\eta\colon W\to M$ be a cofibration in $\Lambda\Bimod_W$, then there exists a $W_1$-bimodule $M_1$
endowed with a map $\eta_1\colon W_1\to M_1$ of $W_1$-bimodules, and an equivalence $M\xrightarrow{\simeq} M_1$
of $W$-bimodules, such that the square
\begin{equation}\label{eq:M1}
\xymatrix{
W\ar[r]^\eta\ar[d]^\phi_\simeq&M\ar[d]^\simeq\\
W_1\ar[r]^{\eta_1}&M_1
}
\end{equation}
commutes.
\end{pro}

\begin{proof}
(i) The adjunction~\eqref{eq:W_W1_adj_under} is induced by the map $\phi$ and the restriction-induction adjunction 
\[
\phi_B^!\colon\Lambda\Bimod_W\leftrightarrows\Lambda\Bimod_{W_1}\colon\phi_B^*,
\]
which is a Quillen equivalence by Theorem~\ref{th:bimod_ind_restr}.
By Theorem~\ref{th:bim_reedy_model}~(iii), the categories $\Lambda\Bimod_W$ and $\Lambda\Bimod_{W_1}$ are left proper relative to $\Sigma$-cofibrant objects. For simplicity
Lemmas~\ref{l:fresse1}-\ref{l:fresse3} were formulated for left proper model categories, but similar statements
hold also in the relative case. To apply Lemma~\ref{l:fresse1} to the case $C$ is 
$\Lambda\Bimod_W$ or $\Lambda\Bimod_{W_1}$, one needs to assume that $c_1$ and $c_2$ are 
$\Sigma$-cofibrant. To apply Lemma~\ref{l:fresse2} to the adjunction~\eqref{eq:W_W1_adj_under} one needs $Lc$ and $d$ to be $\Sigma$-cofibrant. 
As a consequence, to apply an analogue of Lemma~\ref{l:fresse3} to the adjunction~\eqref{eq:W_W1_adj_under}, one needs to assume that $c$, $Qc$, $LQc$, $d$, and $Sd$
are $\Sigma$-cofibrant. One has $c=W$ and $d=W_1$ as cofibrant operads they are also
$\Sigma$-cofibrant by~\cite[Proposition~4.3]{Berger03}. We choose 
for $Qc$ the Boardman-Vogt resolution $\calB_W^\Lambda(W)$ of $W$ as a bimodule 
over itself. It is $\Sigma$-cofibrant by the same argument as the proof of Lemma~\ref{l:W1}
-- one  takes a filtration by the number of vertices. Similarly one proves that
$LQc=\phi_B^!\left(\calB_W^\Lambda(W)\right)$ is $\Sigma$-cofibrant. Elements of $\phi_B^!\left(\calB_W^\Lambda(W)\right)$ are trees with vertices labelled by $W$ and $W_1$ and 
by numbers in $[0,1]$. Only vertices assigned~1 are labelled by $W_1$. To prove the $\Sigma$-cofibrancy,
one should consider a double filtration by the total number of vertices and also by the number of 
vertices that are not labelled by~1. Finally for $Sd$ one can choose the functorial Reedy fibrant replacement
$W_1^f$ of $W_1$ as an operad. It has automatically the structure of a Reedy fibrant $W_1$-bimodule.
Since $W_1\to W_1^f$ is a cofibration of operads, $W_1^f$ is a cofibrant operad and therefore
is $\Sigma$-cofibrant.\vspace{5pt}

%
%
%Applying Lemma~\ref{l:fresse3} to the adjunction
%\[
%\phi_B^!\colon\Lambda\Bimod_W\leftrightarrows\Lambda\Bimod_{W_1}\colon\phi_B^*
%\]
%and the morphism
%\[
%\phi\colon W\to W_1
%\]
% of W-bimodules, we get that the adjunction
%\[
%\hat\phi_B^!\colon(W\downarrow\Lambda\Bimod_W)\leftrightarrows(W_1\downarrow\Lambda\Bimod_{W_1})\colon\hat\phi_B^*
%\]
%is a Quillen equivalence. 

(ii) One can take $M_1:=\hat\phi_B^!(M)$. 
The map $M\to \hat\phi_B^* (\hat\phi_B^!(M))=M_1$
is a weak equivalence, because $W\to M$ is a cofibrant object in $W\downarrow\Lambda\Bimod_W$
and the adjunction~\eqref{eq:W_W1_adj_under} is a Quillen equivalence.
The square~\eqref{eq:M1} commutes  because the natural map $M\to M_1$ is a morphism in the under category $W\downarrow\Lambda\Bimod_W$. 
\end{proof}

We are now ready to finish the proof of Main Theorem~\ref{th:main}. For a weakly doubly reduced operad~$O$, one has a zigzag of equivalences
\[
O\xleftarrow{\simeq}WO\xrightarrow{\simeq}W_1O
\]
from Theorem~\ref{th:zigzag} in which $WO$ and $W_1O$ are Reedy cofibrant. 
Given a map $\eta\colon O\to M$ of $O$-bimodules, one has a  zigzag of derived mapping spaces of bimodules:
\begin{multline}\label{eq:zigzag1}
\Bimod_O^h(O,M)\simeq\Bimod_{WO}^h(O,M)\simeq\Bimod_{WO}^h(WO,M)
\simeq\\
\Bimod_{WO}^h(WO,M^c)\simeq\Bimod_{WO}^h(W_1O,M_1^c)\simeq\Bimod_{W_1O}^h(W_1O,M_1^c),
\end{multline}
where $M^c$ is obtained by taking the canonical factorization $WO\to M^c\xrightarrow{\simeq}M$
(in the category of $WO$-bimodules) into a cofibration and a trivial fibration of the
map $WO\to M$, and $M_1^c$ is obtained from $M^c$ using Proposition~\ref{p:M1}(ii). The first and the last equivalences are
 by Theorem~\ref{th:bimod_ind_restr}; the second, third, and fourth by replacing the source and/or target by an equivalent object. One has 
 a similar zigzag for Ibimodules mapping spaces:
 \begin{multline}\label{eq:zigzag2}
\Ibimod_O^h(O,M)\simeq\Ibimod_{WO}^h(O,M)\simeq\Ibimod_{WO}^h(WO,M)
\simeq\\
\Ibimod_{WO}^h(WO,M^c)\simeq\Ibimod_{WO}^h(W_1O,M_1^c)\simeq\Ibimod_{W_1O}^h(W_1O,M_1^c).
\end{multline}
By Theorem~\ref{th:main_2reduced}, the generalized delooping~\eqref{eq:equiv_general2_red} holds for the right-most mapping spaces
of~\eqref{eq:zigzag1} and~\eqref{eq:zigzag2}. Therefore it holds for the left-most mapping spaces as well. The truncated case
is obtained by the same zigzag.\hfill$\square$

\subsection{Explicit maps}\label{ss:proof:expl}
In this subsection we want to emphasize the fact that there are explicit maps realizing the equivalences of Main Theorem~\ref{th:main}. \vspace{5pt}

In case of a doubly reduced operad $O$,
in Subsection~\ref{s:map} besides the maps $\xi_\bullet^\Lambda$ that realize the equivalences~(\ref{eq:equiv_general1_red}-\ref{eq:equiv_general2_red})
of Theorem~\ref{th:main_2reduced}, we also constructed maps $\xi_\bullet^\Sigma$, see~(\ref{eq:equiv_red_pr1}-\ref{eq:equiv_red_pr2}).
The fact that $\xi_\bullet^\Sigma$ are equivalences follows easily from Theorems~\ref{th:bimod_maps} and~\ref{th:id_ibim_equiv}~(ii) and
 the fact that $\xi_\bullet^\Lambda$  are ones. This can also be shown directly, but the proof is more tedious than in the case of
 $\xi_\bullet^\Lambda$. According to Examples~\ref{ex:BprojO} and~\ref{ex:Ibproj} one would need to consider a double filtration
 in $\calB^\Sigma(O)$ and $\overline{\calI b}{}^\Sigma(O)$, as the inclusion from the $(k-1)$st filtration term to the $k$-th one for both of them is a sequence of $(k+1)$ cell attachments, and then to compare the mapping spaces for each term of this double filtration. This has been done for the associative operad
 by the second author in~\cite[Part~II]{Tourtchine10}.\vspace{5pt} 
 
 The maps $\xi_\bullet^\Lambda$ and $\xi_\bullet^\Sigma$ can also be generalized to the case  of any weakly doubly reduced operad,
 but in the latter case one would need to replace $\Sigma O(2)$ by the homotopy equivalent space $\calI b^\Lambda(O)(1)/\partial\calI b^\Lambda(O)(1)$.
 Figure~\ref{Fig:Ibim_weak} shows  how typical elements in $\calI b^\Lambda(O)(1)$ and $\partial\calI b^\Lambda(O)(1)$ look like.
  
 \begin{figure}[!h]
\begin{center}
\includegraphics[scale=0.3]{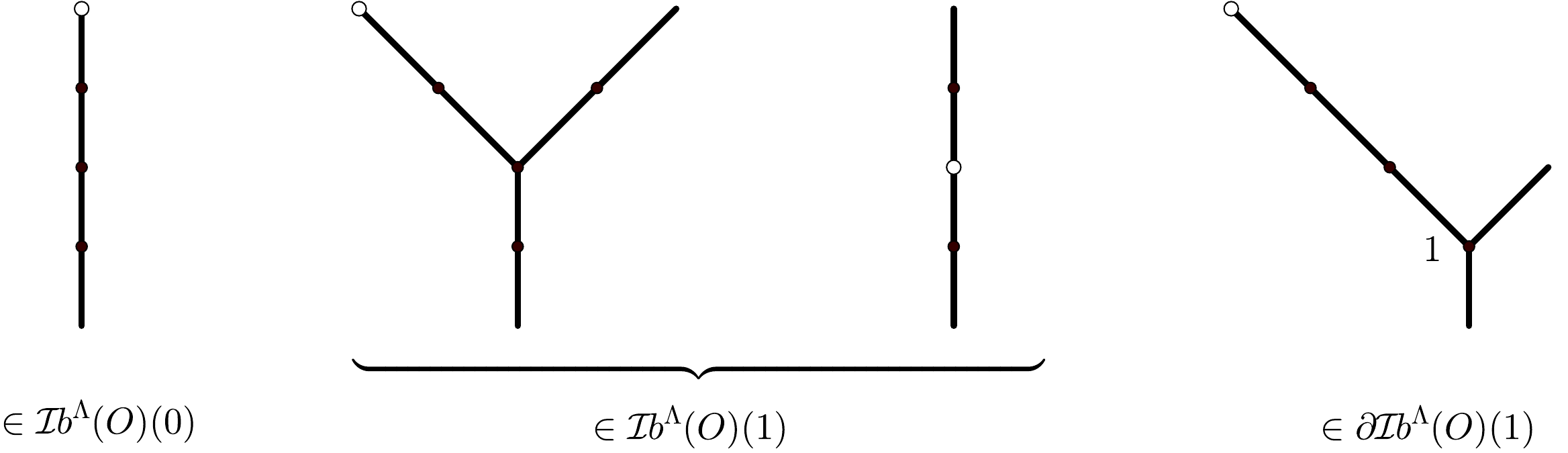}
\caption{Examples of elements in $\calI b^\Lambda(O)(0)$, $\calI b^\Lambda(O)(1)$, $\partial\calI b^\Lambda(O)(1)$.}\label{Fig:Ibim_weak}\vspace{-10pt}
\end{center}
\end{figure}

%\newpage

 We claim that the obtained maps 
\[
\hspace{-30pt}\begin{array}{l}\vspace{5pt}
\xi_k^\Sigma\colon  \Map_*\left(\calI b^\Lambda(O)(1)/\partial\calI b^\Lambda(O)(1), \TT_k\Bimod_O\left(\TT_k \calB_k^\Sigma(O),\TT_kM
\right)\right) \\ 
\hspace{230pt}\longrightarrow \TT_k\Ibimod_O\left(\TT_k \overline{\calI b}{}_k^\Sigma(O),\TT_kM\right),\, k\geq 0,
\end{array} 
\]
producing  at the limit $k=\infty$:
\[
\xi^\Sigma\colon  \Map_*\left(\calI b^\Lambda(O)(1)/\partial\calI b^\Lambda(O)(1), \Bimod_O\left(\calB^\Sigma(O),M
\right)\right) \longrightarrow
\Ibimod_O\left(\overline{\calI b}{}^\Sigma(O),M\right);
\]
and maps
\[
\hspace{-20pt}\begin{array}{l}\vspace{5pt}
\xi_k^\Lambda\colon  \Map_*\left(\calI b^\Lambda(O)(1)/\partial\calI b^\Lambda(O)(1), \TT_k\Bimod_O\left(\TT_k \calB^\Lambda(O),\TT_kM^f
\right)\right) \\ 
\hspace{240pt}\longrightarrow \TT_k\Ibimod_O\left(\TT_k \overline{\calI b}{}^\Lambda(O),\TT_kM^f\right),\, k\geq 0,
\end{array} 
\]
producing  at the limit $k=\infty$:
\[
\xi^\Lambda\colon  \Map_*\left(\calI b^\Lambda(O)(1)/\partial\calI b^\Lambda(O)(1), \Bimod_O\left(\calB^\Lambda(O),M^f
\right)\right) \longrightarrow
\Ibimod_O\left(\overline{\calI b}{}^\Lambda(O),M^f\right)
\]
 realize the equivalences of Main Theorem~\ref{th:main}. We failed to find a direct proof of this statement,
 %\footnote{The main difficulty is
% relation~$(i)$ in Constructions~\ref{B0} and~\ref{C8}, that\rq{}s why we have to use the homotopy theory to prove Main Theorem~\ref{th:main}.}
  but it follows easily from the
 fact that the maps $\xi_\bullet^\Lambda$ and $\xi_\bullet^\Sigma$ are functorially defined, and thus can be shown to commute 
 with the zigzags~\eqref{eq:zigzag1} and~\eqref{eq:zigzag2} from the previous subsection.

\subsection{Main Theorem~\ref{th:main3}}\label{ss:last}
Even though we constructed an explicit map that proves equivalence~\eqref{eq:equiv_general2}
of Main Theorem~\ref{th:main}, this map still seems mysterious. On the other hand,
the result of this theorem and its underlying construction has an easy generalization that we
present below. It  shed some light on the algebraic nature of this construction. \vspace{5pt}

Recall Definitions~\ref{d:rho},~\ref{d:coherent}, and~\ref{d:scoherent}.

\begin{defi}\label{d:coherent2}
An infinitesimal bimodule $N$ over a weakly doubly reduced operad $O$ is called {\it (strongly) coherent} if each diagram $\rho_i^N$, $i\geq 2$, is (strongly) coherent.
\end{defi}

Let $C_N$ denote the mapping cone of the infinitesimal left action map $\circ_1\colon O(2)\times N(0)\to N(1)$ pointed in the 
apex of the cone.
Note that $C_N$ is determined by the 1-truncation $\TT_1N$ of $N$.

\begin{mainthm}\label{th:main3}
Let $O$ be a weakly doubly reduced, $\Sigma-cofibrant$, and well-pointed topological operad,
let $\mu\colon N\to O$ be a map of $O$-Ibimodules, $\eta\colon O\to M$ be a map of
$O$-bimodules, and $M(0)=*$. If $N$ is coherent, then one has an equivalence of  towers
\begin{equation}\label{eq:equiv_general3}
\TT_k\Ibimod_O^h\left(\TT_kN,\TT_kM\right)\simeq \Map_*\left(C_N, \TT_k\Bimod_O^h\left(\TT_kO,\TT_kM
\right)\right),\, k\geq 0,
\end{equation}
implying at the limit $k=\infty$:
\begin{equation}\label{eq:equiv_general4}
\Ibimod_O^h\left(N,M\right)\simeq \Map_*\left(C_N, \Bimod_O^h\left(O,M
\right)\right).
\end{equation}
\end{mainthm}

The left-hand sides in \eqref{eq:equiv_general4} and~\eqref{eq:equiv_general3} are pointed in
$\eta\circ\mu$ and $\TT_k(\eta\circ\mu)$.

\begin{proof}[Sketch of the proof]
First prove it in the case of a stritctly doubly reduced operad $O$, then generalize it to the
weakly doubly reduced case. (One has to use the fact that the model category $\Lambda\Ibimod_O$
is right proper for any $O$, which is a consequence of the fact that $\Topo$ is so and pullbacks
in $\Lambda\Ibimod_O$ are obtained objectwise.)\vspace{5pt}

For the strictly doubly reduced case, one has $\calI b^\Lambda(N)(1)/\partial
\calI b^\Lambda(N)(1)=C_N$ (compare with Example~\ref{D2}). On the other hand, one can show that $\overline{\calI b}{}^\Lambda(N):=
\calI b^\Lambda(N){\ltimes}\calB^\Lambda(O)$ is homeomorphic to $\calI b^\Lambda(N)$ 
(which is analogous to Theorem~\ref{H5}).  By the general Construction~\ref{const:map}, one gets a map
\[
\Map_*\left(C_N, \Bimod_O\left(\calB^\Lambda(O),M^f
\right)\right) \to
\Ibimod_O\left(\overline{\calI b}{}^\Lambda(N),M^f\right),
\]
which similarly to the argument in Subsection~\ref{ss:proof:2reduced} is proved to be an equivalence 
provided $N$ is coherent. 
\end{proof}

This result  implies Main Theorem~\ref{th:main} by taking $N=O$. %On the other hand it also appears
% less surprising. 
  Note  that  in case $N$ is strongly coherent, each $N(k)$, $k\geq 2$, can be expressed
 from $N(0),\ldots,N(k-1)$ by means of a homotopy colimit~\eqref{eq:scoherent}. 
However, the structure of a $k$-truncated infinitesimal bimodule can not be recovered from the $(k-1)$-truncation. It would be interesting to find more examples of (strongly) coherent infinitesimal bimodules,
and to explore further their connection with the manifold functor calculus.
% Using this  the homotopy type of $N$ can be determined from its 1-truncation $\TT_1N$. 
%  This means that both sides in~\eqref{eq:equiv_general4} depend only on the $\TT_1N$ part of $N$. 

\appendix

\section{Appendix}%{Appendices}

\subsection{A convenient category of topological spaces}\label{s:A0}

Below we recall  basic properties of the category $\Topo$ of $k$-spaces 
 from~\cite[Appendix~A]{Lewis78} and~\cite[Section~2.4]{Hovey99}, where this category is denoted by~$\textbf{K}$.\vspace{5pt}

For any topological space $X$, its kelleyfication $kX$  is the refinement space of $X$  in which
$U\subset X$ is open if and only if for any continuous map $f\colon K\to X$ with a compact source,
$f^{-1}(U)$ is open in $K$. The (identity) map $kX\to X$ is continuous  and induces a bijective map of  the sets 
of singular chains, therefore it is a weak homotopy equivalence. A space $X$ is called {\it $k$-space}
if $kX=X$. One has an adjunction
\begin{equation}\label{eq:k_adj}
i\colon\Topo\leftrightarrows\AllTop\colon k
\end{equation}
between the category $\Topo$ of $k$-spaces and the category $\AllTop$ of all topological spaces.
(Here $i$ is the inclusion functor.)  Both 
categories admit a cofibrantly generated model structures, where in both cases equivalences are
weak homotopy equivalences, the set of generating cofibrations is $\{S^{n-1}\hookrightarrow D^n\}_{n\geq 0}$ and that of generating acyclic cofibrations is $\{D^n\hookrightarrow D^n\times [0,1]\}_{n\geq 0}$.
With respect to these model structures, the adjunction~\eqref{eq:k_adj} is a Quillen equivalence~\cite[Theorem~2.4.23]{Hovey99}. Limits in $\Topo$ are kelleyfication of the usual limits. Colimits are the 
usual colimits since any usual colimit of $k$-spaces is always a $k$-space.
It follows from this adjunction, that a map $X\to kY$ from a $k$-space $X$ to the kelleyfication of another space is
continuous if and only if the composition $X\to kY\to Y$ is so.\vspace{5pt}

The category $\Topo$ is a so called {\it convenient} topological category~\cite{Steenrod67,Vogt71}: it contains all $CW$-complexes, it is complete and cocomplete,
and 
it is cartesian closed meaning that for any $X,Y,Z\in\Topo$ one has homeomorphisms:
\begin{equation}\label{eq:exp2}
\Map(X,\Map(Y,Z))\cong\Map(X\times Y,Z)\cong \Map(Y,\Map(X,Z)).
\end{equation}
From~\eqref{eq:exp2} we get that for any $Y,Z\in\Topo$ one has adjunctions
\begin{equation}\label{eq:adj_top1}
(-)\times Y\colon\Topo\leftrightarrows\Topo\colon\Map(Y,-);
\end{equation}
\begin{equation}\label{eq:adj_top2}
\Map(-,Z)\colon\Topo\leftrightarrows\Topo^{op}\colon\Map(-,Z).
\end{equation}
The first adjunction~\eqref{eq:adj_top1} implies that the product $(-)\times Y$ distributes
over colimits:
\[
\left(\underset{I}{\operatorname{colim}}\,X_i\right)\times Y\cong \underset{I}{\operatorname{colim}}
\left(X_i\times Y\right),
\]
which is a prerequisite to define the theory of operads on the monoidal category $(\Topo,\times,*)$;
and one also gets that $\Map(Y,-)$ preserves limits:
\[
\Map\left(Y,\underset{I}{\operatorname{lim}}\, Z_i\right)\cong\underset{I}{\operatorname{lim}}\,\Map\left(Y,Z_i\right).
\]
From the second adjunction~\eqref{eq:adj_top2}, $\Map(-,Z)$ converts colimits to limits:
\begin{equation}\label{eq:colim_lim}
\Map\left(\underset{I}{\operatorname{colim}}\,X_i,Z\right)\cong
\underset{I}{\operatorname{lim}}\, \Map\left(X_i,Z\right).
\end{equation}

\begin{lmm}\label{l:q_map}
For any $k$-spaces $X$ and $Y$, and any equivalence relation $\sim$ on $X$, the natural inclusion map
$\Map(X/{\sim},Y)\hookrightarrow \Map(X,Y)$ is a homeomorphism on its image.
\end{lmm}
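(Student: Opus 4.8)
The statement to prove is Lemma~\ref{l:q_map}: for any $k$-spaces $X$ and $Y$ and any equivalence relation $\sim$ on $X$, the natural inclusion $\Map(X/{\sim},Y)\hookrightarrow\Map(X,Y)$ is a homeomorphism onto its image.

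The plan is to first establish the continuity of the inclusion, then characterize its image, and finally check that the inverse (on the image) is continuous. The inclusion is induced by precomposition with the quotient map $q\colon X\to X/{\sim}$, which is continuous; since precomposition with a fixed continuous map is always continuous on mapping spaces in a cartesian closed category (this follows formally from the exponential adjunction~\eqref{eq:exp2}, or one can see it directly on subbasic open sets of the compact-open topology after kelleyfication), the inclusion $q^*$ is continuous. The image of $q^*$ is exactly the set of continuous maps $f\colon X\to Y$ that are constant on each $\sim$-equivalence class: such an $f$ factors through $q$ as a map of sets $\bar f\colon X/{\sim}\to Y$, and $\bar f$ is automatically continuous because $X/{\sim}$ carries the quotient topology and $\bar f\circ q=f$ is continuous; so $f=q^*(\bar f)$ lies in the image. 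Conversely every element of the image is visibly of this form. Moreover $q^*$ is injective because $q$ is surjective.

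The crux is then to show that the set-theoretic inverse $\Map(X,Y)\supset\operatorname{Im}(q^*)\to\Map(X/{\sim},Y)$, $f\mapsto\bar f$, is continuous for the subspace topology on $\operatorname{Im}(q^*)$. Here I would use the universal property of the mapping space together with cartesian closedness. By the exponential law, giving a continuous map $g\colon Z\to\Map(X/{\sim},Y)$ is the same as giving a continuous map $\hat g\colon Z\times(X/{\sim})\to Y$. So I take $Z=\operatorname{Im}(q^*)$ with the subspace topology, and the candidate map $g=(f\mapsto\bar f)$; its adjoint would be the evaluation $\operatorname{ev}\colon \operatorname{Im}(q^*)\times(X/{\sim})\to Y$, $(f,[x])\mapsto f(x)$. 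To see $\operatorname{ev}$ is continuous, note that $\mathrm{id}\times q\colon \operatorname{Im}(q^*)\times X\to \operatorname{Im}(q^*)\times(X/{\sim})$ is a quotient map — this is the key point where I need that $Top$ is the category of $k$-spaces, since for $k$-spaces the product of a quotient map with the identity of any space remains a quotient map (a standard fact about $k$-spaces, recorded in the references of Appendix~\ref{s:A0}), whereas this can fail in $\AllTop$, precisely the phenomenon flagged in Subsection~\ref{ss:top}. Since $\operatorname{ev}\circ(\mathrm{id}\times q)$ is just the restriction of the (continuous) evaluation $\Map(X,Y)\times X\to Y$ to $\operatorname{Im}(q^*)\times X$, it is continuous; and because $\mathrm{id}\times q$ is a quotient map, $\operatorname{ev}$ itself is continuous. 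Passing back through the exponential adjunction, $f\mapsto\bar f$ is continuous, which finishes the argument.

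The main obstacle, as indicated above, is exactly the claim that $\mathrm{id}_{Z}\times q$ is a quotient map in $Top$; everything else is a routine unwinding of the exponential law and the universal property of the quotient topology. I would cite the basic facts about $k$-spaces from \cite[Appendix~A]{Lewis78} (or \cite{Steenrod67,Vogt71}) for this, and then the rest is formal. Note that the pointed version used elsewhere in the paper follows by the same argument applied to pointed mapping spaces, replacing products by smash products and using that smashing with the identity of a quotient map is again a quotient map of pointed $k$-spaces.
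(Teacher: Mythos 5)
Your proof is correct, but it takes a genuinely different route from the paper's. You run the classical argument: identify the image as the maps constant on equivalence classes, then prove continuity of the inverse via the exponential law, reducing it to the continuity of the evaluation $\operatorname{Im}(q^*)\times(X/{\sim})\to Y$, which you obtain from the key fact that $\mathrm{id}_Z\times q$ remains a quotient map in the category of $k$-spaces. The paper instead writes $X/{\sim}$ as the pushout of $(X/{\sim})^\delta\leftarrow X^\delta\rightarrow X$ (with $(-)^\delta$ the discretization functor) and applies the fact that $\Map(-,Y)$ converts colimits to limits, so that $\Map(X/{\sim},Y)$ becomes the pullback $\Map((X/{\sim})^\delta,Y)\times_{\Map(X^\delta,Y)}\Map(X,Y)$, which is then identified with the image subspace of $\Map(X,Y)$. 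The paper's argument is shorter and purely formal given the adjunction~\eqref{eq:adj_top2}, though it quietly uses that $\Map((X/{\sim})^\delta,Y)\to\Map(X^\delta,Y)$ is an embedding of (kelleyfied) products; your argument is longer but self-contained at the point-set level and makes explicit exactly which convenience property of $k$-spaces is being used (product-stability of quotient maps), which is the property whose failure in $\AllTop$ explains the counterexample cited in Subsection~\ref{ss:top}. Both inputs are standard facts from the references in Appendix~\ref{s:A0}, so either proof is acceptable; your closing remark about the pointed version also matches how the paper uses the lemma.
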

\begin{proof}
Consider the pushout diagram 
\begin{equation}\label{eq:q_map}
(X/{\sim})^\delta\leftarrow X^\delta\rightarrow X,
\end{equation}
 where $(-)^\delta$ is the discretization functor. The colimit of~\eqref{eq:q_map} is $X/{\sim}$. 
 Applying~\eqref{eq:colim_lim} to this diagram, the left-hand side is $\Map(X/{\sim},Y)$,
 while the right-hand side is exactly the image subspace in $\Map(X,Y)$.
 \end{proof}

 Recall again that the mapping spaces and subspaces in this lemma (and everywhere throughout the paper)
 are taken with the kelleyfication of the standard compact-open and subspace topologies. In $\AllTop$
 the inclusion map $\Map(X/{\sim},Y)\hookrightarrow \Map(X,Y)$ may not even be a weak 
 homotopy equivalence on its image, see example~\cite{mathoverflow}.
 
%
%\begin{cor}\label{cor:q_map}
%For any $X,Y,X\inTop$, and any equivalence relation $\sim$ on $X$, a map $Z\to \Map(X/{\sim},Y)$
%is continuous if and only if the composition map $Z\to \Map(X/{\sim},Y)\hookrightarrow \Map(X,Y)$
%is so.
%\end{cor}

\subsection{Proof of Theorem~\ref{th:zigzag}}\label{s:A1}
Without loss of generality, one can assume that the operad $O$ is well-pointed and $\Sigma$-cofibrant. (If 
not, one can replace $O$ by $|C_\bullet O|\times\calB_\infty$, where $C_\bullet O$ is the simplicial 
operad of singular chains on $O$, and $|-|$ denotes the realization functor. The operad $|C_\bullet O|$ 
is well pointed and has cofibrant components. By taking the objectwise product with $\calB_\infty$,
the resulting operad is $\Sigma$-cofibrant.) Define an operad $WO$ as the Boardman-Vogt resolution of 
$O_{>0}$, see~\cite{Boardman68}, in positive arities and as a point in arity~0. Composition with
$*_0^W\in WO(0)$ is defined in the same way as the right action of $*_0$ on $\calB^\Lambda(M)$ 
and on $\calI b^\Lambda(M)$, see Section~\ref{s:cof}. \vspace{5pt}

The operad $W_1O$ is obtained from $WO$ by collapsing the arity one component $WO(1)$
to a point and quotienting out the other components by the induced relations.\vspace{5pt} 

Recall that $WO(k)$ is the space
\begin{equation}\label{eq:WO}
WO(k)=\left.\left(\coprod_{T\in \textbf{tree}_k^{\geq 1}} [0,1]^{E^{int}(T)}\times 
\prod_{v\in V(T)} O(|v|)\right)\right/\sim.
\end{equation}
%where $\textbf{tree}_k^{\geq 1}$ denotes the subset of $\textbf{tree}_k$ consisting of trees whose all vertices are of arity $\geq 1$.
 The relations are: relation~(ii) of Construction~\ref{B0}; analogue of 
relation~(i) of Construction~\ref{B0}:

\begin{center}
\includegraphics[scale=0.19]{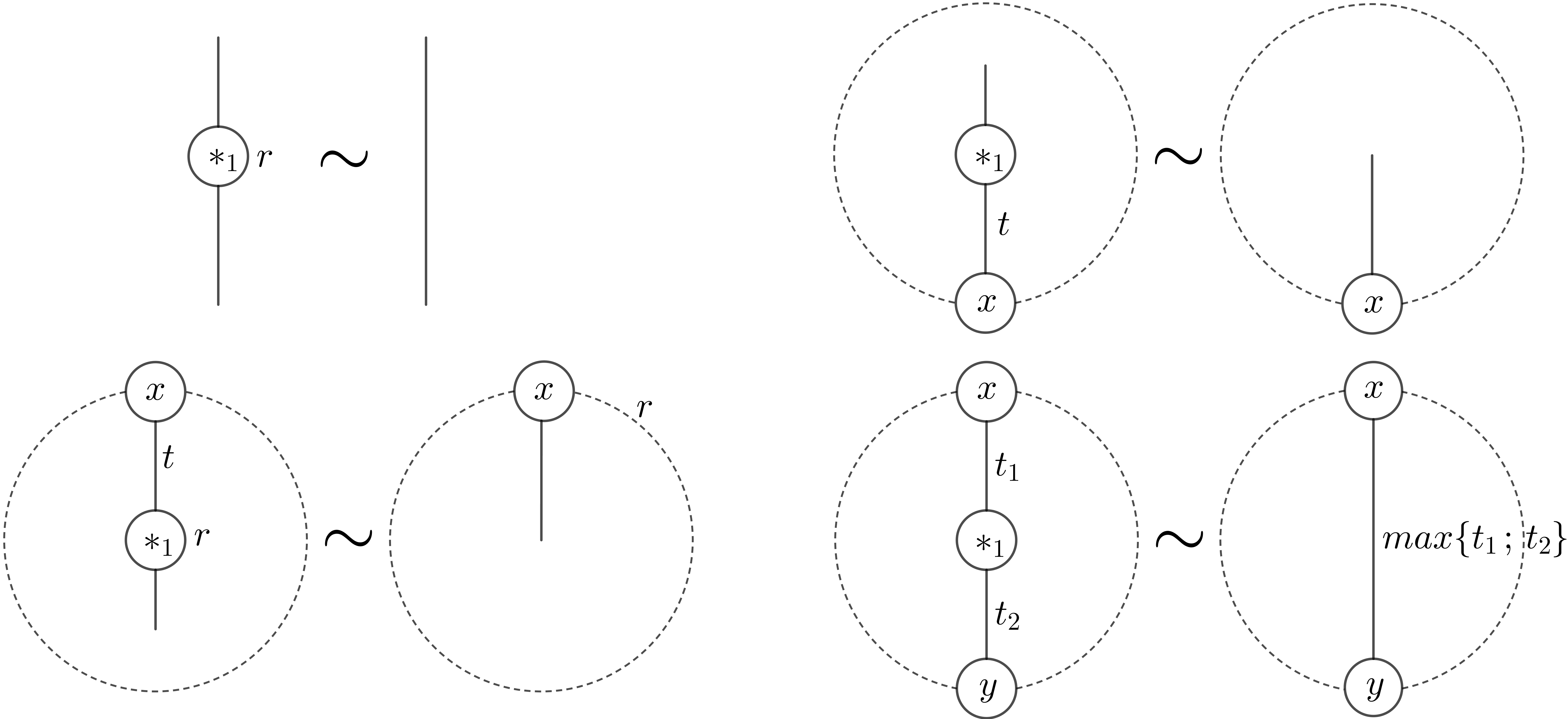}\vspace{-2pt}
\end{center}

\noindent (the relations with dotted circles are \lq\lq{}local\rq\rq{} in the sense that the trees outside 
the dotted circles are the same for left and right sides); and composition relations:

\begin{center}
\includegraphics[scale=0.21]{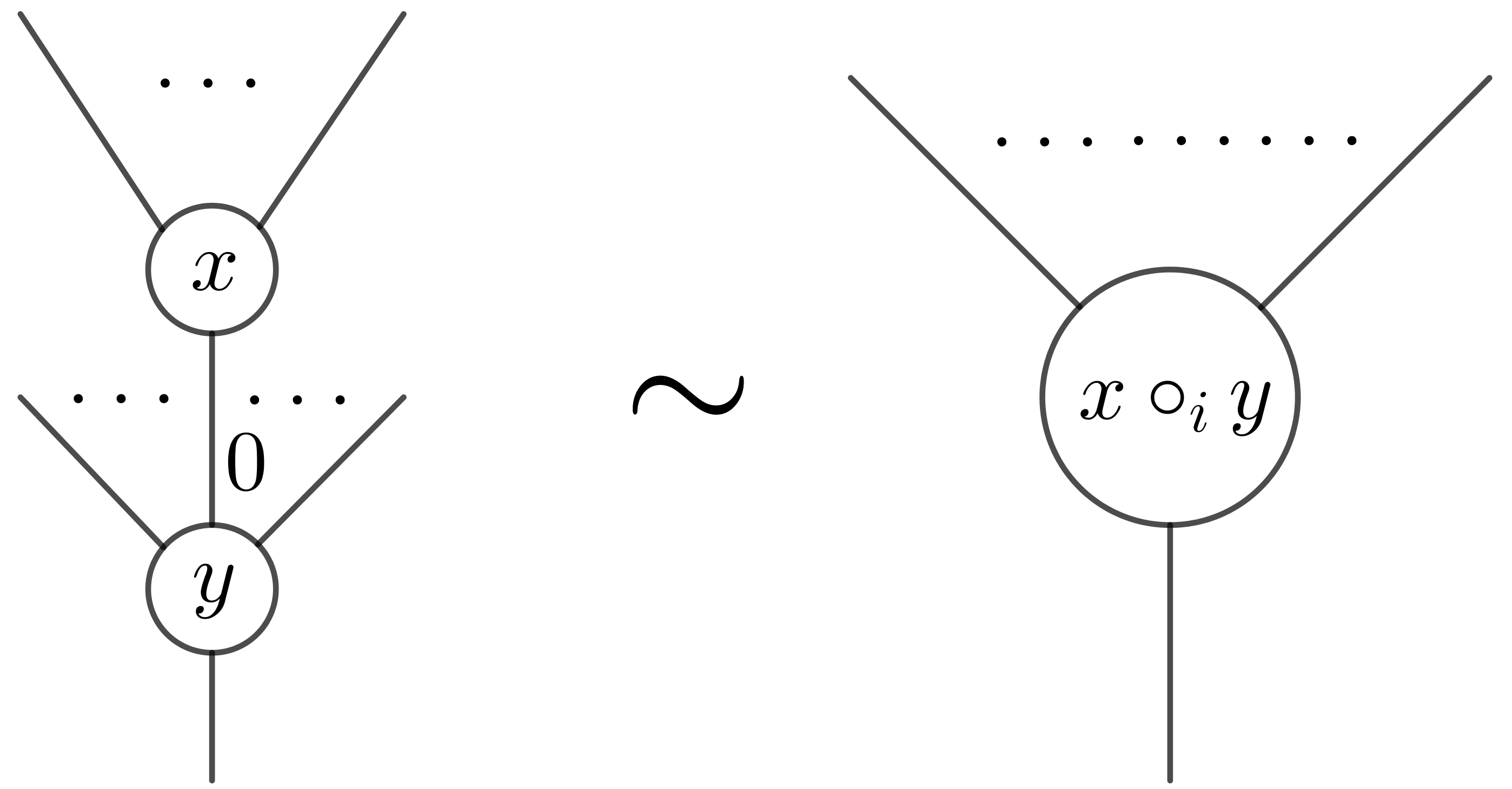}\vspace{-2pt}
\end{center}

To pass to $W_1O$, one adds the local relations from Figure~\ref{Fig:W1}.\vspace{-10pt}

\begin{figure}[!h]
\begin{center}
\includegraphics[scale=0.14]{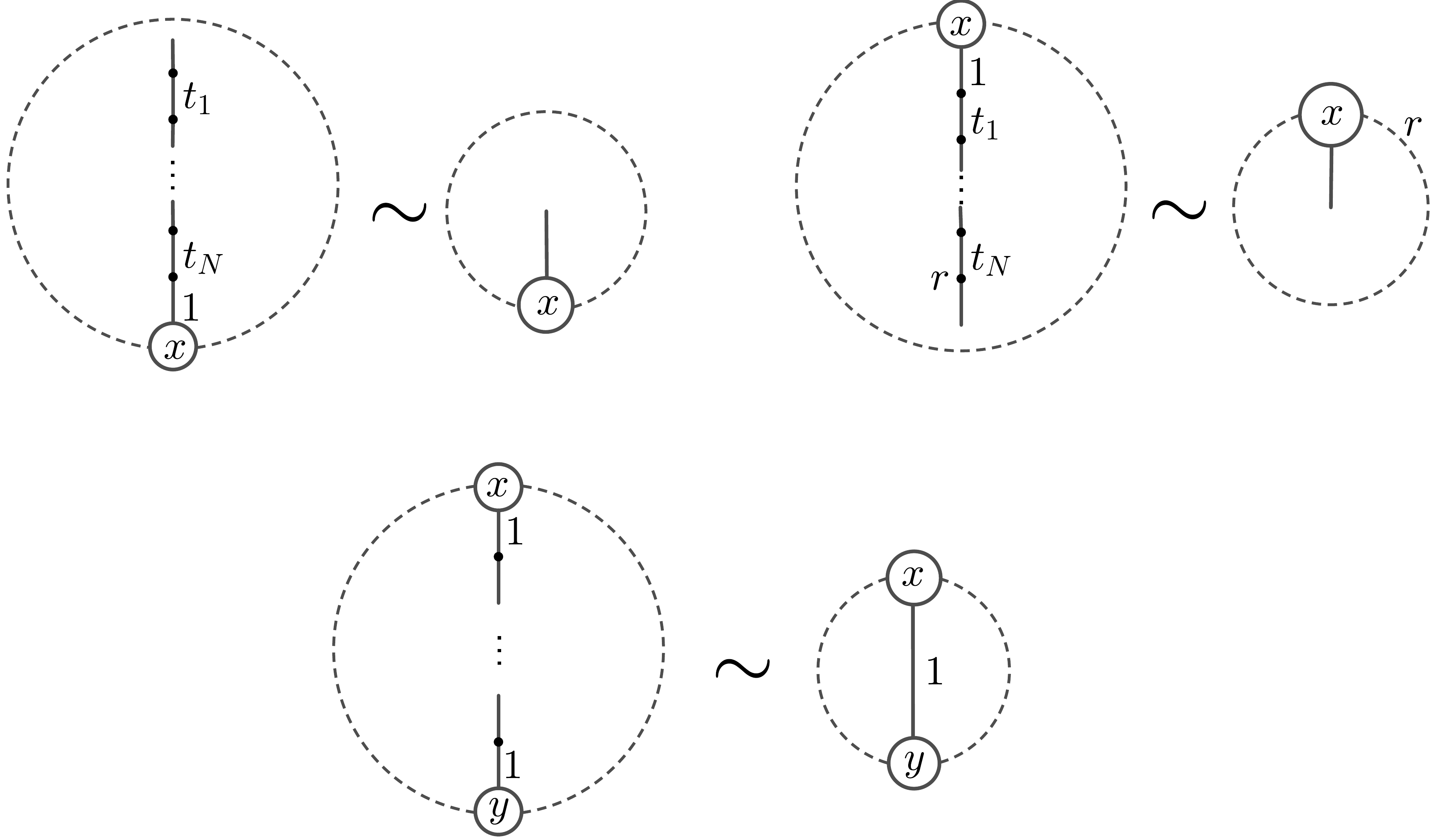}\vspace{-13pt}
\caption{Additional relations in $W_1O$.}\label{Fig:W1}\vspace{-42pt}
\end{center}
\end{figure}

\newpage

We need to show that the quotient map $WO(k)\to W_1O(k)$ is a weak equivalence. For $k=0$, both
spaces are singletons. For $k=1$, the second space is a point, while the first one is homotopy equivalent 
to $O(1)$ therefore is also contractible. We concentrate below on the case $k\geq 2$. We also
need to show that $WO$ and $W_1O$ are Reedy cofibrant operads, or, equivalently,  that $WO_{>0}$ and
$W_1O_{>0}$ are cofibrant in the projective model structure, see Subsection~\ref{ss:ht2} 
or~\cite[Theorem II.8.4.12]{Fresse17}. The cofibrancy of $WO_{>0}$ is guaranteed by the fact that $O_{>0}$
is well-pointed and $\Sigma$-cofibrant, see~\cite{Boardman68}. \vspace{5pt}

Consider an increasing filtration $F_iWO(k)$, respectively $F_iW_1O(k)$, in $WO(k)$, respecticely $W_1O(k)$,
by the number $i$ of vertices of the trees in $\textbf{tree}_k^{\geq 1}$. The map
\begin{equation}\label{eq:W-W1}
WO(k)\to W_1O(k)
\end{equation}
preserves this filtration. One also has $F_1WO(k)=F_1W_1O(k)=O(k)$, $k\geq 2$. The following lemma
implies that~\eqref{eq:W-W1} is a weak homotopy equivalence.

\begin{lmm}\label{l:W1}
For any weakly doubly reduced, well-pointed, and $\Sigma$-cofibrant operad $O$, the inclusions
\begin{equation}\label{eq:F_iWO}
F_{i-1}WO(k) \to F_iWO(k)\,\, \text{ and }\,\, F_{i-1}W_1O(k) \to F_iW_1O(k)
\end{equation}
are trivial $\Sigma_k$-cofibrations, $i\geq 2$, $k\geq 2$. 
\end{lmm}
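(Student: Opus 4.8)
The statement to prove is Lemma~\ref{l:W1}: the inclusions~\eqref{eq:F_iWO} are trivial $\Sigma_k$-cofibrations for $i\geq 2$, $k\geq 2$. I would treat the two cases $WO$ and $W_1O$ essentially in parallel, since $W_1O$ is a quotient of $WO$ by the extra relations in Figure~\ref{Fig:W1}, and the filtration $F_\bullet W_1O(k)$ is the image of $F_\bullet WO(k)$. The plan is first to identify, for each $i\geq 2$, the pushout square describing how $F_iWO(k)$ is obtained from $F_{i-1}WO(k)$, showing it is a $\Sigma_k$-cofibration; and then separately to exhibit a $\Sigma_k$-equivariant deformation retraction of $F_iWO(k)$ onto $F_{i-1}WO(k)$, which gives the triviality.

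\textbf{Step 1: the cofibration part.} For a tree $T\in\textbf{tree}_k^{\geq 1}$ with exactly $i$ vertices, the stratum it contributes to $WO(k)$ is (an equivariant version of) $[0,1]^{E^{int}(T)}\times\prod_{v\in V(T)}O(|v|)$. Passing from $F_{i-1}$ to $F_i$ amounts to gluing these strata along their boundary, where the "boundary" $\partial\bigl([0,1]^{E^{int}(T)}\bigr)$ corresponds to internal edges being set to $0$ (contraction via operad structure, lowering vertex count) or to $1$ (decomposition, also handled in a lower stratum after the composition relations), together with the faces where some $O(|v|)$ is replaced via a generating cofibration coming from well-pointedness (a vertex labelled $\ast_1\in O(1)$, which via relation~(i) contracts). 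Concretely, I would write $F_iWO(k)=F_{i-1}WO(k)\coprod_{\partial Z_i}Z_i$, where $Z_i$ is the coproduct over the $\Sigma_k$-orbits of such trees $T$ of the equivariant cells $\Sigma_k\times_{\mathrm{Aut}(T)}\bigl([0,1]^{E^{int}(T)}\times\prod_v O(|v|)\bigr)$ and $\partial Z_i$ the corresponding subspace; since $\Sigma$-cofibrancy and well-pointedness of $O$ make $\partial Z_i\to Z_i$ a $\Sigma_k$-cofibration (this is exactly the standard Boardman--Vogt cell structure argument, which the paper already invokes via~\cite{Boardman68}), the inclusion $F_{i-1}\to F_i$ is a $\Sigma_k$-cofibration. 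For $W_1O$ the same cell decomposition applies after quotienting by the Figure~\ref{Fig:W1} relations, which only collapse arity-one labels; one checks the quotient cells are still $\Sigma_k$-cofibrant, either directly or by noting the relations identify cofibrant subcomplexes.

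\textbf{Step 2: the acyclicity part.} This is where the hypothesis $O(1)\simeq\ast$ (weak double reducedness) is used. The idea is the same "pull down the top vertices" homotopy already used in the proof of Theorem~\ref{th:proj_bar_cof}: a tree $T$ with $i\geq 2$ vertices has at least one vertex $v$ adjacent to a leaf; if one sends the parameter $t_{e_0(v)}$ on its outgoing inner edge to $0$ one contracts that edge and lands in $F_{i-1}$, unless $v$ is at distance making this impossible, but since $k\geq 2$ and $i\geq 2$ there is always a top vertex to pull down. More carefully, I would define a $\Sigma_k$-equivariant homotopy $h\colon F_iWO(k)\times[0,1]\to F_iWO(k)$ that uniformly scales all the parameters $t_e$ on inner edges incident to the top-most vertices toward $0$; at $t=1$ every such parameter is $0$, so all top vertices get absorbed into their targets, reducing the number of vertices, hence landing in $F_{i-1}WO(k)$ (using that after absorbing, the labels multiply via operad composition). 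One must check this is compatible with all the relations (ii), (i), composition, and — for $W_1O$ — the Figure~\ref{Fig:W1} relations, and that it restricts to the identity on $F_{i-1}$; the only subtlety is when absorbing a top vertex produces an arity-one vertex, which is exactly where $O(1)\simeq\ast$ (for the weak equivalence statement) or $O(1)=\ast$ (so the relation (i) kills it outright) is needed to see the retraction is onto all of $F_{i-1}$ up to homotopy.

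\textbf{Main obstacle.} The genuinely delicate point is Step~2 in the weakly (as opposed to strictly) doubly reduced case: when $O(1)$ is only contractible, pulling a top vertex down does not literally contract it, so the naive homotopy does not land in $F_{i-1}$ on the nose. I would handle this by first using the homotopy to reduce to configurations where all top inner-edge parameters are $0$ (so the would-be-removed vertices are "ready to be absorbed"), then composing with the chosen contraction of $O(1)$ to a point to actually eliminate the resulting arity-one vertices, checking $\Sigma_k$-equivariance and relation-compatibility throughout — this two-stage homotopy is what makes~\eqref{eq:F_iWO} a \emph{trivial} cofibration rather than merely a cofibration, and it is the technical heart of Theorem~\ref{th:zigzag}.
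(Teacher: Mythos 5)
Your Step~1 (the cofibration half) matches the paper: the cell decomposition into pieces $\Sigma_k\times_{\mathrm{Aut}(T)}\bigl(H(T)\times O(T)\bigr)$ with $H(T)=[0,1]^{E^{int}(T)}$, $O(T)=\prod_v O(|v|)$, and the observation that $\Sigma$-cofibrancy and well-pointedness make the boundary inclusion a $\Sigma_k$-cofibration, is exactly what the paper imports from Berger--Moerdijk. Your Step~2, however, takes a genuinely different route from the paper, and that is where a gap opens up. The paper never constructs a global deformation retraction of $F_iWO(k)$ or $F_iW_1O(k)$ onto the previous filtration term; instead it proves acyclicity \emph{cell by cell}: each attaching inclusion has the pushout--product form $\widetilde{H}{}^-(T)\times O(T)\cup H(T)\times O^-(T)\hookrightarrow H(T)\times O(T)$, and the equivariant pushout--product Lemma~\ref{l:pushprod} makes this a \emph{trivial} $\mathrm{Aut}(T)$-cofibration as soon as \emph{one} of the two factors is a trivial cofibration. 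For $WO$ that factor is always $H^-(T)\subset H(T)$ (the cube retracts onto its parameter-zero facets). For $W_1O$ there is a dichotomy by the shape of $T$: when every vertex has arity $\geq 2$, the Figure~\ref{Fig:W1} relations do not touch $T$, so $\widetilde H^-(T)=H^-(T)$ and the cube retraction again applies; when $T$ has a vertex of arity one, $\widetilde{H}{}^-(T)\subset H(T)$ may fail to be trivial, but now the \emph{other} factor $O^-(T)\subset O(T)$ is a trivial cofibration, because it is a pushout--product built from $\{\ast_1\}\hookrightarrow O(1)$ --- a trivial cofibration by well-pointedness and $O(1)\simeq\ast$ --- with cofibrations $\emptyset\hookrightarrow O(|v|)$. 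This dichotomy, together with the pushout--product lemma, is the mechanism; triviality is never produced by a global homotopy.

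The concrete step in your proposal that would fail is the ``two-stage homotopy'' in which you first pull top parameters to $0$ and then ``compose with the chosen contraction of $O(1)$ to a point''. A chosen contraction of $O(1)$ onto $\ast_1$ is in general \emph{not} compatible with the operad structure: it need not commute with the composition $O(1)\times O(1)\to O(1)$, with the unit relation~(i), nor --- in $W_1O$ --- with the Figure~\ref{Fig:W1} relations, which collapse entire \emph{subtrees} evaluating into $WO(1)$ rather than individual vertex labels. Applying such a contraction simultaneously to the $O(1)$-labels scattered through a tree therefore does not descend to a well-defined, $\Sigma_k$-equivariant self-homotopy of $WO(k)$ or $W_1O(k)$, and you would be left with an uncontrolled web of compatibility conditions that a generic contraction has no reason to satisfy. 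The point of the pushout--product formulation is precisely to exploit $O(1)\simeq\ast$ \emph{without} ever choosing, let alone globalizing, a contraction of $O(1)$: one only needs that $\{\ast_1\}\hookrightarrow O(1)$ is a trivial cofibration. That is the idea missing from your argument, and without it the weakly doubly reduced case --- which, as you correctly sensed, is the real point of the lemma --- does not go through.
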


\begin{proof}
This fact for the Boardman-Vogt resolutions (first inclusion in~\eqref{eq:F_iWO}) was proved by 
Berger-Moerdijk in~\cite{Berger06}. Below we recall their argument and then adapt it to the case
of the second inclusion in~\eqref{eq:F_iWO}. \vspace{5pt}

For any tree $(T,id)\in\textbf{tree}_k^{\geq 1}$ with exactly $i$ vertices, denote by $Aut(T)\subset \Sigma_k$
the group of automorphisms of $T$. Define $O(T):=\prod_{v\in V(T)}O(|v|)$ and $H(T):=[0,1]^{E^{int}(T)}$.
Define also $O^-(T)\subset O(T)$ as the subset of elements for which at least one coordinate
is $*_1\in O(1)$; and $H^-(T)\subset H(T)$ as the subset for which at least one coordinate is $0\in[0,1]$. 

\begin{lmm}[Equivariant pushout-product lemma - Lemma 2.5.2 in~\cite{Berger06}]
\label{l:pushprod}
Let $\Gamma$ be a discrete group. Consider the projective model structure on $\Topo^\Gamma$. Let $A\hookrightarrow B$ and $X\hookrightarrow Y$ be $\Gamma$-equivariant cofibrations. If one of them is
a $\Gamma$-cofibration, the pushout-product map $A\times Y\bigcup B\times X
\hookrightarrow B\times Y$ is a $\Gamma$-cofibration. Moreover, the latter is trivial if 
$A\hookrightarrow B$ or $X\hookrightarrow Y$ is.
\end{lmm}

Berger-Moerdijk showed that for $O$ well-pointed and $\Sigma$-cofibrant,
the inclusion $O^-(T)\subset O(T)$ is an $Aut(T)$-cofibration and $H^-(T)\subset H(T)$ 
is a trivial $Aut(T)$-equivariant cofibration. From Lemma~\ref{l:pushprod} it follows that the
inclusion
\[
(H(T)\times O(T))^-:= H^-(T)\times O(T)\bigcup H(T)\times O^-(T)
\subset H(T)\times O(T)
\]
is a trivial $Aut(T)$-cofibration. As a consequence, the inclusion 
\begin{equation}\label{eq:OH}
(H(T)\times O(T))^-\times_{Aut(T)}\Sigma_k\subset (H(T)\times O(T))\times_{Aut(T)}\Sigma_k
\end{equation}
is a trivial $\Sigma_k$-cofibration. On the other hand, it is easy to see that $F_iWO(k)$ is obtained
from $F_{i-1}WO(k)$ by taking pushout along cofibrations~\eqref{eq:OH} over all (non-planar)
trees $T$ with $k$ leaves and $i$ vertices (all of arity $\geq 1$). \vspace{5pt}

Let us adapt this argument to the second inclusion in~\eqref{eq:F_iWO}. For any tree $T$
as above, define $\widetilde{H}{}^-(T)\subset H(T)$ as the set of points having at least one coordinate $0
\in [0,1]$ plus the points (markings of the edges) that would appear in relations of Figure~\ref{Fig:W1}. 
It is easy to see that $\widetilde{H}{}^-(T)$ is a union of 
some codimension~0 and~1 faces of the cube $H(T)$. Therefore  $\widetilde{H}{}^-(T)\subset H(T)$
is an $Aut(T)$-equivariant cofibration, trivial in case all vertices of~$T$ have arities $\geq 2$. 
On the other hand, by the usual pushout product lemma ($\Gamma=1$ in Lemma~\ref{l:pushprod}),
the $Aut(T)$-cofibration $O^-(T)\subset O(T)$  is trivial in case $T$ has at least one vertex of arity~1. 
Thus every inclusion
\[
 \widetilde{H}{}^-(T)\times O(T)\bigcup H(T)\times O^-(T)
\subset H(T)\times O(T)
\]
is a trivial $Aut(T)$-cofibration and we conclude similarly that $F_{i-1}W_1O(k)\hookrightarrow
F_iW_1O(k)$ is a trivial $\Sigma_k$-cofibration.
\end{proof}

To prove that the operad $W_1O_{>0}$ is cofibrant, we consider a different filtration in $W_1O_{>0}$. 
We say that an element $x\in W_1O_{>0}$ is prime if all edges of $x$ are assigned length $<1$.
To any $x\in W_1O_{>0}$ one can assign the set of its prime components by splitting the underline tree
into subtrees cutting all edges of length~1. We say that $x$ is in the $i$-th filtration term $W_1^{(i)}O_{>0}$
if all its prime components have $\leq i$ vertices. Each term $W_1^{(i)}O_{>0}$ is an operad. Moreover,
each inclusion $W_1^{(i-1)}O_{>0}\subset W_1^{(i)}O_{>0}$ is an operadic cofibration. Indeed,
it is obtained as a sequence of free (operadic) cell attachments along $\Sigma_k$-cofibrations:
\[
(\partial H(T)\times O(T)\bigcup H(T)\times O^-(T))\times_{Aut(T)}\Sigma_k
\subset (H(T)\times O(T))\times_{Aut(T)}\Sigma_k,
\]
where $\partial H(T)$ is the boundary of the cube $H(T)$, and $T$ is a tree with $i$ internal vertices
(each of arity $\geq 1$) and with $k\geq 2$ leaves. (It is easy to see that by the same argument
 $WO_{>0}$ is obtained as a sequence of cell attachments. Moreover $W_1O_{>0}$ is obtained
 from $WO_{>0}$ by collapsing to a point all its arity $k=1$ cells.)

\subsection{Cellular cosheaves}\label{s:A2}
In this section we explain what we mean by a {\it cellular cosheaf} and its {\it realization}.  
Given a CW-complex $X$, we consider the poset structure on the set 
$I$ of its cells generated by relations $i<j$ if the interior of a cell $i$ has a non-empty intersection with
the boundary of a cell $j$. Denote by $d\colon I\to {\mathbb N}$ the associated dimension function,
which is obviously order-preserving. One has 
\[
X=\left.\left(\coprod_{i\in I} D^{d(i)}\right)\right/\sim,
\]
where $\sim$ says how the boundary of each disc $D^{d(i)}$ is attached to the $(d(i)-1)$-skeleton
of $X$. \vspace{5pt}

Viewing $I$ as a category, we say that a {\it cellular cosheaf} on $X$ is a contravariant functor
${\mathcal F}\colon I\to\Topo$. Thus for any $i<j$, one gets a map ${\mathcal F}(j)\to{\mathcal F}(i)$.

By {\it realization} of a cellular cosheaf ${\mathcal F}$ we understand the space
\[
|{\mathcal F}|:=\left.\left(\coprod_{i\in I}{\mathcal F}(i)\times D^{d(i)}\right)\right/\sim,
\]
where $\sim$ is generated by $(a_1,x_1)\sim (a_2,x_2)$ with $x_1\in D^{d(i)}$, $x_2\in \partial D^{d(j)}$,
$a_1\in {\mathcal F}(i)$, $a_2\in {\mathcal F}(j)$, $i<j$, $x_1\sim x_2$, and $a_2\mapsto a_1$ 
by the map ${\mathcal F}(i<j)\colon {\mathcal F}(j)\to {\mathcal F}(i)$.\vspace{5pt}

In case a CW-complex $Y$ is a cellular refinement of $X$ along a homeomorphism 
$f\colon Y\to X$, the poset $J$ of cells of $Y$ admits a poset map $f_*\colon J\to I$, which assigns
to a cell of $Y$ the cell of $X$ in which interior it lies.  One can also define the cellular cosheaf
$f^*{\mathcal F}$ on $Y$ as the composition $f^*{\mathcal F}\colon J\xrightarrow{f_*}I \xrightarrow{\mathcal F}Top$. \vspace{5pt}

The following lemma is easy to check.

\begin{lmm}\label{l:cellularsheaf}
If $f\colon Y\xrightarrow{\cong} X$ is a cellular refinement, ${\mathcal F}$ is a cellular cosheaf on $X$,
then one has a homeomorphism $|f^*{\mathcal F}|\cong|{\mathcal F}|$.
\end{lmm}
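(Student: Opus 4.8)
The plan is to unwind both realizations as coends (equivalently, as quotients of disjoint unions of products $\mathcal{F}(i)\times D^{d(i)}$) and to exhibit the identity on the indexing sets of such products as the desired homeomorphism. First I would record the basic observation that a cellular refinement $f\colon Y\xrightarrow{\cong}X$ induces, for each cell $j\in J$ of $Y$, a homeomorphism from the characteristic disc $D^{d(j)}$ of $j$ onto its image in $X$, and that this image is contained in the closed cell $\overline{e_{f_*(j)}}$ of $X$; moreover $Y$ being a refinement means each open cell of $X$ is a disjoint union of open cells of $Y$ of the same top dimension, so $d(j)=d(f_*(j))$ whenever $j$ is a top-dimensional cell lying over the open cell $e_{f_*(j)}$, while in general $d(j)\le d(f_*(j))$. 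I would then note $\mathcal{F}(f_*(j))=(f^*\mathcal F)(j)$ by definition of the pullback cosheaf, so that the summands indexed by $J$ in $|f^*\mathcal F|$ are $(f^*\mathcal F)(j)\times D^{d(j)}=\mathcal F(f_*(j))\times D^{d(j)}$.

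Next I would build the map $\Phi\colon |f^*\mathcal F|\to |\mathcal F|$ on representatives: send $(a,x)\in \mathcal F(f_*(j))\times D^{d(j)}$ to $(\mathcal F(i_0<f_*(j))(a),\, h_j(x))$, where $i_0\in I$ is the cell of $X$ whose \emph{open} cell contains $f(\mathrm{int}\,x)$ — for $x$ in the interior of $D^{d(j)}$ this $i_0$ is well-defined by the refinement property, and one extends by continuity/compatibility to the boundary strata — and $h_j$ is the attaching homeomorphism $D^{d(j)}\hookrightarrow X$ followed by the identification with the characteristic disc of $i_0$. The key point is that this assignment respects the gluing relation $\sim$ on both sides: a face incidence $j'<j$ in $J$ maps under $f_*$ to a chain $f_*(j')\le f_*(j)$ in $I$, and the cosheaf transition maps compose functorially, so the relation $(a_1,x_1)\sim(a_2,x_2)$ in $|f^*\mathcal F|$ is carried to the corresponding relation in $|\mathcal F|$. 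Conversely I would construct $\Psi\colon|\mathcal F|\to|f^*\mathcal F|$ by using $f^{-1}\colon X\to Y$ to see which cell $j$ of $Y$ a point of a closed cell of $X$ lands in, and check $\Phi\Psi=\mathrm{id}$, $\Psi\Phi=\mathrm{id}$ on representatives. Continuity of both maps is automatic from the quotient (colimit) topology on a CW-complex together with the fact that the ingredient maps are continuous on each summand; here I would invoke the behaviour of colimits and mapping spaces in $Top$ recalled in Appendix~\ref{s:A0} (colimits of $k$-spaces are $k$-spaces, and a map out of a colimit is continuous iff its restriction to each summand is).

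The main obstacle I anticipate is purely bookkeeping rather than conceptual: carefully matching the stratified boundary identifications of $D^{d(j)}$ with those of $D^{d(f_*(j))}$ when $d(j)<d(f_*(j))$, i.e.\ when a low-dimensional cell of $Y$ lies in the interior of a higher-dimensional cell of $X$ — there the disc $D^{d(j)}$ is glued, via $f$, into a proper face-free subset of the open cell $e_{f_*(j)}$, and one must verify that the cosheaf value $\mathcal F(f_*(j))$ attached to it via the pullback is glued compatibly with the neighbouring summands. Once the combinatorics of this face-matching is set up (most transparently by working simplex-by-simplex after a common subdivision, or by the explicit coend description), the verification that $\Phi$ and $\Psi$ are mutually inverse continuous maps is routine, and the lemma follows.
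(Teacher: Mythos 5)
The paper gives no proof of this lemma (it only asserts that it is ``easy to check''), so there is no argument of the authors to compare against; your proposal fills the gap, and the construction you outline does work. Building mutually inverse maps $\Phi$, $\Psi$ on representatives via the index map $f_*\colon J\to I$ and the cosheaf transitions is the right thing to do, and you have correctly flagged the cells $j$ of $Y$ with $d(j)<d(f_*(j))$ as the place where the bookkeeping is non-trivial. Two small points deserve attention. First, the preliminary claim that a characteristic disc of $Y$ maps homeomorphically onto its image is false for non-regular CW-complexes (the attaching map can identify boundary points); it is also unnecessary, since your definition of $\Phi$ only needs well-definedness of $i_0$ and of the unique interior preimage $h_j(x)\in D^{d(i_0)}$, both of which hold regardless of regularity. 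Second, the continuity of $\Phi$ on a summand $\mathcal F(f_*(j))\times D^{d(j)}$ is not quite ``automatic'': your formula branches on $i_0$, which jumps as $x$ crosses boundary strata of $D^{d(j)}$. The clean argument is to factor the summand map through $\mathcal F(f_*(j))$ times the closed cell of $f_*(j)$ in $X$ (via the identity crossed with the composite of the characteristic map of $j$ with $f$), and then to observe that the further map to $|\mathcal F|$ is continuous because the characteristic disc $D^{d(f_*(j))}$ surjects onto that closed cell as a quotient map (and products with a $k$-space preserve quotients in $Top$), while the gluing relation in $|\mathcal F|$ identifies all preimages of a point in the interior of $i_0$ with the single cosheaf value $\mathcal F(i_0\le f_*(j))(a)$. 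With those two repairs in place, the remaining verifications you mention — compatibility of $\Phi$ with the generators of $\sim$ on $|f^*\mathcal F|$, and $\Psi$ being inverse to $\Phi$ — are indeed routine, and the lemma holds.
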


%%%%%%%%%%%%%%%%%%%%%%%%%%%%%%%%%%%%%%%%%%%%%%%%%%%%%%%%%%%%%%%%
%%%%%%%%%%%%%%%%%%%%%%%%%%%%%%%%%%%%%%%%%%%%%%%%%%%%%%%%%%%%%%%%
%%%%%%%%%%%%%%%%%%%%%%%%%%%%%%%%%%%%%%%%%%%%%%%%%%%%%%%%%%%%%%%%
%%%%%%%%%%%%%%%%%%%%%%%%%%%%%%%%%%%%%%%%%%%%%%%%%%%%%%%%%%%%%%%%
%%%%%%%%%%%%%%%%%%%%%%%%%%%%%%%%%%%%%%%%%%%%%%%%%%%%%%%%%%%%%%%%
%%%%%%%%%%%%%%%%%%%%%%%%%%%%%%%%%%%%%%%%%%%%%%%%%%%%%%%%%%%%%%%%
%%%%%%%%%%%%%%%%%%%%%%%%%%%%%%%%%%%%%%%%%%%%%%%%%%%%%%%%%%%%%%%%
%%%%%%%%%%%%%%%%%%%%%%%%%%%%%%%%%%%%%%%%%%%%%%%%%%%%%%%%%%%%%%%%
%%%%%%%%%%%%%%%%%%%%%%%%%%%%%%%%%%%%%%%%%%%%%%%%%%%%%%%%%%%%%%%%

\bibliographystyle{amsalpha}
\bibliography{bibliography2}

\providecommand{\bysame}{\leavevmode\hbox to3em{\hrulefill}\thinspace}
\providecommand{\MR}{\relax\ifhmode\unskip\space\fi MR }
% \MRhref is called by the amsart/book/proc definition of \MR.
\providecommand{\MRhref}[2]{%
  \href{http://www.ams.org/mathscinet-getitem?mr=#1}{#2}
}
\providecommand{\href}[2]{#2}
\begin{thebibliography}{BdBW18}

\bibitem[AT14]{Arone14}
Gregory Arone and Victor Turchin, \emph{On the rational homology of high
  dimensional analogues of spaces of long knots}, Geometry and Topology
  \textbf{18} (2014), 1261--1322.

\bibitem[BB17]{Batanin17}
M.~A. Batanin and C.~Berger, \emph{Homotopy theory for algebras over polynomial
  monads}, Theory Appl. Categ. \textbf{32} (2017), 148--253.

\bibitem[BCKS17]{Budney17}
Ryan Budney, James Conant, Robin Koytcheff, and Dev Sinha, \emph{Embedding
  calculus knot invariants are of finite type}, Algebr. Geom. Topol.
  \textbf{17} (2017), no.~3, 1701--1742.

\bibitem[BdBW13]{Boavida13}
Pedro Boavida~de Brito and Michael Weiss, \emph{Manifold calculus and homotopy
  sheaves}, Homology Homotopy Appl. \textbf{15} (2013), no.~2, 361--383.

\bibitem[BdBW18]{Weiss15}
\bysame, \emph{Spaces of smooth embeddings and configuration categories},
  Journal of Topology, Issue 1 \textbf{18} (2018), 65--143.

\bibitem[BDL19]{BataninDL17}
M.~A. Batanin and F.~De~Leger, \emph{Grothendieck's homotopy theory, polynomial
  monads and delooping of spaces of long knots}, to appear in the Journal of
  Noncommutative Geometry (2019).

\bibitem[BM03]{Berger03}
C.~Berger and I.~Moerdijk, \emph{Axiomatic homotopy theory for operads},
  Comment. Math. Helv. \textbf{78} (2003), 805--831.

\bibitem[BM06]{Berger06}
\bysame, \emph{The {B}oardman-{V}ogt resolution of operads in monoidal model
  categories}, Topology \textbf{45} (2006), no.~5, 807--849.

\bibitem[BM11]{Berger11}
\bysame, \emph{On an extension of the notion of {R}eedy category}, Math. Z.
  \textbf{269} (2011), no.~3-4, 977--1004.

\bibitem[BT94]{Bott94}
R.~Bott and C.~Taubes, \emph{On the self-linking of knots}, J. Math. Phys.
  \textbf{35} (1994), no.~(10), 5247--5287.

\bibitem[Bud07]{Budney07}
Ryan Budney, \emph{Little cubes and long knots}, Topology \textbf{46} (2007),
  no.~1, 1--27. \MR{2288724 (2008c:55015)}

\bibitem[Bud12]{Budney12}
\bysame, \emph{An operad for splicing}, J. Topol. 5 \textbf{5} (2012), no.~4,
  945--976.

\bibitem[BV68]{Boardman68}
J.~M. Boardman and R.~M. Vogt, \emph{Homotopy-everything {$H$}-spaces}, Bull.
  Amer. Math. Soc. \textbf{74} (1968), 1117--1122. \MR{0236922}

\bibitem[BV73]{Boardman73}
J.M. {Boardman} and R.M. {Vogt}, \emph{Homotopy invariant algebraic structures
  on topological spaces}, Lecture Notes in Mathematics, Vol. 347,
  Springer-Verlag, Berlin, 1973.

\bibitem[CFT02]{CFT02}
A.S. Catteneo, G.~Felder, and L.~Tomassini, \emph{From local to global
  deformation quantization of poisson manifolds}, Duke Math. J. \textbf{115}
  (2002), no.~2, 329--352.

\bibitem[DFT19]{FressePC17}
Julien Ducoulombier, Benoit Fresse, and Victor Turchin, \emph{{Projective and
  Reedy model category structures for (infinitesimal) bimodules over an
  operad}}, arXiv:1911.03890 (2019).

\bibitem[DH12]{Dwyer12}
W.~{Dwyer} and K.~{Hess}, \emph{{Long knots and maps between operads}},
  Geometry and topology \textbf{16} (2012), 919--955.

\bibitem[DK88]{Dydak88}
J.~Dydak and G.~Kozlowski, \emph{{A generalization of the {V}ietoris-{B}egle
  theorem.}}, Proc. Amer. Math. Soc. \textbf{103} (1988), no.~1, 209--212.

\bibitem[DT15]{Turchin14}
N.~{Dobrinskaya} and V.~{Turchin}, \emph{{Homology of non-k-overlapping
  discs}}, Homology, Homotopy, and Applications \textbf{17} (2015), 261--290.

\bibitem[DTW18]{Ducoulombier18}
J.~Ducoulombier, V.~Turchin, and T.~Willwacher, \emph{On the delooping of
  (framed) embedding spaces}, arXiv:1811.12816 (2018).

\bibitem[Duc18]{Ducoulombier16}
J.~Ducoulombier, \emph{From maps between coloured operads to {Swiss-Cheese}
  algebras}, Annales de l'Institut Fourier \textbf{68} (2018), 661 -- 724.

\bibitem[Duc19]{Ducoulombier17}
\bysame, \emph{Delooping derived mapping spaces of bimodules over an operad},
  Journal of Homotopy and relative structure. \textbf{14} (2019), ,411--453.

\bibitem[Fre09]{Fresse09}
Benoit Fresse, \emph{Modules over operads and functors}, Lecture Notes in
  Mathematics, Vol. 1967, Springer-Verlag, Berlin, 2009.

\bibitem[Fre17]{Fresse17}
\bysame, \emph{Homotopy of operads and {G}rothendieck-{T}eichm\"{u}ller groups.
  {P}art 2}, Mathematical Surveys and Monographs, vol. 217, American
  Mathematical Society, Providence, RI, 2017, The applications of (rational)
  homotopy theory methods. \MR{3616816}

\bibitem[FTW17]{FresseTWbig}
Benoit Fresse, Victor Turchin, and Thomas Willwacher, \emph{The rational
  homotopy of mapping spaces of {$E_n$} operads}, arXiv:1703.06123 (2017).

\bibitem[FTW18]{FresseTWsmall}
\bysame, \emph{The homotopy theory of operad subcategories}, J. Homotopy Relat.
  Struct. \textbf{13, Issue 4} (2018), 689–702.

\bibitem[FV15]{Fiedorowicz15}
Z.~Fiedorowicz and R.~M. Vogt, \emph{{An additivity theorem for the interchange
  of $E_n$ structures}}, Adv. Math. \textbf{273} (2015), 421--484.

\bibitem[GJ94]{Getzler94}
E.~Getzler and J.D.S Jones, \emph{Operads, homotopy algebra and iterated
  integrals for double loop spaces}, arXiv:hep-th/9403055 (1994).

\bibitem[GK15]{Goodwillie15}
T.~G. {Goodwillie} and J.~R. {Klein}, \emph{Multiple disjunction for spaces of
  smooth embeddings}, Journal of Topology \textbf{8} (2015), 651--674.

\bibitem[Gro86]{Gromov86}
Mikhael Gromov, \emph{Partial differential relations}, Ergebnisse der
  Mathematik und ihrer Grenzgebiete (3) [Results in Mathematics and Related
  Areas (3)], vol.~9, Springer-Verlag, Berlin, 1986.

\bibitem[GW99]{Weiss99.2}
Thomas~G. Goodwillie and Michael Weiss, \emph{Embeddings from the point of view
  of immersion theory. {II}}, Geom. Topol. \textbf{3} (1999), 103--118.
  \MR{1694808 (2000c:57055b)}

\bibitem[Hir03]{Hirschhorn03}
Philip~S. Hirschhorn, \emph{Model categories and their localizations},
  Mathematical Surveys and Monographs, vol.~99, American Mathematical Society,
  Providence, RI, 2003.

\bibitem[Hov99]{Hovey99}
Mark Hovey, \emph{Model categories}, Mathematical Surveys and Monographs,
  vol.~63, American Mathematical Society, Providence, RI, 1999.

\bibitem[Kon99]{Kontsevich99}
M.~Kontsevich, \emph{Operads and motives in deformations quantization}, Lett.
  Math. Phys. \textbf{48} (1999), 35 -- 72.

\bibitem[Kon01]{Kontsevich01}
\bysame, \emph{Deformation quantization of algebraic varietes}, EuroConférence
  Moshé Flato 2000, Part III (Dijon), Letters in Mathematical Physics
  \textbf{56} (2001), no.~3, 271 -- 294.

\bibitem[Kon03]{Kontsevich03}
\bysame, \emph{Deformation quantization of poisson manifolds}, Lett. Math.Phys.
  \textbf{66} (2003), 157 -- 216.

\bibitem[Lac69]{Lacher69}
R.~C. Lacher, \emph{Cell-like mappings. {I}}, Pacific J. Math. \textbf{30}
  (1969), 717 -- 731.

\bibitem[Lew78]{Lewis78}
Lemoine~G. Lewis, \emph{{T}he stable category and generalized {T}hom spectra},
  Thesis (Ph.D., The University of Chicago, 1978.

\bibitem[LTV10]{Lambrechts102}
Pascal Lambrechts, Victor Turchin, and Ismar Voli{\'c}, \emph{Associahedron,
  cyclohedron and permutohedron as compactifications of configuration spaces},
  Bull.Belg. Math.soc. Simon Stevin \textbf{17} (2010), no.~no.2, 303--332.

\bibitem[LV14]{Volic14}
P.~Lambrechts and I.~Volic, \emph{Formality of the little n-discs operad}, Mem.
  Amer. Math. Soc. \textbf{239} (2014).

\bibitem[Mar99]{Markl99}
M.~Markl, \emph{A compactification of a real configuration space as an operadic
  completion}, J. Algebra \textbf{215} (1999), 185--204.

\bibitem[Mat]{mathoverflow}
MathOverflow, \emph{Mapping space from a quotient space},
  https://mathoverflow.net/questions/274994/.

\bibitem[May72]{May72}
J.~P. May, \emph{The geometry of iterated loop spaces}, Lecture Notes in
  Mathematics, Vol. 271, Springer-Verlag, Berlin-New York, 1972.

\bibitem[Mos02]{Mostovoy02}
Jacob Mostovoy, \emph{Short ropes and long knots}, Topology \textbf{41} (2002),
  no.~3, 435--450.

\bibitem[MP12]{MayPonto}
J.~P. May and K.~Ponto, \emph{More concise algebraic topology. {L}ocalization,
  completion, and model categories}, Chicago Lectures in Mathematics,
  University of Chicago Press, Chicago, IL, 2012.

\bibitem[MS04a]{McClure04.2}
James~E. McClure and Jeffrey~H. Smith, \emph{Cosimplicial objects and little
  {$n$}-cubes. {I}}, Amer. J. Math. \textbf{126} (2004), no.~5, 1109--1153.
  \MR{2089084 (2005g:55011)}

\bibitem[MS04b]{McClure04}
\bysame, \emph{Operads and cosimplicial objects: an introduction}, Axiomatic,
  enriched and motivic homotopy theory, NATO Sci. Ser. II Math. Phys. Chem.,
  vol. 131, Kluwer Acad. Publ., Dordrecht, 2004, pp.~133--171.

\bibitem[MS18]{Moriya16}
Syunji Moriya and Keiichi Sakai, \emph{The space of short ropes and the
  classifying space of the space of long knots}, Algebraic and Geometric
  Topology \textbf{18} (2018), 2859--2873.

\bibitem[MV09]{Merkulov09}
Vladimir Merkulov and Bruno Vallette, \emph{Deformation theory of
  representation of prop(erad)s {I}}, J. Reine Angew. Math. \textbf{534}
  (2009), 51--106.

\bibitem[Rez02]{Rezk02}
C.~Rezk, \emph{Every homotopy theory of simplicial algebras admits a proper
  model}, Topology and its Applications, Issue 1 \textbf{119} (2002), 65--94.

\bibitem[Sak14]{Sakai14}
Keiichi Sakai, \emph{{Deloopings of the spaces of long embeddings}}, Fund.
  Math. \textbf{227} (2014), no.~1, 27--34.

\bibitem[{Sal}01]{Salvatore99}
P.~{Salvatore}, \emph{{Configuration spaces with summable labels}},
  Cohomological methods in homotopy theory, Progr. Math., vol. 196,
  Birkhäuser, Basel, 2001, p.~375–395.

\bibitem[Sin04]{Sinha04}
Dev~P. Sinha, \emph{Manifold-theoretic compactifications of configuration
  spaces}, Selecta Math. (N.S.) \textbf{10} (2004), no.~3, 391--428.
  \MR{2099074 (2005h:55015)}

\bibitem[Sin06]{Sinha06}
\bysame, \emph{Operads and knot spaces}, J. Amer. Math. Soc. \textbf{19}
  (2006), no.~2, 461--486.

\bibitem[Sin09]{Sinha09}
\bysame, \emph{The topology of spaces of knots: cosimplicial models}, Amer. J.
  Math. \textbf{131} (2009), no.~4, 945--980. \MR{2543919 (2010m:57033)}

\bibitem[Sma59]{Smale59}
Stephen Smale, \emph{{T}he classification of immersions of spheres in
  {E}uclidean spaces}, Ann. of Math. \textbf{69} (1959), no.~2, 327--344.

\bibitem[Sta63]{Stasheff63}
James~Dillon Stasheff, \emph{Homotopy associativity of {H}-spaces. {I}, {II}},
  Trans. Amer. Math. Soc. \textbf{108} (1963), 293--312.

\bibitem[Ste67]{Steenrod67}
Norman Steenrod, \emph{{A} convenient category of topological spaces}, Michigan
  Math. J. \textbf{14} (1967), 133--152.

\bibitem[Tur10]{Turchin10}
Victor Turchin, \emph{Hodge-type decomposition in the homology of long knots},
  J. Topol. \textbf{3} (2010), no.~3, 487--534. \MR{2684511 (2012a:57035)}

\bibitem[Tur13]{Turchin13}
\bysame, \emph{{Context-free manifold calculus and the Fulton-MacPherson
  operad}}, Algebraic and Geometric Topology \textbf{13} (2013), 1243--1271.

\bibitem[Tur14]{Tourtchine10}
\bysame, \emph{Delooping totalization of a multiplicative operad}, J. Homotopy
  Relat. Struct. \textbf{9} (2014), no.~2, 349--418. \MR{3258687}

\bibitem[Vas92]{Vassiliev92}
Victor~A. Vassiliev, \emph{Complements of discriminants of smooth maps:
  topology and applications}, Translations of Mathematical Monographs, vol.~98,
  American Mathematical Society, Providence, RI, 1992, Translated from Russian
  by B. Goldfarb.

\bibitem[Vog71]{Vogt71}
Rainer~M. Vogt, \emph{Convenient categories of topological spaces for homotopy
  theory}, Arch. Math. (Basel) \textbf{22} (1971), 545--555.

\bibitem[Vor99]{Voronov99}
Alexander~A. Voronov, \emph{The {S}wiss-cheese operad}, Homotopy invariant
  algebraic structure ({B}altimore, {MD}, 1998), Contemp. Math., vol. 239,
  Amer. Math. Soc., Providence, RI, 1999, pp.~365--373.

\bibitem[Vor00]{Voronov00}
\bysame, \emph{Homotopy {G}erstenhaber algebras}, Conf\'erence Mosh\'e Flato
  1999, Vol. II (Dijon), Math. Phys. Stud., vol.~22, Kluwer Acad. Publ.,
  Dordrecht, 2000, pp.~307--331.

\bibitem[Wei99]{Weiss99}
Michael Weiss, \emph{Embeddings from the point of view of immersion theory.
  {I}}, Geom. Topol. \textbf{3} (1999), 67--101. \MR{1694812 (2000c:57055a)}

\bibitem[Wei19]{Weiss15_2}
\bysame, \emph{Truncated operads and simplicial spaces}, Tunisian J. Math.
  \textbf{1} (2019), 109--126.

\bibitem[Wil17]{Willwacher17}
Thomas Willwacher, \emph{({N}on-)formality of the extended {Swiss} {Cheese}
  operads}, arXiv:1706.02945 (2017).

\end{thebibliography}

%%%%%%%%%%%%%%%%%%%%%%%%%%%%%%%%%%%%%%%%%%%%%%%%%%%%%%%%%%%%%%%%
%%%%%%%%%%%%%%%%%%%%%%%%%%%%%%%%%%%%%%%%%%%%%%%%%%%%%%%%%%%%%%%%
%%%%%%%%%%%%%%%%%%%%%%%%%%%%%%%%%%%%%%%%%%%%%%%%%%%%%%%%%%%%%%%%
%%%%%%%%%%%%%%%%%%%%%%%%%%%%%%%%%%%%%%%%%%%%%%%%%%%%%%%%%%%%%%%%
%%%%%%%%%%%%%%%%%%%%%%%%%%%%%%%%%%%%%%%%%%%%%%%%%%%%%%%%%%%%%%%%
%%%%%%%%%%%%%%%%%%%%%%%%%%%%%%%%%%%%%%%%%%%%%%%%%%%%%%%%%%%%%%%%
%%%%%%%%%%%%%%%%%%%%%%%%%%%%%%%%%%%%%%%%%%%%%%%%%%%%%%%%%%%%%%%%
%%%%%%%%%%%%%%%%%%%%%%%%%%%%%%%%%%%%%%%%%%%%%%%%%%%%%%%%%%%%%%%%
%%%%%%%%%%%%%%%%%%%%%%%%%%%%%%%%%%%%%%%%%%%%%%%%%%%%%%%%%%%%%%%%

%
%\begin{thebibliography}{PTW02} % '2nd argument contains the widest acronym'
%\bibitem[Lam94]{Lamport}
%  L.~Lamport, \emph{\LaTeX: A document preparation system \textup{(}%
%  updated for \LaTeXe\textup{)}} (Addison--Wesley, New York, 1994).
%\bibitem[PTW02]{PRL}
%  T.~Prokopec, O.~T\"ornkvist and R.~P.~Woodard, \emph{Photon mass
%  from inflation}, Phys.\ Rev.\ Lett.\ \textbf{89} (2002), 101301.
%\end{thebibliography}

\vspace{20pt}

\noindent Julien Ducoulombier: Department of Mathematics, ETH Zurich, Ramistrasse 101, Zurich, Switzerland\\
\textit{E-mail address: } \href{mailto:julien.ducoulombier@math.ethz.ch}{julien.ducoulombier@math.ethz.ch}

\vspace{20pt}

\noindent Victor Turchin: Department of Mathematics, Kansas State University, Manhattan, KS 66506, USA\\
\textit{E-mail address: } \href{mailto:turchin@ksu.edu}{turchin@ksu.edu}

\end{document}